\documentclass[12pt]{article}

\usepackage{graphicx}
\usepackage[utf8]{inputenc}
\usepackage[numbers]{natbib}
\usepackage{amsmath} 
\usepackage{bbm}
\usepackage{amssymb,amsthm}
\usepackage{mathrsfs}
\usepackage{indentfirst}
\usepackage{geometry}
\usepackage{url}
\usepackage{comment}
\usepackage{transparent}
\usepackage{enumitem}

\allowdisplaybreaks[4]
\numberwithin{equation}{section}
\newtheorem{assumption}{Assumption}[section]
\newtheorem{definition}{Definition}[section]
\newtheorem{theorem}[definition]{\textbf{Theorem}}
\newtheorem{lemma}[definition]{\textbf{Lemma}}
\newtheorem{proposition}[definition]{\textbf{Proposition}}
\newtheorem{corollary}[definition]{\textbf{Corollary}}
\newtheorem{remark}[definition]{\textbf{Remark}}
\usepackage[colorlinks=true, allcolors=blue]{hyperref}
\usepackage{xcolor}

\def\r{{\mathbb R}}
\def\h{{\mathbb H}}
\def\e{{\mathbb E}}
\def\p{{\mathbb P}}
\def\P{{\bf P}}
\def\z{{\mathbb Z}}
\def\N{{\mathbb N}}
\def\d{\, \mathrm{d}}
\def\bD{{\mathbb D}}
\def\bI{{\mathbb I}}
\def\cD{\mathcal D}
\def\cE{\mathcal E}
\def\cL{\mathcal L}
\def\cH{\mathcal H}
\def\cW{\mathcal W}
\def\cWhat{\widehat {\mathcal W}}
\def\cM{\mathcal M}
\def\cS{\mathcal S}
\def\cX{\mathcal X}
\def\fD{\mathfrak D}
\def\fR{\mathfrak R}
\def\fB{\mathfrak B}
\def\cY{\mathcal Y}

\def\cF{\mathcal F}
\def\cShat{\widehat {\mathcal S}}
\def\df{\mathfrak{d}}
\newcommand{\dd}{\ensuremath{\mathrm{d}}}
\newcommand{\ind}{\mathbbm{1}}

\setlist[enumerate]{
    topsep=6pt,
    itemsep=4pt,
    partopsep=0pt,
    parsep=4pt
}

\title{\Large{\textbf{ The Polynomially Self-repelling Motion }  }}
\author{Yaolin Yu\footnote{\scriptsize SMS, Fudan University, China, \texttt{ylyu23@m.fudan.edu.cn}}}
\date{}

\begin{document}

\maketitle

\begin{abstract}
Kosygina et al.~\cite{Elena23} showed that the rescaled polynomially self-repelling walk does not converge to any Brownian motion perturbed at extrema (BMPE). Motivated by this result, we introduce an explicit candidate for the scaling limit, the polynomially self-repelling motion (PSRM), and study its properties. The construction relies on a new family of continuous stochastic flows, called polynomially self-repelling flows, introduced in~\cite{Yu26}. A key ingredient in relating the limiting process of the rescaled walk to the PSRM is a joint Ray--Knight theorem for rescaled local time processes, extending the classical Ray--Knight theorems of B.~T\'oth~\cite{Toth96}.
\end{abstract}

{\bf Classification}. 60F17, 60H10, 60J55, 60K35. 

{\bf Keywords}. Ray--Knight theorems, self-interacting random walks, local time flow, stochastic differential equation, generalized P\'{o}lya urn.

\section{Introduction}
Let $\Omega=\{(\omega_i)_{i\ge 0}:\, \omega_0=0,\, |\omega_i-\omega_{i-1}|=1,\, i\ge1\}$ be the set of nearest-neighbor paths starting from the origin. Set $\ell_x^0=r_x^0=0$ for $x\in \z$ and for each $\omega\in \Omega$ define
$$r_x^i(\omega) := \sum_{j=1}^i \mathbbm{1}_{\{\{\omega_{j-1},\, \omega_j\}=\{x,\, x+1\}\}},
\qquad
\ell_x^i(\omega) := \sum_{j=1}^i \mathbbm{1}_{\{\{\omega_{j-1},\, \omega_j\}=\{x-1,\, x\}\}}, \quad i\in \N.$$
They represent the numbers of crossings of the undirected edges $\{x,x+1\}$ and $\{x-1,x\}$ up to time $i$, respectively. 
Let $w:\N_0\to(0,\infty)$ be a given weight function. The self-interacting random walk (SIRW) $X=(X_i)_{i\ge 0}$ on $\Omega$ is defined recursively by, for $i\ge 0$,
\begin{align*}
    P\left(X_{i+1} = X_i + 1 \mid X_1, \ldots, X_i\right)=&\; \frac{w\!\left(r_{X_i}^i\right)}{w\!\left(r_{X_i}^i\right) + w\!\left(\ell_{X_i}^i\right)},\\
    P\left(X_{i+1} = X_i - 1 \mid X_1, \ldots, X_i\right)=&\; \frac{w\!\left(\ell_{X_i}^i\right)}{w\!\left(r_{X_i}^i\right) + w\!\left(\ell_{X_i}^i\right)}.
\end{align*}
If $w$ is non-increasing (non-decreasing), the walk is self-repelling (self-attracting). These non-Markovian processes have been extensively studied in probability theory and statistical physics, especially their long-time behavior. Different growth regimes of the weight function $w$ at infinity lead to distinct classes of SIRWs~\cite{Davis96, pemantle88, toth94, toth95, Toth96, wrrw, toth08}. We refer to~\cite{APP83, DR92, Lawler13, SAW, OP83, Peliti86, toth_survey} for further background and related literature.

\medskip

The \emph{polynomially self-repelling walk} (PSRW) is a distinguished member of the class of SIRWs, corresponding to polynomially decaying interaction weights. More precisely, it is the SIRW whose weight function $w$ is non-increasing and satisfies
\begin{align}\label{eq:weight_func}
    \frac{1}{w(n)}=n^\alpha\left(1+\frac{2 B}{n}+O\left(\frac{1}{n^{2}}\right)\right) \quad \text { as } n \rightarrow \infty ,
\end{align}
where $\alpha>0$ and $B\in\r$ are fixed parameters.
This model goes back to T\'oth~\cite{Toth96} and is characterized by $w(n)\sim n^{-\alpha}$. We highlight below several results from his work:
\begin{itemize}[itemsep=0pt, left=4pt]
\item A generalized Ray--Knight theorem for local time processes stopped at inverse local times (\cite[Theorem~1B]{Toth96}, recalled as Theorem~\ref{thm:1B_toth} below), together with the weak convergence of the inverse local times (\cite[Corollary 1B]{Toth96}). 

\item A local limit theorem observed at an independent geometric time (\cite[Theorem~2B]{Toth96}) and the tightness of the rescaled sequence $X_{\lfloor nt\rfloor}/\sqrt{n}$ for fixed $t$. It was further noted that Brownian motions perturbed at extrema (BMPE) with suitable parameters~\cite{CPY98, Davis96} exhibit the same local time processes as those in \cite[Theorem~1B]{Toth96}.
\end{itemize}

\noindent
These results suggest that the diffusively rescaled process $(X_{\lfloor nt\rfloor}/\sqrt{n})_{t\ge0}$ should converge, as $n\to\infty$, to a nontrivial limit. Establishing such a functional convergence, however, was left open (see \cite[Remark~1.7]{Elena23}). In fact, Kosygina et al.~\cite{Elena23} showed that, although BMPEs reproduce the marginal distributions predicted by the Ray--Knight theorem, they cannot serve as the scaling limit of the PSRW, thereby revealing the limitations of the Ray--Knight approach. Motivated by this, we propose in this work an explicit candidate for the scaling limit of the PSRW and provide evidence that it captures the correct limiting structure. In concurrent work, Kosygina et al.~\cite{Elena26} prove functional convergence of the diffusively rescaled PSRW to the PSRM constructed here. Together, the two works resolve the scaling limit problem for the PSRW.

As a side remark, we consider the case $\alpha=0$ in~\eqref{eq:weight_func}, referred to as \emph{asymptotically free} by T\'oth~\cite{Toth96}. We outline in Remark~\ref{rem:AF} how our joint Ray--Knight approach could extend to this case using the urn estimates in Appendix~\ref{appsub:AF}, and explain why the BMPE arises as the limiting process.

\bigskip
\bigskip

Our first result is a joint Ray--Knight theorem for PSRWs, stating that the rescaled upcrossing profiles converge in joint distribution to a family of limiting processes, each of which can be viewed as a generalized two-sided BESQ-type process; see Definition~\ref{def:RK_curve}. The counterexample in \cite[Section~3]{Elena23} demonstrates that a single Ray--Knight theorem is insufficient for functional convergence of the underlying walk, suggesting that a genuine understanding of the joint structure of the associated local time processes is necessary. A further motivation comes from the work of Kosygina and Peterson~\cite{Elena25}, who established the functional convergence of rescaled ``true'' self-avoiding walks (TSAW) to the ``true'' self-repelling motion (TSRM), introduced by T\'oth and Werner~\cite{Toth98}, by means of a joint generalized Ray--Knight theorem (GRKT). Here TSAW corresponds to the SIRW with exponential weight function $w(n)=e^{-\beta n}$, for $\beta>0$. This approach offers a blueprint for analyzing the functional convergence of other types of SIRWs.

\medskip

Fix $\alpha>0$ and $B\in \r$. Let $(X_n)_{n\ge0}$ be a PSRW corresponding to a non-increasing weight function satisfying~\eqref{eq:weight_func}. For each $k\in \z$ and $i\in\N$, let $\cE^k(i)$ denote the number of upcrossings of the directed bond $(k,k+1)$ by time $i$, that is,
\begin{align}\label{def_cE}
    \cE^k(i):=\sum_{j=0}^{i-1} \ind_{\{X_j=k,\; X_{j+1}=k+1\}}.
\end{align}
Set $T^{\cE}_{k,-1}=0$, and for $m\ge0$ define
\begin{align}\label{def_T_k_m}
    T^{\cE}_{k,m}:=\inf\{i>0 : \, \cE^k(i)>m\}
\end{align}
as the time of the $(m+1)$-st upcrossing of $(k,k+1)$. Then,  $\cE^k\bigl(T^{\cE}_{k,m}\bigr)=m+1$ for all $m\ge0$. For $(a,r)\in\h:=\r_+\times\r$ and $x\in\r$, define the rescaled upcrossing profile by
\begin{equation}\label{eq:def_S_N}
    S_{r,x}^N(a) := N^{-1}\,\cE^{\lfloor Nx\rfloor}\left(T^{\cE}_{\lfloor Nr\rfloor,\, \lfloor Na\rfloor}\right).
\end{equation}
Note that $S_{r,r}^N(a)=N^{-1}(\lfloor N a\rfloor+1)\to a$ as $N\to \infty$.

\medskip

We now turn to the continuum objects arising as limits of the above discrete quantities. Write $r^+=\max\{r,0\}$ and $r^-=\min\{r,0\}$. Let $\cW$ be a standard space--time white noise on $\h$ (see, e.g., \cite{Walsh1986AnIT}), and let $\cW^*$ denote its image under the map $(a,r)\mapsto(a,-r)$.

\begin{definition}\label{def:RK_curve}
    For each $(a,r)\in\h$, we define a process $(\cS_{r,x}(a),\, x\in\r)$, referred to as the \emph{Continuum Ray--Knight Curve (CRKC)}, as follows.
\begin{itemize}[itemsep=0pt, left=4pt]
    \item
    The forward branch $\bigl(\cS_{r,x}(a),\, x\ge r\bigr)$ is the almost sure unique strong solution to
    \begin{align}\label{SDE:for}
    \cS_{r,x}(a) = a + \int_r^x \frac{2}{\cS_{r,u}(a)^{\alpha}}  \int_{\r_+} s^\alpha \mathbbm{1}_{\{s\le \cS_{r,u}(a)\}} \cW(\dd s, \dd u) + \frac{x-r}{2\alpha+1},
    \end{align}
    which is reflected at $0$ and absorbed upon hitting $0$ after time $r^+$.
    
    \item
    The backward branch $\bigl(\cS_{r,-y}(a),\, y\ge -r\bigr)$ is the almost sure unique strong solution to
    \begin{equation}\label{SDE:back}
    \cS_{r,-y}(a) = a - \int_{-r}^{y} \frac{2}{\cS_{r,-u}(a)^{\alpha}} \int_{\r_+} s^{\alpha}\mathbbm{1}_{\{s\le \cS_{r,-u}(a)\}} \cW^{*}(\dd s,\dd u) + \frac{y+r}{2\alpha+1},
    \end{equation}
    which is reflected at $0$ and absorbed upon hitting $0$ after time $-r^-$.
\end{itemize}
\end{definition}

By verifying the conditions of \cite[Equation~(2.1)]{Dawson12}, the SDEs~\eqref{SDE:for} and~\eqref{SDE:back} admit unique strong solutions, and pathwise uniqueness holds. Let $\mathrm{BESQ}^{\delta}$ denote the squared Bessel process of dimension $\delta$ (see \cite[Definition~(1.1), Chapter~XI]{RevYor} and \cite[(28)]{GJY03}). Each CRKC $\cS_{r,\cdot}(a)$ may be viewed as the concatenation of two copies of $(2\alpha+1)^{-1}\mathrm{BESQ}^{1}$, evolving in the forward and backward directions.
The primary contribution of this paper is the following joint convergence result, which identifies the CRKCs as the limit of the discrete rescaled upcrossing profiles.

\begin{theorem}[Joint Ray--Knight]\label{thm:main}
    For any $p\in \mathbb{N}$ and $(a_i,r_i)\in \h $, $1\le i\le p$, the joint law of the processes $(2S_{r_i,x}^N(a_i/2),\, x\in \r)$, $1\le i\le p$, as defined in \eqref{eq:def_S_N}, converges as $N\to \infty$ to the joint law of CRKCs $(\cS_{r_i,x}(a_i),\, x\in \r)$, $1\le i\le p$.
\end{theorem}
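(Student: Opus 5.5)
We follow the blueprint of Kosygina and Peterson for the TSAW and represent all upcrossing profiles through a single family of \emph{edge urns}. For each site $k\in\z$, the successive departures from $k$ (right or left) form a generalized P\'olya urn: the step is to the right with probability $w(r)/(w(r)+w(\ell))$, where $r,\ell$ are the current crossing counts of the adjacent edges. Conditionally on these urns, which are independent across sites, the profile $x\mapsto\cE^x(T^{\cE}_{r,m})$ is a deterministic function of the urn outcomes. Going from $x$ to $x+1$ (for $x\ge r$), the number of upcrossings at $x+1$ equals the number of right-steps drawn from the urn at $x+1$ before the $\cE^x$-th left-step, up to a boundary correction of one. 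Going backward, the roles are reversed. We fix one probability space carrying i.i.d. urn sequences at every site. All profiles $S^N_{r_i,\cdot}(a_i/2)$ are then read off the \emph{same} urns, and the joint law is built in from the start. This is the discrete analogue of driving every CRKC with the same white noise $\cW$.

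\textbf{Step 1 (urn estimates).} For the urn at a single site, let $\xi(m)$ be the number of right-steps before the $m$-th left-step. Using \eqref{eq:weight_func} we show that $\xi(m)-m$ behaves like a martingale plus drift. The drift per left-step is $\approx \tfrac1{2(2\alpha+1)}$; it comes from the $B$-correction and from the stationary distribution of the difference $r-\ell$, which is tight because the urn is self-repelling. The quadratic variation is of the right order. More importantly, the fluctuations are \emph{coupled across levels}: the increment generated at level $s\le m$ carries weight $\propto s^\alpha/m^\alpha$. This is what produces the kernel $\tfrac{2}{\cS^\alpha}s^\alpha\ind_{\{s\le\cS\}}$ in \eqref{SDE:for}. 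Concretely, we prove that the rescaled two-parameter field
\[
M^N(s,u)=N^{-1/2}\sum_{k=\lfloor Nr\rfloor}^{\lfloor Nu\rfloor}\bigl(\text{centered increments of urn $k$ up to level } Ns\bigr)
\]
converges to the martingale measure $\int_0^u\!\int_0^{s}(\cdot)\,\cW(\dd s',\dd u')$ with the prescribed covariance. The tools are moment bounds on excursions of $r-\ell$ (as in T\'oth's analysis behind Theorem~1B) and a martingale functional CLT. The bounds must be uniform in the level and must also hold near $0$, so that reflection and absorption are handled correctly.

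\textbf{Step 2 (tightness and identification).} Tightness of each $2S^N_{r_i,\cdot}(a_i/2)$ in the Skorokhod topology follows from the single-curve theorem (Theorem~\ref{thm:1B_toth}) together with Kolmogorov-type moment bounds from Step~1; joint tightness is then automatic. For a subsequential limit we write each discrete profile as a semimartingale in $x$, driven by the common urn field:
\[
2S^N_{r_i,x}-a_i-\frac{x-r_i}{2\alpha+1}=\int_{r_i}^x\!\!\int \frac{2 s^\alpha}{(2S^N_{r_i,u})^\alpha}\ind_{\{s\le 2S^N_{r_i,u}\}}\,M^N(\dd s,\dd u)+o(1).
\]
We pass to the limit through the martingale problem: for all $i,j$, the limit must satisfy the prescribed joint quadratic covariations
\[
\langle \cS_{r_i},\cS_{r_j}\rangle_x=\int 4\,\frac{(\cS_{r_i}\wedge\cS_{r_j})^{2\alpha+1}}{(2\alpha+1)\,\cS_{r_i}^\alpha\cS_{r_j}^\alpha}\,\dd u.
\]
Representation theorems then yield a white noise $\cW$ driving all curves simultaneously. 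Forward and backward branches are handled separately, the backward branch with $\cW^*$ via the reversal of the urn reading. Finally, pathwise uniqueness of \eqref{SDE:for}--\eqref{SDE:back} as strong solutions (Dawson--Li) identifies the joint law. The same uniqueness applies to the coupled system, since every curve is a functional of the same $\cW$.

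\textbf{Main obstacle.} The hardest part is Step~1 near the boundary and the joint covariance. When two curves meet, or when one is near $0$, the urn increments at small levels matter. The indicator $\ind_{\{s\le \cS\}}$ is discontinuous, and the coefficient $\cS^{-\alpha}$ blows up as $\cS\to0$. To handle this, we first stop every curve when it falls below $\varepsilon$ and prove convergence of the stopped system. We then use absorption (after $r^\pm$) to show that the time spent near $0$ is negligible uniformly in $N$. Before $r^+$, the curve is reflected at $0$, and we compare with $\mathrm{BESQ}^1$ via Theorem~\ref{thm:1B_toth}. Finally we let $\varepsilon\to0$.
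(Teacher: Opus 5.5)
Your outline has the right overall architecture: common urns, field convergence with the level covariance $(s\wedge t)^{2\alpha+1}/(s^\alpha t^\alpha)$, tightness from Theorem~\ref{thm:1B_toth}, and identification by strong uniqueness. But the step that actually fixes the \emph{joint} law is missing, namely how forward and backward branches come to be driven by a single noise.

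For $p\ge2$, a forward branch of one curve and a backward branch of another occupy the same spatial interval. For example, take $r_1=0$ and $r_2=1$: on $[0,1]$ the first branch is driven by $\cW$ in the forward direction, the second by $-\cW^*$ in the reversed direction. No filtration makes both of them semimartingales, so a joint martingale problem cannot be posed. Treating the two directions ``separately'' and then invoking representation theorems produces two noises on enlarged spaces, and nothing forces one to be the reflection of the other. Your closing claim that every curve is a functional of the same $\cW$ is exactly what has to be proved. Two ingredients are needed.
\begin{itemize}
\item[(i)] The forward reading uses the blue-stopped discrepancies $\fD_{\tau_m^\fB}$, while the backward reading uses the red-stopped ones $\fD_{\tau_m^\fR}$. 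These are different functionals of the urn. You must show $\e[(\fD_{\tau_m^\fB}-\fD_{\tau_m^\fR})^2]=o(m)$; the paper uses the bound $C\sqrt m$ of Lemma~\ref{lm:4_7}. This gives $\Phi_N^\fB+\Phi_N^\fR\to$ a deterministic field, hence one white noise behind both limits (Theorem~\ref{thm:meas}, Lemma~\ref{lem:M_b_r}).
\item[(ii)] Each branch must be identified as the strong solution driven by the limit of the discrete field itself, jointly with that field on one probability space. The paper does this with Skorokhod representation and a dyadic approximation of $\int\cM^\fB((0,Y(u)],\dd u)$ (Proposition~\ref{prop:joint_for_RK}). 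A martingale problem does not suffice, since its representing noise bears no relation to the field limit.
\end{itemize}
Only with (i) and (ii) does strong uniqueness make every curve a functional of one $\cW$.

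Your treatment of the reflected stretches is also incomplete. These are the backward branch on $[0,r_0]$ for $r_0>0$ and the forward branch on $[r_0,0]$ for $r_0<0$. Comparing with $\mathrm{BESQ}^1$ via Theorem~\ref{thm:1B_toth} controls only the marginal law of the curve. It does not show that the SDE driven by the limit field holds across the zero set.

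The paper avoids identifying this piece directly:
\begin{itemize}
\item It uses discrete non-crossing (Lemma~\ref{lem:noncrossing}) to show that the limit on $[0,r_0]$ equals $\inf\{b:\cY^+_{x,r_0}(b)>a_0\}$, a functional of the already identified absorbed forward flow.
\item It then invokes self-duality of the PSR flow (Proposition~\ref{cor:self-dual}).
\item The part on $(-\infty,0]$ follows by sandwiching between backward branches started at $0$, using independence of $Y(0)$ from the field there and right-continuity (Corollary~\ref{cor:cov_b}).
\end{itemize}
A direct $\varepsilon$-truncation might be made to work. It would need urn variance bounds uniform down to level zero, plus control of the noise and drift accumulated while the curve is below $\varepsilon$, and you supply neither.

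Finally, your Step~1 heuristic is wrong for $\alpha>0$. The discrepancy is diffusive, $\operatorname{Var}(\fD_n)\sim n/(2\alpha+1)$, not tight; a tight discrepancy would give a deterministic limit. The limiting drift does not involve $B$. Accordingly, the field must be normalized by $N^{-1}$, not $N^{-1/2}$.
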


\bigskip
\medskip

The second part of the paper concerns the construction of the limiting process, the PSRM, following the strategy of~\cite{Toth98} for the TSRM. We give an informal overview of the construction and refer to Sections~\ref{sec:PSR_flow} and~\ref{sec:constr_PSRM} for the full and precise definitions. Recall the polynomially self-repelling flow with index $\alpha$ (PSR$^\alpha$ flow) from~\cite[Theorem~6.2]{Yu26}, where each flow line starting from $(a,r)\in\h$ is the unique strong solution to~\eqref{SDE:for}, subject to the same reflecting/absorbing rule as the forward branch in Definition~\ref{def:RK_curve}. A key structural feature of the PSR flow is its self-duality, established in~\cite[Theorem~6.6]{Yu26}. This property already appears in the discrete setting of coalescing simple random walks~\cite{Arratia} and was later studied for families of independent coalescing reflected/absorbed Brownian motions by T\'oth and Werner~\cite{Toth98}. Let $\cS$ denote the PSR flow and $\cS^*$ its dual flow defined by~\eqref{eq:def_dual_+}. By stitching together the forward flow lines of $\cS$ with the backward flow lines of $\cS^*$ at their common starting points, we obtain a family of CRKCs on $\h$, which defines the two-sided flow $\overline{\cS}$; see~\eqref{eq:cS_bar}.

Consider the mapping $\h\ni (a,r)\mapsto D(a,r)$, where $D(a,r)$ denotes the closed planar region bounded by the curve $\overline{\cS}_{r,\cdot}(a)$ and the $y$-axis. The continuity and non-crossing property of the curves in $\overline{\cS}$ imply that the family $\{D(a,r):(a,r)\in\h\}$ is totally ordered by inclusion. This ordering gives rise to an injective area functional $T(a,r):=|D(a,r)|$ on $\h$ whose range is dense in $\r_+$. The quantity $T(a,r)$ can be interpreted as an inverse local time, in the sense that it records the time at which the motion has accumulated local time $a$ at position $r$. The PSRM $X$ is then defined implicitly by $X_{T(a,r)}=r$ and extended to all $t\ge0$ by continuity.

\bigskip
\bigskip

A\"id\'ekon et al.~\cite{elie23} show that a stochastic process can be recovered as a measurable functional of its local time flow. Several recent works, including \cite{elie24}, \cite{ASW25}, and \cite{AWY24}, build links between stochastic processes and their associated stochastic flows, using this viewpoint to study diverse questions. In the same spirit, properties of the PSRM can be derived from the structure of the PSR flow from which it is constructed. We finish the presentation with a summary of several such properties established in the present work.

\medskip

The PSRM $X$ is symmetric in law and has continuous sample paths almost surely. It is also recurrent in the sense that it visits every spatial point infinitely often. It also satisfies a diffusive scaling invariance: for any $a>0$, $(X_{a t})_{t\ge 0}$ and $(a^{1/2}X_t)_{t\ge 0}$ have the same law. Moreover, the pair $(X_t, L_t(\cdot))_{t\ge 0}$ is Markovian. In other words, the future evolution of $X$ depends only on its current position $X_t$ and its local time profile $L_t(\cdot)$, reflecting the self-interacting nature of $X$. The process also admits local times in the following sense:
\vspace{-0.3em}
\begin{definition}\label{def:local_time}
    We say that a process $\cX$ admits local times if there exists a family $\{\cL_\cX(t,x),\, t\ge 0,\, x\in \r\}$ such that the occupation times formula
    \begin{align}\label{eq:oc_times_for}
        \int_0^t h(\cX_s)\dd s=\int_\r h(x)\cL_\cX(t,x)\dd x
    \end{align}
    holds for every nonnegative Borel function $h$ and all $t\ge 0$. We refer to $\cL_\cX(t,x)$ as the local time of $\cX$ at level $x$ and time $t$.
\end{definition}
\vspace{-0.3em}
\noindent Furthermore, the local times of $X$ admit a continuous version, denoted by $L_t(\cdot)$, which is given in terms of the PSR flow via~\eqref{def_T} and~\eqref{eq:def_Lt}. Precise statements and proofs of these properties can be found in Section~\ref{subsec:prop_PSRM}.

\medskip

We also make two further observations concerning the sample-path behavior of the PSRM; see Section~\ref{subsec:pro_more}. The PSRM has no points of increase: almost surely, there do not exist $t,\varepsilon>0$ such that $L(t,X_t)>0$, $X_s\le X_t$ for all $s\in(t-\varepsilon,t)$, and $X_u\ge X_t$ for all $u\in(t,t+\varepsilon)$. The absence of points of increase for Brownian motion has been proven from different perspectives (see, e.g., \cite{non_incre_2, non_incre_1, Yuv96}). Related questions have also been studied for the Brownian sheet \cite{DRM97} and for L\'evy processes \cite{doney96}. In addition, on every fixed open interval, the maximum of $X$ is attained at a unique time almost surely. Moreover, the set of times at which $X$ attains a local maximum is almost surely countable. The same statements hold for the minimum and local minima. We refer to \cite[Theorem~2.11]{BM10} for the Brownian motion case.

\paragraph{Notation} 
Let $\bD$ denote the set of dyadic numbers, i.e., numbers of the form $k2^{-n}$ for $k \in \z$ and $n \in \N$, and let $\bD_+$ be the set of positive dyadic numbers. Set $\r_+ := [0, \infty)$, $\r_- := (-\infty,0]$, and define the half-plane $\h := \r_+ \times \r$ and its interior $\h^* := (0,\infty)\times\r$. Let $\cD:=\bD^2\cap\h$ be the set of dyadic pairs in $\h$. We write $a\vee b:=\max\{a,b\}$ and $a\wedge b:=\min\{a,b\}$.

\paragraph{Structure of the paper}
The proof of Theorem~\ref{thm:main} is given in Section~\ref{sec:joint_RK}, based on a coupling between the rescaled upcrossing profile and generalized P\'{o}lya urns. In Section~\ref{sec:PSR_flow}, we introduce the PSR flow and its dual, and collect useful properties. The PSRM is then constructed from the PSR flow in Section~\ref{sec:constr_PSRM}, where we also investigate several of its properties. Section~\ref{sec:deco} contains the spatial Markov property and decomposition results, which clarify the structure of the PSR flow. In Section~\ref{sec:perfect}, we further develop the perfect flow property. Finally, Appendix~\ref{app:est} provides estimates on the coalescence time of two trajectories in the PSR flow, while Appendix~\ref{app:discrepancy} collects results on discrepancy urn processes. Appendix~\ref{app:tight} treats the tightness of the collection of rescaled upcrossing profiles.

\paragraph{Acknowledgments}
This paper, including all its novel ideas and arguments, was conceived and written by the author. AI tools were used only for language polishing and correcting typographical errors during the final stage of manuscript preparation. We are grateful to Elena Kosygina for introducing this problem to us. We also thank Laure Mar\^ech\'e, Thomas Mountford, and Jonathon Peterson for many valuable discussions. Finally, we thank Elie A\"id\'ekon for his continued support and encouragement.

\section{Joint Ray--Knight theorem}
\label{sec:joint_RK}
For the proof of Theorem~\ref{thm:main}, we use the generalized P\'{o}lya urn description of the increments of the numbers of upcrossings; see \cite[Section~4]{Elena23} and \cite[Section~4]{Toth96}.

Given positive sequences $\{b(i)\}_{i \ge 0}$ and $\{r(i)\}_{i \ge 0}$, the generalized P\'olya urn process $(\fB_n, \fR_n)_{n \ge 0}$ is a Markov chain started from $(0,0)$, with transition probabilities
\begin{align*}
	\P\bigl((\fB_{n+1}, \fR_{n+1})=(i+1,j) \mid (\fB_n, \fR_n)=(i,j)\bigr) &= \frac{b(i)}{b(i)+r(j)},\\
	\P\bigl((\fB_{n+1}, \fR_{n+1})=(i,j+1) \mid (\fB_n, \fR_n)=(i,j)\bigr) &= \frac{r(j)}{b(i)+r(j)}.
\end{align*}
Here $\fB_n$ and $\fR_n$ count the blue and red draws, respectively. An equivalent realization, known as Rubin's construction, can be found in~\cite{Davis90}. Define the discrepancy urn process $\fD=\{\fD_n\}_{n\ge 0}$ as the signed difference between the numbers of red and blue balls
$$\fD_{n}=\fR_{n}-\fB_{n}.$$
Set $\tau_0^\fB=\tau_0^\fR=0$ and, for $k\ge1$, define
\begin{align*}
	\tau_{k}^{\fB}:=\inf \left\{n>0: \fB_{n}=k \right\},\qquad \tau_{k}^{\fR}:=\inf \left\{n>0: \fR_{n}=k \right\}.
\end{align*}

At each site $\ell\in\z$, take an independent generalized P\'olya urn process
$(\fB_n^{(\ell)},\fR_n^{(\ell)})_{n\ge0}$ with weights
\begin{align}\label{weights_on_Z}
	(b^{(\ell)}(j), r^{(\ell)}(j)) =
	\begin{cases}
		(w(2j), w(2j+1)), & \ell < 0,\\
		(w(2j+1), w(2j)), & \ell > 0,\\
		(w(2j), w(2j)), & \ell = 0,
	\end{cases}
\end{align}
where $w$ satisfies~\eqref{eq:weight_func}, with $\alpha>0$ and $B\in\r$.
Write $\fD_n^{(\ell)}:=\fR_n^{(\ell)}-\fB_n^{(\ell)}$. Starting from the origin, construct the walk by using the next draw from the urn at its current site for each departure: a blue draw gives a left jump, and a red draw gives a right jump. These weights reproduce the transition probabilities of the PSRW, so this construction realizes the walk and all its upcrossing profiles on a common probability space.
When no ambiguity arises, write $T_{k,m}:=T^\cE_{k,m}$, as defined in~\eqref{def_T_k_m}. We now express the increments of the two Markov chains
$(\cE^x(T_{k,m}),\,x\ge k)$ and $(\cE^x(T_{k,m}),\,x\le k)$ in terms of the urn processes. Both chains start from $\cE^k(T_{k,m})=m+1$.

\medskip

Suppose first that $k\ge0$. By time $T_{k,m}$, the walk has crossed $(k,k+1)$ to the right $m+1$ times and to the left $m$ times. Thus the urn at $k+1$ has been used through its $m$-th blue draw. Each red draw at that site contributes one upcrossing of $(k+1,k+2)$, giving $\cE^{k+1}(T_{k,m})=\fR^{(k+1)}_{\tau_m^\fB}$. Subtracting $\cE^k(T_{k,m})=m+1$ yields $\fD^{(k+1)}_{\tau_m^\fB}-1$.
If $k<0$, the walk starts and ends on the right side of the bond $(k,k+1)$, so its left and right crossing counts at $T_{k,m}$ are both $m+1$. The urn at $k+1$ has therefore been used through its $(m+1)$-st blue draw, giving $\cE^{k+1}(T_{k,m})=\fR^{(k+1)}_{\tau_{m+1}^\fB}$. The increment is $\fD^{(k+1)}_{\tau_{m+1}^\fB}$. By the same reasoning, one can also derive the backward increment. For all $k\in\z$ and $m\ge0$,
\begin{align}
   \cE^{k+1}(T_{k,m})-\cE^k(T_{k,m}) &\stackrel{\text{Law}}{=} 
   \begin{cases}
      \fD^{(k+1)}_{\tau_{m}^\fB}-1, & k \ge 0, \\[2mm]
      \fD^{(k+1)}_{\tau_{m+1}^\fB},  & k < 0,
   \end{cases}\label{eq:for_dis} \\
   \cE^{k-1}(T_{k,m})-\cE^k(T_{k,m}) &\stackrel{\text{Law}}{=} 
   \begin{cases}
      -\fD^{(k)}_{\tau_{m+1}^\fR}+1, & k > 0, \\[2mm]
      -\fD^{(k)}_{\tau_{m+1}^\fR},  & k \le 0.
   \end{cases}\label{eq:back_dis}
\end{align}

\noindent
For $x>k$, conditionally on $\{\cE^x(T_{k,m})=y\}$, the urn at $x+1$ has been used through its $y$-th blue draw when $x\ge0$, and through its $(y+1)$-st blue draw when $x<0$. Thus,
\begin{align}
	\cE^{x+1}(T_{k,m})-\cE^x(T_{k,m})
	\stackrel{\mathrm{Law}}{=}
	\begin{cases}
		\fD^{(x+1)}_{\tau_y^\fB}, & x\ge0, \\[2mm]
		\fD^{(x+1)}_{\tau_{y+1}^\fB}+1, & x<0.
	\end{cases}\label{eq:for_po}
\end{align}

\noindent
For $x<k$, conditionally on $\{\cE^x(T_{k,m})=y\}$, the urn at $x$ has been used through its $y$-th red draw. Thus,
\begin{align}
	\cE^{x-1}(T_{k,m})-\cE^x(T_{k,m})
	\stackrel{\mathrm{Law}}{=}
	\begin{cases}
		-\fD^{(x)}_{\tau_y^\fR}+1, & x>0, \\[2mm]
		-\fD^{(x)}_{\tau_y^\fR}, & x\le0.
	\end{cases}\label{eq:back_po}
\end{align}

In the remainder of this section, we first establish the joint convergence of the associated driving fields using the urn estimates in Appendix~\ref{appsub:PSR}, and then deduce the joint Ray--Knight theorem. Tightness of the collection of rescaled upcrossing profiles is treated separately in Appendix~\ref{app:tight}.

\subsection{Convergence of the driving fields}\label{sec:mart_meas}

For the urn at site $\ell$, we abbreviate $\fD^{(\ell)}_{\tau_{m}^{*^{(\ell)}}}$ as $\fD^{(\ell)}_{\tau_{m}^{*}}$ for $* \in \{\fB,\fR\}$. Define
\begin{align}\label{def_eta}
	\eta_{m,\ell}^*:=\fD^{(\ell)}_{\tau_m^*}-\fD^{(\ell)}_{\tau_{m-1}^*},	\qquad m\ge1,\quad \ell\in\z.
\end{align}
Let $\mathcal A$ denote the collection of all rectangles of the form $(0,t]\times(c,d]$, where $t\in\bD_+$ and $c,d\in\bD$ with $c<d$. For $A=(0,t]\times(c,d]\in\mathcal A$, write $A^{(2)}=(c,d]$ and $NA=\{Nz:z\in A\}$. Define the cumulative random fields
\begin{align}
	\Phi_N^\fB(A)&:=\frac{1}{\sqrt{2}N}\sum_{(m,\ell)\in N A}\eta_{m,\ell}^\fB+ \frac{1}{\sqrt{2}N} \sum_{\substack{\ell \in NA^{(2)},\, \ell \le 0}} 1,\label{def_Phi_B}\\
	\Phi_N^\fR(A)&:= - \frac{1}{\sqrt{2}N}\sum_{(m,\ell)\in N A}\eta_{m,\ell}^\fR+ \frac{1}{\sqrt{2}N} \sum_{\substack{\ell \in NA^{(2)},\, \ell> 0}} 1.\label{def_Phi_R}
\end{align}

We regard $(\Phi_N^\fB,\Phi_N^\fR)$ as a random element of $\r^{\mathcal A}\times\r^{\mathcal A}$, equipped with the product topology. The following theorem identifies its joint limit.

\begin{theorem}\label{thm:meas}
	As $N\to\infty$,
	$$
	(\Phi_N^\fB,\Phi_N^\fR) \ \mathop{\Longrightarrow}^{(d)\ } \ (\cM^\fB(A),\cM^\fR(A))_{A\in \mathcal{A}}.
	$$
	The limiting fields are given, for $t>0$, by
	\begin{align}
		\cM^\fB((0,t],\dd x)&=\frac{\dd x}{2\sqrt{2}(2\alpha+1)}+\frac{1}{t^\alpha}\int_0^t u^\alpha \cW(\dd u, \dd x),\label{eq:mart_meas_B}\\
		\cM^\fR((0,t],\dd x)&=\frac{\dd x}{2\sqrt{2}(2\alpha+1)}-\frac{1}{t^\alpha}\int_0^t u^\alpha \cW(\dd u, \dd x), \label{eq:mart_meas_R}
	\end{align}
	where $\cW$ is a common standard space-time white noise on $\h$.
\end{theorem}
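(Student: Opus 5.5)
Since the product topology on $\r^{\mathcal A}\times\r^{\mathcal A}$ is used, the plan is to prove convergence of finite-dimensional distributions. Fix finitely many rectangles $A_1,\dots,A_q\in\mathcal A$. By additivity in the spatial variable we may refine them so that the spatial intervals $A_i^{(2)}$ are either disjoint or equal. Distinct sites $\ell$ carry independent urns, so the sums over disjoint spatial intervals are independent. This reduces the problem to a single spatial strip $(c,d]$ and finitely many times $t_1<\dots<t_k$. For that strip, $\Phi_N^\fB$ and $\Phi_N^\fR$ are normalized sums, over the roughly $N(d-c)$ sites in $(Nc,Nd]$, of i.i.d. vectors. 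Within each of the half-lines $\ell<0$ and $\ell>0$ the urn law is the same; the single site $\ell=0$ is negligible after the $1/N$ scaling.

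The one-site vector is
\[
V_N^{(\ell)}=\Bigl(N^{-1/2}\,\fD^{(\ell)}_{\tau^\fB_{\lfloor Nt_j\rfloor}},\ N^{-1/2}\,\fD^{(\ell)}_{\tau^\fR_{\lfloor Nt_j\rfloor}}\Bigr)_{j\le k},
\]
and the per-site quantity entering $\Phi_N^\fB(A)$ is $\sum_{m\le Nt}\eta^\fB_{m,\ell}=\fD^{(\ell)}_{\tau^\fB_{\lfloor Nt\rfloor}}$ (telescoping). The sum over sites carries the prefactor $\frac{1}{\sqrt2 N}=\frac{1}{\sqrt2\sqrt N}\cdot N^{-1/2}$. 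We therefore need a Lindeberg--Feller CLT for $N^{-1/2}\sum_\ell\bigl(V_N^{(\ell)}-\e V_N^{(\ell)}\bigr)$, together with the asymptotics of $\e V_N^{(\ell)}$. This requires three inputs from the urn estimates of Appendix~\ref{appsub:PSR}.

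\textbf{(i) Mean.} The mean asymptotics should be
\[
\e\,\fD^{(\ell)}_{\tau^\fB_m}=-\tfrac12\ind_{\{\ell\le0\}}+\tfrac{1}{2(2\alpha+1)}+o(1),
\]
and similarly for $\fR$ with the complementary indicator and the sign conventions of \eqref{def_Phi_R}. The indicator compensation in \eqref{def_Phi_B}--\eqref{def_Phi_R} is designed to produce exactly the drift $\frac{\dd x}{2\sqrt2(2\alpha+1)}$. I would obtain this from the stationary behaviour of the discrepancy chain, which is a positive recurrent Markov chain with drift toward $0$ of order $\alpha\fD/n$.

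\textbf{(ii) Covariance.} We need a one-site covariance of the form
\[
N^{-1}\mathrm{Cov}\bigl(\fD_{\tau^\fB_{Nt}},\fD_{\tau^\fB_{Ns}}\bigr)\to 2\,\frac{(s\wedge t)^{2\alpha+1}}{(2\alpha+1)\,s^\alpha t^\alpha},
\]
with the same value for $\fR$--$\fR$ and the negative of it for $\fB$--$\fR$. The $2$ cancels the $\frac1{\sqrt2}$ squared, matching $\frac{1}{s^\alpha t^\alpha}\int_0^{s\wedge t}u^{2\alpha}\dd u$. This covariance identifies the limit as a Gaussian field driven by one white noise with $\cM^\fR-\text{drift}=-(\cM^\fB-\text{drift})$.

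\textbf{(iii) Tail bound.} A fourth-moment (or $2+\delta$) bound $\e|\fD_{\tau_m}|^4\le C m^2$ will give the Lindeberg condition.

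Heuristically, $\fD_{\tau_m^\fB}$ behaves like a martingale in $m$ with increments $\eta_m$. It is damped by a restoring force, since by \eqref{eq:weight_func} the drift of $\fD$ is $\approx -\alpha\fD/m$. Writing $\fD_{\tau_m}\approx m^{-\alpha}\sum_{j\le m} j^\alpha\xi_j$ with asymptotically uncorrelated $\xi_j$ of variance $2$ explains the $u^\alpha/t^\alpha$ kernel.

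The main obstacle is (ii), and specifically the exact anti-correlation between the blue and red functionals. The times $\tau^\fB_m$ and $\tau^\fR_m$ are close, since $\fR-\fB=O(\sqrt m)$. Hence $\fD_{\tau^\fR_m}-\fD_{\tau^\fB_m}$ should be $o(\sqrt m)$ in $L^2$, and I would prove this by bounding the increment of $\fD$ over a time window of length $O(\sqrt m)$, which is $O(m^{1/4})$. The autocorrelation $(s/t)^\alpha$ would come from the martingale decomposition $\fD_n=M_n-\sum_{j<n}\frac{\alpha\fD_j}{j}+\dots$, solved as a discrete linear SDE via Duhamel. For this step I would lean directly on the estimates of Appendix~\ref{appsub:PSR}, if they are stated there.

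Finally, Cramér--Wold applied to the finite collection, combined with the independence across sites and strips, yields joint Gaussian convergence. The limiting covariances match those of the right-hand sides of \eqref{eq:mart_meas_B}--\eqref{eq:mart_meas_R} with a common $\cW$, which concludes the proof.
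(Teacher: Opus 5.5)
Your plan is essentially the paper's proof. It reduces to finitely many dyadic strips, uses independence of the urns across sites, and takes means and covariances from Propositions~\ref{prop:exp_discre} and~\ref{prop:cov}. It obtains the Lindeberg condition from the fourth-moment bound implied by Lemma~\ref{lm:4_1}. It ties $\Phi_N^\fR$ to $\Phi_N^\fB$ through $\e[(\fD_{\tau_t^\fB}-\fD_{\tau_t^\fR})^2]\le C\sqrt t$, which follows from Lemma~\ref{lm:4_7}. The only difference is that the paper uses this last bound to show $\operatorname{Var}(\Phi_N^\fB+\Phi_N^\fR)\to0$, rather than running a joint vector CLT.

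Two slips need fixing. First, for $\ell<0$ the correct limit is $\e\,\fD^-_{\tau_m^\fB}\to-1+\frac{1}{2(2\alpha+1)}$; your $-\frac12+\frac{1}{2(2\alpha+1)}$ is the $\ell=0$ value. It is this $-1$ that the $+1$ per site in \eqref{def_Phi_B} cancels. Second, the raw cross-covariance of $\fD_{\tau^\fB}$ and $\fD_{\tau^\fR}$ has the same positive sign as the blue--blue one, as your own $L^2$-closeness argument shows. The minus sign enters only through \eqref{def_Phi_R}.
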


\begin{proof}
	We first prove the convergence of $\Phi_N^\fB$. Lemma~\ref{lm:4_1} implies that the contribution of the urn at the origin tends to zero in $L^2$. Away from the origin,  $\{\eta_{\cdot,\ell}^\fB\}_{\ell>0}$ and $\{\eta_{\cdot,\ell}^\fB\}_{\ell<0}$ are sequences of i.i.d. random variables. Hence, it suffices to prove, for each $n\ge1$, the joint convergence in distribution of
	$$
	\left(\Phi_N^\fB(A),\, A = (0,k 2^{-n}]\times (l 2^{-n}, (l+1) 2^{-n}]\right)_{k\ge 1, l\in \{-2,-1,0\}}.
	$$ 

	\noindent
	Let $0<k_1<k_2<\ldots<k_p$ be multiples of $2^{-n}$, and define $A_{i,l}=(0,k_i]\times (l 2^{-n},(l+1)2^{-n}]$. For any $c_{i,l}\in\r$, let
	$$
	X_N:=\sum_{l=-2}^0 \sum_{i=1}^p  c_{i,l} \Phi_N^\fB(A_{i,l}).
	$$
	It is therefore enough to prove that $X_N$ converges in distribution. For any $A_{i,l}$, we have
	$$
	\Phi_N^\fB(A_{i,l}) = \frac{1}{\sqrt{2}N}\sum_{\ell \in NA_{i,l}^{(2)}} \fD_{\tau_{N_i}^\fB}^{(\ell)} + \frac{\lfloor N(l+1)2^{-n} \rfloor - \lfloor N l 2^{-n} \rfloor}{\sqrt{2}N}\mathbbm{1}_{\{l<0\}} ,
	$$
	where $N_i=\lfloor N k_i\rfloor$. By Propositions~\ref{prop:exp_discre} and~\ref{prop:cov},
	\begin{align*}
	    \lim_{N\to\infty}  \e[X_N]=& 2^{-n}\frac{1}{2\sqrt{2}(2\alpha+1)}\sum_{l=-2}^0 \sum_{i=1}^p  c_{i,l},\\
	    \lim_{N\to\infty} \operatorname{Var}(X_N)=
	    2^{-n}& \frac{1}{2\alpha+1}\sum_{l=-2}^0\sum_{i,j=1}^p c_{i,l} c_{j,l} \frac{(k_i\wedge k_j)^{2\alpha+1}}{ k_i^{\alpha}k_j^\alpha}.
	\end{align*}
	We recognize the expectation and variance of $\sum_{l=-2}^0 \sum_{i=1}^p  c_{i,l} \cM^\fB(A_{i,l})$.
	
	\medskip 
	
	We next show that $X_N$ converges to a Gaussian distribution. By independence, it suffices to prove this for $\sum_{i=1}^p c_{i,l}\Phi_N^\fB(A_{i,l})$ for each fixed $l$. We give the argument for $l=0$. Set $c_i:=c_{i,0}$ and $a_N:=\lfloor N2^{-n}\rfloor$. Then
	$$
	\sum_{i=1}^p c_i\left(\Phi_N^\fB(A_{i,0})-\e[\Phi_N^\fB(A_{i,0})]\right)=\sum_{\ell=1}^{a_N}Y_{N,\ell},
	$$
	where
	$$
	Y_{N,\ell}:=\frac{1}{\sqrt{2}N}\sum_{i=1}^p c_i\left(\fD_{\tau_{N_i}^\fB}^{(\ell)}-\e\left[\fD_{\tau_{N_i}^\fB}^{(\ell)}\right]\right).
	$$
	
	\noindent 
	For each $N$, the variables $\{Y_{N,\ell}\}_{\ell=1}^{a_N}$ are i.i.d. with mean zero, forming a triangular array. 
	By Lemma~\ref{lm:4_1}, there exist constants $C,c>0$ such that, for any $t=N_i>0$,
	$$
	\P\left(\bigl|\fD_{\tau_t^\fB}^{(1)}\bigr|\ge x\right)\le C e^{\frac{-c x^2}{x\vee t}}, \qquad \forall x\ge 1.
	$$
	Integrating the tail bound and splitting at $x=t$, we obtain
	\begin{align*}
		\e\left[\left|\fD_{\tau_t^\fB}^{(1)}\right|^4\right]
		&=4\int_0^\infty x^3\,\P\left(\left|\fD_{\tau_t^\fB}^{(1)}\right|\ge x\right)\dd x\\
		&\le 1+4C\left(\int_0^t x^3e^{-cx^2/t}\dd x+\int_t^\infty x^3e^{-cx}\dd x\right)\\
		&\le 1+\frac{2C}{c^2}t^2+\frac{24C}{c^4}\le C_1(1+t^2),
	\end{align*}
	for some constant $C_1>0$.
	Since $p$, $c_i$ and $k_i$ are fixed, there exists a constant $C_2>0$, independent of $N$, such that
	$$
	\e[Y_{N,1}^4]\le C_2N^{-2}.
	$$
	Thus, for every $\varepsilon>0$,
	$$
	\sum_{\ell=1}^{a_N}\e\left[Y_{N,\ell}^2\mathbbm{1}_{\{|Y_{N,\ell}|>\varepsilon\}}\right]\le\varepsilon^{-2}a_N\e[Y_{N,1}^4]\longrightarrow0.
	$$
	The Lindeberg central limit theorem therefore gives a Gaussian limit for the $l=0$ sum. The other cases are analogous, and independence yields the Gaussian limit of $X_N$.
	
	\medskip
	
	It remains to verify that the limiting distribution of $\Phi_N^\fR$ is given by~\eqref{eq:mart_meas_R} and is driven by the same white noise $\cW$ as in the limit of $\Phi_N^\fB$. To this end, it suffices to prove
	\begin{align}\label{eq:exp_Delta}
	X_N^\Delta:=\sum_{l=-2}^0 \sum_{i=1}^p  c_{i,l} \left(\Phi_N^\fB(A_{i,l})+\Phi_N^\fR(A_{i,l})\right) \, \xrightarrow[N\to\infty]{(d)}\,
	\frac{2^{-n}}{\sqrt{2}(2\alpha+1)}\sum_{l=-2}^0 \sum_{i=1}^p  c_{i,l},
	\end{align}
	where $A_{i,l}$ and $c_{i,l}$ are as defined before. Since
	$$
	\Phi_N^\fB(A_{i,l}) +\Phi_N^\fR(A_{i,l}) = \frac{1}{\sqrt{2}N}\sum_{\ell\in NA_{i,l}^{(2)}} \left(\fD_{\tau_{N_i}^\fB}^{(\ell)} - \fD_{\tau_{N_i}^\fR}^{(\ell)}\right) + \frac{\lfloor N(l+1)2^{-n} \rfloor - \lfloor N l 2^{-n} \rfloor}{\sqrt{2}N},
	$$
	by Proposition~\ref{prop:exp_discre}, the expectation of $X_N^\Delta$ converges to the limit appearing in~\eqref{eq:exp_Delta}. Moreover, the variance of $X_N^\Delta$ is
	$$
	\frac{1}{2N^2}\sum_{l=-2}^0  \sum_{i,j=1}^p c_{i,l} c_{j,l} \sum_{\ell \in NA_{i,l}^{(2)}} \operatorname{Cov}\left(\fD_{\tau_{N_i}^\fB}^{(\ell)} - \fD_{\tau_{N_i}^\fR}^{(\ell)}, \fD_{\tau_{N_j}^\fB}^{(\ell)} - \fD_{\tau_{N_j}^\fR}^{(\ell)}\right).
	$$
	By Lemma~\ref{lm:4_7}, there exists a constant $C > 0$ such that, for any $t=N_i$,
	\begin{align*}
	    \e\left[\left( \fD_{\tau_{t}^\fB}^{(\ell)} - \fD_{\tau_{t}^\fR}^{(\ell)} \right)^2 \right]\le 2\e\left[\left( \fD_{\tau_{t}^\fB}^{(\ell)} - \fD_{2t}^{(\ell)} \right)^2 \right]+2\e\left[\left( \fD_{\tau_{t}^\fR}^{(\ell)} - \fD_{2t}^{(\ell)} \right)^2 \right]
	    \le C\sqrt{t}.
	\end{align*}
	The Cauchy--Schwarz inequality implies that each covariance in the preceding sum is $O(\sqrt N)$. Thus, the variance of $X_N^\Delta$ converges to zero, which implies~\eqref{eq:exp_Delta}.
\end{proof}

We conclude this subsection by reconstructing the common driving white noise from the limiting fields.

\begin{lemma}\label{lem:M_b_r}
	Let $(\tilde{\cM}^\fB,\tilde{\cM}^\fR)$ be a pair of random fields indexed by $\mathcal A$ with the same joint law as $(\cM^\fB,\cM^\fR)$ in Theorem~\ref{thm:meas}. Then there exists a common standard space-time white noise $\tilde{\cW}$ on $\h$ such that the fields admit extensions satisfying~\eqref{eq:mart_meas_B} and~\eqref{eq:mart_meas_R}, respectively, for every $t>0$, with $\cW$ replaced by $\tilde{\cW}$.
\end{lemma}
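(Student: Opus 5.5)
Our plan is to reconstruct the white noise from the difference of the two fields and then check that it reproduces both fields. Define, for $A=(0,t]\times(c,d]\in\mathcal A$,
\[
\Psi(A):=\tfrac12\bigl(\tilde\cM^\fB(A)-\tilde\cM^\fR(A)\bigr).
\]
By the equality in joint law with $(\cM^\fB,\cM^\fR)$, the field $\Psi$ is centered and Gaussian. Its covariance is that of $M(t,(c,d]):=t^{-\alpha}\int_0^t\int_c^d u^\alpha\,\cW(\dd u,\dd x)$. Moreover, $\tilde\cM^\fB+\tilde\cM^\fR$ is almost surely equal to the deterministic field $A\mapsto \frac{t(d-c)}{\sqrt2(2\alpha+1)}$, since this identity holds for $(\cM^\fB,\cM^\fR)$ and is a countable family of a.s. identities. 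Here we read the density $\dd x/(2\sqrt2(2\alpha+1))$ as integrated over $(0,t]$ as well, matching the mean computed in the proof of Theorem~\ref{thm:meas}; the constants are as in the statement. Consequently it suffices to produce a white noise $\tilde\cW$ with
\[
\Psi((0,t]\times(c,d])=t^{-\alpha}\int_0^t\int_c^d u^\alpha\,\tilde\cW(\dd u,\dd x)\quad\text{a.s., for all } t\in\bD_+,\ c<d\in\bD .
\]
Both \eqref{eq:mart_meas_B} and \eqref{eq:mart_meas_R} then follow by adding and subtracting this relation to the deterministic sum.

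To invert the time weighting, set $Z(t,(c,d]):=t^\alpha\Psi((0,t]\times(c,d])$. Then $Z$ has the law of the field $\int_0^t\int_c^d u^\alpha\,\cW$. This field is additive in the spatial interval, its increments in $t$ are independent, and it has covariance $\frac{(s\wedge t)^{2\alpha+1}}{2\alpha+1}\,|I\cap J|$. All of these are properties of the finite-dimensional law, so they hold for $Z$ on dyadic indices. We then define
\[
\tilde\cW\bigl((s,t]\times(c,d]\bigr):=\int_s^t u^{-\alpha}\,Z(\dd u,(c,d]).
\]
To make sense of this, first use the covariance structure to extend $Z(\cdot,I)$ from dyadic $t$ to a continuous martingale in $t$ with quadratic variation $\frac{t^{2\alpha+1}|I|}{2\alpha+1}$, for instance via Kolmogorov's criterion applied to Gaussian increments. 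Because the deterministic integrand $u^{-\alpha}$ is square integrable against $\dd(u^{2\alpha+1})$, the integral above is a Wiener integral, and the resulting Gaussian field has covariance $|(s,t]\cap(s',t']|\,|I\cap J|$ on rectangles. Disjoint rectangles are uncorrelated, so finite additivity together with this covariance extends $\tilde\cW$ by the standard $L^2$-isometry to a white noise on $\h$.

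Finally, we check the reconstruction. Since $\tilde\cW$ was built from $Z$ by a Wiener integral, $\int_0^t\int_I u^\alpha\,\tilde\cW=Z(t,I)$ a.s., which is the same as integrating $u^\alpha u^{-\alpha}$ against $Z$. Dividing by $t^\alpha$ gives the displayed identity on the countable family $\mathcal A$. For general $t>0$ we use the extension of $\Psi$ by the right-hand side, which is consistent by continuity in $t$; this extension is the meaning of ``admit extensions'' in the statement. We expect the main technical point to be the extension of $Z$ from dyadic times to a continuous-time object on which the Wiener integral can be defined, together with the behavior near $u=0$, where $u^{-\alpha}$ blows up. The variance computation $\int_0^t u^{-2\alpha}\,\dd(u^{2\alpha+1})<\infty$ shows that this singularity is harmless, but it has to be checked explicitly.
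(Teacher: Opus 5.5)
Your proposal is correct and is essentially the paper's proof. Since $\tilde\cM^\fB+\tilde\cM^\fR$ is a.s.\ deterministic, your $Z(t,I)=t^\alpha\Psi((0,t]\times I)$ coincides a.s.\ with the paper's elementary orthogonal stochastic measure $\tilde M_0$, which is built from $\tilde\cM^\fB$ alone. Your $\tilde\cW=\int u^{-\alpha}\,Z(\dd u,\cdot)$ is the paper's $\int s^{-\alpha}\,\tilde M$, and your Kolmogorov/Wiener-integral extension plays the role of the Gikhman--Skorokhod extension of an orthogonal measure with structural measure $s^{2\alpha}\,\dd s\,\dd u$.

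One correction, which does not affect your argument: the drift is independent of $t$. The limiting mean in the proof of Theorem~\ref{thm:meas} is $2^{-n}\frac{1}{2\sqrt2(2\alpha+1)}\sum c_{i,l}$, with no $k_i$. This is because $\Phi_N^\fB((0,t]\times(c,d])$ equals $(\sqrt2N)^{-1}\sum_\ell\fD^{(\ell)}_{\tau^\fB_{\lfloor Nt\rfloor}}$ plus a $t$-free term, and these discrepancies have bounded means. So the a.s.\ deterministic sum is $\frac{d-c}{\sqrt2(2\alpha+1)}$, not $\frac{t(d-c)}{\sqrt2(2\alpha+1)}$.
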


\begin{proof}
	Let $\mathcal R$ denote the semi-ring of rectangles $(t_1,t_2]\times(x,y]$ with dyadic endpoints, $0\le t_1<t_2$ and $x<y$. For $R=(t_1,t_2]\times(x,y]\in\mathcal R$, define
	\begin{equation*}
		\tilde{M}_0(R) := t_2^\alpha \tilde{\cM}^\fB((0,t_2]\times(x,y]) - t_1^\alpha \tilde{\cM}^\fB((0,t_1]\times(x,y]) - \frac{(t_2^\alpha-t_1^\alpha)(y-x)}{2\sqrt{2}(2\alpha+1)},
	\end{equation*}
	where the second term is taken to be zero when $t_1=0$.
	Since $\tilde{\cM}^\fB\stackrel{\mathrm{(d)}}{=}\cM^\fB$ on $\mathcal A$, the finite-dimensional distributions of $\{\tilde M_0(R):R\in\mathcal R\}$ coincide with those of
	$$
	\left\{
	\int_x^y\!\int_{t_1}^{t_2}s^\alpha\cW(\dd s,\dd u)
	:\, (t_1,t_2]\times(x,y]\in\mathcal R
	\right\}.
	$$
	
	Define a set function $m$ on $\mathcal R$ by $m((t_1,t_2]\times(x,y]):=\frac{1}{2\alpha+1}(y-x)(t_2^{2\alpha+1}-t_1^{2\alpha+1})$. Then $\tilde M_0$ is an elementary orthogonal stochastic measure with structural measure $m$; see \cite[p.~231]{GS04}. The measure $m$ extends to $\mathcal B(\h)$ as $m(\dd s,\dd u)=s^{2\alpha}\dd s\,\dd u$, with zero mass on $\{0\}\times\r$. By \cite[Chapter IV, Section 4, Theorem 2]{GS04}, $\tilde M_0$ admits an orthogonal extension $\tilde M$ such that $\e[\tilde M(R_1)\tilde M(R_2)]=m(R_1\cap R_2)$ for all $R_1,R_2\in\mathcal B(\h)$ with $m(R_i)<\infty$, $i=1,2$. This extension remains jointly Gaussian, since it is obtained by $L^2$ limits. For $t>0$, we extend $\tilde{\cM}^\fB$ by
	$$
	\tilde{\cM}^\fB((0,t]\times(x,y]):=t^{-\alpha}\tilde M((0,t]\times(x,y])+\frac{y-x}{2\sqrt{2}(2\alpha+1)}.
	$$

	Using the stochastic integral construction of~\cite[Chapter~2]{Walsh1986AnIT}, define
	\begin{equation*}
		\tilde{\cW}_{x,y}(A):=\int_x^y\!\int_A s^{-\alpha}\tilde M(\dd s,\dd u), \qquad A\in\mathcal B(\r_+),\quad \operatorname{Leb}(A)<\infty,
	\end{equation*}
	with the integrand set to zero at $s=0$. A direct computation of its covariance, together with joint Gaussianity, shows that $\tilde\cW$ is a standard white noise on $\h$. Consequently, $\tilde{\cM}^\fB((0,t],\dd x)$ admits the representation~\eqref{eq:mart_meas_B} driven by $\tilde{\cW}$. By the assumed joint law,
	$$
	\tilde{\cM}^\fR((0,t]\times(x,y])
	=\frac{y-x}{\sqrt{2}(2\alpha+1)}-\tilde{\cM}^\fB((0,t]\times(x,y])
	$$
	for all $(0,t]\times(x,y]\in\mathcal A$, almost surely. This identity gives~\eqref{eq:mart_meas_R} with the same $\tilde{\cW}$.
\end{proof}

\subsection{Weak Convergence to the SDE}

We now prove Theorem~\ref{thm:main} using the fields $\cM^\fB$ and $\cM^\fR$ defined in~\eqref{eq:mart_meas_B} and~\eqref{eq:mart_meas_R}, driven by the same white noise $\cW$. For each $(a,r)\in\h$, let $\cS_{r,\cdot}(a)$ denote the two-sided process governed, up to its respective killing times, by
\begin{align}
	\cS_{r,x}(a) =&\; a+\sqrt{2}\int_r^x \cM^\fB([0,\cS_{r,u}(a)],\dd u), &&\ r\le x\le\omega_{a,r}^+, \label{eq:poly_GRKT}\\
	\cS_{r,-x}(a)=&\; a+\sqrt{2}\int_{-r}^x \cM^{\fR,*}([0,\cS_{r,-u}(a)],\dd u), &&\ -r\le x\le-\omega_{a,r}^-, \label{eq:poly_GRKT_back}
\end{align}
where $\cM^{\fR,*}$ is defined as in~\eqref{eq:mart_meas_R}, with $\cW$ replaced by $\cW^*$, and
\begin{align*}
	\omega_{a,r}^{-}:=\sup\{x\le r\wedge0:\,\cS_{r,x}(a)=0\},\qquad
	\omega_{a,r}^{+}:=\inf\{x\ge r\vee0:\,\cS_{r,x}(a)=0\}.
\end{align*}
The branches are reflected at zero until their respective killing times and set to zero thereafter. In these equations, the field integrals are understood via~\eqref{eq:mart_meas_B} and~\eqref{eq:mart_meas_R}, with the white noise integrands set to zero at height zero.

Throughout this section, $\cS$ denotes the process defined above. The family $\{2\cS_{r,\cdot}(a/2):(a,r)\in\h\}$ has the same joint law as the CRKCs in Definition~\ref{def:RK_curve}. Thus, to prove Theorem~\ref{thm:main}, it suffices to establish, for any $p\in\N$ and $\{(a_i,r_i)\}_{i=1}^p\subset\h$,
\begin{align}\label{eq:thm:main}
	\left\{S^N_{r_i,\cdot}(a_i)\right\}_{i=1}^p
	\ \Longrightarrow\
	\left\{\cS_{r_i,\cdot}(a_i)\right\}_{i=1}^p \qquad \text{as } N\to\infty,
\end{align}
weakly in $D(\r)^p$, where $D(\r)$ is equipped with the Skorokhod $J_1$ topology. The case $p=1$ follows from Tóth's generalized Ray--Knight theorem~\cite{Toth96}, which we recall below.

\begin{theorem}[\cite{Toth96}, Theorem~1B]\label{thm:1B_toth}
Let $\alpha>0$, and let $w$ be a non-increasing weight function satisfying~\eqref{eq:weight_func}. Fix $(a,r)\in\h$, and let $S^N_{r,\cdot}(a)$ denote the rescaled upcrossing profile of the PSRW defined in \eqref{eq:def_S_N}. Then
$$
\left(\omega_{a,r}^{N,-},\, \omega_{a,r}^{N,+},\, S^N_{r,\cdot}(a)\right)
\ \Longrightarrow\
\left(\omega_{a,r}^{-},\, \omega_{a,r}^{+},\, \cS_{r,\cdot}(a)\right),
\qquad \text{as } N\to\infty,
$$
weakly in $\r_-\times\r_+\times D(\r)$.
\end{theorem}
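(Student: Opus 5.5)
The plan is to follow T\'oth's original route: use the urn representation \eqref{eq:for_po}--\eqref{eq:back_po} to treat each branch of the profile as a time-inhomogeneous Markov chain in the spatial variable, and prove diffusion convergence through martingale problems. Fix $(a,r)$, and set $k=\lfloor Nr\rfloor$ and $m=\lfloor Na\rfloor$.

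\emph{Forward and backward chains.} The forward chain is $Z^N_j:=\cE^{k+j}(T_{k,m})$, $j\ge0$. Its increments are independent given the current value $y$, because they use disjoint urns. The increment law is that of $\fD^{(x+1)}_{\tau^\fB_y}$ for $x\ge0$, and $\fD^{(x+1)}_{\tau^\fB_{y+1}}+1$ for $x<0$. The backward chain $\cE^{k-j}(T_{k,m})$ is driven in the same way by the red urns at sites $\le k$. Since the urns are independent and indexed by disjoint sets of sites, the two branches are conditionally independent given their common starting value $m+1$. It therefore suffices to treat each branch separately and then combine them.

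\emph{Conditional moments.} For the forward branch, Proposition~\ref{prop:exp_discre} and Proposition~\ref{prop:cov} (taken at a single time) give the conditional moments of an increment given $Z^N_j=y$. The mean is $\frac{1}{2(2\alpha+1)}+o(1)$ on both sides of $0$; for $x<0$ the extra $+1$ is absorbed into the urn asymmetry in \eqref{weights_on_Z}, as in the definition of $\Phi^\fB_N$. The variance is $\frac{2y}{2\alpha+1}(1+o(1))$ as $y\to\infty$, and is $O(1)$ for bounded $y$. These constants match the limiting equation. Indeed, $\sqrt2\,\cM^\fB([0,S],\dd u)$ has drift $\frac{1}{2(2\alpha+1)}\,\dd u$ and quadratic variation $2S^{-2\alpha}\cdot\frac{S^{2\alpha+1}}{2\alpha+1}\,\dd u=\frac{2S}{2\alpha+1}\,\dd u$.

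\emph{Convergence away from zero.} The uniform tail bound of Lemma~\ref{lm:4_1} gives fourth moments $O(1+y^2)$, as computed in the proof of Theorem~\ref{thm:meas}. Hence the rescaled chain $N^{-1}Z^N_{\lfloor N(x-r)\rfloor}$ satisfies the hypotheses of the Ethier--Kurtz theorem on convergence of Markov chains to diffusions. The conditional drift and diffusion coefficients converge locally uniformly on $\{s\ge\varepsilon\}$, and the jumps are asymptotically negligible. The limiting generator is $\frac{s}{2\alpha+1}f''+\frac{1}{2(2\alpha+1)}f'$, i.e.\ that of $(2\alpha+1)^{-1}\mathrm{BESQ}^{1}$ after a space--time change. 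Its martingale problem is well posed by Yamada--Watanabe, since the diffusion coefficient is $\sqrt{s}$ and hence $1/2$-H\"older. Tightness comes from the moment bounds (Appendix~\ref{app:tight}). This yields convergence of $S^N_{r,\cdot}(a)$ to $\cS_{r,\cdot}(a)$ up to the first time the profile falls below $\varepsilon$.

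\emph{Boundary at zero: main obstacle.} The chief difficulty is the behaviour at $0$, which has two parts.
\begin{enumerate}
\item \emph{Reflection for $x<0$.} On the stretch between $r$ and $0$ where the profile must be reflected, I would show that the time the rescaled chain spends below $\varepsilon$ is $O(\varepsilon)$ uniformly in $N$. This follows from the positive drift together with the $O(1)$ variance at small heights. One then lets $\varepsilon\to0$, using that $\mathrm{BESQ}^1$ is instantaneously reflecting and spends zero Lebesgue time at $0$.
\item \emph{Absorption after $r^+$.} We must prove $\omega^{N,+}_{a,r}\to\omega^+_{a,r}$ jointly with the path. The limit hits $0$ a.s.\ because the dimension $1<2$. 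The map sending a path to its hitting time of $0$ is continuous at paths that actually cross below every $\varepsilon$ and do not touch $0$ tangentially beforehand, which holds a.s.\ for $\mathrm{BESQ}^1$. So it remains to show that, once the discrete chain is at height $\le\varepsilon N$, it is absorbed within spatial distance $\delta(\varepsilon)N$ with high probability, where $\delta(\varepsilon)\to0$. I would obtain this by comparing the chain with a $\mathrm{BESQ}^1$ hitting-time estimate. The comparison uses the martingale $Z^N_j-\frac{j}{2(2\alpha+1)}$ and the quadratic variation bound near zero from Lemma~\ref{lm:4_1}, via an optional stopping and Kolmogorov-type inequality.
\end{enumerate}
The backward branch and $\omega^{N,-}_{a,r}$ are handled identically using the red urns. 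Conditional independence of the two branches then yields the joint convergence in $\r_-\times\r_+\times D(\r)$.
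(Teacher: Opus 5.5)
\textbf{Verdict: genuine gap, in the boundary analysis.} The paper does not reprove this statement; it imports it from T\'oth. Your outline is the classical route, and its interior part is fine: spatial Markov chains built from the urns, independence of the two branches (they use the disjoint urn families at sites $>\lfloor Nr\rfloor$ and $\le\lfloor Nr\rfloor$), and a diffusion approximation away from $0$. The boundary analysis is the only non-routine content of the theorem, and both of your boundary steps fail as described.

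\textbf{Absorption.} The tools you name cannot force the chain to reach $0$. These are the approximate martingale $Z^N_j-\frac{j}{2(2\alpha+1)}$, the \emph{upper} variance bound from Lemma~\ref{lm:4_1}, optional stopping and a maximal inequality. Consider a chain with the same drift but conditional variance $\theta\frac{2y}{2\alpha+1}$, $\theta<\tfrac12$, so its effective dimension is $1/\theta>2$. It satisfies all of these inputs, yet started from $\varepsilon N$ it is, with high probability, never absorbed.

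What produces absorption is the matching \emph{lower} asymptotics $\operatorname{Var}(\fD^+_{\tau^\fB_y})=\frac{2y}{2\alpha+1}(1+o(1))$. Writing $\Delta$ for the increment at height $y$, this says the effective dimension $4y\,\e[\Delta\mid y]/\operatorname{Var}(\Delta\mid y)$ tends to $1<2$. It must be used at every scale between $O(1)$ and $\varepsilon N$, where the diffusion approximation says nothing because the coefficients do not converge there. One workable version:
\begin{enumerate}
\item Above a fixed height $K$, $Z^\beta$ with $\beta<\tfrac12$ is a supermartingale (Taylor expansion, with Lemma~\ref{lm:4_1} controlling the remainder). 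So from $\varepsilon N$ the chain enters $[0,K]$ before exceeding $\sqrt{\varepsilon}N$ with probability at least $1-\varepsilon^{\beta/2}$.
\item An exit-time Lyapunov function of $\mathrm{BESQ}^{\delta'}$ type, with $\delta'\in(1,2)$, bounds the time this takes by $O(\varepsilon^{c}N)$ for some $c>0$.
\item From a height $y\le K$ at a site $x\ge0$, the chain is absorbed in one step with probability at least $\prod_{i<K}\frac{w(2i+1)}{w(2i+1)+w(0)}>0$ (the first $y$ draws of the urn at $x+1$ are blue). So a geometric number of attempts suffices.
\end{enumerate}

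\textbf{Reflection.} The limiting profile is a multiple of $\mathrm{BESQ}^1=B^2$, which spends time of order $\sqrt{\varepsilon}$, not $\varepsilon$, below level $\varepsilon$. In the Tanaka-type decomposition of $Z\wedge\varepsilon N$, the drift contribution is balanced by a local-time (overshoot) term of order $\sqrt{\varepsilon}N$. That term comes from increments of size $\sqrt{\varepsilon N}$ near height $\varepsilon N$, so ``positive drift plus $O(1)$ variance at small heights'' gives no bound. Moreover, the drift at bounded heights depends on the values of $w$ at small arguments, which~\eqref{eq:weight_func} does not constrain, and it can be negative. A correct estimate again uses a Lyapunov function, for instance $Z^\beta$ with $\beta\in(\tfrac12,1)$, whose drift is of order $Z^{\beta-1}$ above $K$. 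This bounds the time spent in $[K,\varepsilon N]$ by $O(\varepsilon^{1-\beta}N)$, provided you show separately that the time spent at heights $\le K$ is $O(N^{\beta})$.

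\textbf{Tightness.} Do not invoke Appendix~\ref{app:tight}: its proof uses Theorem~\ref{thm:1B_toth}, so the argument would be circular. Tightness of a single profile follows directly from your moment bounds.
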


\begin{remark}
	Let $L(x,n)$ denote the number of crossings of $\{x,x+1\}$ by time $n$. Then
	$$
	L(x,T_{k,m})
	=\begin{cases}
		2\cE^x(T_{k,m})-\mathbbm{1}_{[0,k]}(x), & k\ge0,\\
		2\cE^x(T_{k,m})+\mathbbm{1}_{[k+1,0)}(x), & k<0.
	\end{cases}
	$$
	For $m\ge0$, set $T^L_{k,m}:=\inf\{i>0:\, L(k,i)>m\}$ and define
	$$
	Y_{r,x}^N(a):=N^{-1}L\bigl(\lfloor Nx\rfloor,T^L_{\lfloor Nr\rfloor,\lfloor Na\rfloor}\bigr).
	$$
	The profiles $Y_{r,\cdot}^N(a)$ and $2S_{r,\cdot}^N(a/2)$ are asymptotically equivalent in probability in the Skorokhod $J_1$ topology. Thus Theorem~\ref{thm:main} yields the joint convergence of any finite collection of these edge-crossing profiles to the corresponding CRKCs of Definition~\ref{def:RK_curve}.
\end{remark}

We now derive a representation of the rescaled upcrossing profiles $S^N$ defined in~\eqref{eq:def_S_N} in terms of $\eta$ and $\Phi$. To simplify the notation, for $y\in\r$, write $y_N:=\lfloor Ny\rfloor$, and set $\delta_k:=\mathbbm{1}_{\{k<0\}}$ and $\gamma_k:=\mathbbm{1}_{\{k>0\}}$. Define
\begin{align*}
    \omega_{a,r}^{N,-}:=\inf\{x\le r\wedge0:\, S_{r,x}^N(a)>0\},\qquad
    \omega_{a,r}^{N,+}:=\sup\{x\ge r\vee0:\, S_{r,x}^N(a)>0\}.
\end{align*}

For each $(a,r)\in\h$ and $N\in\N$, define
\begin{align*}
    R_k^N&:=\left(0,S^N_{r,\frac{k}{N}}(a)+\frac{\delta_k}{N}\right]\times\left(\frac{k}{N},\frac{k+1}{N}\right],
    && k\ge r_N,\\
    L_k^N&:=\left(0,S^N_{r,\frac{k+1}{N}}(a)\right] \times\left(\frac{k}{N},\frac{k+1}{N}\right],
    && k<r_N.
\end{align*}
These rectangles need not have dyadic endpoints. We extend $\Phi_N^*$ to rectangles of the form $(0,t]\times(c,d]$, with arbitrary real endpoints $t>0$ and $c<d$, using the same formulas as in~\eqref{def_Phi_B} and~\eqref{def_Phi_R}, and set $\Phi_N^*(\varnothing)=0$. By definition, $S^N_{r,x}(a)=N^{-1}(a_N+1)$ whenever $x_N=r_N$. Under the common urn coupling,
\begin{align}
    S^N_{r,x}(a)&=N^{-1}a_N-N^{-1} \eta^\fB_{a_N+1+\delta_{r_N},\,r_N+1}+\sqrt2\sum_{k=r_N}^{x_N-1}\Phi_N^\fB(R_k^N),&&x_N>r_N,\label{eq:proce_+1}\\
    S^N_{r,x}(a)&=N^{-1}(a_N+1)+\sqrt2\sum_{k=x_N}^{r_N-1}\Phi_N^\fR(L_k^N),&&x_N<r_N.\label{eq:proce_-1}
\end{align}

To verify these identities, fix $(a,r)\in\h$ and $N\in\N$, and let $\iota^N:=T^\cE_{r_N,a_N}$. If $x_N\ge r_N+1$, then, by~\eqref{eq:for_dis} and~\eqref{eq:for_po},
\begin{align*}
    S^N_{r,x}(a) = N^{-1}(a_N+1) + N^{-1}\Big(\fD_{\tau_{a_N+\delta_{r_N}}^\fB}^{(r_N+1)} - \mathbbm{1}_{\{r_N\ge0\}}\Big) + N^{-1} \sum_{k=r_N+1}^{x_N-1} \Big(\fD^{(k+1)}_{\tau_{\cE^{k}(\iota^N)+\delta_k}^\fB} + \delta_k\Big).
\end{align*}
Since $1-\mathbbm{1}_{\{r_N\ge0\}}=\delta_{r_N}$ and $\cE^{r_N}(\iota^N)=a_N+1$, the contribution corresponding to $k=r_N$ can be incorporated into the summation by using $\eta$ defined in~\eqref{def_eta}. This yields
\begin{align*}
    S^N_{r,x}(a) = N^{-1}a_N - N^{-1}\eta^\fB_{a_N+1+\delta_{r_N},\, r_N+1} + N^{-1} \sum_{k=r_N}^{x_N-1} \Bigg[\sum_{m=1}^{\cE^{k}(\iota^N)+\delta_k} \eta_{m,k+1}^\fB + \delta_k\Bigg].
\end{align*}
By~\eqref{def_Phi_B}, each summand equals $\sqrt{2}N\Phi_N^\fB(R_k^N)$, which proves~\eqref{eq:proce_+1}.

For $x_N<r_N$,~\eqref{eq:back_dis} and~\eqref{eq:back_po} similarly give
\begin{align*}
    S^N_{r,x}(a)&=N^{-1}(a_N+1)+N^{-1}\sum_{j=x_N+1}^{r_N}\left[\cE^{j-1}(\iota^N)-\cE^j(\iota^N)\right]\\
    &=N^{-1}(a_N+1)+N^{-1}\sum_{j=x_N+1}^{r_N}\left[-\sum_{m=1}^{\cE^j(\iota^N)}\eta^\fR_{m,j}+\gamma_j\right].
\end{align*}
Reindexing with $j=k+1$ and using~\eqref{def_Phi_R} yields~\eqref{eq:proce_-1}. The identities also cover empty rectangles: $R_k^N=\varnothing$ implies $\delta_k=0$, and $L_k^N=\varnothing$ implies $\gamma_{k+1}=0$.

\bigskip

Since each upcrossing count is non-decreasing in time, $S^N_{r_1,y}(a_1)<S^N_{r_2,y}(a_2)$ implies $T_{\lfloor Nr_1\rfloor,\lfloor Na_1\rfloor}<T_{\lfloor Nr_2\rfloor,\lfloor Na_2\rfloor}$. Hence, for every $z\in\r$,
$$
\cE^{\lfloor Nz\rfloor}(T_{\lfloor Nr_1\rfloor,\lfloor Na_1\rfloor})\le\cE^{\lfloor Nz\rfloor}(T_{\lfloor Nr_2\rfloor,\lfloor Na_2\rfloor}).
$$
This gives the non-crossing property, which will be used repeatedly in the sequel.

\begin{lemma}\label{lem:noncrossing}
	For any $(a_i,r_i)\in \h$, $i=1,2$, and any $y,z\in \r$,
	\begin{align*}
		S^N_{r_1,y}(a_1)< S^N_{r_2,y}(a_2) \ \Longrightarrow\ S^N_{r_1,z}(a_1)\le S^N_{r_2,z}(a_2).
	\end{align*}
\end{lemma}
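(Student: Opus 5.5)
The argument is a monotonicity argument in the time variable, essentially the one sketched just before the statement. Write $\iota_i:=T^{\cE}_{\lfloor Nr_i\rfloor,\lfloor Na_i\rfloor}$ for $i=1,2$. By~\eqref{eq:def_S_N}, $S^N_{r_i,x}(a_i)=N^{-1}\cE^{\lfloor Nx\rfloor}(\iota_i)$ for every $x\in\r$. So the profiles $S^N_{r_1,\cdot}(a_1)$ and $S^N_{r_2,\cdot}(a_2)$ are the upcrossing profiles of the same walk, evaluated at the two stopping times $\iota_1$ and $\iota_2$. The key observation is that, for each fixed $k\in\z$, the map $i\mapsto\cE^k(i)$ defined in~\eqref{def_cE} is non-decreasing in $i$, since it is a cumulative count of indicator terms.

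\emph{Step 1 (comparing the times).} Assume $S^N_{r_1,y}(a_1)<S^N_{r_2,y}(a_2)$ and set $k=\lfloor Ny\rfloor$. Then $\cE^k(\iota_1)<\cE^k(\iota_2)$. If $\iota_1\ge\iota_2$, monotonicity of $\cE^k(\cdot)$ would give $\cE^k(\iota_1)\ge\cE^k(\iota_2)$, a contradiction. Hence $\iota_1<\iota_2$. Only the contrapositive of monotonicity is used here, so ties in the times are not an issue.

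\emph{Step 2 (transferring to $z$).} For any $z\in\r$, set $k'=\lfloor Nz\rfloor$. Monotonicity of $\cE^{k'}(\cdot)$ together with $\iota_1<\iota_2$ gives $\cE^{k'}(\iota_1)\le\cE^{k'}(\iota_2)$. Dividing by $N$ yields $S^N_{r_1,z}(a_1)\le S^N_{r_2,z}(a_2)$, which is the claim.

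There is no real obstacle. The only point needing care is that both profiles must be realized on the same walk path, so that the comparison is pathwise. This holds because the urn construction builds a single walk and all its upcrossing profiles on one probability space, and the $\iota_i$ are almost surely finite stopping times of that walk. Also, $\cE^k(T^\cE_{k,m})=m+1$ is consistent with the definition, so there are no boundary subtleties at $x=r_i$.
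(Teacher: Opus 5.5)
Your proposal is correct and follows the same route as the paper: monotonicity of each upcrossing count in time forces $T^{\cE}_{\lfloor Nr_1\rfloor,\lfloor Na_1\rfloor}<T^{\cE}_{\lfloor Nr_2\rfloor,\lfloor Na_2\rfloor}$, and applying monotonicity again at $\lfloor Nz\rfloor$ gives the comparison. Your two steps are exactly the paper's short argument stated just before the lemma, written out in more detail.
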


To prove~\eqref{eq:thm:main}, we first establish the joint convergence of a single upcrossing profile $S^N$ and the driving fields $\Phi_N^\fB,\Phi_N^\fR$. The representations~\eqref{eq:proce_+1} and~\eqref{eq:proce_-1} are used to identify the limiting profile through SDEs driven by the limiting fields, while strong uniqueness determines the joint law of finitely many profiles driven by the same white noise.
We write $S_{r,\cdot}^{N,\pm}(a)$ and $\cS_{r,\cdot}^{\pm}(a)$ for the forward and backward branches of $S_{r,\cdot}^N(a)$ and $\cS_{r,\cdot}(a)$, respectively. For $*\in\{\fB,\fR\}$ and $I\subset\r$, we denote by $\Phi_N^*|_I$ the restriction of the field $\Phi_N^*$ to the domain $\r_+\times I$. Formally,
$$
\Phi_N^*|_I := \{\Phi_N^*(A) : A \in \mathcal{A} \text{ and } A^{(2)} \subset I\}.
$$
By the constructions~\eqref{def_Phi_B} and~\eqref{def_Phi_R}, for any intervals $I_1,\dots,I_k\subset\r$ whose interiors are mutually disjoint (e.g., $(-\infty, 0]$ and $[0, \infty)$), the pairs $(\Phi_N^\fB|_{I_j},\Phi_N^\fR|_{I_j})$, $1\le j\le k$, depend on disjoint sets of P\'olya urns and are therefore independent. Similarly, we denote by $\cM^*|_{\mathcal A}$ and $\cM^*|_I$ the restrictions of $\cM^*$ to $\mathcal A$ and $\{A\in\mathcal A:A^{(2)}\subset I\}$, respectively.

\smallskip

The next proposition, whose proof is postponed to the end of this section, gives the joint convergence of the driving field and finitely many one-sided profiles, for forward branches started in $\r_+$ and backward branches started in $\r_-$. In these cases, the killing times simplify to
\begin{align*}
    \omega_{a,r}^{+}=\inf\{x\ge r:\,\cS_{r,x}(a)=0\},
    \qquad -\omega_{a,r}^{-}=\inf\{x\ge -r:\,\cS_{r,-x}(a)=0\}.
\end{align*}

\begin{proposition}\label{prop:joint_for_RK}
For any $p\in \N$ and $(a_i,r_i)\in \r_+^2$, $1\le i\le p$, as $N\to \infty$,
\begin{align*}
\left(\Phi_N^\fB|_{[0,\infty)}, \left\{S_{r_i,\cdot}^{N,+}(a_i)\right\}_{i=1}^p\right)
\ \mathop{\Longrightarrow}^{(d)\ } \
\left(\cM^\fB|_{[0,\infty)}, \left\{\cS_{r_i,\cdot}^+(a_i)\right\}_{i=1}^p\right),
\end{align*}
where each $\cS_{r_i,\cdot}^+(a_i)$ is the unique strong solution to~\eqref{eq:poly_GRKT}, killed at $\omega_{a_i,r_i}^{+}$.

Likewise, for any $p\in \N$ and $(a_i,r_i)\in \r_+\times \r_- $, $1\le i\le p$, as $N\to \infty$,
\begin{align*}
\left(\Phi_N^\fR|_{(-\infty,0]}, \left\{S_{r_i,\cdot}^{N,-}(a_i)\right\}_{i=1}^p\right)
\ \mathop{\Longrightarrow}^{(d)\ } \
\left(\cM^\fR|_{(-\infty,0]}, \left\{\cS_{r_i,\cdot}^-(a_i)\right\}_{i=1}^p\right),
\end{align*}
where each $\cS^-_{r_i,-\cdot}(a_i)$ is the unique strong solution to~\eqref{eq:poly_GRKT_back}, killed at $-\omega_{a_i,r_i}^{-}$.
\end{proposition}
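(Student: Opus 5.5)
We prove the forward statement; the backward one is identical after the reflection $x\mapsto -x$, with $\Phi_N^\fR$ in place of $\Phi_N^\fB$ and \eqref{eq:proce_-1} in place of \eqref{eq:proce_+1}. The approach is the standard tightness plus identification plus strong uniqueness scheme, as in \cite{Elena25}.

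\textbf{Step 1: tightness.} By Theorem~\ref{thm:meas}, $\Phi_N^\fB|_{[0,\infty)}$ converges in law in the product topology on $\r^{\mathcal A}$. By Theorem~\ref{thm:1B_toth}, each $S^{N,+}_{r_i,\cdot}(a_i)$, together with its killing time $\omega^{N,+}_{a_i,r_i}$, is tight in $D(\r)$. Hence the joint vector is tight. Along any subsequence we pass, via Skorokhod representation, to an almost sure limit $(\tilde\cM^\fB,\{\tilde\cS_i\}_{i=1}^p,\{\tilde\omega_i\})$. Each $\tilde\cS_i$ is continuous and has the correct marginal law. By Lemma~\ref{lem:M_b_r}, the field $\tilde\cM^\fB$ is represented through a standard white noise $\tilde\cW$ as in \eqref{eq:mart_meas_B}.

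\textbf{Step 2: identification.} We show that each $\tilde\cS_i$ solves \eqref{eq:poly_GRKT} driven by $\tilde\cM^\fB$ up to $\tilde\omega_i$. Start from \eqref{eq:proce_+1}. The boundary term $N^{-1}\eta^\fB$ vanishes in probability by the tail bound of Lemma~\ref{lm:4_1}. The sum $\sqrt2\sum_k\Phi_N^\fB(R_k^N)$ is a Riemann-type sum of the field over rectangles whose heights are the random profile itself. Fix a dyadic mesh $\delta$ in both space and height. Replace the height $S^N_{r,k/N}(a)$ on each spatial block $(j\delta,(j+1)\delta]$ by the dyadic rounding of $\tilde\cS_i(j\delta)$. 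The resulting quantity is a finite combination of $\Phi_N^\fB(A)$ with $A\in\mathcal A$, so it converges to the analogous combination for $\tilde\cM^\fB$. That combination in turn converges, as $\delta\to0$, to $\sqrt2\int_r^x\tilde\cM^\fB([0,\tilde\cS_i(u)],\dd u)$ in $L^2$. This uses continuity of $\tilde\cS_i$ and the quadratic-variation structure $\frac{1}{2\alpha+1}\,\dd x\,(t_1\wedge t_2)^{2\alpha+1}/(t_1t_2)^\alpha$ of $\cM^\fB$, which is continuous in the heights away from $0$.

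\textbf{Main obstacle: the discretization error.} The hard step is a uniform-in-$N$ bound on the error from replacing the true heights by frozen dyadic heights. This error equals a sum over sites $\ell$ of $\fD^{(\ell)}_{\tau^\fB_{m}}-\fD^{(\ell)}_{\tau^\fB_{m'}}$, with $|m-m'|$ small compared to $N$. It splits into two parts.

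\emph{Martingale part.} For each $\ell$, the height $\cE^{\ell-1}$ is determined by the urns at sites $<\ell$, which are independent of the urn at $\ell$. Conditioning site by site therefore makes the fluctuation part a martingale in $\ell$. Its quadratic variation is controlled by the increment estimates for $\fD_{\tau_m}$ from Appendix~\ref{app:discrepancy}, namely $\e[(\fD_{\tau_m}-\fD_{\tau_{m'}})^2]\lesssim |m-m'|+$ lower order, through Propositions~\ref{prop:exp_discre} and~\ref{prop:cov}. A Doob or BDG bound then gives an error of order $\delta^{1/2}$ in $L^2$.

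\emph{Drift part.} This is $O(\delta)$ by Proposition~\ref{prop:exp_discre}.

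Near height $0$ the integrand $t^{-\alpha}$ degenerates. We handle this by stopping each profile when it falls below a level $\epsilon$, and then letting $\epsilon\to0$ using convergence of the killing times from Theorem~\ref{thm:1B_toth}.

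\textbf{Step 3: uniqueness.} The identification shows that every $\tilde\cS_i$ is a solution of \eqref{eq:poly_GRKT} driven by the same $\tilde\cW$. Pathwise uniqueness \cite{Dawson12} makes $\tilde\cS_i$ a measurable functional of $\tilde\cW$, so the joint limit law is uniquely determined. Since every subsequence has the same limit, the full sequence converges. Lemma~\ref{lem:noncrossing} is available to check that absorption at $0$ is consistent across the $p$ profiles.
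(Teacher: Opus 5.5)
Your overall architecture is the paper's: Skorokhod representation, reconstruction of $\tilde\cW$ via Lemma~\ref{lem:M_b_r}, freezing heights on a dyadic mesh, a martingale/drift split of the discretization error with stopping away from height $0$, and strong uniqueness. But one essential step is missing. You never show that $\tilde\cW$ is a white noise with respect to a filtration to which $\tilde\cS_i$ is adapted. Your $L^2$ convergence of the frozen limit sums to $\sqrt2\int_r^x\tilde\cM^\fB([0,\tilde\cS_i(u)],\dd u)$ is an It\^o-isometry computation, so it needs this. So does your Step~3 appeal to pathwise uniqueness (Yamada--Watanabe), which requires a solution in the adapted sense. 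Without adaptedness the stochastic integral with integrand depending on $\tilde\cS_i$ is not even defined, and a Skorokhod limit does not supply adaptedness for free. The paper proves it as follows:
\begin{itemize}
\item Let $\mathcal F_x$ be generated by $\tilde\cS_i$ on $[r,x]$ and $\tilde\cW$ on $\r_+\times(-\infty,x]$.
\item For each $\varepsilon>0$ and large $N$, the discrete profile up to $x$ and the field on $(-\infty,x]$ are functions of urns disjoint from those driving the field on $(x+\varepsilon,\infty)$.
\item This independence survives the joint limit, and letting $\varepsilon\downarrow0$ gives independence of the future noise from $\mathcal F_x$.
\item The reconstruction in Lemma~\ref{lem:M_b_r} respects spatial restrictions, so the same holds for $\tilde\cW$.
\end{itemize}

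Two further steps would fail as written.

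First, at finite $N$ you freeze the heights at the dyadic rounding of the \emph{limit} value $\tilde\cS_i(j\delta)$. These roundings are not functions of the urns at sites $<\ell$, so your site-by-site martingale argument does not apply to them. You should instead freeze at the rounding of the discrete profile $S^{N,+}_{r_i,j\delta}(a_i)$, which is adapted. Then use $\p\bigl(\tilde\cS_i(j\delta)\in\delta\z,\ j\delta<\tilde\omega_i\bigr)=0$ (as in \eqref{eq:prob_zero_Y}). This gives that, almost surely, the discrete roundings eventually coincide with the limit roundings. Only then do the frozen sums become finite combinations of $\Phi_N^\fB(A)$ over fixed $A\in\mathcal A$.

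Second, the martingale part needs $\operatorname{Var}(\fD_{\tau_m}-\fD_{\tau_l})\le C|m-l|+o(N)$ \emph{uniformly} for $cN\le l,m\le C_0N$, because the true heights are random. Propositions~\ref{prop:exp_discre} and~\ref{prop:cov} are limits at fixed ratios $k_1,k_2$ and give no such uniformity. The a priori bound $\operatorname{Var}(\fD_{\tau_m})\le Cm$ from Lemma~\ref{lm:4_1} only yields an $O(1)$ error. The paper imports the uniform estimate \eqref{eq:urn_increment_var} from \cite{KMMP26}.
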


The previous proposition covers only absorption, whereas $\cS_{r,\cdot}(a)$ is reflected at zero between $0$ and $r$. We use the duality argument of~\cite{Yu26}, recalled in Section~\ref{sec:basic}, to handle this case. The following result is a direct consequence of Proposition~\ref{prop:self_dual}.

\begin{proposition}\label{cor:self-dual}
	Suppose that $\{(\cS_{r,x}(a),\,x\ge r)\}_{(a,r)\in\bD_+^2}$ is a family of processes such that each $(\cS_{r,x}(a),\,x\ge r)$ is the unique strong solution to~\eqref{eq:poly_GRKT}, driven by the common field $\cM^\fB$ and killed at $\omega_{a,r}^{+}$. Fix $(b,y)\in\r_+^2$, and let $(\cS_{y,x}(b),\,0\le x\le y)$ be a continuous process with $\cS_{y,y}(b)=b$ such that, almost surely,
	$$
	\cS_{y,x}(b)=\inf\{a\in\bD_+:\,\cS_{x,y}(a)>b\}, \qquad \forall x\in\bD_+\cap[0,y].
	$$
	Then $(\cS_{y,-x}(b),\,-y\le x\le0)$ is the unique strong solution to~\eqref{eq:poly_GRKT_back}, reflected at zero and driven by the corresponding field $\cM^{\fR,*}$.
\end{proposition}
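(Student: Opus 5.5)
The statement is said to follow directly from Proposition~\ref{prop:self_dual} of Section~\ref{sec:basic}, which is the self-duality of the PSR flow from \cite[Theorem~6.6]{Yu26}. My plan is therefore to translate the present setting into that of Proposition~\ref{prop:self_dual} and check its hypotheses. The translation is a rescaling.

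By the discussion preceding~\eqref{eq:thm:main}, the map $(a,r)\mapsto 2\cS_{r,\cdot}(a/2)$ turns solutions of~\eqref{eq:poly_GRKT} driven by $\cM^\fB$ into solutions of~\eqref{SDE:for} driven by $\cW$. Indeed, substituting~\eqref{eq:mart_meas_B} gives drift $\frac{1}{2(2\alpha+1)}$ and martingale part $\sqrt2\,t^{-\alpha}\int_0^t u^\alpha\cW$ evaluated at $t=\cS$. After the substitution $s\mapsto 2s$ and white-noise scaling, these become exactly the coefficients of~\eqref{SDE:for}. The same substitution identifies~\eqref{eq:poly_GRKT_back} driven by $\cM^{\fR,*}$, which is~\eqref{eq:mart_meas_R} with $\cW^*$, with~\eqref{SDE:back}.

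First step: fix the driving white noise $\cW$, which exists by Lemma~\ref{lem:M_b_r}. The hypothesis then says that $\{\cS_{r,\cdot}(a)\}_{(a,r)\in\bD_+^2}$, rescaled, are the dyadic flow lines of the PSR$^\alpha$ flow built from $\cW$ in \cite[Theorem~6.2]{Yu26}. This uses pathwise uniqueness for~\eqref{SDE:for}, since both are strong solutions driven by the same noise. The rescaling maps $\bD_+^2$ onto itself, so no dyadic points are lost.

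Second step: observe that the defining formula $\cS_{y,x}(b)=\inf\{a\in\bD_+:\cS_{x,y}(a)>b\}$ is exactly the dual flow construction~\eqref{eq:def_dual_+}, restricted to dyadic times $x\in[0,y]$. By continuity of the given process, the hypothesis identifies it with the dual flow line $\cS^*$ started from $(b,y)$ on all of $[0,y]$. Proposition~\ref{prop:self_dual} then asserts that this dual line, viewed backward in space, solves the backward SDE driven by $\cW^*$ and is reflected at $0$. Translating back through the rescaling gives~\eqref{eq:poly_GRKT_back} with the field $\cM^{\fR,*}$. I would also check that the constant parts agree: $\cM^\fR$ has the same deterministic drift as $\cM^\fB$, and the sign of the noise is absorbed by the reflection $\cW\mapsto\cW^*$. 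The restriction $-y\le x\le 0$ matches the fact that the backward branch started at $y\ge0$ is reflected, not absorbed, until position $0$. Uniqueness of the strong solution is the Dawson--Li result quoted after Definition~\ref{def:RK_curve}.

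The main obstacle is bookkeeping rather than substance. It is to make sure the noise appearing in Proposition~\ref{prop:self_dual} is literally $\cW^*$, the image of the same $\cW$ under $r\mapsto -r$. Only then is the dual line a strong solution driven by the field attached to $\cM^{\fR,*}$, and not merely a solution in law. I would handle this by tracing the proof of \cite[Theorem~6.6]{Yu26}, where the dual martingale measure is constructed explicitly from the forward white noise.
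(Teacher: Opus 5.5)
Your proposal is correct and follows the paper's route. The paper records this proposition as a direct consequence of Proposition~\ref{prop:self_dual}, and your steps are the bookkeeping behind that remark: rescaling $\cS_{r,\cdot}(a)\mapsto 2\cS_{r,\cdot}(a/2)$ together with the noise, identifying the dyadic lines with those of the PSR flow by pathwise uniqueness, and extending from dyadic $x$ by continuity. One point should be stated explicitly: $\inf\{a\in\bD_+:\cS_{x,y}(a)>b\}$ equals the infimum over all $a\ge0$ in~\eqref{eq:def_dual_+}, because $a\mapsto\cS_{x,y}(a)$ is nondecreasing by~\ref{item:R1} and~\ref{item:R3} and $\bD_+$ is dense.
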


\smallskip
	
\begin{proof}[Proof of Theorem~\ref{thm:main}]
    Fix $p$ and $(a_i,r_i)\in \h$, $1\le i\le p$. We claim that, in order to prove~\eqref{eq:thm:main}, it suffices to establish, for every fixed $(a,r)\in\h$,
    \begin{equation}\label{eq:joint_single_suff}
        \bigl(\Phi_N^\fB, \Phi_N^\fR, S^N_{r,\cdot}(a)\bigr) \ \mathop{\Longrightarrow}^{(d)\ }\ \bigl(\cM^\fB|_{\mathcal A}, \cM^\fR|_{\mathcal A}, \cS_{r,\cdot}(a)\bigr).
    \end{equation}
    By Theorems~\ref{thm:meas} and~\ref{thm:1B_toth}, the sequence $\bigl(\Phi_N^\fB,\Phi_N^\fR,S^N_{r_1,\cdot}(a_1),\ldots,S^N_{r_p,\cdot}(a_p)\bigr)_{N\ge1}$ is tight in the Polish space $\r^{\mathcal A}\times\r^{\mathcal A}\times D(\r)^p$. Let $(\tilde{\cM}^\fB,\tilde{\cM}^\fR,Y_1,\ldots,Y_p)$ denote a subsequential limit. By Theorem~\ref{thm:meas}, the pair $(\tilde{\cM}^\fB,\tilde{\cM}^\fR)$ has the same joint law on $\mathcal A$ as $(\cM^\fB,\cM^\fR)$. Lemma~\ref{lem:M_b_r} therefore gives a common standard space-time white noise $\tilde{\cW}$ and extensions of the fields such that, for every $t>0$,
   \begin{align}\label{eq:til_cM}
		\tilde{\cM}^\fB((0,t],\dd x)=\frac{\dd x}{2\sqrt{2}(2\alpha+1)}+\frac{1}{t^\alpha}\int_0^t u^\alpha \tilde{\cW}(\dd u, \dd x), 
	\end{align}
	and
	$$
	\tilde{\cM}^\fR((0,t],\dd x)=\frac{\dd x}{2\sqrt{2}(2\alpha+1)}-\frac{1}{t^\alpha}\int_0^t u^\alpha \tilde{\cW}(\dd u, \dd x).
	$$
    We use the same notation for these extensions. Define $\tilde{\cM}^{\fR,*}$ by
    \begin{align}\label{eq:til_cM_R}
        \tilde{\cM}^{\fR,*}((0,t],\dd x) =\frac{\dd x}{2\sqrt{2}(2\alpha+1)}-\frac{1}{t^\alpha}\int_0^t u^\alpha \tilde{\cW}^*(\dd u, \dd x),
    \end{align}
    where $\tilde{\cW}^*$ is the image of $\tilde{\cW}$ under the map $(a,r)\mapsto (a,-r)$.
    For each $1\le i\le p$,~\eqref{eq:joint_single_suff} identifies the joint law of $(\tilde{\cM}^\fB,\tilde{\cM}^\fR,Y_i)$. Thus the forward and backward branches of $Y_i$ solve~\eqref{eq:poly_GRKT} and~\eqref{eq:poly_GRKT_back}, respectively, driven by $\tilde{\cM}^\fB$ and $\tilde{\cM}^{\fR,*}$, with the prescribed reflecting and absorbing boundary rules. By strong uniqueness, each $Y_i$ is a measurable functional of the white noise $\tilde{\cW}$. Hence the joint law of $\{Y_i\}_{i=1}^p$ coincides with that of $\{\cS_{r_i,\cdot}(a_i)\}_{i=1}^p$. This identifies every subsequential limit and proves~\eqref{eq:thm:main}.

    \bigskip

	Fix $(a_0,r_0)\in\r_+^2$; the case $(a_0,r_0)\in\r_+\times\r_-$ follows by symmetry. If $r_0=0$, Proposition~\ref{prop:joint_for_RK}, together with the independence of the urns on the two half-lines and the joint field convergence in Theorem~\ref{thm:meas}, yields~\eqref{eq:joint_single_suff}. We therefore assume $r_0>0$ and set $I_1:=[r_0,\infty)$, $I_2:=[0,r_0]$ and $I_3:=(-\infty,0]$.
	For each $i=1,2,3$, write $S^N_{r_0,\cdot}(a_0)|_{I_i}$ for the restriction of the path to $I_i$. We first establish
    \begin{equation}\label{eq:joint_single}
        \bigl( \Phi_N^\fB|_{I_i}, S^N_{r_0,\cdot}(a_0)|_{I_i} \bigr) \ \mathop{\Longrightarrow}^{(d)\ }\ \bigl(\cM^\fB|_{I_i}, \cS_{r_0,\cdot}(a_0)|_{I_i}\bigr).
    \end{equation}
    It follows from Proposition~\ref{prop:joint_for_RK} that~\eqref{eq:joint_single} holds for $i=1$.

    \medskip
    
    Consider an arbitrary convergent subsequence of the tight sequence
    $$
    \left\{\bigl(\Phi_N^\fB,S^N_{r_0,\cdot}(a_0)|_{I_2},
    \{(S^{N,+}_{r,\cdot}(a),\omega_{a,r}^{N,+})\}_{a,r\in\bD_+}\bigr)\right\}_{N\ge1},
    $$
    and denote its limit by
    $\bigl(\tilde{\cM}^\fB|_{\mathcal A},Y|_{I_2},
    \{(\cY^+_{r,\cdot}(a),\omega_{a,r}^+)\}_{a,r\in\bD_+}\bigr)$. By Skorokhod's representation theorem (see, e.g., \cite[Theorem~6.7]{Bil99}), we may assume that this subsequence converges almost surely on a common probability space. Extend $\tilde{\cM}^\fB$ by the construction in Lemma~\ref{lem:M_b_r}. By Proposition~\ref{prop:joint_for_RK}, each $\cY^+_{r,\cdot}(a)$ is the unique strong solution to~\eqref{eq:poly_GRKT}, driven by $\tilde{\cM}^\fB$ and killed at $\omega_{a,r}^+$. By Theorem~\ref{thm:1B_toth}, $Y$ is continuous and $Y(r_0)=a_0$.
    
    We next show that, almost surely, for every $x\in [0,r_0]\cap \bD_+$,
    \begin{align}\label{eq:claim_dual}
    	Y(x)=\inf\{b\in \bD_+:\,\cY^+_{x,r_0}(b)>a_0\}.
    \end{align}
    The identity is immediate when $x=r_0$, so it remains to consider $x\in(0,r_0)\cap\bD_+$.
    Suppose $b\in \bD_+$ is such that $\cY^+_{x,r_0}(b)>a_0$. Since $\bigl(S^N_{r_0,x}(a_0), S^N_{x,r_0}(b)\bigr)$ converges almost surely to $\bigl(Y(x), \cY^+_{x,r_0}(b)\bigr)$, there exists $N_0$ such that for all $N\ge N_0$, $S^N_{x,r_0}(b)>\frac{\lfloor N a_0 \rfloor+1}{N}=S^N_{r_0,r_0}(a_0)$. Lemma~\ref{lem:noncrossing} implies that, for all $N\ge N_0$, $\frac{\lfloor N b \rfloor+1}{N}=S^N_{x,x}(b)\ge S^N_{r_0,x}(a_0)$. Letting $N\to\infty$, we obtain $b\ge Y(x)$. Taking the infimum over such $b$ yields that $Y(x)$ is no larger than the right-hand side of~\eqref{eq:claim_dual}. On the other hand, suppose $b\in \bD_+$ is such that $\cY^+_{x,r_0}(b)\le a_0$. 
    \begin{itemize}[itemsep=0pt, left=4pt]
        \item If $a_0>0$, note that $\cY^+_{x,\cdot}(b)$ has the same law as a rescaled BESQ$^1$ process up to the killing time $\omega_{b,x}^{+}$, and hence $\mathbb{P}(\exists b\in \bD_+:\,\cY^+_{x,r_0}(b)=a_0)=0$. We can therefore restrict our attention to those $b$ satisfying $\cY^+_{x,r_0}(b)<a_0$. Again, by almost sure convergence, there exists $N_0$ such that for all $N\ge N_0$, $S^N_{x,r_0}(b)<\frac{\lfloor N a_0 \rfloor+1}{N}=S^N_{r_0,r_0}(a_0)$.

        \item If $a_0=0$, then $\cY^+_{x,r_0}(b)=0$, so $\omega_{b,x}^+\le r_0$. Since the killing time has no atom at $r_0$, almost surely $\omega_{b,x}^+<r_0$ whenever this occurs, simultaneously for all such dyadic $x,b$. By the joint almost sure convergence of the killing times, $\omega_{b,x}^{N,+}<r_0$ for all sufficiently large $N$. Hence
        $S^N_{x,r_0}(b)=0<\frac1N=S^N_{r_0,r_0}(0)$.
    \end{itemize}
    In both cases, Lemma~\ref{lem:noncrossing} gives $\frac{\lfloor N b \rfloor+1}{N}=S^N_{x,x}(b)\le S^N_{r_0,x}(a_0)$ for all $N\ge N_0$. Passing to the limit as $N\to\infty$ yields $b\le Y(x)$. Since $\bD_+\ni b\mapsto \cY^+_{x,r_0}(b)$ is monotone, $Y(x)\ge \inf\{b\in \bD_+:\, \cY^+_{x,r_0}(b)>a_0\}$. This finishes the proof of~\eqref{eq:claim_dual}.
    
    Since all the assumptions of Proposition~\ref{cor:self-dual} have been verified, we may apply the proposition to conclude that $(Y(-x),\, -r_0\le x\le 0)$ is the unique strong solution to~\eqref{eq:poly_GRKT_back} driven by $\tilde{\cM}^{\fR,*}$ with reflection at zero, where $\tilde{\cM}^{\fR,*}$ is given by~\eqref{eq:til_cM_R}. Therefore,~\eqref{eq:joint_single} holds for $i=2$. In particular, $S^N_{r_0,0}(a_0)$ converges in distribution to $\cS_{r_0,0}(a_0)$.
    
    \medskip
    
    We next prove~\eqref{eq:joint_single} for $I_3$. By tightness, consider an arbitrary weakly convergent subsequence of
    $$
    \left\{\bigl(\Phi_N^\fR|_{I_3},S^N_{r_0,\cdot}(a_0)|_{I_3},
    \{S^{N,-}_{0,\cdot}(a)\}_{a\in\bD_+}\bigr)\right\}_{N\ge1}.
    $$
    By Skorokhod's representation theorem, we may assume almost sure convergence on a common probability space to $\bigl(\tilde{\cM}^\fR|_{I_3},Y|_{I_3},\{\cY^-_{0,\cdot}(a)\}_{a\in\bD_+}\bigr)$. By Proposition~\ref{prop:joint_for_RK}, each $\cY^-_{0,-\cdot}(a)$ is the unique strong solution to~\eqref{eq:poly_GRKT_back}, driven by $\tilde{\cM}^{\fR,*}$ and killed at $-\omega_{a,0}^-$. Also, $Y$ is continuous. For any $a\in\bD_+$ with $a>Y(0)$, almost sure convergence gives $S^{N,-}_{0,0}(a)>S^N_{r_0,0}(a_0)$ for all sufficiently large $N$. Lemma~\ref{lem:noncrossing} and passage to the limit yield $\cY^-_{0,x}(a)\ge Y(x)$ for all $x\le0$. Similarly, $a<Y(0)$ implies $\cY^-_{0,x}(a)\le Y(x)$ for all $x\le0$. Since $Y(0)$ has the atomless law of $\cS_{r_0,0}(a_0)$, it is positive almost surely. Monotonicity in $a$ therefore gives
    \begin{align}\label{eq:sqee}
    	\lim_{a\uparrow Y(0)}\cY^-_{0,x}(a)\le Y(x)\le\lim_{a\downarrow Y(0)}\cY^-_{0,x}(a),
    \end{align}
    where both limits are taken along $\bD_+$. For fixed $x<0$,
    $$
    \mathcal K_x:=\{b>0:\,\lim_{\bD_+\ni a\uparrow b}\cY^-_{0,x}(a)\ne\lim_{\bD_+\ni a\downarrow b}\cY^-_{0,x}(a)\}
    $$
    is $\tilde{\cM}^\fR|_{I_3}$-measurable and countable by monotonicity. Since $S^N_{r_0,0}(a_0)$ and $\Phi_N^\fR|_{I_3}$ depend on disjoint sets of urns, their limits $Y(0)$ and $\tilde{\cM}^\fR|_{I_3}$ are independent. Since $Y(0)$ has an atomless law, $\p(Y(0)\in\mathcal K_x)=0$. Thus~\eqref{eq:sqee} gives, almost surely,
    $$
    Y(x)=\lim_{\bD_+\ni a\downarrow Y(0)}\cY^-_{0,x}(a)=:Y'(x).
    $$
	By independence, we apply Corollary~\ref{cor:cov_b} conditionally on $Y(0)$, with the corresponding normalization and spatial reversal. Hence $(Y'(-x),\,x\ge0)$ is the unique strong solution to~\eqref{eq:poly_GRKT_back}, driven by $\tilde{\cM}^{\fR,*}$, started from $Y(0)$ and killed at $-\omega_{Y(0),0}^-$. Taking a countable dense set of $x<0$ and using continuity shows that $Y=Y'$ on $I_3$ almost surely. This proves~\eqref{eq:joint_single} with $\fB$ replaced by $\fR$ for $i=3$.
    
    Define the deterministic field $\mathcal H$ on $\mathcal A$ by $\mathcal H((0,t]\times(x,y]):=(y-x)/(\sqrt{2}(2\alpha+1))$. By~\eqref{eq:exp_Delta}, $\Phi_N^\fB+\Phi_N^\fR\to\mathcal H$ in probability in $\r^{\mathcal A}$, while~\eqref{eq:mart_meas_B}--\eqref{eq:mart_meas_R} give $\cM^\fB+\cM^\fR=\mathcal H$ on $\mathcal A$. Slutsky's theorem therefore upgrades the joint convergence with $\Phi_N^\fB$ on $I_1,I_2$ and with $\Phi_N^\fR$ on $I_3$ to
    \begin{align}\label{eq:joint_single_ext}
    	\bigl(\Phi_N^\fB|_{I_i},\Phi_N^\fR|_{I_i},S^N_{r_0,\cdot}(a_0)|_{I_i}\bigr) \ \mathop{\Longrightarrow}^{(d)\ }\ \bigl(\cM^\fB|_{I_i},\cM^\fR|_{I_i},\cS_{r_0,\cdot}(a_0)|_{I_i}\bigr), \qquad i=1,2,3.
    \end{align}
    
    \medskip
	
	It remains to establish~\eqref{eq:joint_single_suff}. By tightness, consider an arbitrary subsequential limit $(\tilde{\cM}^\fB|_{\mathcal A},\tilde{\cM}^\fR|_{\mathcal A},Y)$ of $(\Phi_N^\fB,\Phi_N^\fR,S^N_{r_0,\cdot}(a_0))$. By Theorem~\ref{thm:meas} and Lemma~\ref{lem:M_b_r}, the limiting fields admit extensions driven by a common white noise $\tilde{\cW}$. Moreover, $Y$ is continuous by Theorem~\ref{thm:1B_toth}, so~\eqref{eq:joint_single_ext} identifies the joint law of the restricted fields and $Y|_{I_i}$ for each $i=1,2,3$. Consequently, $Y|_{I_1}$ solves~\eqref{eq:poly_GRKT}, driven by $\tilde{\cM}^\fB$ and killed at its first hitting time of zero. The reversed branch $(Y(-x),-r_0\le x\le0)$ solves~\eqref{eq:poly_GRKT_back}, driven by $\tilde{\cM}^{\fR,*}$ and reflected at zero, while $(Y(-x),x\ge0)$ solves the same SDE from $Y(0)$ and is killed at its first hitting time of zero. Concatenating the latter two solutions at $x=0$ gives the backward branch with the prescribed boundary rule. Strong uniqueness therefore identifies the joint law of $(\tilde{\cM}^\fB|_{\mathcal A},\tilde{\cM}^\fR|_{\mathcal A},Y)$ with that of $(\cM^\fB|_{\mathcal A},\cM^\fR|_{\mathcal A},\cS_{r_0,\cdot}(a_0))$. This proves~\eqref{eq:joint_single_suff} and completes the proof.
\end{proof}

\smallskip

\begin{proof}[Proof of Proposition~\ref{prop:joint_for_RK}]
	Arguing as in the proof of Theorem~\ref{thm:main}, it suffices to establish, for each fixed $(a,r)\in\h$,
	\begin{equation}\label{eq:joint_single_I+}
		\bigl(\Phi_N^\fB|_{[r,\infty)},S^{N,+}_{r,\cdot}(a)\bigr) \ \mathop{\Longrightarrow}^{(d)\ }\ \bigl(\cM^\fB|_{[r,\infty)},\cS^+_{r,\cdot}(a)\bigr),
	\end{equation}
	for $r\in\r_+$, and
	\begin{equation}\label{eq:joint_single_I-}
		\bigl(\Phi_N^\fR|_{(-\infty,r]},S^{N,-}_{r,\cdot}(a)\bigr) \ \mathop{\Longrightarrow}^{(d)\ }\ \bigl(\cM^\fR|_{(-\infty,r]},\cS^-_{r,\cdot}(a)\bigr),
	\end{equation}
	for $r\in\r_-$. If $a=0$, the corresponding one-sided branches are killed at their starting points, so both assertions follow from Theorems~\ref{thm:meas} and~\ref{thm:1B_toth}. Henceforth assume $a>0$. We first consider $r\in\r_+$.
	
	By tightness, consider an arbitrary convergent subsequence of $(\Phi_N^\fB,S^{N,+}_{r,\cdot}(a))$, still indexed by $N$. By Skorokhod's representation theorem, we may assume almost sure convergence on a common probability space to $(\tilde{\cM}^\fB|_{\mathcal A},Y)$. The construction in Lemma~\ref{lem:M_b_r} reconstructs a white noise $\tilde{\cW}$ from $\tilde{\cM}^\fB|_{\mathcal A}$ and extends the field according to~\eqref{eq:til_cM}.
	
	For $x\ge r$, let $\mathcal F_x$ be the completed sigma-field generated by $\{Y(u):r\le u\le x\}$ and the restriction of $\tilde{\cW}$ to $\r_+\times(-\infty,x]$. The white noise $\tilde{\cW}$ remains a white noise relative to this filtration. Indeed, for every $\varepsilon>0$ and all sufficiently large $N$, the profile up to $x$ and the field on $(-\infty,x]$ depend on urns disjoint from those determining the field on $(x+\varepsilon,\infty)$. Passing this independence to the joint limit, then letting $\varepsilon\downarrow0$, gives independence of the future noise from $\mathcal F_x$. Since the reconstruction of $\tilde{\cW}$ preserves spatial restrictions, the same independence holds for the reconstructed noise.
	
	By Theorem~\ref{thm:1B_toth}, $Y$ has the same law as $\cS^+_{r,\cdot}(a)$. In particular, $Y$ is continuous, starts from $a$ and is absorbed at zero. Set $\tau^Y:=\inf\{x\ge r:Y(x)=0\}$. To prove~\eqref{eq:joint_single_I+}, it therefore suffices to show that, almost surely,
	\begin{align*}
		Y(x)=a+\sqrt{2}\int_r^x\tilde{\cM}^\fB((0,Y(u)],\dd u), \qquad r\le x<\tau^Y.
	\end{align*}
	
	\medskip
	
	For each $n\ge1$, set $r^n:=2^{-n}\lfloor2^nr\rfloor$ and $x_k^n:=r^n+k2^{-n}$ for $k\ge0$. Define $Y^{N,n}(r^n):=a$ and
	\begin{align}\label{eq:def_Y_N_n}
		Y^{N,n}(x_{k+1}^n):=Y^{N,n}(x_k^n)+\sqrt{2}\,\Phi_N^\fB\Bigl(\bigl(0,2^{-n}\lfloor2^nS^{N,+}_{r,x_k^n}(a)\rfloor\bigr]\times(x_k^n,x_{k+1}^n]\Bigr),
	\end{align}
	for $k\ge0$, where, in this recursion only, $S^{N,+}_{r,r^n}(a)$ is replaced by $a$. Extend $Y^{N,n}$ as a right-continuous step function, constant on each $[x_k^n,x_{k+1}^n)$. Since $Y$ has the law of a rescaled BESQ$^1$ process absorbed at zero,
	\begin{align}\label{eq:prob_zero_Y}
		\p\bigl(Y(x_k^n)\in2^{-n}\z,\ x_k^n<\tau^Y\bigr)=0, \qquad n,k\ge1.
	\end{align}
	Almost sure convergence to the continuous limit $Y$ gives $S^{N,+}_{r,x_k^n}(a)\to Y(x_k^n)$ for every $n,k\ge1$. Hence, on $\{x_k^n<\tau^Y\}$, the quantities $\lfloor2^nS^{N,+}_{r,x_k^n}(a)\rfloor$ and $\lfloor2^nY(x_k^n)\rfloor$ agree for all sufficiently large $N$, almost surely for every $n,k\ge1$.
	
	Set $Y_0^n:=2^{-n}\lfloor2^na\rfloor$ and $Y_k^n:=2^{-n}\lfloor2^nY(x_k^n)\rfloor$ for $k\ge1$, and define
	$$
	Y^n(x):=a+\sqrt{2}\sum_{\substack{k\ge0:\,x_{k+1}^n\le x}}\tilde{\cM}^\fB\bigl((0,Y_k^n]\times(x_k^n,x_{k+1}^n]\bigr), \qquad x\ge r^n,
	$$
	with zero summand when $Y_k^n=0$. Then $Y^n(r)=a$, and joint field convergence and the stabilization above give, almost surely, $Y^{N,n}(x)\to Y^n(x)$ for all $n\ge1$ and $r\le x<\tau^Y$.
	
	Set $\sigma(z,s):=2(\frac{s}{z})^\alpha\mathbbm{1}_{\{0<s\le z\}}$ for $z>0$, $\sigma(0,s):=0$, and $c_\alpha:=\frac{1}{2(2\alpha+1)}$. Define
	$$
	Y^\infty_x:=a+c_\alpha(x\wedge\tau^Y-r)+\frac1{\sqrt{2}}\int_r^{x\wedge\tau^Y}\!\int_0^\infty\sigma(Y(u),s)\tilde{\cW}(\dd s,\dd u), \qquad x\ge r.
	$$
	Direct integration gives, for $z,w\ge0$,
	\begin{equation}\label{eq:sigma_L2_bounds}
		\int_0^\infty\sigma(z,s)^2\,\dd s=\frac{4z}{2\alpha+1}, \qquad \int_0^\infty|\sigma(z,s)-\sigma(w,s)|^2\,\dd s\le4|z-w|.
	\end{equation}
	Fix $T>r$ and $0<\eta<a<K$, and set $\tau_{\eta,K}:=\inf\{u\ge r:Y(u)\notin(\eta,K)\}$. Write $\bar Y^n(u):=Y_k^n$ on $(x_k^n,x_{k+1}^n]$, with $\bar Y^n(r^n):=Y_0^n$. For $2^{-n}<\eta/2$, both $Y(u)$ and $\bar Y^n(u)$ lie in $[\eta/2,K]$ before $\tau_{\eta,K}$. For fixed $x\in(r,T]$, put $x_n:=2^{-n}\lfloor 2^n x\rfloor$. By Itô's isometry and~\eqref{eq:sigma_L2_bounds}, applied to the integrals stopped at $\tau_{\eta,K}$,
	$$
	\e\!\left[\mathbbm{1}_{\{x<\tau_{\eta,K}\}}|Y^\infty_x-Y^n(x)|^2\right]\le C\int_r^x\e\!\left[\mathbbm{1}_{\{u\le\tau_{\eta,K}\}}|Y(u)-\bar Y^n(u)|\right]\dd u+C2^{-n}.
	$$
	Here the last term accounts for $(r^n,r]$ and $(x_n,x]$, each of length at most $2^{-n}$. The integral tends to zero by continuity and bounded convergence. Letting $\eta\downarrow0$ and $K\uparrow\infty$, we obtain $Y^n(x)\to Y^\infty_x$ in probability on $\{x<\tau^Y\}$. It remains to prove the same convergence with $Y^\infty_x$ replaced by $Y(x)$.
	
	\bigskip
	
	By~\eqref{eq:proce_+1}, for $x_N>r_N$,
	\begin{equation}\label{eq:proce_++}
		S^{N,+}_{r,x}(a)=\frac{a_N}{N}-\frac{\eta^\fB_{a_N+1,r_N+1}}{N}+\sqrt{2}\sum_{k=r_N}^{x_N-1}\Phi_N^\fB\!\left(\left(0,S^N_{r,\frac{k}{N}}(a)\right]\times\left(\frac{k}{N},\frac{k+1}{N}\right]\right).
	\end{equation}
	Write $\mu(m):=\e[\fD^+_{\tau_m^\fB}]$. Proposition~\ref{prop:exp_discre} gives $\mu(m)\to c_\alpha$. Corollary~3.4 and Lemma~3.6 of~\cite{KMMP26} imply that, for any fixed $0<c<C_0<\infty$,
	\begin{equation}\label{eq:urn_increment_var}
		\operatorname{Var}\!\left(\fD^+_{\tau_m^\fB}-\fD^+_{\tau_l^\fB}\right)\le C|m-l|+C\sqrt N\log^2(2+N), \qquad cN\le l,m\le C_0N.
	\end{equation}
	Fix $x\in(r,T]$ and $2^{-n}<\eta/4$, with $n$ large enough that $x_n\ge r$. Set $\ell_N:=\lfloor Nx_n\rfloor$, $m_{r_N+1}:=a_N+1$ and $m_j:=NS^{N,+}_{r,(j-1)/N}(a)$ for $j\ge r_N+2$. For $r_N<j\le\ell_N$, let $k(j)$ satisfy $x_{k(j)}^n<j/N\le x_{k(j)+1}^n$ and put
	$$
	q_j:=\begin{cases}
		\lfloor NY_0^n\rfloor,&k(j)=0,\\
		\left\lfloor N2^{-n}\left\lfloor 2^n S^{N,+}_{r,x_{k(j)}^n}(a)\right\rfloor\right\rfloor,&k(j)\ge1.
	\end{cases}
	$$
	Let $\mathcal G_j$ be generated by the urns at sites at most $j$, and set
	$$
	\rho_N:=\inf\{j>r_N:m_j/N\notin[\eta/2,2K]\}.
	$$
	The heights $m_j,q_j$ and the event $\{j<\rho_N\}$ are $\mathcal G_{j-1}$-measurable. For $j<\rho_N$, both heights lie in $[\eta N/8,2KN]$ for all sufficiently large $N$.
	For deterministic rectangles, set $\bar\Phi_N^\fB(A):=\Phi_N^\fB(A)-\e[\Phi_N^\fB(A)]$, and interpret a signed height interval $(s,t]$ as the difference of the cumulative values at $t$ and $s$. Independence of the urns shows that
	$$
	\bar M^{N,n}(x):=\sqrt{2}\sum_{r_N<j\le\ell_N,\, j<\rho_N} \bar\Phi_N^\fB\!\left(\bigl(q_j/N,m_j/N\bigr]\times\bigl((j-1)/N,j/N\bigr]\right)
	$$
	is a sum of martingale differences. On $\{\rho_N>x_N\}$, write
	$$
	S^{N,+}_{r,x}(a)-Y^{N,n}(x)=\bar M^{N,n}(x)+R^{N,n}(x),
	$$
	and set $R^{N,n}(x)=0$ on the complement. The remainder consists of the initial correction in~\eqref{eq:proce_++}, the conditional means $N^{-1}(\mu(m_j)-\mu(q_j))$, and the two endpoint sums over $(r^n,r]$ and $(x_n,x]$. Put $\varepsilon_N:=\sup_{m\ge\eta N/8}|\mu(m)-c_\alpha|$. Stopping the final endpoint sum at $\rho_N$, Lemma~\ref{lm:4_1} and orthogonality of the centered urn increments give
	$$
	\e|R^{N,n}(x)|^2\le C(2^{-n}+N^{-1}+\varepsilon_N^2).
	$$
	Indeed, the initial correction has second moment $O(N^{-1})$, each endpoint sum contains at most $N2^{-n}+1$ terms with conditional variance at most $C/N$, and the sum of conditional means is bounded by $C\varepsilon_N$ for the interior terms and $C(2^{-n}+N^{-1})$ at the endpoints.
	
	Let $\Omega_N(h)$ be the modulus of continuity of $S^{N,+}_{r,\cdot}(a)$ on $[r,T]$. Since
	$$
	\frac{|m_j-q_j|}{N}\le 2^{-n}+\frac2N+\Omega_N(2^{-n}+N^{-1}),
	$$
	and the left-hand side is at most $4K$ before $\rho_N$,~\eqref{eq:urn_increment_var} gives
	$$
	\e|\bar M^{N,n}(x)|^2\le C\left(2^{-n}+N^{-1}+\e[\Omega_N(2^{-n}+N^{-1})\wedge4K]+N^{-1/2}\log^2(2+N)\right).
	$$
	Uniform convergence to the continuous limit $Y$ and bounded convergence give
	$$
	\lim_{n\to\infty}\limsup_{N\to\infty}\e[\Omega_N(2^{-n}+N^{-1})\wedge4K]=0.
	$$
	On $\{x<\tau_{\eta,K}\}$, we have $\rho_N>x_N$ for all sufficiently large $N$. Passing first to the limit in $N$ and then in $n$ therefore gives, for every $\varepsilon>0$,
	$$
	\p\bigl(|Y(x)-Y^n(x)|>\varepsilon,\ x<\tau_{\eta,K}\bigr)\longrightarrow0.
	$$
	Together with the convergence of $Y^n(x)$ to $Y^\infty_x$ in probability on $\{x<\tau^Y\}$, this gives $Y(x)=Y^\infty_x$ almost surely on $\{x<\tau_{\eta,K}\}$. Letting $\eta\downarrow0$ and $K\uparrow\infty$ yields the same identity on $\{x<\tau^Y\}$. A countable dense set of $x$ and continuity give the SDE on $[r,\tau^Y)$. Absorption and strong uniqueness now prove~\eqref{eq:joint_single_I+}.
	
	\medskip
	
	For $r\le0$, reverse the spatial direction and use~\eqref{eq:proce_-1}. Exchanging colors identifies $(-\fD^-_{\tau_m^\fR})_{m\ge0}$ in law with $(\fD^+_{\tau_m^\fB})_{m\ge0}$, so the same estimates apply to $\Phi_N^\fR$. When $r=0$, the origin urn contributes a term vanishing in $L^2$ by Lemma~\ref{lm:4_1}. The reversed-mesh approximations then identify the limiting branch as the solution to~\eqref{eq:poly_GRKT_back}, driven by $\tilde{\cM}^{\fR,*}$ and absorbed at zero. This proves~\eqref{eq:joint_single_I-} and completes the proof.
\end{proof}

\begin{remark}[Asymptotically free case]\label{rem:AF}
	Propositions~\ref{prop:AF_expe} and~\ref{prop:cov_AF} suggest that our joint Ray--Knight approach could also recover the BMPE limit in the asymptotically free case. Let $w$ be positive and monotone, satisfying~\eqref{eq:weight_func} with $\alpha=0$, and let $\gamma<1$ be given by~\eqref{def:gamma}. The urn estimates suggest that the limiting profiles satisfy
	\begin{align*}
		\dd\cS_{r,x}(a)&=\sqrt{2}\,\cW([0,\cS_{r,x}(a)],\dd x)+\bigl(1-\gamma\;\big|_0\;\gamma\bigr)(x)\d x, && x\ge r,\\
		\dd\cS_{r,-x}(a)&=-\sqrt{2}\,\cW^*([0,\cS_{r,-x}(a)],\dd x)+\bigl(1-\gamma\;\big|_0\;\gamma\bigr)(x)\d x, && x\ge-r,
	\end{align*}
	up to their killing times, with $\cS_{r,r}(a)=a$. Under the normalization $2\cS_{r,\cdot}(a/2)$, the profiles have squared Bessel dimension $2(1-\gamma)$ between $r$ and $0$, and $2\gamma$ outside, where they are absorbed at zero. This agrees with~\cite[Theorem~1A]{Toth96}.
	At inverse local times at the origin and at first hitting times, these profiles have the Ray--Knight laws of the BMPE
	$$
	W_t=B_t+\gamma\sup_{s\le t}W_s+\gamma\inf_{s\le t}W_s,\qquad W_0=0,
	$$
	where $B$ is standard Brownian motion; see~\cite[Theorems~3.1 and~3.4]{CPY98}. Extending the joint Ray--Knight argument and reconstructing the process from its local times would thus provide an alternative proof of the BMPE limit. 
\end{remark}

\section{The PSR flow}
\label{sec:PSR_flow}
In a recent work~\cite{Yu26}, the polynomially self-repelling flow with index $\alpha>0$, hereafter referred to as the PSR$^\alpha$ flow, was introduced. In this section, we recall its definition and basic properties, and present further properties used in the proof of Theorem~\ref{thm:main} and in the sequel.

\subsection{Basic facts about the PSR flow}\label{sec:basic}

Let $\cW$ be a standard space-time white noise on $\h$. Existence and uniqueness of the flow defined below follow~from \cite[Theorem~6.2]{Yu26}, where uniqueness means that any other flow driven by the same white noise and satisfying the same regularity properties coincides with $\cS$ almost surely.

\begin{definition}\label{def:PSR}
	Let $\alpha>0$. A PSR$^\alpha$ flow driven by $\cW$ is a collection $\cS=\{(\cS_{r,x}(a),\,x\ge r)\}_{(a,r)\in\h}$ of continuous processes satisfying the following conditions:
    \begin{enumerate}[label=\textnormal{(\arabic*)}]
        \item for each $(a,r)\in \h$, the process $(\cS_{r,x}(a),\, x \ge r)$ is almost surely the unique strong solution to the SDE~\eqref{SDE:for} and is absorbed at $\tau_{(a,r)}:=\inf \{x\ge r\vee 0:\, \cS_{r,x}(a)=0\}$.
        
        \item Almost surely,
        \begin{enumerate}[label=\textnormal{(R\arabic*)}, nosep]
            \item for all $(a,r) \in \h$, $\cS_{r,r}(a) = a$, \label{item:R1}
            \item for all $r \le x$, $a \mapsto \cS_{r,x}(a)$ is c\`adl\`ag, \label{item:R2}
            \item for any $(a_1,r_1), (a_2,r_2) \in \h$ and any $z \ge y \ge r_1\vee r_2$,
            $$
            \cS_{r_1,y}(a_1) < \cS_{r_2,y}(a_2) 
            \ \Longrightarrow\ 
            \cS_{r_1,z}(a_1) \le \cS_{r_2,z}(a_2).
            $$ \label{item:R3}
        \end{enumerate}
    \end{enumerate}
\end{definition}

Let $\cS$ denote the PSR$^\alpha$ flow driven by $\cW$. Define, for any $x\in \r$,
$$M(x):=\{\cS_{r,x}(a):\, r<x,\,(a,r)\in\cD\}.$$

\begin{lemma}\label{lm:basci_prop}
	The following properties hold almost surely:
	\begin{enumerate}[label=\textup{(\roman*)}]
		\item For all $x\in \r$, the set $M(x)$ is dense in $ \r_+$.
		
		\item For all $(a,r)\in \h$ and $x\ge r$,
		\begin{align}\label{eq:cS_I_+}
			\cS_{r,x}(a)
			= \inf \left\{ \cS_{r_n,x}(a_n):\, r_n< r,\ \cS_{r_n,r}(a_n)>a,\ (a_n,r_n)\in \cD \right\}.
		\end{align}
	\end{enumerate}
\end{lemma}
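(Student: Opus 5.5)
I would prove (i) first, because (ii) follows from (i) combined with the regularity properties \ref{item:R1}--\ref{item:R3}.

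\emph{Part (i).} Fix $x\in\r$ and a target interval $(c,d)\subset\r_+$ with $c<d$. It suffices to find, almost surely and simultaneously for all $x$, a dyadic pair $(a,r)\in\cD$ with $r<x$ and $\cS_{r,x}(a)\in(c,d)$.

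Take first $x$ dyadic and choose dyadic $r<x$ very close to $x$. For small $x-r$, each flow line behaves like $a+O(\sqrt{a(x-r)})+O(x-r)$. By monotonicity in $a$ (a consequence of \ref{item:R3}) and continuity of each line, the values $\{\cS_{r,x}(a):a\in\bD_+\}$ form a nondecreasing family. The only question is whether its jumps avoid $(c,d)$. I would use the pointwise convergence in probability
\[
\cS_{r,x}(a)\to a \quad\text{as } r\uparrow x,
\]
obtained from the SDE~\eqref{SDE:for} via the variance bound $\e[(\cS_{r,x}(a)-a)^2]\le C(a+1)(x-r)$. Apply it to a dyadic $a\in(c,d)$ and let $r_n\uparrow x$. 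Borel--Cantelli along a subsequence then gives some $n$ with $\cS_{r_n,x}(a)\in(c,d)$ almost surely. Taking countable unions over dyadic $c,d,a,x$ yields (i) for all dyadic $x$ simultaneously.

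For general $x$, I would pass from a dyadic $x'<x$ to $x$. The step is to show that the flow map from time $x'$ to time $x$ has a dense image when started from a dense set at time $x'$. Concretely, pick dyadic $x'\in(x-\delta,x)$. The set of values at $x$ of lines started from dyadic points at times in $(x',x)$ is dense, by applying the same estimate uniformly. This uses the moment bound to get a uniform-in-$x$ modulus on compact sets via a chaining/Kolmogorov argument over dyadic starting times.

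\emph{Main obstacle.} This uniformity over non-dyadic $x$ is the hardest step. What I would try first is to use coalescence only through \ref{item:R3}, together with a Kolmogorov-type bound for
\[
\sup_{r<x<r+h}\bigl|\cS_{r,x}(a)-a\bigr|
\]
with $(a,r)$ ranging over dyadics in a compact set. The estimate I would aim for is
\[
\p\Bigl(\sup_{x\in[r,r+h]}\bigl|\cS_{r,x}(a)-a\bigr|>\varepsilon\Bigr)\le C_\varepsilon h^{2},
\]
which follows from a fourth-moment BDG bound.

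\emph{Part (ii).} Denote the right-hand side of~\eqref{eq:cS_I_+} by $J$.

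For the lower bound: each admissible $(a_n,r_n)$ satisfies $\cS_{r_n,r}(a_n)>a=\cS_{r,r}(a)$. Then \ref{item:R3} gives $\cS_{r_n,x}(a_n)\ge\cS_{r,x}(a)$, so $J\ge\cS_{r,x}(a)$.

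For the upper bound: by (i) at time $r$, there are admissible $(a_n,r_n)$ with $\cS_{r_n,r}(a_n)\downarrow a$. Fix a dyadic $b>a$ close to $a$, chosen so that $\cS_{r_n,r}(a_n)<b$. Then \ref{item:R3} gives $\cS_{r_n,x}(a_n)\le\cS_{r,x}(b)$. By right-continuity \ref{item:R2}, $\cS_{r,x}(b)\to\cS_{r,x}(a)$ as $b\downarrow a$. Hence $J\le\cS_{r,x}(a)$, which closes the argument.
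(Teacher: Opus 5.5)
Your proof is correct, and it is self-contained where the paper's is not. The paper simply cites \cite[Lemma~6.5(i)]{Yu26} for (i) and says (ii) follows by the argument of \cite[Lemma~5.2(ii)]{Yu26}.

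Your part (ii) uses only (i), \ref{item:R2} and \ref{item:R3} on their common almost-sure event, so it holds simultaneously for all $(a,r)$ and $x$. Applying \ref{item:R3} at $y=r$ against $(a,r)$ gives $J\ge\cS_{r,x}(a)$. Density of $M(r)$ gives an admissible $(a_n,r_n)$ with $\cS_{r_n,r}(a_n)\in(a,b)$, then \ref{item:R3} against $(b,r)$ gives $J\le\cS_{r,x}(b)$, and \ref{item:R2} lets $b\downarrow a$.

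For (i) you replace the citation by a direct short-time estimate. What this buys is that the lemma follows from Definition~\ref{def:PSR} alone (the SDE for countably many dyadic lines plus \ref{item:R1}--\ref{item:R3}), independently of how the flow is built in \cite{Yu26}. The clean version of your estimate is as follows. Let $\sigma$ be the exit time of $\cS_{r,\cdot}(a)$ from $(a-\varepsilon,a+\varepsilon)$, with $a\ge 2\varepsilon$. Then the quadratic variation up to $\sigma\wedge(r+h)$ is at most $C(a+\varepsilon)h$ deterministically. The line also stays away from $0$, so the reflecting regime $r<0$ causes no trouble. BDG and Markov then give $\p(\sigma\le r+h)\le C(a+\varepsilon)^2\varepsilon^{-4}h^2$ once $h/(2\alpha+1)<\varepsilon/2$.

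One point must be tightened, because read literally ("$(a,r)$ ranging over dyadics in a compact set") it fails. In the union bound, let the starting times run over $k2^{-n}\in[-T,T]$, i.e.\ $O(2^n)$ of them, but keep the heights in a \emph{fixed} finite $\varepsilon$-net of $[2\varepsilon,K]$. If the heights also range over level-$n$ dyadics, the bound is $O(2^{2n}\cdot 2^{-2n})$, which is not summable, and Borel--Cantelli gives nothing. Alternatively, use a $p$-th moment BDG bound with $p\ge 8$.

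With the fixed net, $\sum_n C(K,T,\varepsilon)2^{-n}<\infty$. So for all large $n$, every $x\in[-T+1,T]$ has a grid time in $[x-2^{-n},x)$ from which all net lines end within $\varepsilon$ of their starting heights at $x$. This gives $2\varepsilon$-density of $M(x)$ in $[2\varepsilon,K]$ for all such $x$ at once, and countable unions over dyadic $\varepsilon,K,T$ finish the proof.

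No chaining or Kolmogorov argument is needed, since the supremum over $[r,r+h]$ is already inside the probability. Your separate dyadic-$x$ step via convergence in probability also becomes redundant.
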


\begin{proof}
	Assertion~(i) is~\cite[Lemma~6.5(i)]{Yu26}, while~(ii) follows by the argument of~\cite[Lemma~5.2(ii)]{Yu26}.
\end{proof}
 
\bigskip

We next recall the self-duality and non-crossing properties established in~\cite{Yu26}. The \emph{dual} (or \emph{backward}) \emph{PSR flow}
$\cS^*=\{(\cS^*_{r,x}(a),\,x\le r)\}_{(a,r)\in\h}$ is defined by
\begin{align}\label{eq:def_dual_+}
	\cS^*_{r,x}(b):=\inf\{a\ge0:\,\cS_{x,r}(a)>b\}, \qquad x\le r,\ b\ge0.
\end{align}
Concatenating the forward and backward flow lines at their common starting point gives the \emph{two-sided PSR flow}. More precisely, for each $(a,r)\in\h$, define
\begin{align}\label{eq:cS_bar}
\overline{\cS}_{r,x}(a):= 
\begin{cases}
    \cS_{r,x}(a), & \text{if }\  x \ge r, \\
    \cS^*_{r,x}(a), & \text{if }\  x \le r,
\end{cases}
\end{align}
and write $\overline{\cS}=\bigl\{\overline{\cS}_{r,\cdot}(a)\bigr\}_{(a,r)\in\h}$. 

\begin{proposition}[Self-duality, {\cite[Theorem~6.6]{Yu26}}]\label{prop:self_dual}
The families
$$
\{(\cS_{r,x}(a),\,x\ge r)\}_{(a,r)\in\h}
\quad\text{and}\quad
\{(\cS^*_{-r,-x}(a),\,x\ge r)\}_{(a,r)\in\h}
$$
are identical in law. Furthermore, for each $(a,r)\in \h$, the two-sided PSR flow line $\overline{\cS}_{r,\cdot}(a)$ is a CRKC in the sense of Definition~\ref{def:RK_curve}. In particular,
$$
\{(\overline{\cS}_{r,x}(a),\, x\in \r)\}_{(a,r)\in \h}
\quad\text{and}\quad
\{(\overline{\cS}_{-r,-x}(a),\, x\in \r)\}_{(a,r)\in \h}
$$
are identical in law.
\end{proposition}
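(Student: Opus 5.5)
The strategy is to transfer self-duality from the discrete level, where it is an exact combinatorial identity, to the PSR flow by means of the joint Ray--Knight convergence. Working at the level of flows rather than individual SDEs avoids any direct study of the backward equation. An alternative route, going through the Walsh/Arratia-type duality between coalescing flows and their dual flows as in T\'oth--Werner, is possible, but the discrete route fits what is already established.

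\textbf{Discrete duality.} For the PSRW, fix $(a,r)$ and $N$. Lemma~\ref{lem:noncrossing} together with monotonicity of upcrossing counts gives, for $x<r$,
$$
S^N_{r,x}(a)=\min\Bigl\{N^{-1}(\lfloor Nb\rfloor+1):\,S^N_{x,r}(b)>S^N_{r,r}(a)\Bigr\}
$$
up to $O(N^{-1})$ lattice errors. This holds because $T_{\lfloor Nr\rfloor,\lfloor Na\rfloor}$ lies between the successive upcrossing times at $\lfloor Nx\rfloor$. So the backward profile is, exactly, the generalized inverse of the forward profiles. The proof of Theorem~\ref{thm:main} already encodes this relation in the form \eqref{eq:claim_dual}.

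\textbf{Passage to the limit.} I take, by tightness and Skorokhod representation, a subsequential limit of
$$
\bigl(\Phi_N^\fB,\{S^N_{r,\cdot}(a)\}_{(a,r)\in\cD}\bigr)
$$
jointly. By Proposition~\ref{prop:joint_for_RK} (forward, absorbed case) the forward branches are the strong solutions driven by $\tilde\cM^\fB$; using the translation of the field, this extends to all dyadic $r$. Hence, by strong uniqueness and properties (R1)--(R3), the limiting forward family coincides on $\cD$ with the PSR flow $\cS$ built from $\tilde\cW$. The same limit argument used for \eqref{eq:claim_dual}, splitting into the cases $a_0>0$ and $a_0=0$ exactly as there, shows that the limiting backward branches satisfy
$$
Y_{r,x}(b)=\inf\{a\in\bD_+:\,\cS_{x,r}(a)>b\}=\cS^*_{r,x}(b)
$$
at dyadic $x$. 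Continuity in $x$ together with Lemma~\ref{lm:basci_prop}(ii) then extends this identity to all $x$.

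\textbf{Law of the dual family.} By the spatial reflection symmetry of the PSRW, which comes from exchanging colors in \eqref{weights_on_Z}, the discrete backward family $\{S^N_{-r,-\cdot}\}$ has the same law as the forward one, up to the $\pm1$ corrections near the origin and a vanishing origin urn. Passing to the limit, $\{\cS^*_{-r,-x}(a)\}$ has the law of the limiting forward family restricted to $\cD$. The forward family's law is determined by strong uniqueness, so the two laws coincide on $\cD$; by (R2) and Lemma~\ref{lm:basci_prop} they coincide on all of $\h$.

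\textbf{CRKC property and the main obstacle.} Once the dual is identified as the limit of backward branches, the last claims follow. Theorem~\ref{thm:main} gives that each limiting two-sided profile is a CRKC, and the CRKC definition is symmetric under $(r,x)\mapsto(-r,-x)$ with $\cW\mapsto\cW^*$, which gives the symmetry of $\overline{\cS}$.

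The main obstacle is circularity: the proof of Theorem~\ref{thm:main} itself invokes this proposition, via Proposition~\ref{cor:self-dual}, to handle the reflected part between $0$ and $r$. I would therefore avoid Theorem~\ref{thm:main} and obtain the CRKC property of the backward branch directly. For the law of a single backward line, Theorem~\ref{thm:1B_toth} (T\'oth) suffices, and it is independent of everything here. For the joint law across lines, I would use only the discrete reflection symmetry of the flow. Done this way, the argument relies on T\'oth's single-curve theorem plus Proposition~\ref{prop:joint_for_RK}, and is not circular.
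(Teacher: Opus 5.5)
Your proposal has a genuine gap. It sits exactly where the circularity you identified lives, and avoiding Theorem~\ref{thm:main} does not remove it.

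\textbf{Missing reflected convergence.} Proposition~\ref{prop:joint_for_RK} identifies limits of \emph{absorbed} branches only. Its proof establishes the integral equation for $Y$ on $[r,\tau^Y)$, i.e.\ before the first zero, and then appeals to absorption. Your construction needs forward lines $\cS_{r,\cdot}(a)$ with $r<0$ for two purposes: to assert that the limiting forward family is the PSR flow $\cS$ on all of $\cD$, and to evaluate $\inf\{a:\cS_{x,r}(a)>b\}$ at dyadic $x<0$. Even the restricted statement for $r\ge0$ involves $\cS^*_{-r,-x}$, which is built from forward lines living entirely on the negative half-line. These lines are \emph{reflected} on $[r,0]$. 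Since $(2\alpha+1)\cS_{r,\cdot}(a)$ is, in law, a $\mathrm{BESQ}^1$ process there, they hit zero before time $0$ with positive probability.

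``Translation of the field'' does not handle this. $\Phi_N^\fB$ is indeed asymptotically translation invariant, and \eqref{eq:proce_+1} holds for $r<0$. The difficulty is the continuation after a zero. The urn inputs of that proof give no control of the profile near height zero: the drift limit of Proposition~\ref{prop:exp_discre} is used only for heights $m\ge\eta N/8$, and the variance bound \eqref{eq:urn_increment_var} only for heights of order $N$. Identifying the limit through zeros as the strong solution driven by $\tilde{\cW}$ would require a reflected version of Proposition~\ref{prop:joint_for_RK}, which you do not supply. In the paper this reflected regime is precisely what is handed to self-duality via Proposition~\ref{cor:self-dual}. With such a reflected convergence result you would in effect reprove Theorem~\ref{thm:main} without self-duality, and your argument could then be completed.

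\textbf{Laws versus pathwise identification.} Your proposed non-circular substitute in the last paragraph only produces laws. The CRKC assertion is pathwise: the backward branch of $\overline{\cS}_{r,\cdot}(a)$ must be the strong solution of \eqref{SDE:back} driven by $\cW^*$, the reflection of the \emph{same} noise that drives $\cS$. This strong form is what Proposition~\ref{cor:self-dual} and the strong-uniqueness step in the proof of Theorem~\ref{thm:main} actually use. Theorem~\ref{thm:1B_toth} and the distributional reflection symmetry of the walk can at best show that $\{\cS^*_{-r,-x}(a)\}$ has the law of a PSR flow. They cannot identify its driving noise as $-\cW^*$.

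The paper imports the statement from \cite[Theorem~6.6]{Yu26}, and the remark following the proposition shows the mechanism, which reverses your logic. One defines $\cS^{\mathrm b}$ as the PSR flow driven by $-\cW^*$, so that concatenating it with $\cS$ yields CRKCs by construction. The only thing left to prove is the pathwise identity $\cS^{\mathrm b}=\cS^*$. It is obtained from non-crossing of forward and backward lines on $\cD$, together with the infimum characterization of Lemma~\ref{lm:basci_prop}(ii), its time-reversed counterpart, and the argument of \cite[Theorem~1.4]{Yu26}.
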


\begin{remark}[An alternative proof of Proposition~\ref{prop:self_dual}: a sketch]
	Let $\{(\cS^{\mathrm b}_{-r,-x}(a),\,x\ge r)\}_{(a,r)\in\h}$ be the PSR$^\alpha$ flow driven by $-\cW^*$. Concatenating the forward flow lines of $\cS$ with the backward flow lines of $\cS^{\mathrm b}$ yields the CRKCs of Definition~\ref{def:RK_curve}. By Theorem~\ref{thm:main} and Lemma~\ref{lem:noncrossing}, these curves almost surely do not cross for any starting points in $\cD$. Lemma~\ref{lm:basci_prop}(ii) and its time-reversed counterpart then allow us to apply the argument of \cite[Theorem~1.4]{Yu26} to obtain, almost surely,
	$$
	\cS^{\mathrm b}_{r,x}(b)=\inf\{a\ge0:\cS_{x,r}(a)>b\},\qquad (b,r)\in\h,\ x\le r.
	$$
	Thus $\cS^{\mathrm b}=\cS^*$, proving self-duality.
\end{remark}

\begin{proposition}[Non-crossing, {\cite[Proposition~6.7]{Yu26}}]\label{prop:non-crossing}
Almost surely, for any $(a_i,r_i)\in\h$, $i=1,2$, and any $y,z\in\r$,
$$
\overline\cS_{r_1,y}(a_1)<\overline\cS_{r_2,y}(a_2)\ \Longrightarrow\  \overline\cS_{r_1,z}(a_1)\le\overline\cS_{r_2,z}(a_2).
$$
\end{proposition}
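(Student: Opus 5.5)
The statement is the non-crossing property for the two-sided flow $\overline\cS$. The strategy is to reduce every configuration to the forward non-crossing property (R3) of the PSR flow and the defining identity~\eqref{eq:def_dual_+} of the dual flow. By symmetry in the roles of $y$ and $z$, it suffices to treat $z<y$ as well as $z>y$. The plan splits into cases according to the position of $y,z$ relative to $r_1,r_2$.

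\textbf{Case 1: $y,z\ge r_1\vee r_2$.} Here all curves are forward flow lines. The case $z\ge y$ is exactly (R3).

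For $z<y$, argue by contraposition. Suppose $\cS_{r_1,z}(a_1)>\cS_{r_2,z}(a_2)$. Then (R3), applied with the roles of the two curves swapped, gives $\cS_{r_1,y}(a_1)\ge\cS_{r_2,y}(a_2)$, which contradicts the hypothesis.

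\textbf{Case 2: $y,z\le r_1\wedge r_2$.} Both curves are dual flow lines. Use the definition $\cS^*_{r,x}(b)=\inf\{a:\cS_{x,r}(a)>b\}$ together with the monotonicity of $a\mapsto\cS_{x,r}(a)$ to translate the claim into a statement about forward lines.

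\emph{Key sub-claim.} If $\cS^*_{r_1,y}(a_1)<c<\cS^*_{r_2,y}(a_2)$ for some dyadic $c$, then the forward line $\cS_{y,\cdot}(c)$ satisfies
\[
\cS_{y,r_1}(c)>a_1 \quad\text{and}\quad \cS_{y,r_2}(c)\le a_2 .
\]
By (R3), the forward line from $(c,y)$ then separates the two dual curves at every intermediate level. Using the density of $M(x)$ from Lemma~\ref{lm:basci_prop}(i) and the formula~\eqref{eq:cS_I_+}, one compares at level $z$ and obtains $\cS^*_{r_1,z}(a_1)\le\cS^*_{r_2,z}(a_2)$.

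\textbf{Mixed cases.} If $y$ lies between $r_1$ and $r_2$, or one of $y,z$ lies on the forward side and the other on the dual side, split the interval at the intermediate starting point $r_i$. At $x=r_i$ both descriptions of the curve agree, $\cS^*_{r_i,r_i}(a_i)=a_i=\cS_{r_i,r_i}(a_i)$. Then chain Cases 1 and 2 through that point.

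Inequalities may degrade from strict to weak across a junction. To handle this, use a perturbation: insert a dyadic forward line $(c,x)$ strictly between the two curves at some level, which is possible by density, and propagate non-crossing with respect to this separating line on both sides.

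\textbf{Main obstacle.} The hardest step is Case 2 and the mixed case, where one must show that the dual curves are ordered relative to every forward line: $\cS^*_{r,x}(b)<c$ should imply that $\cS_{x,\cdot}(c)$ stays above $\overline\cS_{r,\cdot}(b)$ all the way up to level $r$. At boundary equalities this requires the càdlàg regularity (R2), and the infimum representation~\eqref{eq:cS_I_+} is needed to pass from dyadic starting points to arbitrary ones.

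Since the statement is cited as \cite[Proposition~6.7]{Yu26}, I would ultimately present exactly this reduction. I would rely on self-duality (Proposition~\ref{prop:self_dual}) to transfer the forward argument of Case 1 to the backward direction, which avoids redoing Case 2 from scratch.
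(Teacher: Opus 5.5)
Your proposal is correct. The paper gives no proof of this statement; it is quoted from \cite[Proposition~6.7]{Yu26}. Your reduction to (R1)--(R3) plus the infimum definition \eqref{eq:def_dual_+} in fact shows that the implication holds deterministically on the event where (R1)--(R3) hold. However, the case analysis collapses to a single argument, and several tools you invoke are not needed.

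\emph{Reduction to $y<z$.} Contraposition with the labels $1,2$ exchanged shows that the claim for levels $(y,z)$ is equivalent to the claim for $(z,y)$. You used this only in Case~1, but it works in every configuration, so you may assume $y<z$ throughout.

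\emph{One separating line.} Take any $c$ strictly between $\overline\cS_{r_1,y}(a_1)$ and $\overline\cS_{r_2,y}(a_2)$. No density argument or dyadic restriction is needed, since (R1)--(R3) hold simultaneously for all starting points. It then suffices to prove $\overline\cS_{r_1,x}(a_1)\le\cS_{y,x}(c)\le\overline\cS_{r_2,x}(a_2)$ for all $x\ge y$.
\begin{itemize}
\item If $r_i\le y$, this is (R3).
\item If $r_1>y$, the infimum and monotonicity give $\cS_{y,r_1}(c)>a_1$. For $x\in[y,r_1]$ and any $b>\cS_{y,x}(c)$, (R3) gives $\cS_{x,r_1}(b)\ge\cS_{y,r_1}(c)>a_1$, hence $\cS^*_{r_1,x}(a_1)\le\cS_{y,x}(c)$. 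For $x\ge r_1$, (R3) applies directly at level $r_1$.
\item If $r_2>y$, the infimum gives $\cS_{y,r_2}(c)\le a_2$. For $x\in[y,r_2]$ and $b<\cS_{y,x}(c)$, (R3) gives $\cS_{x,r_2}(b)\le a_2$, hence $\cS^*_{r_2,x}(a_2)\ge\cS_{y,x}(c)$. For $x\ge r_2$, apply (R3) against $\cS_{r_2,\cdot}(a')$ with $a'>a_2$, then let $a'\downarrow a_2$ using (R2).
\end{itemize}
That last step is the only place where the strict-to-weak degradation you worried about occurs, and right-continuity disposes of it. No chaining through $r_i$ is needed, and neither Lemma~\ref{lm:basci_prop}(i) nor \eqref{eq:cS_I_+} is used. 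The step you call the main obstacle is just the short computation in the last two items.

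I would not rely on Proposition~\ref{prop:self_dual} as a shortcut. It is an identity in law, so transporting the pathwise, uncountably indexed property (R3) to $\cS^*$ through it needs an additional measurability and regularity argument. It would also cover only the dual--dual configuration. The direct argument has the further advantage of not using Corollary~\ref{cor:dual}, which in this paper rests on the coming-down-from-infinity property.
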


We end this subsection with a consequence of right-continuity used in the proof of Theorem~\ref{thm:main}, which follows from property~\ref{item:R2} of the PSR flow.

\begin{corollary}\label{cor:cov_b}
	Fix $(a,r)\in\h$ and a sequence $(a_n)_{n\ge1}$ with $a_n\downarrow a$. Define $Y(x):=\lim_{n\to \infty}\cS_{r,x}(a_n)$ for $x\ge r$. Then, almost surely, $(Y(x),\,x\ge r)$ is the unique strong solution to~\eqref{SDE:for}, started from $a$, and is absorbed at $\tau_{(a,r)}$.
\end{corollary}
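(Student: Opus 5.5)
The plan is to show that the monotone limit $Y$ agrees with the flow line $\cS_{r,\cdot}(a)$ up to its absorption time, and then to read off every property of $Y$ from that flow line. Existence of $Y(x)$ is clear: by the non-crossing property \ref{item:R3} and \ref{item:R1}, the map $a'\mapsto\cS_{r,x}(a')$ is non-decreasing for $x\ge r$. Property \ref{item:R2} says that $a'\mapsto\cS_{r,x}(a')$ is right-continuous, so $\lim_{n\to\infty}\cS_{r,x}(a_n)=\cS_{r,x}(a)$ almost surely, simultaneously for all $x\ge r$, because \ref{item:R2} holds almost surely for all $r\le x$ at once. Hence $Y=\cS_{r,\cdot}(a)$ identically on $[r,\infty)$. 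By property (1) of Definition~\ref{def:PSR}, the fixed-starting-point line $\cS_{r,\cdot}(a)$ is almost surely the unique strong solution to~\eqref{SDE:for} absorbed at $\tau_{(a,r)}$, and this gives the claim.

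The one point that needs care is the quantifier. \ref{item:R2} is stated to hold almost surely for all $r\le x$, and it concerns càdlàg dependence in the variable $a$. Here $(a,r)$ is fixed and the sequence $a_n$ is deterministic, so we only need right-continuity at a single point, on one full-measure event. This is immediate from \ref{item:R2}.

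If one prefers an argument that does not rely on \ref{item:R2} uniformly in $x$, a fallback is available. Fix countably many $x$ and use \ref{item:R2} pointwise to get $Y(x)=\cS_{r,x}(a)$ at those $x$. Then pass to all $x$ using continuity of $\cS_{r,\cdot}(a)$ together with monotonicity of $Y$ in the sense of sandwiching. Specifically, $\cS_{r,x}(a)\le Y(x)\le\cS_{r,x}(a_n)$ for every $n$, and the upper bounds converge at dyadic points. I expect no real obstacle beyond making sure the absorption rule is inherited. This holds because $Y$ coincides with $\cS_{r,\cdot}(a)$, which is absorbed at $\tau_{(a,r)}$ by definition.
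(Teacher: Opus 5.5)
Your main argument is correct and matches the paper, which only notes that the corollary follows from property~\ref{item:R2}: that property holds on one full-measure event for all $r\le x$, so right-continuity at $a$ gives $Y\equiv\cS_{r,\cdot}(a)$ on $[r,\infty)$, and property~(1) of Definition~\ref{def:PSR} then supplies the SDE and the absorption at $\tau_{(a,r)}$. Your fallback paragraph is unnecessary, and as written it would not close: a decreasing sequence of continuous paths can converge to a continuous limit on a dense set of $x$ without converging at every $x$, so passing to all $x$ needs extra input such as Lemma~\ref{lm:coal} together with Corollary~\ref{p:properties}(i).
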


\subsection{Coalescence properties}

In this subsection, we investigate the coalescence properties of the PSR flow. A key ingredient is the \emph{coming-down-from-infinity} property in Proposition~\ref{prop:coming_down}: infinitely many flow lines started from bounded positions coalesce rapidly after any positive time, leaving only finitely many distinct paths. Our proof follows the argument for an infinite family of independent coalescing reflected/absorbed Brownian motions (FICRAB) in~\cite[Section~8.2]{Toth98}; see also~\cite{co_down_2, Kingman, co_down_1} for related notions.

We first recall the flow property. For each fixed $(a,r)\in\h$ and $x\ge r$, almost surely,
\begin{align}\label{eq:flow_prop}
	\cS_{r,y}(a)=\cS_{x,y}\circ\cS_{r,x}(a),\qquad y\ge x.
\end{align}
If $x\ge\tau_{(a,r)}$, this is immediate; otherwise, it follows from~\cite[Lemma~4.5(i)]{Yu26}. The next proposition establishes the \emph{perfect flow property} at positive intermediate heights: on a common event of probability one,~\eqref{eq:flow_prop} holds for all $(a,r)\in\h$ and $y\ge x\ge r$ such that $\cS_{r,x}(a)>0$. Its proof is deferred to Section~\ref{subsec:perf}.

\begin{proposition}[Perfect flow property]\label{prop:perfect}
	Almost surely, for all $(a,r)\in\h$ and $y\ge x\ge r$ such that $\cS_{r,x}(a)>0$,
	\begin{align}\label{eq:perf}
		\cS_{r, y}(a) = \cS_{x, y} \circ \cS_{r, x}(a).
	\end{align}
\end{proposition}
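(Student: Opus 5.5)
We upgrade the fixed-$(a,r,x)$ flow property~\eqref{eq:flow_prop} to a statement holding simultaneously for all parameters. The route is to prove it first on a countable dense set and then extend using the monotonicity/c\`adl\`ag regularity \ref{item:R2}, the non-crossing property \ref{item:R3}, and the approximation formula~\eqref{eq:cS_I_+}. Concretely, let $\Omega_0$ be the almost sure event on which~\eqref{eq:flow_prop} holds for all $(a,r)\in\cD$, all dyadic $x\ge r$ and all $y\ge x$, on which \ref{item:R1}--\ref{item:R3} and Lemma~\ref{lm:basci_prop} hold, and on which, for dyadic $x$, the map $b\mapsto\cS_{x,y}(b)$ is continuous at every value $b=\cS_{r,x}(a)$ with $(a,r)\in\cD$, $r<x$. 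This last requirement I would justify by the following fact: for fixed dyadic $x$, the jump set of $b\mapsto\cS_{x,\cdot}(b)$ is countable and determined by the noise on $\r_+\times(x,\infty)$, while $\cS_{r,x}(a)$ is determined by the noise on $(-\infty,x]$ and has an atomless law on $(0,\infty)$, in the same way as in the proof of Theorem~\ref{thm:main} for $\mathcal K_x$.

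The first step is to extend to arbitrary real intermediate $x$ with $(a,r)\in\cD$. Fix such a pair, $x\ge r$ with $\cS_{r,x}(a)=b>0$, and $y\ge x$. Choose dyadic $x'\in(x,y]$ close to $x$ (if $y=x$ the claim is \ref{item:R1}). Then $\cS_{r,y}(a)=\cS_{x',y}(\cS_{r,x'}(a))$ by $\Omega_0$, and we must show this equals $\cS_{x,y}(b)$. To see this, sandwich $\cS_{x,\cdot}(b)$ between flow lines from dyadic points $(a_n,r_n)$ with $r_n<x$ and $\cS_{r_n,x}(a_n)$ slightly above or below $b$; Lemma~\ref{lm:basci_prop}(i) gives density of $M(x)$. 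Non-crossing \ref{item:R3} orders them at $x'$ and at $y$. By~\eqref{eq:cS_I_+} the infimum from above equals $\cS_{x,y}(b)$. For the lower side, I would use that lines from below starting strictly under $b$ at $x$ coalesce with $\cS_{r,\cdot}(a)$ or stay below. The key point is that for dyadic lines the identity at dyadic $x'$ together with continuity at $b'=\cS_{r,x'}(a)$ forces the upper and lower envelopes at $y$ to agree with $\cS_{x',y}(b')$.

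The second step is to extend to general $(a,r)\in\h$ via~\eqref{eq:cS_I_+}. Write $\cS_{r,x}(a)$ and $\cS_{r,y}(a)$ as infima over dyadic $(a_n,r_n)$ with $\cS_{r_n,r}(a_n)>a$. Applying step one to each $(a_n,r_n)$ gives $\cS_{r_n,y}(a_n)=\cS_{x,y}(\cS_{r_n,x}(a_n))$. Taking the infimum, and using that $\cS_{r_n,x}(a_n)\downarrow\cS_{r,x}(a)$ along a suitable sequence while $b\mapsto\cS_{x,y}(b)$ is right-continuous by \ref{item:R2}, yields~\eqref{eq:perf}. One subtlety is that the decreasing limit could be attained from the right through a jump of $\cS_{x,y}$. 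This is harmless because right-continuity gives the value exactly at the limit point, and non-strict monotonicity handles equality cases.

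I expect the main obstacle to be step one at non-dyadic $x$ when $b$ is a jump point of $\cS_{x,y}(\cdot)$. Such jump points exist for uncountably many $x$, so the independence argument cannot exclude them. There I would rely on the coalescence structure: since $b>0$, lines started near $(b,x)$ coalesce with $\cS_{x,\cdot}(b)$ after any positive time (Proposition~\ref{prop:coming_down}). This lets me transfer to a dyadic $x'>x$, where $\cS_{r,x'}(a)$ is again a continuity point. The strict positivity $\cS_{r,x}(a)>0$ is used exactly here, since at height $0$ the reflected lines need not coalesce near $x$.
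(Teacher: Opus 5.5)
There is a genuine gap. It sits exactly in the case you flag as the main obstacle, and the remedy you propose does not close it.

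By \ref{item:R2}--\ref{item:R3} one always has $\cS_{r,y}(a)\le\cS_{x,y}(\cS_{r,x}(a))$. So, with $b:=\cS_{r,x}(a)>0$, \eqref{eq:perf} can fail only if the line $\cS_{r,\cdot}(a)$ leaves $(b,x)$ strictly below $\cS_{x,\cdot}(b)$, for instance along $\cS_{x,\cdot}(b-)\neq\cS_{x,\cdot}(b)$. Your steps do not exclude this configuration:
\begin{itemize}
\item Transferring to a dyadic $x'>x$ is fully compatible with it. In that case $b'=\cS_{r,x'}(a)=\cS_{x,x'}(b-)$ is a continuity point of $\cS_{x',\cdot}$, and the identity $\cS_{r,y}(a)=\cS_{x',y}(b')$ on $\Omega_0$ says nothing about which branch was taken at $x$.
\item The two envelopes in your first step are $\cS_{x,y}(b-)$ and $\cS_{x,y}(b)$. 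Nothing you invoke forces them to coincide, or forces $\cS_{r,\cdot}(a)$ onto the upper one.
\item Proposition~\ref{prop:coming_down} does not help, since it is perfectly consistent with two distinct branches out of $(b,x)$.
\end{itemize}

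More decisively, every ingredient you use holds equally for the coalescing Brownian (Arratia) flow. These ingredients are \ref{item:R1}--\ref{item:R3}, the flow property at fixed or dyadic times, coalescence, coming down from infinity, density of $M(x)$, and atomless one-point laws with spatial independence. Yet the right-continuous version $\phi$ of that flow is \emph{not} perfect. At the $(1,2)_l$ points of the Brownian web, the incoming path exits along the left outgoing branch while $\phi_{x,\cdot}(b)$ follows the right one. So no argument built only from these structural properties can prove the proposition.

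What is missing is an analytic non-touching property specific to~\eqref{SDE:for}. In the bad configuration, every $(b',z)\in\cD$ with $z\in(x,y]$ and $\cS_{r,z}(a)<b'<\cS_{x,z}(b)$ satisfies $\cS^*_{z,x}(b')=b>0$. In other words, a dual line touches a forward line at positive height. The paper proceeds as follows:
\begin{enumerate}
\item It reduces to countably many dyadic pairs via Proposition~\ref{prop:skeleton}.
\item It uses Corollary~\ref{cor:dual} and self-duality (Proposition~\ref{prop:self_dual}) to place the dual line below the forward line.
\item It rules out the contact by~\eqref{eq:hhh}.
\end{enumerate}

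That bound carries the real content. The space--time transformation of Theorem~\ref{thm:perkin_disintegration} maps the reference line to the constant $1$. The transformed backward flow then has $2b^*/\bar\sigma=(z-1)^{-1}+O(1)$ near $1$, so its scale function satisfies $p(1-)=+\infty$ and it cannot reach $1$ (Proposition~\ref{prop:transformed_separation}). This Feller-type boundary behaviour is what excludes $(1,2)$-type points for the PSR flow (cf.\ Proposition~\ref{prop:special_points}). It has no counterpart in the Brownian web, where forward and dual paths do touch.
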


\medskip

For any $x\le y$, define
$$
M(x,y):=\{\cS_{r,y}(a):\, (a,r)\in \h,\, r<x\}.
$$
By Lemma~\ref{lm:basci_prop}(i), almost surely, $M(x,x)$ is dense in $\r_+$ for every $x\in\r$. Enumerate $\cD$ as $\{(a_n,r_n):n\in\N\}$ and write $\cS^n=\cS_{r_n,\cdot}(a_n)$. We call this countable family the \emph{skeleton} of the PSR flow. Since the dual flow is finite-valued, \eqref{eq:def_dual_+} implies that $a\mapsto\cS_{x,y}(a)$ is unbounded for every $x<y$. Consequently, almost surely,
$$
\sup\{\cS^n(y):\,n\in\N,\ r_n<x\}=\infty,\qquad x\le y.
$$
In particular, $M(x,y)$ is unbounded for every $x\le y$.

\begin{proposition}\label{prop:coming_down}
    Almost surely, for every $x < y$, the set $M(x,y)$ is locally finite.
\end{proposition}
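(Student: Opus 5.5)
We want to show: almost surely, for all $x<y$, the set $M(x,y)=\{\cS_{r,y}(a):(a,r)\in\h,\ r<x\}$ has only finitely many points in every bounded interval $[0,K]$. Following the FICRAB argument of~\cite[Section~8.2]{Toth98}, the plan is to first reduce to the countable skeleton and to rational (dyadic) $x<y$, and then to prove a counting estimate.

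\textbf{Step 1: reduction to the skeleton.} By Lemma~\ref{lm:basci_prop}(ii), every $\cS_{r,y}(a)$ with $r<x$ is an infimum of skeleton values $\cS^n(y)$ with $r_n<r$. Moreover, every flow line started before $x$ passes through $\cS_{r,x'}(a)$ at a dyadic $x'\in(r,x)$.

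\begin{itemize}
\item If that height is positive, the perfect flow property (Proposition~\ref{prop:perfect}) writes $\cS_{r,y}(a)=\cS_{x',y}(b)$ with $b=\cS_{r,x'}(a)$.
\item If the height is zero, the value at $y$ is either $0$ or is produced by reflection.
\end{itemize}

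Using monotonicity and right-continuity in $a$ (\ref{item:R2}, \ref{item:R3}), the values $\{\cS_{x',y}(b):b\le K'\}$ are squeezed between skeleton lines started at time $x'$. Hence it suffices to show: almost surely, for all dyadic $x'<y'$ and every $K$, the set $\{\cS_{x',y'}(b):b\ge0\}\cap[0,K]$ is finite. Monotonicity in $y$ then passes the statement from dyadic $y'$ to all $y$. That passage itself uses coalescence: once two lines meet, they stay together by \ref{item:R3} and the perfect flow property.

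\textbf{Step 2: counting estimate at fixed times.} Fix dyadic $x<y$, set $h=y-x$, and let $N_\varepsilon$ be the number of distinct values in $\{\cS_{x,y}(b):b\in\varepsilon\z_+\}\cap[0,K]$. Two neighbouring lines started at heights $b$ and $b+\varepsilon$ are distinct at time $y$ only if they have not coalesced within time $h$. By non-crossing, the distinct values at $y$ below $K$ come from consecutive pairs whose lower line is still below $K$. I would bound
\[
\e[N_\varepsilon]\le 1+\sum_{b\in\varepsilon\z_+}\p\bigl(\cS_{x,y}(b)<\cS_{x,y}(b+\varepsilon),\ \cS_{x,y}(b)\le K\bigr).
\]
Here I invoke the coalescence-time estimates of Appendix~\ref{app:est}: the probability that two lines started at distance $\varepsilon$ near height $b$ have not merged after time $h$ should be $O(\varepsilon\, g(b,h))$. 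I would then need the extra event $\{\cS_{x,y}(b)\le K\}$ to make the sum over $b$ summable. This should follow because a line started at a large height $b$ is unlikely to be below $K$ after time $h$, thanks to the BESQ-type scaling (the lines are $(2\alpha+1)^{-1}\mathrm{BESQ}^1$ in law). These two bounds together give $\sup_\varepsilon\e[N_\varepsilon]<\infty$. Since $N_\varepsilon$ is non-decreasing as $\varepsilon\downarrow0$ along dyadics, monotone convergence gives finiteness of the count over all dyadic $b$. Right-continuity in $b$ then extends this to all $b$.

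\textbf{Main obstacle.} I expect the difficulty to be the two-point coalescence bound. The two lines are driven by the same white noise with an overlapping region of the noise, so their difference is not a simple Brownian motion. Its quadratic variation is $\int|\sigma(z,s)-\sigma(w,s)|^2\,ds\asymp|z-w|$ by~\eqref{eq:sigma_L2_bounds}, so it behaves like a $\mathrm{BESQ}^0$-type difference. My first attempt would be a time change: the difference $D$ has drift zero and quadratic variation comparable to $D$, so it is dominated by a Feller diffusion. For such a diffusion, $\p(D_h>0)\le C\varepsilon/h$ uniformly in the heights. Then the sum over $b\in\varepsilon\z_+$ with the restriction to $\cS_{x,y}(b)\le K$ contributes about $K/h$ effective terms, since lines that remain distinct must remain ordered below $K$. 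This yields $\e[N_\varepsilon]\le C(1+K/h)$. The lower bound on the quadratic variation near height $0$, where reflection and absorption occur, will need separate care, possibly by treating lines that hit zero (and therefore coalesce with the zero line) separately.
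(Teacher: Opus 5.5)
Your plan uses the same FICRAB counting scheme as the paper. The time-change estimate you single out as the main obstacle is already Lemma~\ref{lm:appen_tau}: by Corollary~\ref{app:coal}, the non-killed lines from $b$ and $b+\varepsilon$ are still distinct after time $h$ with probability at most $\frac{(2\alpha+1)\varepsilon}{2h}$, uniformly in $b$.

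\textbf{Step~2 does not yield $\sup_\varepsilon\e[N_\varepsilon]<\infty$.} You sum over all $b\in\varepsilon\z_+$, but the per-pair probability is of order $\varepsilon/h$ at every height; far from $0$ the difference of neighbouring lines behaves like a $\mathrm{BESQ}^0$. You would therefore need the joint probability $\p\bigl(\cS_{x,y}(b)<\cS_{x,y}(b+\varepsilon),\ \cS_{x,y}(b)\le K\bigr)$ to be $\varepsilon$ times something integrable in $b$. A per-pair bound together with a marginal bound $\p(\cS_{x,y}(b)\le K)\lesssim e^{-cb/h}$ (for large $b$) does not give this. Taking the smaller of the two for each $b$ leaves a bound growing like $\log(1/\varepsilon)$, and the ``about $K/h$ effective terms'' is only a heuristic.

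The paper, following Step~1 of \cite[Proposition~2.2(ii)]{Toth98}, truncates the starting heights rather than the images. Counting only $b\in\bD\cap[0,K_0]$ gives $\e[N_\varepsilon]\le1+\frac{(2\alpha+1)K_0}{2h}$, hence a.s.\ finiteness of $\{\cShat_{x,y}(b):b\in\bD\cap[0,K_0]\}$ for every $K_0$. Since $b\mapsto\cShat_{x,y}(b)$ is nondecreasing and unbounded, images in $[0,K]$ come only from $b<K_0$ once $\cShat_{x,y}(K_0)>K$.

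\textbf{Your uniform bound fails for the killed flow.} For $x\ge0$, $\cS_{x,\cdot}(0)\equiv0$, while $(2\alpha+1)\cS_{x,\cdot}(\varepsilon)$ is a killed $\mathrm{BESQ}^1$ that survives time $h$ with probability of order $\sqrt{\varepsilon/h}$. The cause is not the quadratic-variation lower bound, which holds at all heights: $\int[\sigma(z,s)-\sigma(w,s)]^2\,\mathrm{d}s\ge\frac{4}{2\alpha+1}|z-w|$. It is that the difference stops being a martingale once the lower line is absorbed. This costs at most one extra increment, since killed values are nondecreasing in $b$ and leave $0$ only once, but it must be argued. The paper sidesteps it by counting for $\cShat$ and then using $M(x,y)\subseteq\overline{M'(x,y)\cup\{0\}}$. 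That inclusion follows from \eqref{eq:cS_I_+}, because each killed skeleton line equals its non-killed version until absorption and is $0$ afterwards.

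\textbf{Step~1 is circular as written.} It invokes Proposition~\ref{prop:perfect} and coalescence of arbitrary pairs of lines. But Proposition~\ref{prop:perfect} is proved from Lemma~\ref{lm:coal}, Proposition~\ref{prop:skeleton} and Corollary~\ref{p:properties}, and the paper deduces each of these from the very statement you are proving.

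None of this is needed. By \eqref{eq:cS_I_+}, $M(x,y)$ lies inside the closure of the countable skeleton image $\{\cS^n(y):r_n<x\}$, and a locally finite set is closed. For countably many skeleton lines, the fixed-time flow property \eqref{eq:flow_prop} at dyadic times holds simultaneously almost surely. That suffices to reduce to lines started at a fixed dyadic time, using right-continuity in the starting height. It also lets you pass to arbitrary $x<y$: choose dyadic $x\le x'<y'\le y$ and apply the nondecreasing, unbounded map $\cS_{y',y}$ to the locally finite set at time $y'$. There is no ``monotonicity in $y$'' to use here; the flow is monotone only in the starting height.
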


Let $\cShat$ be the restriction to initial points in $\h$ of the non-killed flow on $\r^2$ described in~\cite[Remark~6.3]{Yu26}, whose existence follows from \cite[Theorem~4.3]{Yu26}. We take this flow to be driven by the same white noise $\cW$; its trajectories with initial points in $\h$ solve~\eqref{SDE:for} without absorption. By pathwise uniqueness, almost surely, for every $(a,r)\in\cD$, the trajectories $\cS_{r,\cdot}(a)$ and $\cShat_{r,\cdot}(a)$ coincide up to $\tau_{(a,r)}$, after which the former remains zero. For $x\le y$, define
$$
M'(x,y):=\{\cShat_{r,y}(a):\, (a,r)\in \cD,\, r<x\}.
$$
It follows from~\eqref{eq:cS_I_+} that, almost surely,
$$
M(x,y)\subseteq\overline{M'(x,y)\cup\{0\}},\qquad x\le y.
$$
Thus the following lemma implies Proposition~\ref{prop:coming_down}.

\begin{lemma}
	Almost surely, for every $x<y$, the set $M'(x,y)$ is locally finite.
\end{lemma}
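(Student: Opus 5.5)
We follow the coming-down-from-infinity argument for the FICRAB in \cite[Section~8.2]{Toth98}, using the non-killed flow $\cShat$ so that absorption plays no role. Local finiteness of $M'(x,y)$ means that for every $K>0$ only finitely many distinct values of $M'(x,y)$ lie in $[0,K]$. By monotonicity in $x$ and $y$, it is enough to prove the claim for dyadic $x<y$, with $y-x$ bounded below, and then pass to the general case. Indeed, $M'(x,y)\subseteq M'(x',y)$ for $x\le x'$. Moreover, the flow property together with non-crossing shows that the number of distinct values in $[0,K]$ at time $y'$ is at most the number of distinct values below some random level $K'$ at an earlier dyadic time $y\in(x,y')$, since paths ending below $K$ at $y'$ must pass below the path through a fixed large skeleton point.

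\textbf{Step 1: counting via gaps.} Fix dyadic $x<y$ and $K$. Let $N_K(x,y)$ be the number of distinct points of $M'(x,y)\cap[0,K]$. Every trajectory started before $x$ passes through some height at time $x$, and $M'(x,x)$ is dense by Lemma~\ref{lm:basci_prop}(i). By the non-crossing property \ref{item:R3} and the flow property at time $x$, for two paths that are distinct at time $y$ there is a dyadic interval of heights at time $x$ whose endpoint trajectories have not coalesced by time $y$. Partition $[0,K_x]$ into intervals of length $2^{-m}$, where $K_x$ is a level such that the path started from $(K_x,x)$ stays above $K$ at time $y$; this level is finite a.s.\ because flow lines grow. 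Then
\[
N_K(x,y)\le 1+\#\{j\le K_x2^m:\ \cShat_{x,y}(j2^{-m})\neq\cShat_{x,y}((j+1)2^{-m})\}
\]
for every $m$.

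\textbf{Step 2: coalescence estimate.} The key input is a bound on the probability that two trajectories started $\varepsilon$ apart at heights in $[0,K']$ have not coalesced after time $t$. This probability should be $\le C(K',t)\,\varepsilon$, which is the content of the coalescence-time estimates in Appendix~\ref{app:est}. Heuristically, the difference of two flow lines behaves like a martingale with quadratic variation of order $|z-w|$ by \eqref{eq:sigma_L2_bounds}, i.e.\ like a Feller-type diffusion that hits $0$ by time $t$ except with probability $O(\varepsilon/t)$. Summing over the $K_x2^m$ intervals (after truncating on $\{K_x\le K'\}$) gives $\e[N_K(x,y);K_x\le K']\le 1+C K'$, uniformly in $m$. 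By Fatou's lemma, $N_K(x,y)<\infty$ a.s. Letting $K'\to\infty$ and taking a union over countably many dyadic $x,y$ and integer $K$ completes the dyadic case, and the monotonicity reduction of the first paragraph extends it to all $x<y$.

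\textbf{Main obstacle.} The hardest step is the linear-in-$\varepsilon$ non-coalescence estimate, uniformly over starting heights near $0$ where the noise degenerates (there $\sigma$ vanishes and paths reflect or pass through zero without absorption in $\cShat$). I would first try to handle this by a time change of the difference process $D=\cShat_{x,\cdot}(b)-\cShat_{x,\cdot}(a)$, whose quadratic variation rate is at least $cD$ by \eqref{eq:sigma_L2_bounds} when the heights are bounded. This reduces the problem to a BESQ$^0$-type hitting estimate. If the lower bound on the rate fails near zero, I would use the drift $1/(2\alpha+1)$, which pushes both paths upward out of the degenerate region.
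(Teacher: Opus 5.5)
Your argument is essentially the paper's: a dyadic mesh of starting heights at time $x$, the tail bound of Corollary~\ref{app:coal} summed over the mesh, giving $1+\frac{(2\alpha+1)K}{2(y-x)}$ for the expected number of distinct grid values at time $y$ uniformly in $m$, a limit over nested meshes, a truncation in height, and the monotonicity reduction to dyadic $x<y$ (your first paragraph is Step~2 of \cite[Proposition~2.2(ii)]{Toth98}, which the paper also invokes).

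\textbf{Step 1 needs a fix.} The displayed inequality is false for a fixed $m$. A mesh interval whose endpoint trajectories have not coalesced by time $y$ can contain time-$x$ heights of earlier paths whose values at $y$ lie strictly between the two endpoint values, so a coarse mesh undercounts. What holds, and what your appeal to Fatou actually needs, is $N_K(x,y)\le 3+2\liminf_{m\to\infty}n_m$, where $n_m$ is your count. To see this, suppose $z_1<\dots<z_k\le K$ are distinct values at time $y$. They come from paths with strictly ordered heights $b_1<\dots<b_k<K_x$ at time $x$, since paths meeting at $x$ coalesce. Once $2^{-m}<\min_i(b_{i+1}-b_i)$, pick grid points $c_i\in(b_i,b_{i+1})$. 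Non-crossing then gives $\cShat_{x,y}(c_1)\le z_2<z_3\le\cShat_{x,y}(c_3)\le z_4<\cdots$, so the grid paths take at least $\lfloor k/2\rfloor$ distinct values at time $y$. (Right-continuity of $b\mapsto\cShat_{x,y}(b)$, combined with the flow property at $x$, gives the same conclusion.) This is the truncation step that the paper delegates to Step~1 of \cite[Proposition~2.2(ii)]{Toth98}.

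\textbf{Your ``main obstacle'' is not one, but your tool for it is wrong.} The bound \eqref{eq:sigma_L2_bounds} is an \emph{upper} bound on $\int|\sigma(z,s)-\sigma(w,s)|^2\,\dd s$, so it cannot bound the quadratic variation of the difference process from below. The lower bound you need, $\int[\sigma(z,s)-\sigma(w,s)]^2\,\dd s\ge\frac{4}{2\alpha+1}|z-w|$, holds for all $z,w\ge0$, including near zero; see the proof of Corollary~\ref{app:coal}. Lemma~\ref{lm:appen_tau} therefore gives $\p\bigl(\widehat\tau(r;a_1,a_2)>v\bigr)\le\frac{2\alpha+1}{2}\frac{a_2-a_1}{v}$ with no dependence on the starting heights. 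You need neither a drift argument near $0$ nor a $K'$-dependent constant.
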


\begin{proof}
	We follow the proof of~\cite[Proposition~2.2(ii)]{Toth98}, using Corollary~\ref{app:coal} in place of the meeting-time estimate. Fix dyadic numbers $x<y$ and an integer $K\ge1$. For $m\ge1$, let
	$$
	N_m:=\#\{\cShat_{x,y}(j2^{-m}):j=0,\ldots,K2^m\}.
	$$
	By monotonicity and Corollary~\ref{app:coal},
	$$
	\e(N_m)=1+\sum_{j=0}^{K2^m-1}\p\!\left(\cShat_{x,y}(j2^{-m})<\cShat_{x,y}((j+1)2^{-m})\right)\le1+\frac{(2\alpha+1)K}{2(y-x)}.
	$$
	Since the dyadic meshes are nested, monotone convergence implies that $\{\cShat_{x,y}(a):a\in\bD\cap[0,K]\}$ is almost surely finite. Taking a countable intersection over $K$, Step~1 of the proof of~\cite[Proposition~2.2(ii)]{Toth98} yields local finiteness of $M'(x,y)$ via a truncation argument. Step~2 of the same proof extends this result to arbitrary $x<y$.
\end{proof}

For $a>0$ and $r\le x$, define the left limit $\cS_{r,x}(a-):=\lim_{a'\uparrow a}\cS_{r,x}(a')$. A point $(a,r)\in\h^*$ is called a \emph{bifurcation point} if $\cS_{r,x}(a)>\cS_{r,x}(a-)$ for some $x>r$. Proposition~\ref{prop:coming_down} yields the following structural properties.

\begin{corollary}\label{p:properties}
	Almost surely,
	\begin{enumerate}[label=\textup{(\roman*)}]
		\item for any $r<x$ and $a\ge 0$, $\cS_{r,x}(a')=\cS_{r,x}(a)$ for all $a'>a$ close enough to $a$;
        
		\item for any $r<x$ and $a>0$, $\cS_{r,x}(a')=\cS_{r,x}(a-)$ for all $a'<a$ close enough to $a$;
        
        \item for any $r\in \r$, the set of bifurcation points at level $r$ is countable.
	\end{enumerate}
\end{corollary}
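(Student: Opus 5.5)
The plan is to derive all three assertions from Proposition~\ref{prop:coming_down} together with the monotonicity and right-continuity of $a\mapsto\cS_{r,x}(a)$ (property~\ref{item:R2} and the non-crossing property~\ref{item:R3}). Throughout, fix the almost sure event on which Proposition~\ref{prop:coming_down}, Lemma~\ref{lm:basci_prop} and \ref{item:R1}--\ref{item:R3} all hold.

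\textbf{Parts (i) and (ii).} Fix $r<x$ and choose an intermediate level $r'\in(r,x)$ with $r'$ dyadic. The key observation is that $\cS_{r,x}(a')\in M(r',x)$ for every $a'\ge0$, since $M(r',x)$ contains all $\cS_{\rho,x}(b)$ with $\rho<r'$; here $r<r'$, so no perfect-flow argument is needed.

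The map $a'\mapsto\cS_{r,x}(a')$ is non-decreasing by \ref{item:R3} and \ref{item:R1}: if $a_1<a_2$ then $\cS_{r,r}(a_1)<\cS_{r,r}(a_2)$. For $a'$ ranging in a bounded interval $[0,a+1]$, its values are bounded by $\cS_{r,x}(a+1)<\infty$. They therefore lie in a bounded subset of the locally finite set $M(r',x)$, which is a finite set. A monotone function taking finitely many values on $[0,a+1]$ is a step function with finitely many jumps.

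Hence it is constant on some interval $(a,a+\delta)$. Combined with right-continuity~\ref{item:R2}, this constant equals $\cS_{r,x}(a)$, which gives (i). Likewise the function is constant on some $(a-\delta,a)$, and this constant equals the left limit $\cS_{r,x}(a-)$, which gives (ii).

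\textbf{Part (iii).} Fix $r$. If $(a,r)$ is a bifurcation point, then there is some $x>r$ with $\cS_{r,x}(a)>\cS_{r,x}(a-)$. By monotonicity in the target level this can be reduced to rational $x$. Indeed, non-crossing shows that once two trajectories $\cS_{r,\cdot}(a-)$ (a limit of flow lines) and $\cS_{r,\cdot}(a)$ coincide they stay ordered and can only merge. So if they differ at some $x$, they differ on $(r,x]$, and in particular at some rational $q\in(r,x)$.

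For each rational $q>r$, the set of $a$ at which $a\mapsto\cS_{r,q}(a)$ jumps is countable, because the function is monotone; in fact it is locally finite by the argument for (i)--(ii). The set of bifurcation points at level $r$ is thus contained in a countable union of countable sets.

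\textbf{Main obstacle.} The step needing the most care is justifying that $\cS_{r,\cdot}(a-)$ differs from $\cS_{r,\cdot}(a)$ on an initial segment, so that restricting to rational $x$ loses nothing. I would handle this with the perfect flow property (Proposition~\ref{prop:perfect}) or directly via \ref{item:R3} applied to approximating $a'<a$ from (ii). Strictly, though, countability at a single $x$ suffices: taking $x=q$ rational already captures every bifurcation, since $\cS_{r,y}(a)=\cS_{r,y}(a-)$ for all $y\le x$ would force equality beyond $x$ by coalescence.
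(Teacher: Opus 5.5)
Your proof of (i) and (ii) is correct. It is exactly what the paper's one-line attribution to Proposition~\ref{prop:coming_down} intends. For $r<r'<x$, the values $\cS_{r,x}(a')$ with $a'\in[0,a+1]$ lie in a bounded part of the locally finite set $M(r',x)$. Monotonicity, from \ref{item:R1} and \ref{item:R3}, together with right-continuity \ref{item:R2}, then gives both one-sided stabilizations.

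The gap is in (iii), at the reduction to rational levels. You need the following: if $\cS_{r,x}(a-)<\cS_{r,x}(a)$ for some $x>r$, then $\cS_{r,q}(a-)<\cS_{r,q}(a)$ for every $q\in(r,x)$.

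Non-crossing cannot give this. Property~\ref{item:R3} only turns a \emph{strict} inequality at one level into a weak inequality later. It says nothing about two paths that agree at $q$ and might separate afterwards. Your alternative, ``\ref{item:R3} applied to approximating $a'<a$'', fails for the same reason.

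What is needed is coalescence. Since $\cS_{r,\cdot}(a-)$ is not itself a flow line, you must first replace it by one. Suppose $\cS_{r,q}(a-)=\cS_{r,q}(a)$. Use (ii) at both levels $q$ and $x$ to choose a single $a'<a$ with $\cS_{r,q}(a')=\cS_{r,q}(a-)$ and $\cS_{r,x}(a')=\cS_{r,x}(a-)$. Then $\cS_{r,q}(a')=\cS_{r,q}(a)$ with $q>r$, so Lemma~\ref{lm:coal} gives $\cS_{r,x}(a')=\cS_{r,x}(a)$, i.e.\ $\cS_{r,x}(a-)=\cS_{r,x}(a)$, a contradiction.

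Lemma~\ref{lm:coal} appears after the corollary, but invoking it is not circular. Its proof uses only \eqref{eq:cS_I_+}, Proposition~\ref{prop:coming_down}, \ref{item:R3} and pathwise uniqueness for pairs of skeleton paths. The paper itself records this consequence right after Lemma~\ref{lm:coal}: the two trajectories can bifurcate only at $r$.

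The perfect-flow route you mention is far heavier. Its positivity hypothesis also leaves the case $\cS_{r,q}(a)=0$ at negative levels to be handled separately.

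Finally, no single level detects all bifurcations, since two branches may recoalesce before it. You need the union of jump sets over levels $q_n\downarrow r$, as in your main argument. Each of these jump sets is locally finite by your argument for (ii), which gives countability.
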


\begin{corollary}\label{cor:dual}
    Almost surely, for any $r<x$ and $a,b\ge 0$, 
    $$
    \cS^*_{x,r}(b)>a \ \Longleftrightarrow \ b\ge \cS_{r,x}(a).
    $$
\end{corollary}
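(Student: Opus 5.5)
The proof will rest on the definition~\eqref{eq:def_dual_+} together with the local constancy in Corollary~\ref{p:properties}(i), which says that $a\mapsto\cS_{r,x}(a)$ is a nondecreasing step function that is right-continuous and locally constant to the right of every point. We fix the almost sure event on which Corollary~\ref{p:properties} holds and the monotonicity in $a$ from the non-crossing property~\ref{item:R3} holds. Then we take arbitrary $r<x$ and $a,b\ge0$.

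For ($\Leftarrow$), assume $b\ge\cS_{r,x}(a)$. By Corollary~\ref{p:properties}(i) there is $\varepsilon>0$ such that $\cS_{r,x}(a')=\cS_{r,x}(a)\le b$ for all $a'\in[a,a+\varepsilon)$. By monotonicity, $\cS_{r,x}(a')\le b$ for every $a'\le a+\varepsilon/2$, so no $a'$ in $[0,a+\varepsilon/2]$ belongs to the set $\{a'\ge0:\cS_{r,x}(a')>b\}$. Hence the infimum of this set, which is $\cS^*_{x,r}(b)$, is at least $a+\varepsilon/2>a$. Note that the infimum is finite because $a'\mapsto\cS_{r,x}(a')$ is unbounded, as remarked before Proposition~\ref{prop:coming_down}.

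For ($\Rightarrow$), argue by contraposition and assume $b<\cS_{r,x}(a)$. Then $a$ itself lies in the set $\{a'\ge0:\cS_{r,x}(a')>b\}$, so $\cS^*_{x,r}(b)\le a$, which contradicts $\cS^*_{x,r}(b)>a$. This direction uses only the definition.

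The only delicate point is the strict inequality in ($\Leftarrow$). Plain right-continuity (R2) would give only $\cS^*_{x,r}(b)\ge a$. To get strict inequality we need the flow lines to be genuinely locally constant to the right of $a$, which is the content of Corollary~\ref{p:properties}(i) and ultimately comes from the coming-down-from-infinity property in Proposition~\ref{prop:coming_down}. We must also make sure the almost sure event is chosen uniformly over all $r<x$ and $a,b$. This is exactly how Corollary~\ref{p:properties} is stated, so no extra countability argument is needed.
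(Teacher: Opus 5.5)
Your proof is correct and follows essentially the same route as the paper, which obtains the equivalence directly from the definition \eqref{eq:def_dual_+}, the local right-constancy in Corollary~\ref{p:properties}(i), and monotonicity in $a$ from \ref{item:R1} and \ref{item:R3}. Your remark that the infimum is finite is not needed: if the set were empty, then $\cS^*_{x,r}(b)=+\infty$ and both sides of the equivalence hold trivially.
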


\begin{proof}
	The equivalence follows from the definition~\eqref{eq:def_dual_+} and Corollary~\ref{p:properties}(i).
\end{proof}

For each fixed pair of starting points in $\h$, pathwise uniqueness implies that the corresponding flow lines coalesce upon meeting, almost surely. The next lemma strengthens this to simultaneous coalescence for all pairs whose flow lines meet strictly after both starting times.

\begin{lemma}[Coalescing property]\label{lm:coal}
	Almost surely, for all $(a_1,r_1),(a_2,r_2)\in\h$, if $\cS_{r_1,x}(a_1)=\cS_{r_2,x}(a_2)$ for some $x>r_1\vee r_2$, then $\cS_{r_1,y}(a_1)=\cS_{r_2,y}(a_2)$ for all $y\ge x$.
\end{lemma}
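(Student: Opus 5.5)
The plan is to reduce the simultaneous statement to the almost sure coalescence of countably many pairs of skeleton flow lines. We pass to arbitrary starting points using the perfect flow property (Proposition~\ref{prop:perfect}) and the local structure of the flow given by Corollary~\ref{p:properties}.

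\emph{Step 1: the countable skeleton.} For fixed $(a_1,r_1),(a_2,r_2)\in\cD$, pathwise uniqueness of \eqref{SDE:for} together with the flow property \eqref{eq:flow_prop} at dyadic intermediate times gives the following. Almost surely, if the two lines meet at some time after $r_1\vee r_2$, they agree from then on. Concretely, let $\sigma$ be the first meeting time after $r_1\vee r_2$. By the strong Markov property of the pair driven by the common white noise (the noise on $(\sigma,\infty)$ is independent of $\mathcal F_\sigma$), both lines solve the same SDE from the same value at $\sigma$, so strong uniqueness forces equality after $\sigma$. If the common value at $\sigma$ is zero, both lines are absorbed (since $\sigma>r_i\vee0$ or reflection considerations apply), and equality is again immediate. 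Intersecting over countably many pairs gives one full-probability event $\Omega_0$ on which all skeleton pairs coalesce upon meeting.

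\emph{Step 2: extension to general starting points.} Work on $\Omega_0$ intersected with the events of Proposition~\ref{prop:perfect}, Corollary~\ref{p:properties} and Proposition~\ref{prop:non-crossing}. Suppose $\cS_{r_1,x}(a_1)=\cS_{r_2,x}(a_2)=:h$ with $x>r_1\vee r_2$. If $h=0$, both lines are absorbed at times at most $x$ (since $x>r_i$ and either $x\ge0$, or reflection holds before $0$; in the reflected regime $x<0$ with value zero we use the non-killed flow). This is the delicate case, discussed below. Assume $h>0$ and pick a dyadic $q\in(r_1\vee r_2,x)$ with $\cS_{r_i,q}(a_i)>0$; this is possible by continuity. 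By the perfect flow property, $\cS_{r_i,y}(a_i)=\cS_{q,y}(b_i)$ for $y\ge q$, where $b_i:=\cS_{r_i,q}(a_i)$. It therefore suffices to treat two lines started at the same dyadic time $q$ from heights $b_1\le b_2$ that agree at $x>q$. By Corollary~\ref{p:properties}(i), $\cS_{q,\cdot}(b)$ agrees on $[x',\infty)$, for any $x'\in(q,x)$, with $\cS_{q,\cdot}(b')$ for dyadic $b'>b$ close to $b$. Similarly, Corollary~\ref{p:properties}(ii) identifies $\cS_{q,\cdot}(b-)$ with skeleton lines from below. Monotonicity in $b$ (by \ref{item:R3}) then sandwiches the two lines between skeleton lines $\cS_{q,\cdot}(b_1')\le\cS_{q,\cdot}(b_1)\le\cS_{q,\cdot}(b_2)\le\cS_{q,\cdot}(b_2'')$ on $[x',\infty)$. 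We choose these so that the outer skeleton lines equal the inner ones after $x'$. Here we use that, by Corollary~\ref{p:properties}, the map $b\mapsto\cS_{q,y}(b)$ is locally constant to the right of each $b$ and to the left of each $b$ away from the left limit. Hence the two inner lines, meeting at $x$, force the outer skeleton lines to meet at $x$, since they coincide with the inner ones there. Step 1 then gives equality of the outer ones for all $y\ge x$, and the sandwich yields $\cS_{r_1,y}(a_1)=\cS_{r_2,y}(a_2)$.

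\emph{Main obstacle.} The hardest point is the choice of the sandwiching skeleton lines when $b_1$ is a bifurcation point. In that case $\cS_{q,\cdot}(b_1)$ may equal the right-limit branch rather than the left one, and a dyadic approximation from below produces the wrong branch. I would handle this by using only right approximations: $\cS_{q,y}(b_i)=\cS_{q,y}(b_i')$ for $y\ge x'$, with dyadic $b_i'\downarrow b_i$ close enough (Corollary~\ref{p:properties}(i) applied at time $x'$ combined with \eqref{eq:perf}). This replaces each line by a single skeleton line agreeing with it on $[x',\infty)$, so no sandwich is needed and Step 1 applies directly. The remaining zero-height case $h=0$ is handled as follows. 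For $x\ge0$, both lines are absorbed, so equality persists trivially. For $x<0$, a reflected line at $0$ is a positive-height line shortly after; we compare via the skeleton and Step 1 applied at a later time where the lines are positive, using \eqref{eq:cS_I_+}.
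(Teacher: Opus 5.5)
Your argument is circular within the paper. You invoke the perfect flow property (Proposition~\ref{prop:perfect}) twice. First, to move both starting points to a common dyadic time $q$. Second, to propagate $\cS_{q,x'}(b_i')=\cS_{q,x'}(b_i)$ from the single time $x'$ to all of $[x',\infty)$ for the non-dyadic heights $b_i$. But the proof of Proposition~\ref{prop:perfect} in Section~\ref{subsec:perf} uses Lemma~\ref{lm:coal} directly: it is needed to keep $\cS_{r,z}(a)<\cS_{x,z}\circ\cS_{r,x}(a)$ strict on $(x,y]$, and to obtain $\cS_{r_1,z}(a_2)=\cS_{r,z}(a)$. It also uses the lemma indirectly, through Proposition~\ref{prop:skeleton} and through the absorption of $\cY^+$ at $1$ in Definition~\ref{def:trans_flow}, on which \eqref{eq:hhh} rests.

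With Proposition~\ref{prop:perfect} in hand, the case $h>0$ is a one-liner, since both lines equal $\cS_{x,y}(h)$ for $y\ge x$. So you are assuming a strengthened form of the claim, and the sandwiching is beside the point. Replacing that step by Proposition~\ref{prop:skeleton} does not help, because that result is also derived from Lemma~\ref{lm:coal}. Separately, your case $h=0$, $x<0$ is not handled. Proposition~\ref{prop:perfect} excludes zero intermediate heights, and knowing that both lines become positive shortly after $x$ says nothing about their being equal there, which is exactly what must be shown. Your Step~1 and the case $h=0$, $x\ge0$ are fine.

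The missing idea is to combine \eqref{eq:cS_I_+} with the local finiteness of $M(r_1\vee r_2,x)$ from Proposition~\ref{prop:coming_down}; neither depends on Lemma~\ref{lm:coal}.
\begin{itemize}
\item Local finiteness forces the infimum in \eqref{eq:cS_I_+} at time $x$ to be attained. This gives skeleton paths $\cS^{n_i}$ with $r_{n_i}<r_i$, $\cS^{n_i}(r_i)>a_i$ and $\cS^{n_i}(x)=\cS_{r_i,x}(a_i)$.
\item These two paths meet at $x$, so they coalesce by your Step~1. It remains to show $\cS_{r_1,y}(a_1)=\cS^{n_1}(y)$ for $y\ge x$.
\item Take any $(a',r')\in\cD$ with $r'<r_1$ and $\cS_{r',r_1}(a')>a_1$. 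If $\cS_{r',r_1}(a')\le\cS^{n_1}(r_1)$, property~\ref{item:R3} squeezes $\cS_{r',x}(a')$ between $\cS_{r_1,x}(a_1)$ and $\cS^{n_1}(x)$; in the equality case at $r_1$, use Step~1 instead. These two bounds coincide, so $\cS_{r',\cdot}(a')$ coalesces with $\cS^{n_1}$ from $x$ on.
\item Otherwise, \ref{item:R3} gives $\cS_{r',y}(a')\ge\cS^{n_1}(y)$ for $y\ge x$.
\item Since $\cS^{n_1}$ itself lies in the infimum set, \eqref{eq:cS_I_+} yields $\cS_{r_1,y}(a_1)=\cS^{n_1}(y)$.
\end{itemize}
This argument never uses positivity of the common value, so it covers $h=0$, including $x<0$, at the same time.
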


\begin{proof}
	By~\eqref{eq:cS_I_+} and Proposition~\ref{prop:coming_down} applied to $M(r_1\vee r_2,x)$, we may choose $n_1,n_2\in\N$ such that $r_{n_i}<r_i$, $\cS^{n_i}(r_i)>a_i$ for $i=1,2$, and
	\begin{align*}
		\cS^{n_1}(x)=\cS_{r_1,x}(a_1)=\cS_{r_2,x}(a_2)=\cS^{n_2}(x).
	\end{align*}
	The two skeleton paths coalesce at $x$, so it suffices to show that $\cS_{r_i,y}(a_i)=\cS^{n_i}(y)$ for all $y\ge x$ and $i=1,2$. We prove this for $i=1$. Let $(a',r')\in\cD$ satisfy $r'<r_1$ and $\cS_{r',r_1}(a')>a_1$. If $\cS_{r',r_1}(a')\le \cS^{n_1}(r_1)$, then, by Property~\ref{item:R3} and the fact that $\cS_{r',\cdot}(a')$ and $\cS^{n_1}$ coalesce once they meet, $\cS_{r',x}(a')=\cS^{n_1}(x)$, and hence $\cS_{r',y}(a')= \cS^{n_1}(y)$ for all $y\ge x$. Otherwise, Property~\ref{item:R3} yields $\cS_{r',y}(a')\ge \cS^{n_1}(y)$ for all $y\ge x$. Therefore, the desired conclusion follows from~\eqref{eq:cS_I_+}.
\end{proof}

By Corollary~\ref{p:properties} and Lemma~\ref{lm:coal}, the trajectories $\cS_{r,\cdot}(a-)$ and $\cS_{r,\cdot}(a)$ can bifurcate only at their common starting time $r$.

\medskip

We next introduce the \emph{instantaneous coalescence} property: almost surely, every flow line beyond its starting point is contained in the union of the skeleton paths.
\begin{align*}
	\bigcup_{n \in \N} \textup{graph} (\cS^n|_{[r_n, \infty)})=\{(x, \cS^n(x)):\, n\in \N, \, x\ge r_n\}.
\end{align*}

\begin{proposition}\label{prop:skeleton}
	Almost surely, for every $(a,r) \in \h$ and $\varepsilon > 0$, there are points  $(a_n,r_n)\in \cD$ arbitrarily close to $(a,r)$ such that $r_n < r$, $\cS^n(r) \in (a , a+ \varepsilon)$, and $\cS_{r,x}(a)=\cS^n(x)$ for all $x\ge r+\varepsilon$.
\end{proposition}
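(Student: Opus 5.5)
The plan is to combine the approximation formula \eqref{eq:cS_I_+} from Lemma~\ref{lm:basci_prop}(ii) with coming down from infinity (Proposition~\ref{prop:coming_down}) and the coalescing property (Lemma~\ref{lm:coal}). Work on the almost sure event on which all of these hold simultaneously. Fix $(a,r)\in\h$ and $\varepsilon>0$, and also a neighbourhood size $\delta>0$. We look for $(a_n,r_n)\in\cD$ with $r-\delta<r_n<r$ and $a_n$ within $\delta$ of $a$ (after a suitable choice). The proof splits into three steps.

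\emph{Step 1: approximation at time $r$.} First we want dyadic points with $r_n\in(r-\delta,r)$ and $\cS^n(r)\in(a,a+\varepsilon)$. Restricting the infimum in \eqref{eq:cS_I_+} to $r_n\in(r-\delta,r)$ still gives $\cS_{r,\cdot}(a)$ at $x=r$. This follows from density of $M(r)$ (Lemma~\ref{lm:basci_prop}(i)), applied with starting times in $(r-\delta,r)$, which the argument of \cite[Lemma~6.5]{Yu26} allows. Hence $\inf\{\cS^n(r):\, r_n\in(r-\delta,r),\ \cS^n(r)>a\}=a$, so there are infinitely many such $n$ with $\cS^n(r)\in(a,a+\varepsilon)$.

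For closeness of $a_n$ to $a$, I would use continuity of the flow in the starting point. Take $(a_n,r_n)$ with $r_n$ very close to $r$ and $a_n$ slightly above $a$. For $r-r_n$ small enough relative to $a_n-a$, the path $\cS^n$ should stay in $(a,a+\varepsilon)$ over the short interval $[r_n,r]$. This needs a modulus-of-continuity bound for flow lines uniform over starting points in a compact set, obtained from Kolmogorov/BDG estimates for \eqref{SDE:for}. If this uniform estimate is delicate, I would instead argue by monotonicity: by \ref{item:R3}, lines starting at height $a+\varepsilon/2$ at times close to $r$ are sandwiched by skeleton lines started slightly lower and higher.

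\emph{Step 2: coalescence after time $r+\varepsilon$.} This is the main step. By Proposition~\ref{prop:coming_down}, the set $M(r,r+\varepsilon)$ is locally finite. By \eqref{eq:cS_I_+} at $x=r+\varepsilon$, the value $\cS_{r,r+\varepsilon}(a)$ is an infimum of values of the form $\cS^n(r+\varepsilon)$ (with $r_n<r$ and $\cS^n(r)>a$), all lying in the locally finite set $M(r,r+\varepsilon)$. So the infimum is attained. Moreover, because of monotonicity \ref{item:R3}, it is attained by all $n$ in the class with $\cS^n(r)$ sufficiently close to $a$ from above. Precisely: if $\cS^{n}(r)\in(a,\cS^{m}(r))$ where $m$ attains the infimum, then $\cS_{r,r+\varepsilon}(a)\le \cS^n(r+\varepsilon)\le \cS^m(r+\varepsilon)=\cS_{r,r+\varepsilon}(a)$.

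Combining with Step 1, we obtain $n$ with all three properties at time $r+\varepsilon$: $r_n$ close to $r$, $\cS^n(r)\in(a,a+\varepsilon)$, and $\cS^n(r+\varepsilon)=\cS_{r,r+\varepsilon}(a)$. Then Lemma~\ref{lm:coal}, applied at $x=r+\varepsilon>r\vee r_n$, gives $\cS_{r,x}(a)=\cS^n(x)$ for all $x\ge r+\varepsilon$. The absorbed case, where $\cS_{r,r+\varepsilon}(a)=0$, is covered by the same argument, since $0\in\overline{M'\cup\{0\}}$ and absorption is permanent.

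The main obstacle I expect is Step 1's requirement that $a_n$ itself be close to $a$, not just $\cS^n(r)$. I would first try to get this from density of $M(r)$ restricted to starting times in $(r-\delta,r)$, together with a short-time continuity estimate.
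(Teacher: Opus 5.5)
Your Step 2 is correct and is also the core of the paper's proof. Local finiteness of $M(r,r+\varepsilon)$ means the infimum in \eqref{eq:cS_I_+} is attained by some $\cS^m$. Property~\ref{item:R3} then forces every skeleton path with $r_n<r$ and $\cS^n(r)\in(a,\cS^m(r))$ to equal $\cS_{r,\cdot}(a)$ at $r+\varepsilon$, hence on $[r+\varepsilon,\infty)$ by Lemma~\ref{lm:coal}.

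The gap is the one you flag in Step 1, and your proposal does not close it. You never exhibit a dyadic point $(a_n,r_n)$ within $\delta$ of $(a,r)$ whose path satisfies $\cS^n(r)\in(a,\min\{a+\varepsilon,\cS^m(r)\})$. Density of $M(r)$, even restricted to starting times in $(r-\delta,r)$, controls $r_n$ and $\cS^n(r)$ but not $a_n$. Choosing $a_n$ slightly above $a$ and $r_n$ slightly below $r$ does not by itself give $\cS^n(r)>a$, since the path may dip below $a$. The uniform modulus of continuity you would use to bridge this is not available from the paper's tools. Kolmogorov or BDG bounds control one path at a time, the relevant family of skeleton paths is infinite, and the flow is only c\`adl\`ag in the initial height, so there is no parameter continuity to chain over. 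Your monotonicity fallback is too vague to fill the gap. As stated (a line from height $a+\varepsilon/2$ sandwiched by lines started slightly lower and higher), it does not produce $a_n$ near $a$. It also does not say where the sandwiching lines sit at the new starting time, which is exactly the missing information.

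The missing idea is to reverse the order of choices, so that only the continuity of two \emph{fixed} paths is used.
\begin{enumerate}
\item Using Step 2 and Lemma~\ref{lm:basci_prop}(i), fix $k$ with $r_k<r$ and $a<\cS^k(r)<\min\{a+\varepsilon,\cS^m(r)\}$.
\item For small $\delta\in(0,(\cS^k(r)-a)/2)$, fix $n$ with $r_n<r$ and $\cS^n(r)\in(a,a+\delta)$. Both $\cS^n$ and $\cS^k$ coincide with $\cS_{r,\cdot}(a)$ on $[r+\varepsilon,\infty)$.
\item By continuity of $\cS^n$ and $\cS^k$ at $r$, pick $r'\in\bD$ with $r_n\vee r_k<r'<r$, so close to $r$ that there is $a'\in\bD$ with $\cS^n(r')<a'<\cS^k(r')$ and $|a'-a|<2\delta$.
\item Property~\ref{item:R3} then sandwiches $\cS_{r',\cdot}(a')$ between $\cS^n$ and $\cS^k$ on $[r',\infty)$. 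This gives $\cS_{r',r}(a')\in[\cS^n(r),\cS^k(r)]\subset(a,a+\varepsilon)$ and $\cS_{r',x}(a')=\cS_{r,x}(a)$ for all $x\ge r+\varepsilon$.
\end{enumerate}
This is the paper's argument. It makes both your restricted-density claim and any uniform continuity estimate unnecessary.
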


\begin{proof}
	Almost surely, for any $(a,r)\in\h$ and $\varepsilon>0$, by~\eqref{eq:cS_I_+} and Proposition~\ref{prop:coming_down} applied to $M(r,r+\varepsilon)$, there exists $k=k(a,r,\varepsilon)$ such that $r_k<r$, $\cS^k(r)>a$ and $\cS_{r,r+\varepsilon}(a)=\cS^k(r+\varepsilon)$. By Lemma~\ref{lm:coal},
	\begin{equation*}
		\cS_{r,y}(a)=\cS^k(y),\qquad \forall y\ge r+\varepsilon.
	\end{equation*}
	Moreover, Lemma~\ref{lm:basci_prop}(i) allows us to choose $k$ such that $a^k:=\cS^k(r)\in(a,a+\varepsilon)$.
	
	For arbitrarily small $\delta\in(0,\tfrac{a^k-a}{2})$, Lemma~\ref{lm:basci_prop}(i) gives an index $n=n(\delta)$ such that $r_n<r$ and $\cS^n(r)\in(a,a+\delta)$. By property~\ref{item:R3},
	$$
	\cS_{r,y}(a)\le\cS^n(y)\le\cS^k(y)=\cS_{r,y}(a),\qquad \forall y\ge r+\varepsilon.
	$$
	By density of $\cD$ in $\h$ and continuity of $\cS^n$ and $\cS^k$, we can choose $(a',r')\in\cD$ such that
	\begin{align*}
		\|(a',r')-(a,r)\|<2\delta,\qquad r>r'>\max(r_k,r_n),\qquad \cS^n(r')<a'<\cS^k(r').
	\end{align*}
	Property~\ref{item:R3} then implies
	$$
	\cS_{r',r}(a')\in(a,a+\varepsilon),\qquad \cS_{r',y}(a')=\cS_{r,y}(a),\quad \forall y\ge r+\varepsilon.
	$$
	Since $\delta$ can be chosen arbitrarily small, this proves the claim.
\end{proof}

\subsection{Properties of the double PSR flow}

We classify points $z=(a,r)\in \h$ according to the local behavior of the PSR flow and its dual $(\cS,\cS^*)$, which we call the \emph{double PSR flow}. Let $\gamma,\gamma' \in \cS$ be two flow lines entering $z$, that is, starting before time $r$ and satisfying $\gamma(r)=\gamma'(r)=a$. We write $\gamma =^{z}_{\mathrm{in}} \gamma'$ if $\gamma|_{[r-\varepsilon, r]} \equiv \gamma'|_{[r-\varepsilon, r]}$ for some $\varepsilon>0$, and denote by $m_{\mathrm{in}}(z)$ the number of resulting equivalence classes of the flow lines entering $z$. Similarly, a flow line leaves $z$ if it starts no later than time $r$ and satisfies $\gamma(r)=a$, and we write $\gamma =^{z}_{\mathrm{out}} \gamma'$ if $\gamma|_{[r,r+\varepsilon]} \equiv \gamma'|_{[r,r+\varepsilon]}$ for some $\varepsilon>0$. Let $m_{\mathrm{out}}(z)$ denote the number of equivalence classes of the flow lines leaving $z$ under $=^{z}_{\mathrm{out}}$. The quantities $m_{\mathrm{in}}^*(z)$ and $m_{\mathrm{out}}^*(z)$ are defined analogously for the dual flow $\cS^*$, with the direction of evolution reversed.

\begin{definition}
    Let $z\in \h$. The \textbf{type} and \textbf{dual type} of $z$ are defined as the pairs $$
	\bigl(m_{\mathrm{in}}(z), m_{\mathrm{out}}(z)\bigr)
	\quad\text{and}\quad
	\bigl(m_{\mathrm{in}}^*(z), m_{\mathrm{out}}^*(z)\bigr),
	$$
	respectively.
\end{definition}

For related classifications of coalescing systems and the Brownian web, see~\cite{Toth98, Fontes06, BWBN17}. On $\h^*$, the PSR flow admits fewer multiplicity types than the Brownian web. Indeed, Proposition~\ref{prop:perfect}, together with self-duality, implies that, almost surely, $m_{\mathrm{out}}(z)=m_{\mathrm{out}}^*(z)=1$ for every $z\in\h^*$. The precise classification is stated below, while its proof is deferred to the end of this section.

\begin{proposition}[Special points of the double PSR flow]\label{prop:special_points}
	Let $(\cS, \cS^*)$ be the double PSR flow. Then almost surely, every point $z\in \h^*$ belongs to one of the following five types according to $\left(m_{\mathrm{in}}(z), m_{\mathrm{out}}(z)\right) /\left(m^*_{\mathrm{in}}(z), m^*_{\mathrm{out}}(z)\right)$:
    \begin{align}\label{eq:types}
        (0,1) /(0,1),(0,1) /(1,1),(0,1) /(2,1),(1,1) /(0,1),(2,1) /(0,1).
    \end{align}
	Moreover, almost surely:
	\begin{enumerate}[label=\textup{(\alph*)}, left=0pt]
		\item The set of points of type $(0,1) /(0,1)$ has full Lebesgue measure in $\h^*$;

		\item The sets of points of type $(0,1) /(1,1)$ and $(1,1) /(0,1)$ have Hausdorff dimension $\frac{3}{2}$;

		\item The sets of points of type $(0,1) /(2,1)$ and $(2,1) /(0,1)$ are countable.
	\end{enumerate}
\end{proposition}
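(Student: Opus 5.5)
The proof splits into a combinatorial part (only the five listed types occur) and a metric part (statements (a)--(c)).

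\textbf{Step 1: outgoing multiplicities.} First I show $m_{\mathrm{out}}(z)=m^*_{\mathrm{out}}(z)=1$ for every $z=(a,r)\in\h^*$. Proposition~\ref{prop:perfect} gives, for any flow line $\gamma$ leaving $z$ with $\gamma(r)=a>0$, $\gamma(y)=\cS_{r,y}(a)$ for all $y\ge r$. Hence all outgoing lines lie in one class. The dual statement follows from self-duality (Proposition~\ref{prop:self_dual}) applied to the dual flow.

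\textbf{Step 2: incoming multiplicities.} Next I bound $m_{\mathrm{in}}(z)\le 2$ and show that $m_{\mathrm{in}}(z)\ge1$ forces $m^*_{\mathrm{in}}(z)=0$ (and symmetrically). By Lemma~\ref{lm:coal}, two incoming lines that meet strictly before $r$ coincide afterwards. Two distinct incoming classes therefore correspond to two skeleton paths (Proposition~\ref{prop:skeleton}) meeting for the first time at $z$.

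Suppose there were three incoming classes $\gamma_1<\gamma_2<\gamma_3$ on $[r-\varepsilon,r)$. Then $\gamma_2$ would be trapped, and every dual line started between $\gamma_1,\gamma_2$ or between $\gamma_2,\gamma_3$ at time $r-\varepsilon/2$ would be squeezed to $a$. I rule this out by a first-moment estimate on triple meetings: three independent-like coalescing lines meeting at one point. I would use Corollary~\ref{app:coal}-type estimates and the covering argument of \cite[Section~8]{Toth98} to show this has probability zero.

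For the incompatibility claim: if $z$ has a forward incoming line $\gamma$ and a dual incoming line $\gamma^*$, then Corollary~\ref{cor:dual} and the non-crossing property (Proposition~\ref{prop:non-crossing}) force $\gamma$ and $\gamma^*$ to be on specified sides. A dual line entering $z$ from the right must lie weakly above or below forward lines passing near $z$, and that contradicts $\gamma(r)=a$ unless $\gamma^*$ coincides with a forward path, which is impossible.

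Combining Steps 1 and 2 leaves exactly the five types in~\eqref{eq:types}.

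\textbf{Step 3: measure and dimension claims.} For (a), a fixed point $z\in\h^*$ almost surely has no incoming forward line. Indeed, $\p(\exists\, n:\cS^n(r)=a,\ r_n<r)=0$, since each skeleton path has an atomless marginal; Proposition~\ref{prop:skeleton} then covers all incoming lines. By self-duality the same holds for dual lines, and Fubini gives full Lebesgue measure.

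For (c), a type $(2,1)$ point is a first meeting point of two skeleton paths. Each pair of skeleton paths contributes at most one such point, so these points are countable; the dual case follows by self-duality.

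For (b), the type $(1,1)/(0,1)$ points are the union of the graphs of skeleton paths minus countably many points. Each skeleton path is locally a time-changed $\mathrm{BESQ}$-type semimartingale with nondegenerate quadratic variation away from $0$. Its graph therefore has Hausdorff dimension $3/2$, as for Brownian graphs, which I would obtain by localizing to where the path lies in $[\eta,K]$ and comparing with Brownian motion via a time change. The dual case again follows from self-duality.

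\textbf{Main obstacle.} I expect the hardest step to be excluding three incoming classes (and, relatedly, the incompatibility of forward and dual incoming lines) simultaneously for all $z$. This requires a uniform estimate on triple coalescence, adapting \cite{Toth98}'s argument for coalescing Brownian motions to the PSR flow.
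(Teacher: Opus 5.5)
Step~2 has two genuine gaps.

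\textbf{Forward versus dual incoming lines.} As written, your argument that forward and dual incoming lines cannot coexist at $z$ does not reach a contradiction. Proposition~\ref{prop:non-crossing} only gives weak orderings, so two two-sided curves touching at $z$ is compatible with it. The claim that $\gamma^*$ would have to ``coincide with a forward path, which is impossible'' is unsupported. What you need are strict inequalities in opposite directions at two different times.

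Write $\gamma=\cS_{r_1,\cdot}(a_1)$ and $\gamma^*=\cS^*_{r_2,\cdot}(a_2)$ with $r_1<r<r_2$ and $\gamma(r)=\gamma^*(r)=a>0$. Since $\cS^*_{r_2,r}(a_2)=a$ is not $>a$, Corollary~\ref{cor:dual} gives $\cS_{r,r_2}(a)>a_2$. The perfect flow property (Proposition~\ref{prop:perfect}, which you already use in Step~1) upgrades this to $\cS_{r_1,r_2}(a_1)=\cS_{r,r_2}(a)>a_2$. Symmetrically, perfect flow for $\cS^*$ (via self-duality) together with Corollary~\ref{cor:dual} gives $\cS^*_{r_2,r_1}(a_2)=\cS^*_{r,r_1}(a)>a_1$.

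Hence $\overline{\cS}_{r_1,\cdot}(a_1)$ lies strictly below $\overline{\cS}_{r_2,\cdot}(a_2)$ at $r_1$ and strictly above it at $r_2$, contradicting non-crossing. Perfect flow is essential here: (R2)--(R3) alone only give $\cS_{r_1,r_2}(a_1)\le\cS_{r,r_2}(a)$, which points the wrong way. Your identification of the type $(1,1)/(0,1)$ set in (b) also depends on this step.

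\textbf{Three incoming classes.} You leave this exclusion as an unspecified first-moment estimate, and the estimate you point to cannot supply it. Corollary~\ref{app:coal} bounds the coalescence time of a single pair of lines. It says nothing about whether a third line arrives at exactly the same instant. Excluding a simultaneous triple meeting is a genuinely two-dimensional, corner-hitting statement about the two gaps.

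This is exactly Proposition~\ref{prop:no_31pts}, stated just before this result, and you should simply invoke it. Its proof works with $F_i=((\cS^{i+1})^{\alpha+1}-(\cS^i)^{\alpha+1})/(\alpha+1)$, whose martingale parts are orthogonal. An entropy bound on the ratio $F_1/(F_1+F_2)$ shows $\int \dd x/(F_1+F_2)<\infty$ up to the meeting time. So $\log(F_1+F_2)$ stays finite, and the two gaps cannot vanish together at positive height.

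With these repairs the rest works. Step~1 and (c) match the paper, and your treatments of (a) and (b) are legitimate alternatives:
\begin{itemize}
\item For (a), the paper deduces full measure from (b) and (c), since sets of dimension $\frac32$ and countable sets are Lebesgue-null. Your Fubini argument over fixed points (each fixed $z$ a.s.\ lies on no forward or dual skeleton path) does not depend on (b).
\item For (b), your bi-Lipschitz time-change comparison with Brownian graphs, on pieces where the path stays in $[\eta,K]$, is a self-contained replacement for the paper's appeal to \cite[Appendix~B]{AWY24}.
\end{itemize}
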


\smallskip

By the non-crossing property of $\overline{\cS}$ (Proposition~\ref{prop:non-crossing}), each flow line acts as a barrier separating the half-plane $\h$ into two regions. This decomposition underpins the spatial Markov property for the two-sided PSR flow, which is analogous to that in \cite[Proposition~5.1]{Toth98}. Fix $(a_0,r_0)\in\h$. If $a_0>0$, define the index sets
\begin{equation}\label{def:lamb}
\begin{aligned}
    \Lambda_- &:= \left\{(a,r)\in\h:\, \overline{\cS}_{r,r_0}(a)<a_0\right\},\\
    \Lambda_+ &:= \left\{(a,r)\in\h:\, \overline{\cS}_{r,r_0}(a)>a_0\right\}.
\end{aligned}
\end{equation}
If $a_0=0$, define instead
\begin{equation}\label{def:lamb_0}
\begin{aligned}
    \Lambda_- &:= \left\{(a,r)\in\h:\, \overline{\cS}_{r,r_0}(a)=0\right\}\setminus\{(a_0,r_0)\},\\
    \Lambda_+ &:= \left\{(a,r)\in\h:\, \overline{\cS}_{r,r_0}(a)>0\right\}.
\end{aligned}
\end{equation}
These index sets induce two non-intersecting subfamilies of $\overline{\cS}$ by
\begin{align}
	\label{def:barS}\overline \cS_- := \left\{ \overline \cS_{r, \cdot}(a) : \, (a,r)\in \Lambda_- \right\},\qquad  \overline \cS_+ := \left\{ \overline \cS_{r, \cdot}(a) : \, (a,r)\in \Lambda_+ \right\}.
\end{align}
Moreover, we claim that, almost surely, $\overline \cS_-\uplus \overline \cS_+\uplus \{\overline \cS_{r_0, \cdot}(a_0)\}=\overline \cS$. For $a_0=0$, this is immediate. For $a_0>0$, it suffices to show that $\{(a,r)\in \h:\, \overline \cS_{r, r_0}(a) = a_0\}=\{(a_0,r_0)\}$. Indeed, by Proposition~\ref{prop:skeleton} and property~\ref{item:R1}, almost surely,
\begin{align*}
	\{(a,r)\in \h:\, \overline \cS_{r, r_0}(a) = a_0\}\setminus \{(a_0,r_0)\}
	= \{(a,r)\in \cD:\, \overline \cS_{r, r_0}(a) = a_0,\, r\neq r_0\}=\varnothing.
\end{align*}

\begin{proposition}[Spatial Markov property]
	\label{prop:spatial_markov}
	Fix $(a_0,r_0)\in\h$, and let $\overline \cS_-$ and $\overline \cS_+$ be defined as in~\eqref{def:barS}. Then, $\overline{\cS}_-$ and $\overline{\cS}_+$ are conditionally independent given $\overline{\cS}_{r_0,\cdot}(a_0)$.
\end{proposition}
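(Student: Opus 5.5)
The plan follows the strategy of \cite[Proposition~5.1]{Toth98}. The point is that the curve $\overline{\cS}_{r_0,\cdot}(a_0)$ splits the half-plane into two regions. The lines in $\overline\cS_-$ are, by Proposition~\ref{prop:non-crossing}, trapped between the $y$-axis and this curve. The lines in $\overline\cS_+$ are trapped above it. Each family should then be a measurable functional of the white noise restricted to its own region, together with the separating curve.

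\textbf{Step 1: reduction to skeleton lines.} By Proposition~\ref{prop:skeleton}, Lemma~\ref{lm:basci_prop}(ii) and the definition~\eqref{eq:def_dual_+} of the dual, both $\overline\cS_\pm$ are determined by their skeleton lines $\overline\cS_{r,\cdot}(a)$ with $(a,r)\in\cD\cap\Lambda_\pm$, through countable monotone limits. It therefore suffices to prove conditional independence for finitely many dyadic lines on each side. For this I would use the discrete approximation. By Theorem~\ref{thm:main}, the joint law of finitely many CRKCs, together with $\overline\cS_{r_0,\cdot}(a_0)$, is the limit of the joint law of the rescaled upcrossing profiles $S^N$.

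\textbf{Step 2: the discrete statement.} For the walk, stop at $\iota^N=T^\cE_{\lfloor Nr_0\rfloor,\lfloor Na_0\rfloor}$. The profiles of earlier stopping times, those with $S^N_{r,r_0}<S^N_{r_0,r_0}$, are functions of the path before $\iota^N$. Through the urn coupling~\eqref{eq:for_po}--\eqref{eq:back_po}, they are functions of the first $\cE^x(\iota^N)$ (or $+1$) draws of each urn at site $x$. Later profiles depend on $\cE^\cdot(\iota^N)$ and the remaining draws. By the strong Markov property of each urn at the stopping index, which is determined by the local profile, the remaining draws are conditionally independent of the used ones given the profile $\cE^\cdot(\iota^N)$. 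Hence the discrete analogues of $\overline\cS_-$ and $\overline\cS_+$ are conditionally independent given $S^N_{r_0,\cdot}(a_0)$.

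\textbf{Step 3: passing to the limit (main obstacle).} Conditional independence is not preserved by weak limits in general, so I would pass to the limit in a quantitative form. Fix bounded continuous $F,G,H$. The discrete statement gives
$$\e[F(\text{minus lines})G(\text{plus lines})H(S^N_{r_0})]=\e\bigl[\e[F\mid S^N_{r_0}]\,\e[G\mid S^N_{r_0}]\,H(S^N_{r_0})\bigr].$$
To take limits I need the conditional expectations to converge. I would avoid this by a limit-side argument instead. In the continuum, realize $\overline\cS_+$ (respectively $\overline\cS_-$) as the strong solution, via~\eqref{SDE:for}--\eqref{SDE:back} and the perfect flow property (Proposition~\ref{prop:perfect}), driven by the white noise $\cW$ restricted to the region above (respectively below) the curve $\overline\cS_{r_0,\cdot}(a_0)$, with initial/boundary data given by that curve. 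Lines in $\Lambda_+$ started at $r<r_0$ are handled through the dual flow, where Corollary~\ref{cor:dual} converts them into forward lines. Then I need to show that the white noise restricted to the two random regions is conditionally independent given the curve. This is a strong-Markov property for white noise with respect to a stopping set: the curve is a stopping line for the filtration generated by the noise below the heights (cf.\ Walsh). I expect this to be the hardest point, since the curve is built from both $\cW$ and $\cW^*$. To overcome it, I would approximate the curve by dyadic step curves taking countably many values. For each value the regions are deterministic and the independence of white noise on disjoint sets applies. The conclusion then follows by letting the mesh go to zero, using the continuity of the flow lines and non-crossing.
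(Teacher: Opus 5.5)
There is a genuine gap in Step~3, where you assert that $\overline\cS_+$ is a strong solution driven by the noise in the region above $\lambda:=\overline\cS_{r_0,\cdot}(a_0)$, with boundary data $\lambda$. Neither \eqref{SDE:for} nor the perfect flow property gives this. Take a forward line $Z=\cS_{r,\cdot}(a)$ lying above $S=\cS_{r_0,\cdot}(a_0)$. Its integrand $2Z_u^{-\alpha}s^\alpha\ind_{\{s\le Z_u\}}$ charges every height in $[0,Z_u]$, including the whole region below $S$. Only for $\overline\cS_-$ can the claim essentially be read off the equation.

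The decoupling rests on a structural identity specific to the PSR coefficients: the noise below $S$ enters only through $\dd S$. Indeed, $2Z_u^{-\alpha}\int_0^{S_u}s^\alpha\cW(\dd s,\dd u)=(S_u/Z_u)^\alpha\bigl(\dd S_u-\frac{\dd u}{2\alpha+1}\bigr)$. Equivalently, $F=(Z^{\alpha+1}-S^{\alpha+1})/(\alpha+1)$ has martingale part $2\int_{S_u}^{Z_u}s^\alpha\cW(\dd s,\dd u)$ and drift $Z_u^\alpha-S_u^\alpha$, which is a function of $(F_u,S_u)$.

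Even with this identity, three further steps remain.
\begin{enumerate}
\item Show that $F$ is a strong solution with respect to $(S,\cW_S^+)$, where $\cW_S^+=\cW(S_u+\dd s,\dd u)$. This is a Yamada--Watanabe argument with non-Lipschitz coefficients in which the whole path $S$ is put into the initial $\sigma$-field. That is legitimate only because $\cW_S^+$ is a white noise independent of $S$.
\item Pass from countably many starting points to the whole family.
\item Do the same for the dual lines on $(-\infty,r_0]$.
\end{enumerate}
This is Proposition~\ref{p:difference}(i), the heart of the paper's proof. For a general coalescing white-noise flow, lines above a given line would see the noise below it in ways not encoded by the line. Conditioning on $\lambda$ would then not decouple the two families.

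On the independence step, your dyadic step-curve approximation is a different route from the paper's, which avoids approximation altogether. The paper builds $\cWhat$ from the reweighted noise below an independent copy $Z$ of $S$, plus a fresh white noise placed above $Z$. It checks that $Z$ solves the same SDE under $\cWhat$, so $(\cWhat,Z)\overset{d}{=}(\cW,S)$. It then reads off that the shifted noise above $S$ is independent of the reweighted noise below, which determines $S$.

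Your route can probably be made rigorous, but it needs two things. First, the event $\{g_n=g\}$ must be measurable with respect to the noise below $g$, which requires a localized pathwise-uniqueness argument. Second, you need a non-anticipating definition of the shifted noise, since a step curve dominating $S$ on each dyadic cell is not predictable. Step curves also cannot replace $\lambda$ in the measurability step. Above a deterministic $g$, the lines still depend on $\int_0^{g(u)}s^\alpha\cW(\dd s,\dd u)$, which $g$ does not determine.

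Conversely, the difficulty you flag as hardest is the easy part. The forward and backward pieces of $\lambda$ use the noise on the disjoint slabs $u\ge r_0$ and $u\le r_0$, so the independence simply factorizes (Proposition~\ref{prop:ind_scr}). Step~2 and the appeal to Theorem~\ref{thm:main} are unnecessary once you argue in the continuum, and you discard the passage to the limit yourself.
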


\begin{proposition}\label{prop:no_31pts}
    Almost surely, there do not exist a point $(a,x)\in \h$ and three PSR flow lines $\cS^i(\cdot)=\cS_{r_i,\cdot}(a_i)$, $i=1,2,3$, starting before $x$ such that, for some $\varepsilon>0$, the paths $\cS^1,\cS^2,\cS^3$ are pairwise distinct on $[x-\varepsilon,x)$ and satisfy $\cS^i(x)=a$ for $i=1,2,3$.
\end{proposition}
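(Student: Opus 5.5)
The plan is to reduce the statement, via instantaneous coalescence, to a probability-zero statement about three flow lines from fixed dyadic starting points. That statement in turn reduces to a two-dimensional non-hitting estimate for the gap process.

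\emph{Reduction to the skeleton.} Suppose $(a,x)$ and $\cS^1,\cS^2,\cS^3$ are as in the statement. By Property~\ref{item:R3} and pairwise distinctness on $[x-\varepsilon,x)$, after relabelling we have $\cS^1<\cS^2<\cS^3$ on $[x-\varepsilon,x)$. This holds because two flow lines that touch at a time $>r_i\vee r_j$ coalesce by Lemma~\ref{lm:coal}, so distinct lines cannot touch before $x$ on that interval. Choose a dyadic $q\in(x-\varepsilon,x)$ with $q>r_1\vee r_2\vee r_3$. Proposition~\ref{prop:skeleton}, applied at the point $(\cS^i(q'),q')$ for some $q'<q$ close to $x-\varepsilon$, yields skeleton paths $\cS^{n_i}$ that coincide with $\cS^i$ on $[q,\infty)$. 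Hence it suffices to prove the following. For fixed $(a_i,r_i)\in\cD$, $i=1,2,3$, and fixed dyadic $q$, almost surely there is no $x>q$ such that the three lines are strictly ordered on $[q,x)$ and all equal at $x$.

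\emph{Reduction to the gap process.} Conditioning on the values at $q$ and using the flow property~\eqref{eq:flow_prop}, we start three solutions of~\eqref{SDE:for} from $b_1<b_2<b_3$ at time $q$, all driven by the same $\cW$. We set $D_1:=\cS^2-\cS^1$ and $D_2:=\cS^3-\cS^2$. The target is
$$\p\bigl(\zeta_1=\zeta_2<\infty\bigr)=0,$$
where $\zeta_i$ is the hitting time of $0$ by $D_i$. The absorbed case $a=0$ is handled in the same way, using the pair $(\cS^1,D_1)$ when the lowest line hits $0$ at the same time as the collision of the other two.

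Writing $\sigma(z,s)=2(s/z)^\alpha\mathbbm{1}_{\{s\le z\}}$ as in the proof of Proposition~\ref{prop:joint_for_RK}, the quadratic variations and covariation of the gaps are
$$d\langle D_i\rangle=\int(\sigma(\cS^{i+1},s)-\sigma(\cS^i,s))^2ds,\qquad d\langle D_1,D_2\rangle=\int(\sigma(\cS^2,s)-\sigma(\cS^1,s))(\sigma(\cS^3,s)-\sigma(\cS^2,s))ds .$$
When all three lines lie in a compact subset of $(0,\infty)$, the indicator parts give $d\langle D_i\rangle=4D_i(1+O(D_i))$. The cross term picks up only contributions of order $D_1D_2$, coming from the smooth factor $(s/z)^\alpha$ and from the disjoint height windows. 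The drift vanishes identically. Consequently, after localisation away from $0$ and $\infty$, $(D_1,D_2)$ is a pair of $\mathrm{BESQ}^0$-like martingales with asymptotically orthogonal noise.

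\emph{Main obstacle: the simultaneous-hitting estimate.} The principal difficulty is to show rigorously that $D_1$ and $D_2$ cannot be absorbed at the same instant. My first approach is a Knight-type orthogonalisation. I time-change each $D_i$ by its own clock $\int d\langle D_i\rangle/(4D_i)$ and represent $D_i=\beta_i(\cdot)$ with $\beta_1,\beta_2$ Brownian motions. Their covariation is $O(\sqrt{D_1D_2})\,dt$, which is integrable up to the collision time. A Girsanov change of measure, with density controlled on localising events, then makes the pair exactly independent $\mathrm{BESQ}^0$ processes in their own clocks. For independent processes with atomless absorption-time laws, $\p(\zeta_1=\zeta_2)=0$ by Fubini. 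The clocks are comparable to real time up to factors $1+O(D)$, but equality of hitting times must be transported through two \emph{different} time changes, and this is the delicate point. To handle it, I would instead discretise: I aim for the bound
$$\p\bigl(\zeta_1,\zeta_2\in[t,t+h]\bigr)\le C h^{1+\delta}$$
on the localised event, obtained from the conditional density of $\zeta_2$ given the past up to time $\zeta_1$. This conditional density is bounded because $D_2(\zeta_1)>0$ is bounded away from $0$ on the event and $D_2$ then evolves as a nondegenerate $\mathrm{BESQ}^0$-like process. Summing over a mesh of size $h$ and letting $h\to0$ gives probability zero. Finally, removing the localisation by letting the compact window grow, and taking countable unions over dyadic data, completes the proof.
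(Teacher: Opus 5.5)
Your reduction to three fixed dyadic flow lines and to the event $\{\zeta_1=\zeta_2<\infty\}$ is in line with the paper. Your computations $\dd\langle D_i\rangle=4D_i(1+O(D_i))\,\dd x$ and $\dd\langle D_1,D_2\rangle=O(D_1D_2)\,\dd x$ are correct. The gap is the step you flag as the main obstacle, which is the entire content of the proposition.

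Your justification of $\p(\zeta_1,\zeta_2\in[t,t+h])\le Ch^{1+\delta}$, namely that $D_2(\zeta_1)$ is bounded away from $0$ on the event, is backwards. On $\{\zeta_1,\zeta_2\in[t,t+h]\}$ the gap $D_2$ must reach $0$ within time $h$ after $\zeta_1$. So $D_2(\zeta_1)$ is forced to be of order $h$: for a $\mathrm{BESQ}^0$ started from $d$ one has $\p(\zeta\le h)=e^{-d/(2h)}$. Moreover, the conditional density of $\zeta_2-\zeta_1$ given $D_2(\zeta_1)=d$ is of order $1/d$, hence unbounded. What you actually need is that the law of $D_2(\zeta_1)$ has a bounded density near $0$. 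That is a quantitative estimate for the two-dimensional process $(D_1,D_2)$ near the corner $(0,0)$, which is exactly what has to be proved. It holds for independent $\mathrm{BESQ}^0$ processes by the explicit transition density, but nothing in your argument transfers it to the coupled system. The Girsanov route cannot supply it either. An equivalent change of measure alters drifts but leaves $\langle D_1,D_2\rangle$ unchanged, so it cannot decorrelate the noises. Separate time changes, as you note, destroy the meaning of $\zeta_1=\zeta_2$.

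The missing idea is a Lyapunov-function argument in coordinates where the noises are exactly orthogonal.
\begin{itemize}
\item Set $F_i:=((\cS^{i+1})^{\alpha+1}-(\cS^i)^{\alpha+1})/(\alpha+1)$. By It\^o's formula its martingale part is $2\int_{\cS^i}^{\cS^{i+1}}s^\alpha\,\cW(\dd s,\dd x)$. These live on disjoint height windows, so $\langle M_1,M_2\rangle=0$ exactly.
\item After localisation, $\dd F_i=\beta_iF_i\,\dd x+\dd M_i$ with $\dd\langle M_i\rangle=c_iF_i\,\dd x$ and $c\le c_i\le C$.
\item Put $D=F_1+F_2$, $R=F_1/D$ and $h(p)=-p\log p-(1-p)\log(1-p)$. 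The It\^o correction $\tfrac12h''(R)\,\dd\langle R\rangle$ makes the drift of $h(R)$ at most $C-c/(2D)$.
\item Since $0\le h\le\log 2$, this gives $\e[\int_q^\rho\dd x/D_x\mid\cF_q]<\infty$. That bound controls both the quadratic variation and the drift of $\log D$.
\item Hence $\log D$ has a finite limit at $\rho$, and the two gaps cannot close at the same instant.
\end{itemize}

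Your one-line treatment of coalescence at height $0$ (``handled in the same way'') is also insufficient. There your localisation away from $0$ is unavailable, and the problem is scale invariant. What must be excluded is two ordered lines, e.g.\ $\cS^2<\cS^3$, hitting $0$ simultaneously; the lowest line may already have been absorbed earlier. The paper handles this with the space--time transformation of Section~\ref{sec:perfect}. The ratio $\cS^2/\cS^3$, run in the clock $\int\dd x/\cS^3$, is an autonomous diffusion on $[0,1]$ that exits $(0,1)$ in finite expected time. That clock diverges at the absorption time of $\cS^3$ (Corollary~\ref{cor:hit_boun_fi}).
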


The proofs of Propositions~\ref{prop:spatial_markov} and~\ref{prop:no_31pts} are given in Section~\ref{sec:deco}.

\smallskip

\begin{proof}[Proof of Proposition~\ref{prop:special_points}]
	Let $z=(a,r)\in\h^*$. By Proposition~\ref{prop:no_31pts}, $m_{\mathrm{in}}(z)\le2$, and Proposition~\ref{prop:perfect} implies that every trajectory entering $z$ coincides with $\cS_{r,\cdot}(a)$ after time $r$, so $m_{\mathrm{out}}(z)=1$. By self-duality, the same conclusions hold for the dual flow.
	
    Suppose that both $m_{\mathrm{in}}(z)$ and $m^*_{\mathrm{in}}(z)$ are nonzero. Then there exist $(a_1,r_1),(a_2,r_2)\in\h$ with $r_1<r<r_2$ such that $\cS_{r_1,r}(a_1) = \cS^*_{r_2,r}(a_2) = a$. By Corollary~\ref{cor:dual}, $\cS^*_{r_2,r}(a_2)=a$ implies that $\cS_{r,r_2}(a)>a_2$. Proposition~\ref{prop:perfect} yields
    \begin{align}\label{eq:for_a}
        \cS_{r_1,r_2}(a_1)=\cS_{r,r_2}\circ \cS_{r_1,r}(a_1)
        =\cS_{r,r_2}(a)>a_2.
    \end{align}
    By Proposition~\ref{prop:self_dual}, the same argument applied to the dual flow yields
    \begin{align}\label{eq:back_a}
        \cS^*_{r_2,r_1}(a_2)=\cS^*_{r,r_1}\circ \cS^*_{r_2,r}(a_2)
        =\cS^*_{r,r_1}(a)>a_1.
    \end{align}
    Together, \eqref{eq:for_a} and \eqref{eq:back_a} contradict Proposition~\ref{prop:non-crossing}. Hence, almost surely, there is no point $z\in\h^*$ at which $m_{\mathrm{in}}(z)$ and $m^*_{\mathrm{in}}(z)$ are nonzero. Consequently, only the five types in~\eqref{eq:types} can occur.

    By Proposition~\ref{prop:skeleton} and self-duality, any point $z\in\h^*$ with $m_{\mathrm{in}}(z)=2$ or $m^*_{\mathrm{in}}(z)=2$ must be a coalescence point of two forward or two backward skeleton paths. Since each skeleton is countable and each pair of paths can coalesce at most once, the set of such points is countable, proving (c). Applying Proposition~\ref{prop:skeleton} again, the set of points of type $(1,1)/(0,1)$ is
    $$
    \bigcup_{n\in\N}\{(\cS^n(x),x):x>r_n,\, \cS^n(x)>0\},
    $$
    with the forward coalescence points removed. On every non-degenerate compact interval where a skeleton path stays strictly positive, its graph has Hausdorff dimension $3/2$, by the argument in~\cite[Appendix~B]{AWY24} applied to its rescaled BESQ marginal law. A countable union of such graph segments, with a countable set removed, still has Hausdorff dimension $3/2$. By self-duality, the same holds for points of type $(0,1)/(1,1)$, proving (b).
    Finally, since sets of Hausdorff dimension $3/2<2$ and countable sets have zero two-dimensional Lebesgue measure, the set of points of type $(0,1)/(0,1)$ has full Lebesgue measure in $\h^*$. This proves (a).
\end{proof}

\begin{remark}
    In this section, we have shown that the PSR flow shares many properties with the BESQ flow (see, e.g., \cite[Section 2.3]{AWY24} and \cite[Section 2]{elie23}). We trace this similarity to the coalescence-time estimates established in Corollary~\ref{app:coal}. While properties of the BESQ flow are typically studied through its embedding in PRBM, where Ray--Knight theorems identify it as a local time flow, our analysis of the PSR flow proceeds directly from its intrinsic flow structure. The same approach can also be used to derive analogous properties of BESQ flows directly from their flow structure. 
\end{remark}

\section{Construction and properties of the PSRM}
\label{sec:constr_PSRM}
We construct the PSRM from the two-sided PSR flow $\overline{\cS}$ of~\eqref{eq:cS_bar}, following~\cite{Toth98}.  While stochastic flows are often realized as local time flows, we consider the inverse problem of constructing a process from a given flow, here the PSR flow.

\subsection{Construction of the PSRM}

For $(a,r)\in\h$, set
\begin{align*}
	D(a,r):=\{(b,s)\in\h:b\le\overline{\cS}_{r,s}(a)\}.
\end{align*}
By Proposition~\ref{prop:non-crossing} and Corollary~\ref{cor:dual}, these regions are totally ordered, and distinct starting points give distinct profiles. Continuity then makes each strict difference have positive area. By Lemma~\ref{lm:basci_prop}(i), every profile is bounded above by a two-sided skeleton profile, which has compact support. Hence the area map
\begin{align}\label{def_T}
	T(a,r):=|D(a,r)|=\int_\r\overline{\cS}_{r,x}(a)\,\dd x
\end{align}
is finite and injective. More precisely, non-crossing and Corollary~\ref{cor:dual} give the right-continuous counterpart of~\cite[Proposition~3.1]{Toth98}:
\begin{align}\label{eq:area_order}
	T(b,s)<T(a,r)\quad\Longleftrightarrow\quad b<\overline{\cS}_{r,s}(a),\qquad (a,r),\ (b,s)\in\h.
\end{align}
For each $r$, the map $a\mapsto T(a,r)$ is strictly increasing, right-continuous and unbounded. For $a>0$, write
\begin{align*}
	T^-(a,r):=\lim_{b\uparrow a}T(b,r).
\end{align*}

\begin{lemma}\label{lm:T_sigl1}
	Almost surely, $T(\cD)$ is dense in $\r_+$.
\end{lemma}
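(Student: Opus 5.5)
Since $T$ is strictly increasing along $\overline{\cS}$-profiles by \eqref{eq:area_order}, density of $T(\cD)$ amounts to showing that $T(\h)$ has no gaps, together with a continuity argument that lets us approximate by dyadic starting points. Fix $0\le t_1<t_2$. I will produce $(a,r)\in\cD$ with $T(a,r)\in(t_1,t_2)$.

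\emph{Step 1: $T(\h)$ has no gaps.} Fix $r$ (say $r=0$). Along this vertical line, $a\mapsto T(a,0)$ is strictly increasing, right-continuous, and unbounded, and $T(0,0)=|D(0,0)|$. A gap in $T(\cdot,0)$ can occur only at a jump $a>0$, where $T^-(a,0)<T(a,0)$. In that case the difference region
\[
D(a,0)\setminus\bigcup_{b<a}D(b,0)
\]
has positive area $T(a,0)-T^-(a,0)$. Concretely, it is the region between $\overline{\cS}_{0,\cdot}(a-)$ and $\overline{\cS}_{0,\cdot}(a)$, where the left-limit profile is defined through the forward and dual flows. Pick an interior point $(b,s)$ of this region with $s\neq0$, so that
\[
\overline{\cS}_{0,s}(a-)<b<\overline{\cS}_{0,s}(a).
\]
By \eqref{eq:area_order}, $T(b,s)<T(a,0)$. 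Moreover, $b>\overline{\cS}_{0,s}(a')$ for every $a'<a$. Non-crossing and the injectivity of profiles should then give $T(b,s)>T(a',0)$ for all $a'<a$, hence $T(b,s)\ge T^-(a,0)$. Repeating this inside the gap, the image of $T$ should fill the interval $[T^-(a,0),T(a,0)]$. To argue this cleanly, I would look at the sub-family of profiles lying between the two curves and use the same right-continuity and unboundedness along the vertical line at $s$.

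\emph{Step 2: sup/inf characterization.} I will show
\[
\sup\{T(b,s):T(b,s)<t\}=t,\qquad t>0.
\]
Together with the analogous statement for the infimum over $T>t$, this gives density of $T(\h)$. The argument is by contradiction: if the supremum were smaller than $t$, the region
\[
\bigcap_{T(b,s)>t'}D(b,s)\setminus\bigcup_{T(b,s)<t'}D(b,s)
\]
would have positive area. Any interior point of this region would then have to lie strictly between all profiles, which contradicts \eqref{eq:area_order}.

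\emph{Step 3: approximation by dyadic points.} Given $(b,s)\in\h^*$ with $T(b,s)\in(t_1,t_2)$, I use Proposition~\ref{prop:skeleton}. It supplies $(a_n,r_n)\in\cD$ with $r_n<s$ and $\cS^n(s)\in(b,b+\varepsilon)$, such that the forward curve agrees with $\cS_{s,\cdot}(b)$ after $s+\varepsilon$. On the backward side I use the dual, via Corollary~\ref{cor:dual} and the continuity of $\overline{\cS}$. Together these should give $T(a_n,r_n)\to T(b,s)$. The cleaner route is to sandwich instead. Choose dyadic points whose profiles pass through $(b',s)$ with $b'$ slightly above $b$; these have areas in $(T(b,s),T(b',s)]$ by \eqref{eq:area_order}. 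Right-continuity of $T(\cdot,s)$ then makes these areas arbitrarily close to $T(b,s)$, and Lemma~\ref{lm:basci_prop}(i) provides such $b'$ values densely in $\r_+$.

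The main obstacle is Step~1/2, namely ruling out that an area jump $T(a,0)-T^-(a,0)>0$ creates a gap. The key input there is that every point strictly between two distinct profiles is itself the starting point of a profile with intermediate area. I expect this to follow from the total order \eqref{eq:area_order}, but it needs care at points of dual type $(2,1)$ and on the boundary $a=0$.
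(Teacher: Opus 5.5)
Your proof is correct and follows the paper's two-step structure: density of $T(\h)$, then an upgrade to dyadic starting points. The second step uses a different argument from the paper's.

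\textbf{Step~2 and density of $T(\h)$.} Your Step~2 plays the role of the argument of \cite[Lemma~3.2]{Toth98}, which the paper simply cites, and it suffices. Step~1 can be dropped. The care you anticipate at points of dual type $(2,1)$ or at $a=0$ is also not needed: $\overline{\cS}_{r_1,s}(a_1)<b<\overline{\cS}_{r_2,s}(a_2)$ already gives $T(a_1,r_1)<T(b,s)<T(a_2,r_2)$, using only \eqref{eq:area_order}, $\overline{\cS}_{s,s}(b)=b$ and injectivity of $T$.

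One step in Step~2 needs rewording. Positive area of $\bigcap_{T>t'}D\setminus\bigcup_{T<t'}D$ does not by itself give interior points, because the bounding envelopes are only semicontinuous: one is an infimum of continuous profiles, the other a supremum. What positive area does give is a level $s$ at which the lower envelope lies strictly below the upper one. Any height $b$ strictly between them has $T(b,s)=t'$ exactly, which is the contradiction you want.

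\textbf{Step~3, the passage to $\cD$.} The paper takes two distinct areas in $(t_1,t_2)$. The set strictly between the two continuous profiles is open and nonempty, so it contains a dyadic point, and that point's area is strictly intermediate by the implication above.

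You instead approximate a single $(b,s)$ from above at the fixed level $s$. You take dyadic lines with $r'<s$ and $\cS_{r',s}(a')=b'\downarrow b$, supplied by Lemma~\ref{lm:basci_prop}(i). Then \eqref{eq:area_order} gives $T(a',r')\in(T(b,s),T(b',s)]$, and right-continuity of $T(\cdot,s)$ finishes.

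Both routes work. Yours needs two extra inputs: density of skeleton values and right-continuity of the area map. The paper's needs only continuity of profiles and the order relation. It can be applied directly once your Step~2 has produced two distinct areas in the target interval.
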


\begin{proof}
	The argument in~\cite[Lemma~3.2]{Toth98} gives density of $T(\h)$. Between any two distinct profiles there is a nonempty open region containing a dyadic point; by~\eqref{eq:area_order}, its area lies strictly between their areas. This gives density of $T(\cD)$.
\end{proof}

For $t\ge0$, define
\begin{align}\label{eq:def_P_t}
	P_t:=\bigcap_{\varepsilon>0}\overline{\{(a,r)\in\h:|T(a,r)-t|<\varepsilon\}}.
\end{align}
The proofs of~\cite[Lemmas~3.3--3.4]{Toth98} carry over by Propositions~\ref{prop:coming_down} and~\ref{prop:skeleton} and their dual counterparts. Hence, almost surely, $P_t$ is a singleton for every $t\ge0$.

\begin{definition}\label{def:PSRM}
	For each $t\ge 0$, let the singleton $P_t$ be denoted by $\{(H_t, X_t)\}$. We then refer to the process $(X_t,\, t\ge 0)$ as the polynomially self-repelling motion (PSRM).
\end{definition}

\begin{remark}\label{rmk:def_L}
	For $t\ge0$ and $y\in\r$, define
	\begin{align}\label{eq:def_Lt}
		L_t(y):=\inf\{a>0:T(a,y)>t\}.
	\end{align}
	Since $T(\cdot,y)$ is strictly increasing and unbounded, $L_\cdot(y)$ is well-defined, continuous and nondecreasing. By~\eqref{eq:area_order}, $L_{T(a,r)}(y)=\overline{\cS}_{r,y}(a)$. Thus, Lemma~\ref{lm:T_sigl1} gives spatial continuity at a dense set of times. Monotonicity and pointwise continuity in time then give joint continuity of $(t,y)\mapsto L_t(y)$ by comparison with profiles at nearby times in this dense set. At $t=0$, the same argument uses $L_0\equiv0$ as the lower bound.
	
	For any $t\ge0$, choose $(a_n,r_n)\to(H_t,X_t)$ such that $T(a_n,r_n)\to t$. Joint continuity gives $a_n=L_{T(a_n,r_n)}(r_n)\to L_t(X_t)$, and hence
	\begin{align}\label{eq:L_t_H_t}
		H_t=L_t(X_t),\qquad t\ge0.
	\end{align}
	The definition of $L$ and right-continuity of $T(\cdot,y)$ also imply, for every $t\ge0$,
	\begin{align}\label{eq:t_between}
		T^-(H_t,X_t)\le t\le T(H_t,X_t)&\quad\text{if }H_t>0,\nonumber\\
		0\le t\le T(0,X_t)&\quad\text{if }H_t=0.
	\end{align}
	Theorem~\ref{thm:occupation} identifies $L$ as the occupation density of $X$.
\end{remark}

\subsection{Properties of PSRM}\label{subsec:prop_PSRM}

We first record some basic properties of the PSRM.

\begin{proposition}
	\begin{enumerate}[label=\textup{(\roman*)}]
		\item The processes $(X_{t}, H_{t})$ and $(-X_{t}, H_{t})$ are identical in law.
		\item Almost surely, for any $x \in \r$, the set $\{t \in \r_{+} : X_t = x\}$ is unbounded.
		\item Almost surely, $t \mapsto (X_{t}, H_{t})$ is continuous on $[0, \infty)$ and $(X_{0}, H_{0}) = (0, 0)$.
		\item For any $a > 0$, $t \mapsto (X_{a t}, H_{a t})$ and $t \mapsto (a^{1/2} X_{t}, a^{1/2} H_{t})$ are identical in law.
	\end{enumerate}
\end{proposition}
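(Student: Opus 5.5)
We prove the four items by transferring invariances of the two-sided flow $\overline{\cS}$ through the deterministic construction $\overline{\cS}\mapsto T\mapsto P_t=\{(H_t,X_t)\}$, in the spirit of \cite[Section~3]{Toth98}.

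\emph{Items (i) and (iv).} The key observation is that $(X,H)$ is a measurable functional $\Psi(\overline{\cS})$ of the two-sided flow. Indeed, $T$ is defined by~\eqref{def_T}, and $P_t$ is defined by~\eqref{eq:def_P_t}.

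For (i), apply Proposition~\ref{prop:self_dual}: the reflected family $\{\overline{\cS}_{-r,-x}(a)\}$ has the same law as $\{\overline{\cS}_{r,x}(a)\}$. Under this reflection, areas are preserved, so the reflected flow has area map $T'(a,r)=T(a,-r)$. It follows that $P'_t=\{(H_t,-X_t)\}$, and equality in law of the flows gives (i).

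For (iv), first check that white noise scaling yields the following. If $\cS$ is a PSR$^\alpha$ flow, then $\widetilde{\cS}_{r,x}(a):=c\,\cS_{r/c,x/c}(a/c)$ is again a PSR$^\alpha$ flow driven by another white noise. To verify this, substitute into~\eqref{SDE:for}: the drift $(x-r)/(2\alpha+1)$ is invariant, and the noise $\int s^\alpha\ind_{\{s\le S\}}\cW(\dd s,\dd u)/S^\alpha$ has quadratic variation scaling like $c\cdot c^{-1}\cdot c^2$. This is the BESQ scaling. The regularity properties (R1)–(R3) and the dual definition~\eqref{eq:def_dual_+} are preserved, so $\overline{\widetilde{\cS}}$ has the law of $\overline{\cS}$ by uniqueness \cite[Theorem~6.2]{Yu26}. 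The area transforms as $\widetilde T(a,r)=c^2T(a/c,r/c)$. Taking $c=a^{1/2}$ then gives $(\widetilde H_t,\widetilde X_t)=(cH_{t/c^2},cX_{t/c^2})$, which is (iv).

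\emph{Item (iii).} We first show $(X_0,H_0)=(0,0)$. The point $(0,0)$ lies in $P_0$, since $T(0,0)=\int\overline{\cS}_{0,x}(0)\dd x=0$: the flow line from height $0$ at $r=0$ is absorbed immediately on both sides. Since $P_0$ is a singleton, this settles the initial value.

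For continuity, we follow \cite[Lemma~3.5]{Toth98}. Take $t_n\to t$ and any limit point $(b,s)$ of $(H_{t_n},X_{t_n})$. Before extracting this limit point, we need boundedness of $(H_{t_n},X_{t_n})$. For $t_n\le t+1$, the points $P_{t_n}$ lie inside $D(a,r)$ for a skeleton point with $T(a,r)>t+1$; such a point exists by Lemma~\ref{lm:T_sigl1}, and $D(a,r)$ is compact because two-sided skeleton profiles have compact support. Having bounded the sequence, we next show $(b,s)\in P_t$. By~\eqref{eq:t_between}, $|T(\cdot)-t_n|$ is small at points arbitrarily near $(H_{t_n},X_{t_n})$, so $(b,s)$ belongs to the intersection of closures in~\eqref{eq:def_P_t}. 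This closedness argument uses only the definition of $P_t$ and is a diagonal extraction. Since $P_t$ is a singleton, $(b,s)=(H_t,X_t)$, which gives continuity.

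\emph{Item (ii).} This is the step we expect to be the main obstacle. Fix $x$. For each $a\in\bD_+$ we have $P_{T(a,x)}\ni(a,x)$, so $X_{T(a,x)}=x$. Moreover $T(a,x)\to\infty$ as $a\to\infty$, since $a\mapsto T(a,x)$ is strictly increasing and unbounded. Hence the set of times with $X_t=x$ is unbounded for each fixed $x$.

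Making this hold simultaneously for all $x\in\r$ requires care, because $P_{T(a,x)}=\{(a,x)\}$ holds for every $(a,x)\in\h$, not only at dyadic points. For general $x$ we argue directly: $(a,x)\in P_{T(a,x)}$ trivially, since $T(a,x)$ is attained at $(a,x)$ itself, and the singleton property then forces $X_{T(a,x)}=x$ for all $a>0$ and all $x$. Unboundedness in $a$, which holds for every $r$ by the statement after~\eqref{eq:area_order}, then finishes the argument on the single almost sure event on which $P_t$ is a singleton for all $t$.
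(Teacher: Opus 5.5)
Your proposal is correct and follows the paper's own route: self-duality (Proposition~\ref{prop:self_dual}) for (i); $X_{T(a,r)}=r$ with $T(a,r)\to\infty$ for (ii); the T\'oth--Werner singleton/closed-graph argument for (iii); and the rescaled flow with area map $c^2T(a/c,r/c)$ for (iv), where you also supply the SDE scaling check that the paper leaves implicit. One small repair is needed in (iii): $D(a,r)$ as defined is not compact, since it contains $\{0\}\times\r$, so get boundedness from~\eqref{eq:area_order} instead — $T(b,s)<T(a,r)$ forces $b<\overline{\cS}_{r,s}(a)$, which confines $(b,s)$ to a bounded rectangle fixed by the compactly supported skeleton profile.
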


\begin{proof}
	Part (i) follows from Proposition~\ref{prop:self_dual}. For (ii), use $X_{T(a,r)}=r$ and $T(a,r)\to\infty$ as $a\to\infty$. The proof of~\cite[Proposition~3.5(i)]{Toth98} gives (iii). For (iv), fix $c>0$ and define
	$$
	\overline{\cS}^{(c)}_{r,x}(b):=c^{-1}\overline{\cS}_{cr,cx}(cb).
	$$
	By scaling invariance, $\overline{\cS}^{(c)}$ has the same law as $\overline{\cS}$. Its area map is $T^{(c)}(b,r)=c^{-2}T(cb,cr)$, so the corresponding motion is $(c^{-1}X_{c^2t},c^{-1}H_{c^2t})_{t\ge0}$. Taking $c=a^{1/2}$ yields (iv).
\end{proof}

The following proposition is a direct consequence of the definition of $L$ in Remark~\ref{rmk:def_L}.

\begin{proposition}\label{prop:ray-knight}
	Almost surely,
	\begin{enumerate}[label=\textup{(\roman*)}]
		\item for every $(a,r)\in\h$ and $x\in\r$, $L_{T(a,r)}(x)=\overline{\cS}_{r,x}(a)$;
		\item for every $(a,r)\in\h^*$ and $x\in\r$, $L_{T^-(a,r)}(x)=\overline{\cS}_{r,x}(a-):=\lim_{b\uparrow a}\overline{\cS}_{r,x}(b)$.
	\end{enumerate}
\end{proposition}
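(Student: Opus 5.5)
Both assertions will be read off from the definition $L_t(y)=\inf\{b>0:T(b,y)>t\}$ in~\eqref{eq:def_Lt}, combined with the area ordering~\eqref{eq:area_order}. All arguments are carried out on the almost sure event where~\eqref{eq:area_order} holds simultaneously for all pairs of points in $\h$. On this event, $b\mapsto T(b,x)$ is strictly increasing, right-continuous and unbounded for every $x$, and Corollary~\ref{cor:dual} and Proposition~\ref{prop:non-crossing} hold.

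For (i), fix $(a,r)\in\h$, $x\in\r$, and set $t=T(a,r)$. By~\eqref{eq:area_order} with $(b,s)=(b,x)$, we have $T(b,x)<t$ if and only if $b<\overline{\cS}_{r,x}(a)$. Hence every $b>\overline{\cS}_{r,x}(a)$ gives $T(b,x)\ge t$, and by injectivity of $T$ this inequality is strict unless $(b,x)=(a,r)$. The exception occurs only when $x=r$ and $b=a$, and that case is excluded because $b>\overline{\cS}_{r,r}(a)=a$. Therefore $L_t(x)\le\overline{\cS}_{r,x}(a)$. Conversely, for $b<\overline{\cS}_{r,x}(a)$ we have $T(b,x)<t$, so $L_t(x)\ge \overline{\cS}_{r,x}(a)$. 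If $\overline{\cS}_{r,x}(a)=0$, the lower bound is trivial. This proves (i); it is exactly the identity already noted in Remark~\ref{rmk:def_L}.

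For (ii), fix $(a,r)\in\h^*$ and write $T^-(a,r)=\lim_{b\uparrow a}T(b,r)$. The main point is continuity of $t\mapsto L_t(x)$, established in Remark~\ref{rmk:def_L}. Since $T(b,r)\uparrow T^-(a,r)$ as $b\uparrow a$, continuity and (i) give
\[
L_{T^-(a,r)}(x)=\lim_{b\uparrow a}L_{T(b,r)}(x)=\lim_{b\uparrow a}\overline{\cS}_{r,x}(b).
\]
The right-hand limit exists by the monotonicity of $b\mapsto\overline{\cS}_{r,x}(b)$, which follows from non-crossing (Proposition~\ref{prop:non-crossing}).

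The only delicate step is the continuity in time of $L_\cdot(x)$ used in (ii). If one prefers to avoid relying on it, the direct sandwich argument works as well. For $b<a$, monotonicity gives $L_{T^-(a,r)}(x)\ge L_{T(b,r)}(x)=\overline{\cS}_{r,x}(b)$. For the reverse inequality, take any $c>\lim_{b\uparrow a}\overline{\cS}_{r,x}(b)$. Then $c>\overline{\cS}_{r,x}(b)$ for all $b<a$, so $T(c,x)>T(b,r)$ for all $b<a$, which yields $T(c,x)\ge T^-(a,r)$. This gives $L_{T^-(a,r)}(x)\le c$, with a possible equality issue at the boundary. That boundary issue is removed by decreasing $c$ slightly and using strict monotonicity of $T(\cdot,x)$.
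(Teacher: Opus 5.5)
Your proof is correct and follows essentially the paper's route. The paper treats the statement as a direct consequence of the definition~\eqref{eq:def_Lt}: (i) is the identity $L_{T(a,r)}(y)=\overline{\cS}_{r,y}(a)$ read off from the area ordering~\eqref{eq:area_order} in Remark~\ref{rmk:def_L}, and (ii) follows from (i) together with the continuity and monotonicity of $t\mapsto L_t(x)$. Your sandwich argument for (ii) is also valid, with strictness obtained by comparing $c$ to some $c'$ strictly between the limit and $c$, though it is not needed.
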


We next identify $L$ as the occupation density of $X$, following the argument of~\cite[Proposition~4.1(iii) and Theorem~4.2(i)]{Toth98}; we refer the reader there for details.

\begin{theorem}[Occupation-time density property]\label{thm:occupation}
	Almost surely, for any measurable, bounded, real-valued function $h$ and any $t\ge 0$,
	\begin{align}\label{eq:occu}
		\int_0^t h(X_s) \dd s=\int_\r h(x)L_t(x) \dd x.
	\end{align}
	In particular, $L_t(\cdot)$ is the Radon--Nikodym derivative of the occupation time measure $\mu_t$ with respect to Lebesgue measure, where $\mu_t$ is defined by
	$$
	\mu_t(A):=\int_0^t \mathbbm{1}_{\{X_s\in A\}}\,\dd s,
	\qquad A\subset \r \text{ Borel}.
	$$
\end{theorem}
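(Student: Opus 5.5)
The plan is to prove the occupation formula first for indicator functions of intervals at the special times $t=T(a,r)$, and then extend by monotone class and continuity. This follows the route of T\'oth--Werner.

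\emph{Step 1: reduce to $h=\mathbbm{1}_{(-\infty,y]}$ and to times in $T(\cD)$.} Both sides of \eqref{eq:occu} are continuous and nondecreasing in $t$: the right side by joint continuity of $L$ (Remark~\ref{rmk:def_L}), the left side trivially. By Lemma~\ref{lm:T_sigl1}, $T(\cD)$ is dense, so it suffices to prove the identity at $t=T(a,r)$ with $(a,r)\in\cD$. Both sides are finite measures in $h$ (the right side has compact support since $L_t$ is dominated by a skeleton profile). A $\pi$--$\lambda$ argument therefore reduces matters to half-lines $h=\mathbbm{1}_{(-\infty,y]}$ with $y$ in a countable dense set.

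\emph{Step 2: Lebesgue measure of times spent left of $y$.} Fix $t=T(a,r)$. For $s<t$ with $s=T(b,u)\in T(\h)$, we have $X_s=u$, and by \eqref{eq:area_order} such $s$ correspond exactly to points $(b,u)\in D(a,r)$. The time set $\{s\le t: X_s\le y\}$ is then the image under $T$ of $D(a,r)\cap\{u\le y\}$, up to the null set $[0,t]\setminus T(\h)$.

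To compute its Lebesgue measure, I would use monotone coupling of areas. Consider the map $\Psi:(b,u)\mapsto T(b,u)$ on $D(a,r)$. I would show it pushes the measure $\dd b\,\dd u$ forward to Lebesgue measure on $[0,t]$, in the following sense. For each dyadic $(b,u)\in D(a,r)$, the Lebesgue measure of $\{s\le t: s<T(b,u)\}$ equals $T(b,u)=|D(b,u)|$, which is the area of the subregion. Ordering and injectivity \eqref{eq:area_order} make $\Psi$ measure preserving on the generating class of regions $D(b,u)$. Those regions form a $\pi$-system generating the relevant $\sigma$-field, which is the content of T\'oth--Werner Proposition~4.1(iii).

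Consequently
\[\int_0^t\mathbbm{1}_{\{X_s\le y\}}\dd s=\bigl|\{(b,u)\in D(a,r):u\le y\}\bigr|=\int_{-\infty}^y\overline{\cS}_{r,u}(a)\,\dd u=\int_{-\infty}^y L_t(u)\,\dd u,\]
where the last equality uses Proposition~\ref{prop:ray-knight}(i).

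\emph{Step 3: the main obstacle.} The main obstacle is justifying the pushforward claim. This needs two things.

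First, $[0,t]\setminus T(\h)$ must be Lebesgue-null. I would prove this by showing that the jumps $T(a,r)-T^-(a,r)$ are filled by $P_s$ being a singleton. Each gap in $T(\cdot,r)$ corresponds, via Proposition~\ref{prop:ray-knight}(ii), to the area between the profiles $\overline{\cS}_{r,\cdot}(a-)$ and $\overline{\cS}_{r,\cdot}(a)$. That area is covered by other $D(b,u)$ with $u\ne r$, so these times are still attained by $T$.

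Second, measurability of $X$ is needed, and continuity (part (iii) of the preceding proposition) supplies it.

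I would try first to write $\mathbbm{1}_{\{s<T(b,u)\}}$ and integrate over $s$ using Fubini, comparing with $\mathbbm{1}_{\{b'<\overline{\cS}_{u',\cdot}\}}$ via \eqref{eq:area_order}. Finally, for the Radon--Nikodym statement, the measures $\mu_t$ and $L_t(x)\,\dd x$ agree on half-lines, hence everywhere.
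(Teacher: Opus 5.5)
Your overall route is the one the paper takes by deferring to T\'oth--Werner. The real content is that $T$ pushes $\dd b\,\dd u$ on $\h^*$ forward to Lebesgue measure on $\r_+$, i.e.\ $\int_{\h^*}f(T(b,u))\,\dd b\,\dd u=\int_0^\infty f(s)\,\dd s$ for nonnegative Borel $f$. The paper records this as \eqref{eq:area_iden} and derives it, as in your Step~2, from $|\{T<v\}|=v$ for $v\in T(\h)$ and density of $T(\cD)$. Combined with $X_{T(b,u)}=u$ and Fubini, this gives \eqref{eq:occu}.

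\textbf{The flaw.} The argument you give in Step~3 for the ``main obstacle'' would fail as written. The gaps $(T^-(a,r),T(a,r))$ are \emph{not} filled by values of $T$, and $T$ is not onto $\r_+$. Take a local maximum time $t$ of $X$ with $H_t>0$; such times exist, e.g.\ the maximum of $X$ over a short interval after a fixed time $s>0$. By the (true) occupation formula and spatial continuity of $L$, $L_{s'}(X_t)=H_t$ for $s'$ near $t$. Hence $T(H_t,X_t)=\inf\{s':L_{s'}(X_t)>H_t\}>t$. Any $(b,u)$ with $T(b,u)=t$ would lie in $P_t=\{(H_t,X_t)\}$, so no such point exists. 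Thus ``these times are still attained by $T$'' cannot be proved. The correct statement is that the region between $\overline{\cS}_{r,\cdot}(a-)$ and $\overline{\cS}_{r,\cdot}(a)$ maps onto a full-measure subset of the gap, which is just the measure preservation again.

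\textbf{The fix.} You do not need the null-set claim or the image-set formulation at all. Apply the pushforward identity to $f(s)=h(X_s)\mathbbm{1}_{\{s<t\}}$, which is Borel since $X$ is continuous, and use $X_{T(b,u)}=u$. Then note that $|\{b:T(b,u)<t\}|=L_t(u)$ for every $t$ and $u$, by \eqref{eq:def_Lt} and strict monotonicity of $T(\cdot,u)$. This yields \eqref{eq:occu} for all bounded Borel $h$ and all $t\ge0$ at once. So the Step~1 reductions to half-lines and to times in $T(\cD)$ are also unnecessary.
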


\bigskip

We now show that the joint laws of the inverse local-time profiles characterize the PSRM. Let $\cX$ be a continuous process admitting local times $\cL_\cX$ in the sense of Definition~\ref{def:local_time}, and define
$$
\tau_a^r(\cX):=\inf\{t\ge0:\cL_\cX(t,r)>a\},\qquad (a,r)\in\h.
$$
For the PSRM, right-continuity of $T(\cdot,r)$ and~\eqref{eq:def_Lt} give
$$
T(a,r)=\inf\{t\ge0:L_t(r)>a\}.
$$

\begin{assumption}\label{assump_H}
	For any $p\in \N$ and any $(a_i,r_i)\in \h$, $i=1,\ldots,p$, the family
	$$
	\bigl(\cL_\cX(\tau_{a_i}^{r_i}(\cX),x),\, x\in\r\bigr)_{i=1}^p
	$$
	has the same joint law as 
	$$
	\bigl(\overline{\cS}_{r_i,x}(a_i),\, x\in\r\bigr)_{i=1}^p.
	$$
\end{assumption}

By Proposition~\ref{prop:ray-knight}(i), the PSRM satisfies Assumption~\ref{assump_H}. 

\begin{proposition}
    Let $\cX$ and $\cX'$ denote two continuous processes admitting local times. Suppose that both satisfy Assumption~\ref{assump_H}. Then, $\cX \overset{d}{=} \cX'$.
\end{proposition}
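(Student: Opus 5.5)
The idea is to reconstruct any continuous process with local times, measurably, from its family of inverse local-time profiles indexed by the countable set $\cD$. Assumption~\ref{assump_H} then forces $\cX$ and $\cX'$ to have the same law. This is the approach of A\"id\'ekon et al.~\cite{elie23} and of the area construction of Section~\ref{sec:constr_PSRM}.

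First, Assumption~\ref{assump_H} holds for every finite $p$. By Kolmogorov's extension theorem, the family $\{(\cL_\cX(\tau^{r}_{a}(\cX),x))_{x\in\r}\}_{(a,r)\in\cD}$ therefore has the same law as $\{\overline{\cS}_{r,\cdot}(a)\}_{(a,r)\in\cD}$ on $C(\r)^{\cD}$, and the same holds for $\cX'$. Second, the occupation times formula~\eqref{eq:oc_times_for} with $h\equiv1$ gives, for every $(a,r)\in\cD$,
$$
\tau^r_a(\cX)=\int_\r \cL_\cX(\tau^r_a(\cX),x)\,\dd x=:T_\cX(a,r).
$$
So the times $\tau_a^r$ are a measurable functional $F$ of the profile family, namely integration, and $F$ is the same map that defines $T$ in~\eqref{def_T}. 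Since $\cX_{\tau^r_a}=r$ by continuity of $\cX$, the pairs $(T_\cX(a,r),r)_{(a,r)\in\cD}$ have the same joint law as $(T(a,r),r)_{(a,r)\in\cD}$ for the PSRM. To justify $\cX_{\tau^r_a}=r$, note that $\cL_\cX(\cdot,r)$ increases only when $\cX$ is at $r$. This should follow from the occupation formula together with the continuity of the profiles, by taking $h$ to be an indicator of a small neighborhood of $r$.

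Third, I reconstruct the path. On the PSRM side, Lemma~\ref{lm:T_sigl1} shows that $\{T(a,r):(a,r)\in\cD\}$ is almost surely dense in $\r_+$, and this is an event of the profile family. By equality in law, the same density holds almost surely for $\{T_\cX(a,r)\}$. Since $\cX$ is continuous and takes the value $r$ at time $T_\cX(a,r)$, we get, for every $t\ge0$,
$$
\cX_t=\lim_{n\to\infty} r_n \quad\text{whenever } T_\cX(a_n,r_n)\to t,\ (a_n,r_n)\in\cD.
$$
Hence $\cX=G(\{\cL_\cX(\tau_a^r,\cdot)\}_{\cD})$ almost surely, where $G$ is a fixed measurable map. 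A concrete choice is $G(\cdot)_t=\limsup_n r_{k_n(t)}$, with $k_n(t)$ the first enumerated index satisfying $|T(a_k,r_k)-t|<1/n$. Applying the same $G$ to $\cX'$ and using equality in law of the inputs gives $\cX\overset{d}{=}\cX'$ as laws on $C(\r_+)$.

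The main obstacle is step two: showing that $\cX_{\tau^r_a}=r$ and that $\tau_a^r$ is finite. Definition~\ref{def:local_time} gives only the occupation formula, with no joint continuity and no support property for $\cL_\cX$. I would first extract continuity of $x\mapsto\cL_\cX(\tau_a^r,x)$ from the law, since the profiles $\overline{\cS}$ are continuous. I would then need to upgrade this to regularity in $t$, possibly by restricting to a version of $\cL_\cX$ that is nondecreasing in $t$. If this fails, I would build the support property into the notion of ``admitting local times'', as is implicit in the paper's setup.
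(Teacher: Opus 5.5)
\textbf{The gap is in your step two.} Your reconstruction map $G$ rests entirely on the identity $\cX_{\tau_a^r(\cX)}=r$ for all $(a,r)\in\cD$, and you leave exactly this step open, so as written the proof is incomplete.

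Continuity of $\cX$ does not give this identity. Neither does your hint of taking $h$ to be an indicator of a neighborhood of $r$. Definition~\ref{def:local_time} only provides the occupation formula, and that formula determines $\cL_\cX(t,\cdot)$ only Lebesgue-a.e. for each $t$. It says nothing about the single value $\cL_\cX(t,r)$, nor about monotonicity or regularity in $t$, at times $t$ that are not inverse local times.

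Your two fallbacks are not available either. Passing to a nicer version of $\cL_\cX$ changes $\tau_a^r(\cX)$, and $\tau_a^r(\cX)$, like Assumption~\ref{assump_H}, is defined through the given family. Adding a support property to the notion of local time changes the statement.

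\textbf{How the paper avoids this.} The paper never locates $\cX$ at the inverse local times. Set $A_\cX(t):=\int_0^t\arctan(\cX_s)\,\dd s$. Taking $h=\arctan$ in \eqref{eq:oc_times_for} at $t=\tau_a^r(\cX)$ shows that $A_\cX(\tau_a^r)=\int_\r\arctan(x)\,\cL_\cX(\tau_a^r,x)\,\dd x$ is a functional of the profiles. Since $A_\cX$ is Lipschitz and the times $\tau_a^r$ are a.s. dense, $A_\cX$ is determined by these values. Then $\arctan(\cX_t)=\lim_m m\bigl(A_\cX(t+m^{-1})-A_\cX(t)\bigr)$ recovers $\cX$.

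\textbf{How to rescue your route.} If you want to keep your approach, compare profiles only at inverse local times, where they are a.s. continuous. Take $s=\tau_{a'}^{r'}<\tau_a^r\le t=\tau_{a''}^r$ with $a''>a$.
\begin{itemize}
\item The occupation formula on $[s,t]$ and continuity give $\cL_\cX(t,\cdot)\ge\cL_\cX(s,\cdot)$ everywhere, with equality on any open set that $\cX|_{[s,t]}$ avoids.
\item The a.s. relations \eqref{eq:area_order} and right-continuity of $T(\cdot,r)$ are events of the countable profile family, so they transfer through the joint law. They give $\cL_\cX(s,r)\le a<a''=\cL_\cX(t,r)$, and they let you take $s\uparrow\tau_a^r$ and $t\downarrow\tau_a^r$.
\item Hence $\cX$ comes arbitrarily close to $r$ at times arbitrarily close to $\tau_a^r$, and continuity of $\cX$ yields $\cX_{\tau_a^r}=r$.
\end{itemize}
Without one of these two arguments, your step two remains unproved.
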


\begin{proof}
	Set $A_\cX(t):=\int_0^t\arctan(\cX_s)\,\dd s$. For every $(a,r)\in\cD$, the occupation formula gives
	\begin{align*}
		\tau_a^r(\cX)&=\int_\r\cL_\cX(\tau_a^r(\cX),x)\,\dd x,\\
		A_\cX(\tau_a^r(\cX))&=\int_\r\arctan(x)\cL_\cX(\tau_a^r(\cX),x)\,\dd x.
	\end{align*}
	Thus Assumption~\ref{assump_H} determines the joint law of $\bigl(\tau_a^r(\cX),A_\cX(\tau_a^r(\cX))\bigr)_{(a,r)\in\cD}$. Moreover, these inverse times have the same joint law as $(T(a,r))_{(a,r)\in\cD}$ and hence are almost surely dense by Lemma~\ref{lm:T_sigl1}. Continuous extension therefore determines $A_\cX$ measurably from this family. Finally, continuity of $\cX$ gives
	$$
	\arctan(\cX_t)=\lim_{m\to\infty}m\bigl(A_\cX(t+m^{-1})-A_\cX(t)\bigr),\qquad t\ge0.
	$$
	Since $\arctan$ is injective, this recovers $\cX$ measurably from $A_\cX$. Its law is therefore determined by Assumption~\ref{assump_H}, proving the claim.
\end{proof}

\subsection{Markov property of $(X_t, L_t(\cdot))$}

As in~\cite[Section~5.1]{Toth98}, we deduce the Markov property of $(X_t,L_t(\cdot))$ from the spatial Markov property of the flow.

\begin{theorem}[Markov property]
	For every fixed $t\ge0$, the $\sigma$-fields $\sigma(X_s:0\le s\le t)$ and $\sigma(X_s:s\ge t)$ are conditionally independent given $(X_t,L_t(\cdot))$. In particular, $(X_t,L_t(\cdot))_{t\ge0}$ is a Markov process.
\end{theorem}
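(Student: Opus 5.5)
The plan follows T\'oth--Werner~\cite[Section~5.1]{Toth98}. Fix $t\ge0$ and set $(a_0,r_0):=(H_t,X_t)$. By~\eqref{eq:L_t_H_t} and~\eqref{eq:t_between}, the profile $L_t(\cdot)$ equals either $\overline{\cS}_{r_0,\cdot}(a_0)$ or its left limit $\overline{\cS}_{r_0,\cdot}(a_0-)$. For a fixed deterministic $t$, the set of times $T^-(a,r)<T(a,r)$ is countable, so almost surely $t$ is not one of them. We may therefore treat $L_t=\overline{\cS}_{r_0,\cdot}(a_0)$ as the distinguished curve and use $D(a_0,r_0)$ as the region ``already explored''.

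\textbf{Step 1: express past and future through the flow lines.} By~\eqref{eq:area_order}, the past $\{X_s:s\le t\}$ is the closure of $\{r:T(a,r)\le t\}$, i.e.\ it is generated by the values $X_{T(a,r)}=r$ at times $T(a,r)<t$. These are exactly the points $(a,r)\in\Lambda_-$, with $\Lambda_\pm$ as in~\eqref{def:lamb}. Moreover, for $(a,r)\in\Lambda_-$ the time $T(a,r)=\int\overline{\cS}_{r,x}(a)\,\dd x$ is a functional of $\overline{\cS}_-$. Using density (Lemma~\ref{lm:T_sigl1}) and continuity of $X$, the past path is therefore $\sigma(\overline{\cS}_-)$-measurable.

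For $(a,r)\in\Lambda_+$ one has $T(a,r)=t+\int(\overline{\cS}_{r,x}(a)-L_t(x))\,\dd x$. Hence the future path $(X_{t+u})_{u\ge0}$ is measurable with respect to $\sigma(\overline{\cS}_+,L_t)$, where $t$ itself is read off from $L_t$ as $\int L_t$.

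\textbf{Step 2: the main obstacle is that $(a_0,r_0)$ is random.} Proposition~\ref{prop:spatial_markov} is stated for a fixed point, but here the point is chosen by the stopping rule $T(a_0,r_0)=t$. I would first discretize. Approximate the conditioning by the events $\{(a_0,r_0)\text{ lies in a small dyadic box}\}$, or equivalently by stopping at skeleton curves: pick the two-sided skeleton curve $\overline{\cS}^n$ whose area straddles $t$ at mesh $2^{-m}$. For each fixed dyadic $(a,r)$, the event $\{T(a,r)\le t\}$ is determined by the curve $\overline{\cS}_{r,\cdot}(a)$ alone. Apply Proposition~\ref{prop:spatial_markov} at that fixed point, conditioning on the curve, and sum over the countably many candidate dyadic points. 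This is a strong-Markov argument over a countable family, in the style of~\cite{Toth98}.

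Then let the mesh tend to zero. Proposition~\ref{prop:skeleton} guarantees that the straddling curves converge to $L_t$ and that the sets $\Lambda_\pm$ converge. Conditional independence passes to the limit by a martingale (L\'evy upward/downward) argument on the $\sigma$-fields. Controlling this limit, and checking that conditioning on the discretized curve is compatible with conditioning on $L_t$, is where I expect the real work.

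\textbf{Step 3: conclude.} Combining Steps 1 and 2, $\sigma(X_s:s\le t)$ and $\sigma(X_s:s\ge t)$ are conditionally independent given $(X_t,L_t)$. The future is a functional of $\overline{\cS}_+$ together with $L_t$, and the conditional law of $\overline{\cS}_+$ given $L_t$ depends only on the curve $L_t$ and its position $X_t$, by the spatial Markov property. Since $(X_s,L_s)_{s\le t}$ is a functional of the past path via Theorem~\ref{thm:occupation}, the Markov property of $(X_t,L_t(\cdot))_{t\ge0}$ follows.
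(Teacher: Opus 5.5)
Your Step~1 is sound: the past is measurable from the curves below, and the future from $L_t$ and the curves above. You also correctly note that Proposition~\ref{prop:spatial_markov} only applies at a deterministic base point. But Step~2, which is supposed to bridge this, is not an argument, and the route you sketch does not close. Suppose you have a countable selection rule $(a^m,r^m)\in\cD$ whose events $\{(a^m,r^m)=(a,r)\}$ are measurable with respect to the curves below $(a,r)$, and $\tau_m:=T(a^m,r^m)\downarrow t$. Summing the fixed-point statement then gives conditional independence of the past up to $\tau_m$ and the future after $\tau_m$ given $(X_{\tau_m},L_{\tau_m})$, with $\e[G_{\tau_m}\mid\cdot\,]=\psi(X_{\tau_m},L_{\tau_m})$. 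To pass to time $t$ you must know that $\psi(X_{\tau_m},L_{\tau_m})\to\psi(X_t,L_t)$. That is a Feller-type continuity of the conditional law of $\overline{\cS}_+$ in the boundary curve, and it is nowhere available, since flow lines are only measurable functionals of the noise. L\'evy's theorem does not substitute for this. The $\sigma$-fields $\sigma(X_{\tau_m},L_{\tau_m})$ are not monotone. If you instead condition on the decreasing $\sigma$-fields of everything below $\overline{\cS}_{r^m,\cdot}(a^m)$, the limit is measurable only with respect to the tail of the approximating curves, which may carry germ information from just after time $t$. So the step you flag as ``the real work'' is exactly the missing proof.

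The paper never conditions at a random point. It proves conditional independence for each fixed $(a_0,r_0)\in\h^*$ with $\tau=T(a_0,r_0)$, and writes $\e[F_\tau\mid\lambda]=\varphi(r_0,\lambda)$, $\e[G_\tau\mid\lambda]=\psi(r_0,\lambda)$. It then integrates over $(a_0,r_0)$ using the area identity $\int_{\h^*}f(T(a,r))\,\dd a\,\dd r=\int_0^\infty f(t)\,\dd t$, which follows from $|\{T<v\}|=v$ and density of $T(\cD)$. This turns the fixed-point identity into $\e[F_tG_t\mid X_t,L_t]=\e[F_t\mid X_t,L_t]\,\e[G_t\mid X_t,L_t]$ for Lebesgue-a.e.\ $t$. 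The scaling relation $(X_{c^2s},L_{c^2s}(\cdot))\overset{d}{=}(cX_s,cL_s(\cdot/c))$ then gives every $t>0$.

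The same Fubini-plus-scaling mechanism is what justifies your preliminary claim that $L_t=\overline{\cS}_{X_t,\cdot}(H_t)$ a.s.\ at a fixed $t$. Your countability argument does not justify it. For each fixed $r$, the constancy intervals of $L_\cdot(r)$, namely $[0,T(0,r))$ and the intervals $(T^-(a,r),T(a,r))$, already cover Lebesgue-almost all of $\r_+$.
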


\begin{proof}
	Fix $(a_0,r_0)\in\h^*$, and set $\lambda=\overline{\cS}_{r_0,\cdot}(a_0)$ and $\tau=T(a_0,r_0)$. By Proposition~\ref{prop:ray-knight} and the construction of $X$,
	$$
	(X_\tau,L_\tau)=(r_0,\lambda),\qquad a_0=\lambda(r_0),\qquad \tau=\int_\r\lambda(x)\,\dd x.
	$$
	Recall the index sets $\Lambda_-$ and $\Lambda_+$ from~\eqref{def:lamb}. Their almost-sure partition of $\h\setminus\{(a_0,r_0)\}$, together with~\eqref{eq:area_order}, gives
	$$
	\Lambda_-=\{(a,r):T(a,r)<\tau\},\qquad \Lambda_+=\{(a,r):T(a,r)>\tau\}.
	$$
	Thus, for $0\le s<\tau<u$, the construction~\eqref{eq:def_P_t} yields
	\begin{align*}
		\{(H_s,X_s)\}&=\bigcap_{\varepsilon>0}\overline{\{(a,r)\in\Lambda_-:|T(a,r)-s|<\varepsilon\}},\\
		\{(H_u,X_u)\}&=\bigcap_{\varepsilon>0}\overline{\{(a,r)\in\Lambda_+:|T(a,r)-u|<\varepsilon\}}.
	\end{align*}
	Lemma~\ref{lm:T_sigl1} and continuity imply that the stopped past $(X_{s\wedge\tau})_{s\ge0}$ and the shifted future $(X_{\tau+s})_{s\ge0}$ are measurable functions of $(\lambda,\overline{\cS}_-)$ and $(\lambda,\overline{\cS}_+)$, respectively. By Proposition~\ref{prop:spatial_markov}, they are conditionally independent given $\lambda$.
	
	To pass to deterministic times, note that~\eqref{eq:area_order} gives $|\{T<v\}|=v$ for $v\in T(\h)$, and Lemma~\ref{lm:T_sigl1} extends this identity to every $v\ge0$. Thus, for every nonnegative Borel function $f$ on $\r_+$,
	\begin{align}\label{eq:area_iden}
		\int_{\h^*} f(T(a,r))\,\dd a\dd r=\int_0^\infty f(t)\dd t.
	\end{align}
	Let $F,G$ be bounded Borel functions on $C([0,\infty))$, and put
	$$
	F_t:=F\bigl((X_{s\wedge t})_{s\ge0}\bigr),
	\qquad
	G_t:=G\bigl((X_{t+s})_{s\ge0}\bigr).
	$$
	Since $a_0=\lambda(r_0)$, we may choose jointly measurable functions $\varphi,\psi$ such that
	$$
	\e[F_\tau\mid\lambda]=\varphi(r_0,\lambda),
	\qquad
	\e[G_\tau\mid\lambda]=\psi(r_0,\lambda).
	$$
	For $b>0$ and any bounded Borel function $h$,
	\begin{align*}
		&\int_0^b \e\bigl[h(t,X_t,L_t)F_tG_t\bigr]\,\dd t\\
		&=\int_{\h^*}\e\bigl[\ind_{\{\tau<b\}}h(\tau,r_0,\lambda)F_\tau G_\tau\bigr]\,\dd a_0\dd r_0\\
		&=\int_{\h^*}\e\bigl[\ind_{\{\tau<b\}}h(\tau,r_0,\lambda)\varphi(r_0,\lambda)\psi(r_0,\lambda)\bigr]\,\dd a_0\dd r_0\\
		&=\int_0^b \e\bigl[h(t,X_t,L_t)\varphi(X_t,L_t)\psi(X_t,L_t)\bigr]\,\dd t.
	\end{align*}
	The first and last equalities follow from~\eqref{eq:area_iden}. The middle equality uses the conditional independence proved above, since $\tau=\int_\r\lambda(x)\,\dd x$ is determined by $\lambda$. It follows that
	$$
	\e[F_tG_t\mid X_t,L_t]
	=\e[F_t\mid X_t,L_t]\,\e[G_t\mid X_t,L_t]
	$$
	for Lebesgue-almost every $t>0$. 
	Scaling and the occupation formula yield
	$$
	\bigl(X_{c^2s},L_{c^2s}(\cdot)\bigr)_{s\ge0}
	\overset{d}{=}
	\bigl(cX_s,cL_s(\cdot/c)\bigr)_{s\ge0},
	\qquad c>0.
	$$
	Rescaling from any one such positive time gives the result for every $t>0$.
\end{proof}

\subsection{Trajectory properties}\label{subsec:pro_more}

We now describe the local extrema and points of increase and decrease of the PSRM.

\begin{proposition}
	Let $(X_t)_{t\ge0}$ be the PSRM. Then the following statements hold.
	\begin{enumerate}[label=\textup{(\roman*)}]
		\item Almost surely, $X$ has no flat plateaus, that is, $X$ is not constant on any non-trivial interval.
		\item For every fixed $0\le s<t<\infty$, almost surely, the maximum and minimum of $X$ on $(s,t)$ are each attained at a unique time. Moreover, almost surely, the sets of local maximum and local minimum points are countable.
		\item Almost surely, $X$ has no points of increase (resp. decrease) at which $L_t(X_t)>0$, that is, there do not exist $t,\varepsilon>0$ such that $L_t(X_t)>0$, $X_s\le X_t$ (resp. $X_s\ge X_t$) for all $s\in(t-\varepsilon,t)$, and $X_u\ge X_t$ (resp. $X_u\le X_t$) for all $u\in(t,t+\varepsilon)$.
	\end{enumerate}
\end{proposition}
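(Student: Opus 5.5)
All three parts will be read off the flow picture through the area order~\eqref{eq:area_order}, the identity $X_{T(a,r)}=r$, and the special-point classification of Proposition~\ref{prop:special_points}. The guiding principle is that the local behaviour of $X$ around a time $t$ is encoded in the flow lines through $(H_t,X_t)$.

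For (i), suppose $X\equiv x$ on $(s,t)$. Then by the occupation formula of Theorem~\ref{thm:occupation}, the measure $\mu_t-\mu_s$ is a point mass $(t-s)\delta_x$. This contradicts absolute continuity with density $L_t-L_s$.

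For (ii), I would characterise local maxima geometrically. Suppose $X$ attains a local maximum $m$ at time $t$ with $H_t>0$. Consider the profiles $L_{t'}$ for $t'$ slightly before and after $t$. Near $x=m$ these profiles are continuous flow lines, and by continuity the region $\{(b,y):y>m\}$ is not visited in $(t-\varepsilon,t+\varepsilon)$. So $L$ does not grow above $m$ during this interval, i.e. $L_{t'}(y)=L_t(y)$ for $y>m$ and $|t'-t|<\varepsilon$.

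Take $(a,r)$ near $(H_t,m)$ with $r<m$ and $T(a,r)$ slightly larger than $t$. The forward flow line $\overline\cS_{r,\cdot}(a)$ and the profile $L_{T^-(H_t,m)}$ coincide beyond $m$. Symmetrically, a backward dual line from just after $t$ coincides with these profiles to the right of $m$. This forces a forward flow line and a dual flow line to meet at $z=(H_t,m)$ with nonzero in-multiplicity on both sides, or else it forces a bifurcation.

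To reach a contradiction, I would argue that $z$ has $m_{\mathrm{in}}(z)\ge1$ from the left profile and $m^*_{\mathrm{in}}(z)\ge1$ from the right one, unless one of the in-multiplicities is $2$. If both were nonzero, this would be excluded by the proof of Proposition~\ref{prop:special_points}. So local maxima with $H_t>0$ must sit at points of type $(2,1)/(0,1)$ or $(0,1)/(2,1)$, which form a countable set by Proposition~\ref{prop:special_points}(c). Since each point $(a,r)$ corresponds to at most two times, $T^-(a,r)$ and $T(a,r)$, the set of local maximum times with $H_t>0$ is countable. Times with $H_t=0$ are first hitting times of new levels, so they cannot be local maxima except at $t=0$.

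For the uniqueness of the maximum on a fixed interval $(s,t)$, I would use the scaling and the symmetry in law. Two maximising times would give two local maxima at the same level. On the flow side this means two distinct countable special points with the same spatial coordinate $r$. I would rule this out by showing that, for each pair of skeleton paths, the coalescence levels have atomless laws (via Corollary~\ref{app:coal}), combined with the countability of pairs.

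For (iii), suppose $t$ is a point of increase with $L_t(X_t)>0$. Just before $t$, $X$ stays weakly below $x=X_t$, so the profiles $L_{t'}$ for $t'<t$ agree with $L_t$ to the right of $x$. Just after $t$, $X$ stays weakly above $x$, so the profiles for $t'>t$ agree with $L_t$ to the left of $x$. Hence near $z=(L_t(x),x)$ there is a forward flow line entering $z$ from the left, namely the profile just before $t$. There is also a dual flow line entering $z$ from the right, namely the profile just after $t$, read backwards via Corollary~\ref{cor:dual}. This gives $m_{\mathrm{in}}(z)\ge1$ and $m^*_{\mathrm{in}}(z)\ge1$ at $z\in\h^*$, which the argument in the proof of Proposition~\ref{prop:special_points} (combining~\eqref{eq:for_a} and~\eqref{eq:back_a} with non-crossing) rules out. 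Points of decrease follow by the symmetry (i) of the preceding proposition.

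The main obstacle is the rigorous translation from pathwise behaviour of $X$ near $t$ to "a flow line enters $z$ on a genuine time interval." This requires showing that, for $t'$ close to $t$, the profile $L_{t'}$ is a single flow line $\overline\cS_{r',\cdot}(a')$ whose restriction to a side neighbourhood of $x$ is unchanged. I would handle this by choosing $t'\in T(\cD)$ via Lemma~\ref{lm:T_sigl1}, using Proposition~\ref{prop:ray-knight}(i), and applying Proposition~\ref{prop:perfect} to identify its continuation through $z$.
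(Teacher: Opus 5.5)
Parts (i) and (iii) are essentially the paper's arguments. In (iii) you only need $v_1<t<v_2$ in $T(\cD)$ with $X_{v_1}<X_t<X_{v_2}$, which (i), continuity and Lemma~\ref{lm:T_sigl1} provide; Proposition~\ref{prop:perfect} is not needed. Part (ii), however, has genuine gaps.

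\textbf{The geometry at a local maximum.} If $X\le m$ on $(t-\varepsilon,t+\varepsilon)$, choose $v_1<t<v_2$ in this window with $v_i=T(a_i,r_i)$ and $r_i=X_{v_i}<m$. Then \emph{both} profiles $L_{v_1},L_{v_2}$ are forward lines entering $z=(H_t,m)$ from the left. The profile just after $t$ is not a dual line entering from the right, so the forward/dual contradiction of (iii) is not available. Your conclusion that $z$ has type $(2,1)/(0,1)$ is correct, but the reason is different. One has $L_{v_1}=L_{v_2}$ on $[m,\infty)$. The occupation formula, with $X_t=m$ and $t\in(v_1,v_2)$, forbids them from agreeing on any $[m-\delta,m]$. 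Lemma~\ref{lm:coal} then makes them strictly ordered before $m$.

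\textbf{The case $H_t=0$.} Your dismissal of this case is false. Whenever $X$ reaches its running maximum and then turns back, that time is a local maximum with $L_t(X_t)=0$, by the occupation formula. In particular the maximiser on $(0,t)$ always has $H=0$. So an argument confined to $\h^*$ via Proposition~\ref{prop:special_points} cannot prove (ii).

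\textbf{The uniqueness mechanism.} Suppose the maximum $x_0$ on $(s,t)$ is attained at $u_1<u_2$. Then $L$ is frozen on $[x_0,\infty)$ throughout $(s,t)$, so $H_{u_1}=H_{u_2}$. The two times therefore give the \emph{same} point of $\h$, not two distinct special points at one level, and atomless coalescence levels are irrelevant.

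The missing idea is a third curve. Choose $v_1<u_1<v_2<u_2<v_3$ in $T(\cD)$ with $X_{v_i}<x_0$. The profiles $L_{v_i}$ are forward flow lines. By the occupation argument above, applied on $(v_1,v_2)$ and on $(v_2,v_3)$, they are pairwise strictly ordered just below $x_0$, and they meet at $x_0$. Proposition~\ref{prop:no_31pts} forbids exactly this, whether the common value is zero or positive.

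\textbf{Countability.} Applying this uniqueness to all intervals with rational endpoints gives countability of local extrema directly. This also avoids your unproved claim that each point corresponds to at most the two times $T^-(a,r),T(a,r)$. In fact a local-maximum time with $H_t>0$ lies strictly inside $(T^-(H_t,X_t),T(H_t,X_t))$. The reason is that $L_{T(b,m)}(m)=b$ forces visits to $(m,\infty)$ arbitrarily close before $T^-$ and after $T$.

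\textbf{Attainment in the open interval.} You never show that the supremum over $(s,t)$ is attained rather than only approached at an endpoint. This needs $X$ to oscillate on both sides of each fixed time. The paper derives this from $T^-(H_u,X_u)=u=T(H_u,X_u)$ for Lebesgue-a.e.\ $u$, via~\eqref{eq:area_iden}, Fubini and scaling.
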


\begin{proof}
    \begin{enumerate}[label=(\roman*)]
        \item Suppose for contradiction that $X \equiv r$ on some interval $(s, t)$ with $0\le s < t$. Taking $h = \mathbbm{1}_{\{r\}}$ in~\eqref{eq:occu} yields 
        $$t - s = \int_s^t \mathbbm{1}_{\{r\}}(X_u) \dd u = \int_\r \mathbbm{1}_{\{r\}}(x) (L_t(x) - L_s(x)) \dd x = 0,$$
        which contradicts $t > s$.
        
        \item Fix an interval $(s,t)$. Since $T(\cdot,r)$ is increasing,~\eqref{eq:area_iden} and $(H_{T(a,r)},X_{T(a,r)})=(a,r)$ imply that, almost surely, for Lebesgue-almost every $u>0$,
        $$
        H_u>0,\qquad T^-(H_u,X_u)=u=T(H_u,X_u).
        $$
        Then $L_{u-\delta}(X_u)<H_u<L_{u+\delta}(X_u)$ for $0<\delta<u$. If $X$ stayed on one side of $X_u$ in a left or right neighborhood of $u$, the occupation formula and spatial continuity of $L$ would force the corresponding local-time increment to vanish. Thus $X$ takes values both above and below $X_u$ arbitrarily close to $u$ on either side. By Fubini, this holds almost surely for almost every deterministic $u>0$. Since scaling makes the failure probability the same for all $u>0$, the property holds almost surely at each fixed $u>0$. At zero, $T(a,0)\downarrow0$ as $a\downarrow0$ implies $L_\delta(0)>0$ for every $\delta>0$, and the same argument gives right-hand oscillation. Hence neither endpoint is an extremum on $[s,t]$, and continuity ensures that both extrema are attained in $(s,t)$.
        
        Let $x_0:=\max_{u\in(s,t)}X_u$. Suppose, for contradiction, that this maximum is attained at two distinct times $u_1<u_2$ in $(s,t)$. Since $X$ has no flat plateaus, there exist times $v_1\in(s,u_1)$, $v_2\in(u_1,u_2)$ and $v_3\in(u_2,t)$ such that $X_{v_i}<x_0$ for $i=1,2,3$. By continuity of $X$ and Lemma~\ref{lm:T_sigl1}, we may take $v_i=T(a_i,r_i)$ for some $(a_i,r_i)\in\h$, $i=1,2,3$. By Proposition~\ref{prop:ray-knight}(i), $L_{v_i}(\cdot)=\overline{\cS}_{r_i,\cdot}(a_i)$. Since $X_u\le x_0$ on $(s,t)$, applying~\eqref{eq:occu} over $[v_1,v_3]$ gives $\int_{x_0}^\infty (L_{v_3}(x)-L_{v_1}(x))\,\dd x=0$.
        Monotonicity in time and continuity in space imply that $L_{v_1}(x)=L_{v_2}(x)=L_{v_3}(x)$ for all $x\ge x_0$. Hence the three PSR flow lines $\cS_{r_i,\cdot}(a_i)$, $i=1,2,3$, coalesce by $x_0$. We claim that they first coalesce at $x_0$.
        
        Suppose that, for some $\varepsilon>0$ with $x_0-\varepsilon>r_1\vee r_2$, $\cS_{r_1,x_0-\varepsilon}(a_1)=\cS_{r_2,x_0-\varepsilon}(a_2)$. By Lemma~\ref{lm:coal}, $L_{v_1}(x)=L_{v_2}(x)$ for all $x\in[x_0-\varepsilon,x_0]$. Applying~\eqref{eq:occu} over $[v_1,v_2]$ with $h=\mathbbm{1}_{[x_0-\varepsilon,x_0]}$ gives
        $$
        \int_{v_1}^{v_2}\mathbbm{1}_{[x_0-\varepsilon,x_0]}(X_u)\,\dd u=0.
        $$
        However, $X_{u_1}=x_0$ and $u_1\in(v_1,v_2)$, so continuity and the bound $X\le x_0$ imply that $X$ spends a positive amount of time in $[x_0-\varepsilon,x_0]$, a contradiction. Applying the same argument to $v_2$ and $v_3$, we obtain
        $$
        \cS_{r_1,x}(a_1)<\cS_{r_2,x}(a_2)<\cS_{r_3,x}(a_3),
        \qquad r_1\vee r_2\vee r_3<x<x_0.
        $$
        Their common value at $x_0$ contradicts Proposition~\ref{prop:no_31pts}, proving uniqueness. The minimum is treated by symmetry. Finally, every local maximum is a maximum on some interval with rational endpoints containing it. Applying uniqueness simultaneously to these countably many intervals proves countability of the local maximum points, and likewise for local minima.

        \item By symmetry, it suffices to consider the increasing case. Suppose that $t>0$ and $0<\varepsilon<t$ satisfy $L_t(X_t)>0$, $X_s\le X_t$ for $s\in(t-\varepsilon,t)$ and $X_u\ge X_t$ for $u\in(t,t+\varepsilon)$. By (i), continuity and Lemma~\ref{lm:T_sigl1}, we can choose $v_i=T(a_i,r_i)$, $i=1,2$, such that
        $$
        t-\varepsilon<v_1<t<v_2<t+\varepsilon,\qquad r_1<X_t<r_2.
        $$
        Since $X_s\le X_t$ on $[v_1,t]$ and $X_s\ge X_t$ on $[t,v_2]$, the occupation formula~\eqref{eq:occu} gives
        $$
        \int_{X_t}^\infty \bigl(L_t(x)-L_{v_1}(x)\bigr)\dd x=0,\qquad
        \int_{-\infty}^{X_t}\bigl(L_{v_2}(x)-L_t(x)\bigr)\dd x=0.
        $$
        Since $L_t(\cdot)$ is continuous and $L_\cdot(x)$ is non-decreasing (see Remark~\ref{rmk:def_L}), we obtain
        $$L_{T(a_1,r_1)}(X_t)=L_t(X_t)=H_t=L_{T(a_2,r_2)}(X_t).$$
        By Proposition~\ref{prop:ray-knight}(i),
        $$
        H_t=\cS_{r_1,X_t}(a_1)=\cS^*_{r_2,X_t}(a_2).
        $$
        Thus $(H_t,X_t)\in\h^*$ has both a forward and a backward incoming line, contradicting Proposition~\ref{prop:special_points}.
    \end{enumerate}
\end{proof}

\section{Decomposition and Markov property}
\label{sec:deco}
In this section, we first establish a decomposition of the PSR flow along a reference flow line (Proposition~\ref{p:difference}) and use it to prove the spatial Markov property of the two-sided flow (Proposition~\ref{prop:spatial_markov}). We then prove the absence of triple coalescence (Proposition~\ref{prop:no_31pts}). 

Let $\cW$ be the white noise introduced in Section~\ref{sec:PSR_flow}, and let $\cW^*$ be its image under $(a,r)\mapsto(a,-r)$. For any interval $I\subset\r$, let $\cF_I$ and $\cF_I^*$ be the $\sigma$-fields generated by the restrictions of $\cW$ and $\cW^*$ to $\r_+\times I$, respectively. If $I\cap(-J)=\varnothing$, then $\cF_I$ and $\cF_J^*$ are independent. We use the natural filtrations $\cF=(\cF_x)_{x\in\r}$ and $\cF^*=(\cF_x^*)_{x\in\r}$, where $\cF_x:=\cF_{(-\infty,x]}$ and $\cF_x^*:=\cF^*_{(-\infty,x]}$, with the usual augmentation. Throughout, let $(\cS,\cS^*)$ be the double PSR flow. 

Fix $(a_0,r_0)\in\h$ and define the non-negative $\cF$-predictable process
\begin{align}\label{def:eq_S}
	S_x :=
	\begin{cases}
		0, & x<r_0,\\
		\cS_{r_0,x}(a_0), & x\ge r_0,
	\end{cases}
\end{align}
and the non-negative $\cF^*$-predictable process
\begin{align*}
	Y_x :=
	\begin{cases}
		0, & x<-r_0,\\
		\cS^*_{r_0,-x}(a_0), & x\ge-r_0.
	\end{cases}
\end{align*}
Set $\tau^S:=\tau_{(a_0,r_0)}$ and $\tau^Y:=\sup\bigl\{x\le r_0\wedge0:\,\cS^*_{r_0,x}(a_0)=0\bigr\}$. On their respective time domains, $S$ and $Y$ satisfy \eqref{SDE:for} and \eqref{SDE:back} up to absorption at $\tau^S$ and $-\tau^Y$, respectively, and remain zero thereafter.

\subsection{Decomposition of the PSR flow along a flow line}

Following \cite[Chapter~2]{Walsh1986AnIT}, we define the martingale measures $\cW_S^-$ and $\cW_S^+$ describing the noise below and above $S$. Let
$$
f_S^-(s,u):=
\begin{cases}
	\sqrt{2\alpha+1}\,(s/S_u)^\alpha\ind_{[0,S_u]}(s), & S_u>0,\\
	0, & S_u=0.
\end{cases}
$$
For any $x\le y$ and Borel set $A\subset\r_+$ of finite Lebesgue measure, set
\begin{align*}
	(\cW_S^-)_{x,y}(A)
	&:=\int_{A\times[x,y]} f_S^-(s,u)\,\cW(\dd s,\dd u),\\
	(\cW_S^+)_{x,y}(A)
	&:=\int_{\r_+\times[x,y]}\ind_A(s-S_u)\,\cW(\dd s,\dd u).
\end{align*}
Since $f_S^-$ is bounded and predictable, $\cW_S^-=f_S^-\cdot\cW$ is an orthogonal martingale measure. The quadratic covariations of $\cW_S^+$, together with the L\'evy characterization, show that $\cW_S^+$ is a white noise on $\h$ with respect to $\cF$.

For any $\cF$-predictable $h$ satisfying $\e[\int_{\r_+\times[x,y]}h^2(s,u)f_S^-(s,u)^2\,\dd s\dd u]<\infty$, we have
\begin{equation}\label{eq:fW}
	\int_{\r_+\times[x,y]}h(s,u)\cW_S^-(\dd s,\dd u)=\int_{\r_+\times[x,y]}h(s,u)f_S^-(s,u)\cW(\dd s,\dd u).
\end{equation}
Similarly, for any $\cF$-predictable $h$ satisfying $\e[\int_{\r_+\times[x,y]}h^2(s,u)\,\dd s\,\dd u]<\infty$,
\begin{equation}\label{eq:thetaSW}
	\int_{\r_+\times[x,y]}h(s,u)\,\cW_S^+(\dd s,\dd u)=\int_x^y\int_{S_u}^{\infty}h(s-S_u,u)\,\cW(\dd s,\dd u).
\end{equation}

For later use, we also employ the following shorthand notation for $\cW^-_S$ and $\cW^+_S$:
\begin{align}
 \label{eq:W-}   \cW^-_S(\dd s, \dd u) &:= \tfrac{\sqrt{2\alpha+1}}{S_u^\alpha} s^\alpha \cW(\dd s, \dd u), && s\le S_u, u\ge r_0, S_u>0, \\
\label{eq:W+}    \cW^+_S(\dd s, \dd u) &:= \cW(S_u+\dd s, \dd u), && s\ge 0, u\in \r.
\end{align}

\begin{proposition}\label{p:independence_W-&W^+}
The martingale measures $\cW_S^-$ and $\cW_S^+$, defined via \eqref{eq:W-} and \eqref{eq:W+}, are independent. In particular, $\cW_S^+$ is a white noise independent of the process $S$.
\end{proposition}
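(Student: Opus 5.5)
The plan is to verify joint Gaussianity-type independence through a martingale characterization. Both $\cW_S^-$ and $\cW_S^+$ are orthogonal martingale measures with respect to the filtration $\cF=(\cF_x)$. For test functions $A,B\subset\r_+$ of finite measure, I would form the continuous martingales
\[
M_x:=(\cW_S^-)_{r_0,x}(A),\qquad N_x:=(\cW_S^+)_{r_0,x}(B),\qquad x\ge r_0.
\]
The first step is to compute the cross-variation. Since both are stochastic integrals against the same white noise $\cW$,
\[
\langle M,N\rangle_x=\int_{r_0}^x\int_{\r_+}\ind_A(s)f_S^-(s,u)\,\ind_B(s-S_u)\,\dd s\,\dd u .
\]
The support of $f_S^-(\cdot,u)$ is contained in $[0,S_u]$, while $\ind_B(s-S_u)\neq0$ forces $s\ge S_u$. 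The overlap is therefore the single point $s=S_u$, which is Lebesgue-null, so $\langle M,N\rangle\equiv0$. The same computation gives vanishing brackets for all finite families of pairs $(A_i,B_j)$.

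The second step identifies the quadratic variations. For $\cW_S^+$, the bracket of $(\cW_S^+)(B)$ is $(x-r_0)|B|$, which is deterministic, and this is the Lévy characterization already noted in the text. For $\cW_S^-$ the bracket is
\[
\int_{r_0}^x\int_A (2\alpha+1)(s/S_u)^{2\alpha}\ind_{\{s\le S_u\}}\,\dd s\,\dd u,
\]
which is random. The third step deduces independence from orthogonality despite this randomness. I would apply a Knight-type theorem on orthogonal continuous martingales, in the version for martingale measures: $\cW_S^+$ is a white noise with deterministic covariance, and it is orthogonal to every martingale generated by $\cW_S^-$. For $\cF$-predictable deterministic-type test functions $g$, take exponentials
\[
\mathcal E_x=\exp\Bigl(i\int g\,\dd\cW_S^+ +\tfrac12\int g^2\,\dd s\,\dd u\Bigr).
\]
Each $\mathcal E$ is a bounded martingale. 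It is orthogonal to every bounded martingale of the form $\exp\bigl(i\int h\,\dd\cW_S^-+\tfrac12\langle\cdot\rangle\bigr)$, because their product is a local martingale by Itô's formula once the cross bracket vanishes. Taking expectations then yields factorization of the joint characteristic functionals, and hence independence of the two families.

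The fourth step handles $S$ itself. Up to $\tau^S$, $S$ solves \eqref{SDE:for}, and its martingale part equals
\[
\tfrac{2}{\sqrt{2\alpha+1}}\int S_u^{-\alpha}\int s^\alpha\ind_{\{s\le S_u\}}\,\cW=\tfrac{2}{\sqrt{2\alpha+1}}\,\cW_S^-(\r_+\times\cdot).
\]
By pathwise uniqueness, $S$ is therefore a measurable functional of the driving martingale of $\cW_S^-$. After $\tau^S$, $S$ stays at zero, and $S$ vanishes for $x<r_0$. So $S$ is measurable with respect to $\cW_S^-$, and independence of $\cW_S^+$ from $S$ follows.

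The main obstacle is the third step. Because $\cW_S^-$ has random intensity, independence does not follow from Gaussianity directly. The exponential-martingale factorization argument has to be carried out on the product of the two exponentials, with the random compensator of the $\cW_S^-$ exponential kept inside. Concretely, I would show that $\e\bigl[\mathcal E^+_\infty\,\Psi\bigr]=\e[\Psi]$ for every bounded $\sigma(\cW_S^-)$-measurable $\Psi$. Representing such $\Psi$ as limits of stochastic integrals against $\cW_S^-$ is delicate. As a fallback, I would time-change $\cW_S^-$ into a white noise via its density $f_S^-$ (Knight's theorem for countably many orthogonal martingales), which reduces the claim to the independence of orthogonal Brownian families.
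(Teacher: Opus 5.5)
Your Steps 1, 2 and 4 are fine. In Step 4 you do not even need pathwise uniqueness, since \eqref{sde:S} expresses $S$ explicitly through $\cW_S^-(\r_+,\cdot)$.

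\textbf{The gap is Step 3, which is the entire content of the proposition.} Vanishing cross-brackets do not give independence when one martingale measure has a random intensity. For independent Brownian motions $B^1,B^2$ and nonconstant $\phi$, the martingale $\int\phi(B^2_u)\,\dd B^1_u$ is orthogonal to $B^2$ but not independent of it. Knight's theorem only makes the time-changed (DDS) Brownian families independent, not $\cW_S^-$ itself.

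Your exponential computation yields $\e[\mathcal E^+_\infty\mathcal E^-_\infty]=1$. Because the compensator inside $\mathcal E^-$ is random, this is not a factorization of characteristic functionals. The identity you then propose, $\e[\mathcal E^+_\infty\Psi]=\e[\Psi]$ for all deterministic $g$ and all bounded $\sigma(\cW_S^-)$-measurable $\Psi$, is simply a restatement of the proposition. The route you suggest for it needs a martingale representation property, $\Psi=\e[\Psi]+\int H\,\dd\cW_S^-$ with predictable $H$, for the filtration generated by $\cW_S^-$. You do not establish this, and it is not automatic.

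The fallback has the same hole. Suppose you map the noise below $S$ to a white noise $\widetilde{\cW}$ and show $\widetilde{\cW}$ is independent of $\cW_S^+$. You still must show that $S$, hence $\cW_S^-$, is a functional of $\widetilde{\cW}$ alone, and this is exactly where the SDE has to be used. Moreover, the natural clock $\int\dd u/S_u$ blows up at each zero of $S$. So in the reflecting regime $r_0<0$ the time change fails past the first zero.

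\textbf{The missing idea is to let pathwise uniqueness do the work through a coupling, as the paper does.} Take independent white noises $\cW_1,\cW_2$. Let $Z$ be the strong solution driven by $\cW_1$, with the same start and the same reflection and absorption rule. Glue $\cW_1$ below $Z$ to $\cW_2$ shifted up by $Z$:
\[
\cWhat(g):=\iint \ind_{\{s\le Z_u\}}g(s,u)\,\cW_1(\dd s,\dd u)+\iint g(s+Z_u,u)\,\cW_2(\dd s,\dd u).
\]
Its covariance is the deterministic quantity $\int g^2$, so $\cWhat$ is a white noise in the joint filtration by L\'evy's characterization. By construction $\cWhat_Z^-=\cW_1^-$ and $\cWhat_Z^+=\cW_2$. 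The stochastic integral in the equation for $Z$ is unchanged when $\cW_1$ is replaced by $\cWhat$, so $Z$ solves the SDE driven by $\cWhat$.

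Pathwise uniqueness then gives $(\cWhat,Z)\overset{d}{=}(\cW,S)$, and hence $(\cW_S^-,\cW_S^+,S)\overset{d}{=}(\cW_1^-,\cW_2,Z)$. Here $(\cW_1^-,Z)$ is $\cW_1$-measurable and therefore independent of $\cW_2$. This avoids martingale representation and time changes altogether, and it treats the reflecting and absorbing regimes uniformly.
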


\begin{proof}
	Let $\cW_1$ and $\cW_2$ be independent white noises on $\h$. Let $Z$ be the unique strong solution to \eqref{SDE:for}, driven by $\cW_1$ and started from $a_0$ at $r_0$, absorbed at $\tau^Z:=\inf\{x\ge r_0\vee0:\,Z_x=0\}$, and extended by $Z_x=0$ for $x<r_0$. Define $\cW_1^-$ as in \eqref{eq:W-}, with $(\cW,S)$ replaced by $(\cW_1,Z)$. Since $\cW_1^-$ is supported on $\{(s,u):0\le s\le Z_u\}$, \eqref{eq:fW} gives
	\begin{equation}\label{sde:Y_1}
		Z_x=a_0+\frac{2}{\sqrt{2\alpha+1}}\int_{r_0}^x\cW_1^-(\r_+,\dd u)+\frac{x-r_0}{2\alpha+1},	\qquad r_0\le x\le\tau^Z.
	\end{equation}
	In particular, $Z$ is measurable with respect to $\cW_1^-$.
	
	For deterministic $g\in L^2(\h)$, define
	\begin{align}\label{eq:W'}
		\cWhat(g):=\frac{1}{\sqrt{2\alpha+1}}\iint_{\h}Z_u^\alpha s^{-\alpha}g(s,u)\cW_1^-(\dd s,\dd u)+\iint_{\h}g(s+Z_u,u)\cW_2(\dd s,\dd u),
	\end{align}
	where $Z_u^\alpha s^{-\alpha}$ is set to zero at $s=0$. A direct computation of the quadratic variation shows that $\cWhat$ is a white noise with respect to the joint filtration of $\cW_1$ and $\cW_2$. By density of simple functions, identity~\eqref{eq:W'} extends to predictable processes under suitable integrability assumptions.
	
	Let $\cWhat_Z^-$ and $\cWhat_Z^+$ be defined as in \eqref{eq:W-} and \eqref{eq:W+}, with $(\cW,S)$ replaced by $(\cWhat,Z)$. Applying \eqref{eq:fW} and \eqref{eq:thetaSW} to the two terms in \eqref{eq:W'} yields
	\begin{equation}\label{eq:W'S}
		\cWhat_Z^-=\cW_1^-,\qquad \cWhat_Z^+=\cW_2.
	\end{equation}
	Moreover, replacing $\cW_1$ by $\cWhat$ leaves the stochastic integral in the equation for $Z$ unchanged. Hence $Z$ also solves \eqref{SDE:for} driven by $\cWhat$, with the same initial condition and absorption rule. Thus, $(\cWhat,Z)\overset{\mathrm d}{=}(\cW,S)$. Together with \eqref{eq:W'S}, this gives
	$$
	(\cW_S^-,\cW_S^+,S)
	\overset{\mathrm d}{=}(\cW_1^-,\cW_2,Z).
	$$
	Since $(\cW_1^-,Z)$ is measurable with respect to $\cW_1$ and $\cW_1$ is independent of $\cW_2$, the required independence follows. The recovery formula \eqref{sde:Y_1}, applied to $(S,\cW_S^-)$, also shows that $S$ is measurable with respect to $\cW_S^-$.
\end{proof}

From the above arguments, the process $S$ is driven by $\cW_S^-$ and satisfies
\begin{align}\label{sde:S}
S_x=a_0+\frac{2}{\sqrt{2\alpha+1}} \int_{r_0}^x \cW_S^-([0,S_u], \dd u)+\frac{x-r_0}{2\alpha+1},\qquad  r_0\le x\le \tau^S.
\end{align}

With the construction and properties of the martingale measures in place, we are now ready to establish the decomposition theorem along the process $S=(S_x,\, x\in \r)$ defined in \eqref{def:eq_S}, which is driven by $\cW$.

\begin{proposition}\label{p:difference}
    Fix $(a_0,r_0)\in \h$. Let $\cS$ be the PSR flow defined in Definition~\ref{def:PSR}, driven by $\cW$. Consider the families of processes 
    $$\cS^+:=\{(\cS^+_{r,x}(a),\, x\ge r)\}_{a\ge 0,\, r\ge r_0} \quad\text{and}\quad 
    \cS^-:=\{(\cS^-_{r,x}(a),\, x\ge r)\}_{0\le a< S_r,\, r\ge r_0}$$
    defined by
    \begin{align*}
    \cS^+_{r,x}(a) &:=\cS_{r,x}(a+ S_r)- S_x, &&  a\ge 0,\,  x\ge r, \\
    \cS^-_{r,x}(a) &:=  \cS_{r,x}(a), && a< S_r,\, x\ge r.
    \end{align*}
    Then the following statements hold.
    \begin{enumerate}[label=\textup{(\roman*)}]
    \item The family $\cS^+$ is measurable with respect to $(\cW_S^+, S)$. In particular, conditionally on $S$, $\cS^+$ is independent of $\cW_S^-$.
    
    \item Both $S$ and $\cS^-$ are measurable with respect to $\cW_S^-$. Consequently, they are independent of $\cW_S^+$.
    \end{enumerate}
\end{proposition}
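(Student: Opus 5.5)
The plan is to rewrite the SDEs satisfied by the flow lines above and below $S$ in terms of $\cW_S^+$ and $\cW_S^-$ respectively, and then use pathwise (strong) uniqueness to conclude that each family is a measurable functional of the corresponding noise. Throughout I work on the event of probability one where the flow properties (R1)--(R3), the perfect flow property (Proposition~\ref{prop:perfect}) and the coalescing property (Lemma~\ref{lm:coal}) hold. I first treat starting points in the countable set $\cD$ (with $r\ge r_0$), and then extend to all starting points via the right-continuity (R2) and the representation \eqref{eq:cS_I_+}, since measurability passes to countable infima and limits.

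\emph{Part (ii).} Fix $(a,r)$ with $r\ge r_0$ and $a<S_r$. By non-crossing (R3) and coalescence, $\cS_{r,x}(a)\le S_x$ for all $x\ge r$. Hence in \eqref{SDE:for} the integrand $s^\alpha\ind_{\{s\le \cS_{r,u}(a)\}}$ is supported in $\{s\le S_u\}$, where $\cW(\dd s,\dd u)=\frac{S_u^\alpha}{\sqrt{2\alpha+1}}s^{-\alpha}\cW_S^-(\dd s,\dd u)$ by \eqref{eq:fW}. This gives
\[
\cS_{r,x}(a)=a+\int_r^x\frac{2S_u^\alpha}{\sqrt{2\alpha+1}\,\cS_{r,u}(a)^\alpha}\int_{\r_+}\ind_{\{s\le\cS_{r,u}(a)\}}\,\cW_S^-(\dd s,\dd u)+\frac{x-r}{2\alpha+1}.
\]
Since $S$ is $\cW_S^-$-measurable by Proposition~\ref{p:independence_W-&W^+} (via \eqref{sde:S}), this is an SDE driven by the orthogonal martingale measure $\cW_S^-$ with coefficients depending on the known path $S$. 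The main obstacle is to transfer strong uniqueness to this equation. I would do it by the coupling used in the proof of Proposition~\ref{p:independence_W-&W^+}. Replace $\cW$ by $\cWhat$, built from $\cW_1^-$ and an independent $\cW_2$. Every solution of the displayed equation is then a solution of \eqref{SDE:for} driven by $\cWhat$, and strong uniqueness of \eqref{SDE:for} (Dawson--Li) identifies it with the $\cWhat$-flow line. The flow line is then a functional of $\cW_1^-$ alone, because the equation involves only $\cW_1^-$ and pathwise uniqueness lets one construct the solution via Yamada--Watanabe on the filtration generated by $\cW_1^-$. Transferring back using \eqref{eq:W'S} gives that $\cS^-$ is $\cW_S^-$-measurable. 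Independence from $\cW_S^+$ then follows from Proposition~\ref{p:independence_W-&W^+}.

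\emph{Part (i).} For $a\ge0$, the line $\cS_{r,\cdot}(a+S_r)$ stays above $S$ by (R3), so $Z_x:=\cS^+_{r,x}(a)\ge0$. Split $\int s^\alpha\ind_{\{s\le Z_u+S_u\}}\cW(\dd s,\dd u)$ into the part over $s\le S_u$ and the part over $S_u<s\le S_u+Z_u$. The first part equals $\frac{S_u^\alpha}{\sqrt{2\alpha+1}}\cW_S^-(\r_+,\dd u)$, which is exactly $S_u^\alpha$ times the noise term in \eqref{sde:S}. The second part equals $\int(s+S_u)^\alpha\ind_{\{s\le Z_u\}}\cW_S^+(\dd s,\dd u)$ by \eqref{eq:thetaSW}. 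Subtracting \eqref{sde:S} from \eqref{SDE:for}, the drifts cancel and
\[
Z_x=a+\int_r^x\Bigl[\Bigl(\tfrac{S_u^\alpha}{(Z_u+S_u)^\alpha}-1\Bigr)\tfrac{2}{\sqrt{2\alpha+1}}\,\cW_S^-(\r_+,\dd u)+\tfrac{2}{(Z_u+S_u)^\alpha}\int(s+S_u)^\alpha\ind_{\{s\le Z_u\}}\cW_S^+(\dd s,\dd u)\Bigr].
\]
The $\cW_S^-$ term does not disappear, but it enters only through the continuous semimartingale $S$, since $\frac{2}{\sqrt{2\alpha+1}}\cW_S^-(\r_+,\dd u)=\dd S_u-\frac{\dd u}{2\alpha+1}$. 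Thus $Z$ solves an SDE driven by $\cW_S^+$ and the path $S$. Conditionally on $S$, the noise $\cW_S^+$ is still white noise, since it is independent of $S$ by Proposition~\ref{p:independence_W-&W^+}. Pathwise uniqueness for this conditional equation follows by the same transfer argument as before: a solution $Z$ yields $Z+S$ solving \eqref{SDE:for}, which is unique. Yamada--Watanabe conditionally on $S$ then shows that $Z$ is a measurable functional of $(\cW_S^+,S)$. The conditional independence from $\cW_S^-$ given $S$ follows because $\cW_S^+$ is independent of $\cW_S^-$, while $S$ is $\cW_S^-$-measurable.

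I expect the delicate points to be the Yamada--Watanabe step with a random, noise-dependent integrator, and handling times after absorption of $S$ (or of the line) via Lemma~\ref{lm:coal}.
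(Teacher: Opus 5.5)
Your part (ii) is fine and is essentially the paper's coupling argument. The paper concludes differently: it observes that the flow line driven by $\cWhat$ and the original flow line coincide and are conditionally independent given $\cW_S^-$, instead of invoking Yamada--Watanabe on the filtration of $\cW_1^-$.

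The gap is in part (i), at exactly the step you flagged as delicate, and as stated that step fails. Your equation for $Z=\cS^+_{r,\cdot}(a)$ contains the term
\[
\int_r^x\Bigl(\frac{S_u^\alpha}{(Z_u+S_u)^\alpha}-1\Bigr)\Bigl(\dd S_u-\frac{\dd u}{2\alpha+1}\Bigr),
\]
a stochastic integral against the martingale part of $S$ with a $Z$-dependent integrand. This term cannot be handled ``conditionally on $S$''. Once the whole path $S$ is placed in the initial $\sigma$-field, $S$ is no longer a semimartingale: the simple predictable integrand equal to $\mathrm{sgn}(S_{t_{i+1}}-S_{t_i})$ on $(t_i,t_{i+1}]$ gives $\sum_i|S_{t_{i+1}}-S_{t_i}|$, which is unbounded in probability as the mesh shrinks, because $S$ has infinite variation. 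So there is no conditional SDE driven by the white noise $\cW_S^+$ with frozen coefficients to which Yamada--Watanabe could be applied. Without conditioning, the equation is driven jointly by the martingale part of $S$ and by $\cW_S^+$. You would then need a Yamada--Watanabe argument for the system $(S,Z)$ in the filtration of $(S,\cW_S^+)$, including a pathwise uniqueness proof. Your ``same transfer argument'' does not supply this: from $(S,\cW_S^+)$ alone you must first construct a white noise below $S$ whose $s^\alpha$-component reproduces $\dd S$, and only then can \eqref{SDE:for} be invoked.

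The missing idea is a change of variables under which the noise below $S$ cancels exactly. For any solution $y$ of \eqref{SDE:for}, It\^o's formula gives $\dd\bigl(y^{\alpha+1}/(\alpha+1)\bigr)=2\int s^\alpha\ind_{\{s\le y\}}\cW(\dd s,\dd u)+y^\alpha\,\dd u$. Hence $F:=\bigl(\cS_{r,\cdot}(a+S_r)^{\alpha+1}-S^{\alpha+1}\bigr)/(\alpha+1)$ picks up noise only from the strip $\{S_u<s\le \cS_{r,u}(a+S_r)\}$. With $\psi_S(z,u):=[(\alpha+1)z+S_u^{\alpha+1}]^{1/(\alpha+1)}$, this yields
\[
F_x=F_r+2\int_r^x\!\int_0^{\psi_S(F_u,u)-S_u}(s+S_u)^\alpha\,\cW_S^+(\dd s,\dd u)+\int_r^x\bigl(\psi_S(F_u,u)^\alpha-S_u^\alpha\bigr)\dd u.
\]
Now $S$ enters only through the coefficients, never as an integrator. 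It can therefore be put into the initial $\sigma$-field while $\cW_S^+$ remains a white noise, since $\cW_S^+$ is independent of $\cW_S^-$ and $S$ is a functional of $\cW_S^-$. Away from $F=0$ the drift is locally Lipschitz and the diffusion coefficient is $\tfrac12$-H\"older in $L^2(\dd s)$, and $F$ is absorbed at $0$ by Lemma~\ref{lm:coal}. Yamada--Watanabe then makes $F$, and hence $\cS^+_{r,\cdot}(a)=\psi_S(F,\cdot)-S$, a measurable functional of $(S,\cW_S^+)$. After that, your countable reconstruction and your conditional-independence argument go through.
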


\begin{proof}
	For a non-negative continuous function $g$, write
	$$
	\psi_g(z,u):=\bigl[(\alpha+1)z+g_u^{\alpha+1}\bigr]^{1/(\alpha+1)},
	\qquad z\ge0,
	$$
	where $u$ belongs to the domain of $g$.
	
	\begin{enumerate}[label=\textup{(\roman*)}]
		\item Fix $r\ge r_0$ and $a>0$. Let $Z_x:=\cS_{r,x}(a+S_r)$, and set $F_x:=(\alpha+1)^{-1}(Z_x^{\alpha+1}-S_x^{\alpha+1})$.
		By It\^o's formula and~\eqref{eq:thetaSW},
		\begin{align}\label{eq:cS_+_SDE_F}
			F_x=F_r+2\int_r^x\!\int_0^{\psi_S(F_u,u)-S_u}(s+S_u)^\alpha\cW_S^+(\dd s,\dd u)+\int_r^x(\psi_S(F_u,u)^\alpha-S_u^\alpha)\dd u.
		\end{align}
		Here $F$ is absorbed at its first zero by Lemma~\ref{lm:coal}.
		
		Fix a path $g$. On $\{z,z'\ge\varepsilon,\ \psi_g(z,u),\psi_g(z',u)\le K\}$, the drift in~\eqref{eq:cS_+_SDE_F} is Lipschitz in $z$ with constant $\alpha((\alpha+1)\varepsilon)^{-1/(\alpha+1)}$, while the squared $L^2(\dd s)$-distance between the diffusion coefficients is bounded by
		$$
		\frac{4}{2\alpha+1}\bigl|\psi_g(z,u)^{2\alpha+1}-\psi_g(z',u)^{2\alpha+1}\bigr|\le 4K^\alpha|z-z'|.
		$$
		By~\eqref{eq:W'} and Proposition~\ref{p:independence_W-&W^+}, we may include $S$ in the initial $\sigma$-field while preserving the white-noise property of $\cW_S^+$ and the adaptedness of $F$. The Yamada--Watanabe argument gives pathwise uniqueness up to the first zero. Since $F$ is absorbed at $0$, it is a strong solution with respect to $(S,\cW_S^+)$. Hence $\cS^+_{r,x}(a)=\psi_S(F_x,x)-S_x$ is measurable with respect to $(S,\cW_S^+)$.
		
		We claim that, almost surely, for all $a\ge0$ and $x\ge r\ge r_0$,
		$$
		\cS^+_{r,x}(a)
		=\inf\left\{\cS^+_{r_n,x}(a_n):r_0\le r_n\le r,\ \cS^+_{r_n,r}(a_n)>a,\ a_n\in\bD_+,\ r_n\in\bD\cup\{r_0\}\right\}.
		$$
		This follows from the argument for~\eqref{eq:cS_I_+}, using Lemma~\ref{lm:basci_prop}(i), continuity and properties~\ref{item:R2}--\ref{item:R3}. Therefore, $\cS^+$ is measurable with respect to $(S,\cW_S^+)$, and the conditional independence follows from Proposition~\ref{p:independence_W-&W^+}.
		
		\item Let $\mathcal V$ be a white noise independent of $\cW$, and define $\cWhat$ by~\eqref{eq:W'} with $(Z,\cW_1^-,\cW_2)$ replaced by $(S,\cW_S^-,\mathcal V)$. Then $\cWhat$ is a white noise driving $S$ and agrees with $\cW$ below $S$. For fixed $r\ge r_0$ and $a\ge0$, the corresponding PSR flow line started from $a\wedge S_r$ stays below $S$ and therefore coincides with $\cS_{r,\cdot}(a\wedge S_r)$ by pathwise uniqueness. Given $\cW_S^-$, the two paths are independent, since one is a function of $(\cW_S^-,\mathcal V)$ and the other of $\cW$. Their equality implies that $\cS_{r,\cdot}(a\wedge S_r)$ is measurable with respect to $\cW_S^-$. Restricting to $\{a<S_r\}$ and applying the countable reconstruction argument of (i) to starting points below $S$ proves measurability of $\cS^-$. The independence of $(S,\cS^-)$ and $\cW_S^+$ follows from Proposition~\ref{p:independence_W-&W^+}.
	\end{enumerate}
\end{proof}

\subsection{Spatial Markov property of the flow}

Recall the two-sided PSR flow $\overline{\cS}$ defined by~\eqref{eq:cS_bar}. The spatial Markov property states that, conditionally on a curve in $\overline{\cS}$ started from a fixed point, the collections of curves ``above'' and ``below'' it are independent (see Figure~\ref{fig:decomp}).

Define $\cW_Y^{*,-}$ and $\cW_Y^{*,+}$ by the same integral definitions as $\cW_S^-$ and $\cW_S^+$, with $(S,\cW)$ replaced by $(Y,\cW^*)$. In particular, the lower weight is zero whenever $Y_u=0$. Set
$$
H_S:=\r_+\times[r_0,\infty),\qquad H_Y:=\r_+\times[-r_0,\infty),
$$
and define
$$
\mathscr W^\pm:=\sigma\!\left(\left.\cW_S^\pm\right|_{H_S}, \left.\cW_Y^{*,\pm}\right|_{H_Y}\right).
$$

\begin{proposition}\label{prop:ind_scr}
	The sigma-fields $\mathscr W^-$ and $\mathscr W^+$ are independent.
\end{proposition}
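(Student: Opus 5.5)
The plan is to reduce everything to the independence of the white noise $\cW$ on the two spatial half-lines $\r_+\times[r_0,\infty)$ and $\r_+\times(-\infty,r_0]$, combined with Proposition~\ref{p:independence_W-&W^+} applied separately on each side.

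\emph{Step 1 (localisation of each side).} By definition, $S$ is the strong solution of \eqref{SDE:for} from $(a_0,r_0)$, so $S$ is measurable with respect to $\cF_{[r_0,\infty)}$. Consequently $\cW_S^\pm|_{H_S}$ are measurable with respect to $\cF_{[r_0,\infty)}$, since their integrands only involve $S_u$ for $u\ge r_0$.

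For the dual line, I will use Proposition~\ref{prop:self_dual} together with Proposition~\ref{cor:self-dual}. These identify $(Y_x,\,x\ge -r_0)=(\cS^*_{r_0,-x}(a_0))$ as the unique strong solution of \eqref{SDE:back} driven by $\cW^*$, reflected at $0$ on $[-r_0,-r_0^-]$ and absorbed afterwards. Hence $Y$, and therefore $\cW_Y^{*,\pm}|_{H_Y}$, is measurable with respect to $\cF^*_{[-r_0,\infty)}=\cF_{(-\infty,r_0]}$.

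Since $\r_+\times\{r_0\}$ is Lebesgue-null, $\cF_{[r_0,\infty)}$ and $\cF_{(-\infty,r_0]}$ are independent. Therefore the pair $(\cW_S^-|_{H_S},\cW_S^+|_{H_S})$ is independent of the pair $(\cW_Y^{*,-}|_{H_Y},\cW_Y^{*,+}|_{H_Y})$.

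\emph{Step 2 (independence within each side).} Proposition~\ref{p:independence_W-&W^+} gives that $\cW_S^-$ and $\cW_S^+$ are independent, and restricting to $H_S$ preserves this. For $Y$ I will rerun the proof of Proposition~\ref{p:independence_W-&W^+} with $(\cW,S)$ replaced by $(\cW^*,Y)$. The construction works verbatim:
\begin{itemize}
\item take independent noises $\cW_1,\cW_2$;
\item let $Z$ be the strong solution of \eqref{SDE:back} driven by $\cW_1$ with the reflect-then-absorb rule;
\item define $\cWhat$ by \eqref{eq:W'};
\item check that $\cWhat$ is a white noise, $\cWhat^-_Z=\cW_1^-$ and $\cWhat^+_Z=\cW_2$;
\item invoke pathwise uniqueness for the backward equation with this boundary rule to get $(\cWhat,Z)\overset{d}{=}(\cW^*,Y)$.
\end{itemize}
This yields that $\cW_Y^{*,-}$ and $\cW_Y^{*,+}$ are independent.

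\emph{Step 3 (combining).} Write $A,B,C,D$ for the $\sigma$-fields generated by $\cW_S^-|_{H_S}$, $\cW_S^+|_{H_S}$, $\cW_Y^{*,-}|_{H_Y}$ and $\cW_Y^{*,+}|_{H_Y}$. We have $A\perp B$, $C\perp D$, and $\sigma(A,B)\perp\sigma(C,D)$. For events $a\in A$, $b\in B$, $c\in C$, $d\in D$ this gives
\[
\p(a\cap b\cap c\cap d)=\p(a\cap b)\,\p(c\cap d)=\p(a)\p(b)\p(c)\p(d),
\]
so the four $\sigma$-fields are mutually independent. In particular $\mathscr W^-=\sigma(A,C)$ is independent of $\mathscr W^+=\sigma(B,D)$, by a $\pi$-$\lambda$ argument on generating rectangles.

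\emph{Main obstacle.} I expect the delicate point to be the adaptation of Proposition~\ref{p:independence_W-&W^+} to $Y$, because of the reflection on $[-r_0,-r_0^-]$. The recovery formula \eqref{sde:Y_1} and the identity $\cWhat^-_Z=\cW_1^-$ need care on the set $\{Y_u=0\}$, where the lower weight is set to zero. I would handle this by noting that this set has zero Lebesgue measure in $u$ (the process is a rescaled $\mathrm{BESQ}^1$). Thus the stochastic integrals in \eqref{SDE:back} are unaffected, and the reflected equation is still a genuine It\^o equation driven only by the noise below $Y$. Pathwise uniqueness for the reflected equation is supplied by Definition~\ref{def:RK_curve}.
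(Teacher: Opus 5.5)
Your proposal is correct and is essentially the paper's proof. You show that the pairs $(\cW_S^-,\cW_S^+)$ and $(\cW_Y^{*,-},\cW_Y^{*,+})$ are measurable with respect to the independent noise $\sigma$-fields on either side of $r_0$, that each pair is internally independent (the independence of $\cW_S^\pm$ and its backward analogue for $Y$), and then factor the four-fold product exactly as the paper does. The differences are minor: the paper uses the open half-lines $\cF_{(r_0,\infty)}$ and $\cF^*_{(-r_0,\infty)}$, which avoids your null-set remark, and the measurability of $Y$ with respect to $\cF_{(-\infty,r_0]}$ already follows from the definition of $\cS^*$ through $\cS_{x,r_0}(\cdot)$, without invoking self-duality.
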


\begin{proof}
	Throughout this proof, $\cW_S^\pm$ and $\cW_Y^{*,\pm}$ denote the restrictions to $H_S$ and $H_Y$, respectively. The pairs $(\cW_S^-,\cW_S^+)$ and $(\cW_Y^{*,-},\cW_Y^{*,+})$ are measurable with respect to the independent $\sigma$-fields $\cF_{(r_0,\infty)}$ and $\cF^*_{(-r_0,\infty)}$, respectively, so the pairs are independent. Moreover, by Proposition~\ref{p:independence_W-&W^+}, $\cW^-_S$ and $\cW^+_S$ are independent. Similarly, $\cW^{*,-}_Y$ and $\cW^{*,+}_Y$ are independent. Therefore, for any measurable sets $A, B, C, D$,
	\begin{align*}
		&\p\big(\cW^-_S\in A,\cW^+_S\in B,
		\cW^{*,-}_Y\in C,\cW^{*,+}_Y\in D\big)\\
		&=\p\big(\cW^-_S\in A,\cW^+_S\in B\big)
		\p\big(\cW^{*,-}_Y\in C,\cW^{*,+}_Y\in D\big)\\
		&=\p(\cW^-_S\in A)\p(\cW^+_S\in B)
		\p(\cW^{*,-}_Y\in C)\p(\cW^{*,+}_Y\in D)\\
		&=\p(\cW^-_S\in A)\p(\cW^{*,-}_Y\in C)
		\p(\cW^+_S\in B)\p(\cW^{*,+}_Y\in D)\\
		&=\p\big(\cW^-_S\in A,\cW^{*,-}_Y\in C\big)
		\p\big(\cW^+_S\in B,\cW^{*,+}_Y\in D\big).
	\end{align*}
	This proves the independence of $\mathscr W^-$ and $\mathscr W^+$.
\end{proof}

\begin{figure}[htbp]
	\centering
	\scalebox{1}{ 
		\def\svgwidth{0.5\columnwidth}
\begingroup%
  \makeatletter%
  \providecommand\color[2][]{%
    \errmessage{(Inkscape) Color is used for the text in Inkscape, but the package 'color.sty' is not loaded}%
    \renewcommand\color[2][]{}%
  }%
  \providecommand\transparent[1]{%
    \errmessage{(Inkscape) Transparency is used (non-zero) for the text in Inkscape, but the package 'transparent.sty' is not loaded}%
    \renewcommand\transparent[1]{}%
  }%
  \providecommand\rotatebox[2]{#2}%
  \newcommand*\fsize{\dimexpr\f@size pt\relax}%
  \newcommand*\lineheight[1]{\fontsize{\fsize}{#1\fsize}\selectfont}%
  \ifx\svgwidth\undefined%
    \setlength{\unitlength}{203.20415497bp}%
    \ifx\svgscale\undefined%
      \relax%
    \else%
      \setlength{\unitlength}{\unitlength * \real{\svgscale}}%
    \fi%
  \else%
    \setlength{\unitlength}{\svgwidth}%
  \fi%
  \global\let\svgwidth\undefined%
  \global\let\svgscale\undefined%
  \makeatother%
  \begin{picture}(1,1.7030837)%
    \lineheight{1}%
    \setlength\tabcolsep{0pt}%
    \put(0,0){\includegraphics[width=\unitlength,page=1]{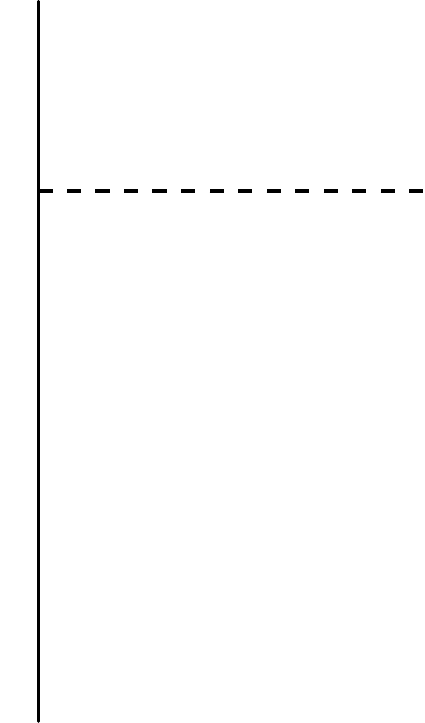}}%
    \put(0.02182134,1.22815191){\color[rgb]{0,0,0}\transparent{0.82599998}\makebox(0,0)[lt]{\lineheight{1.25}\smash{\begin{tabular}[t]{l}$0$\end{tabular}}}}%
    \put(0,0){\includegraphics[width=\unitlength,page=2]{figures/decomposition_proof.pdf}}%
    \put(-0.00032393,1.59749078){\color[rgb]{0,0,0}\transparent{0.82599998}\makebox(0,0)[lt]{\lineheight{1.25}\smash{\begin{tabular}[t]{l}$\tau^S$\end{tabular}}}}%
    \put(-0.00032393,0.35599589){\color[rgb]{0,0,0}\transparent{0.82599998}\makebox(0,0)[lt]{\lineheight{1.25}\smash{\begin{tabular}[t]{l}$\tau^Y$\end{tabular}}}}%
  \end{picture}%
\endgroup%

	}
	\caption{\textbf{Spatial decomposition.}
		Fix a point $(a_0,r_0)$, marked by a \textbf{\textcolor{blue}{blue dot}}. 
		From this point, the process $S$ evolves upward until absorption at zero at time $\tau^S$, while $Y$ evolves downward until absorption at time $\tau^Y$. The black solid curve represents the PSR flow line $(\overline\cS_{r_0,x}(a_0),\, x\in \r)$, which partitions the half-plane $\h$ into two disjoint regions: a gray-shaded region (left) and a white region (right). These regions correspond to $\mathscr{W}^-$ and $\mathscr{W}^+$, respectively. 
		Any trajectory initiated in one region remains confined to that region. 
		Sample trajectories $(\overline\cS_{r,x}(a),\, x\in \r)$, initiated from points $(a,r)$ (indicated by black dots), are shown as blue curves.}
	\label{fig:decomp}
\end{figure}

We are now ready to prove the spatial Markov property.

\begin{proof}[Proof of Proposition~\ref{prop:spatial_markov}]
	Write $\lambda(x):=\overline{\cS}_{r_0,x}(a_0)$ and $\mathscr G^+:=\sigma(\lambda)\vee\mathscr W^+$. By \eqref{sde:S} and its backward counterpart, $\lambda$ is $\mathscr W^-$-measurable. By Proposition~\ref{prop:ind_scr}, it suffices to show that $\overline{\cS}_-$ is $\mathscr W^-$-measurable and $\overline{\cS}_+$ is $\mathscr G^+$-measurable. Note that Proposition~\ref{p:difference} remains valid when the noises are restricted to $H_S$ and $H_Y$.
	
	Fix $(a,r)\in\h$ with $r>r_0$, and write $\gamma(x):=\overline{\cS}_{r,x}(a)$. By Corollary~\ref{cor:dual}, $(a,r)\in\Lambda_+$ if and only if $a\ge\lambda(r)$. On this event, Proposition~\ref{p:difference}(i) shows that $(\gamma(x),\,x\ge r)$ is $\mathscr G^+$-measurable. For $r_0\le x\le r$, since $c<\lambda(x)$ implies $\cS_{x,r}(c)\le\lambda(r)\le a$, we have
	$$
	\gamma(x)=\inf\{c\ge\lambda(x):\cS_{x,r}(c)>a\}.
	$$
	Thus this segment, and in particular $b:=\gamma(r_0)$, is $\mathscr G^+$-measurable. By~\eqref{eq:flow_prop} for $\cS^*$, $\gamma(x)=\cS^*_{r_0,x}(b)$ for $x<r_0$, so $\gamma(x)$ is also $\mathscr G^+$-measurable for $x<r_0$.
	
	Consider $a<\lambda(r)$. Proposition~\ref{p:difference}(ii) gives the $\mathscr W^-$-measurability of $(\gamma(x),\,x\ge r)$. For $r_0\le x\le r$, the same duality formula yields
	$$
	\gamma(x)=\inf\left(\{0\le c<\lambda(x):\cS_{x,r}(c)>a\}\cup\{\lambda(x)\}\right).
	$$
	Hence $b:=\gamma(r_0)\le a_0$ is $\mathscr W^-$-measurable. By the definition of $\Lambda_-$, if $(a,r)\in\Lambda_-$, then either $a_0>0$ and $b<a_0$, or $a_0=0$ and $b=0$. By the flow property, $\gamma(x)=\cS^*_{r_0,x}(b)$ for $x<r_0$.
	When $a_0>0$, the backward version of Proposition~\ref{p:difference}(ii) shows that this segment is $\mathscr W^-$-measurable. When $a_0=0$, we have $b=0$, so $\gamma(x)=\lambda(x)$ for $x<r_0$, which is also $\mathscr W^-$-measurable. 
	The case $r<r_0$ follows by reversing the two directions, and $r=r_0$ follows directly from the one-sided decompositions. Taking the infima over rational heights and using right-continuity construction, we conclude that $\overline{\cS}_-$ is $\mathscr W^-$-measurable and $\overline{\cS}_+$ is $\mathscr G^+$-measurable.
\end{proof}

\begin{proof}[Proof of Proposition~\ref{prop:no_31pts}]
	Suppose, by contradiction, that such a point $(a,x)$ and three flow lines $\cS^i$ exist, with $\cS^i$ distinct on $[x-\varepsilon,x)$ for some $\varepsilon>0$. By Proposition~\ref{prop:skeleton}, after decreasing $\varepsilon$ if necessary, the starting points may be taken in $\cD$ without changing the paths on $[x-\varepsilon,x)$. By the flow property~\eqref{eq:flow_prop} and Corollary~\ref{p:properties}(i), we may further assume that $r_1=r_2=r_3=:r\in\bD$ and $0<a_1<a_2<a_3$ with $a_i\in\bD$. Lemma~\ref{lm:coal} implies that $\cS^i(y)$, $i=1,2,3$, are almost surely pairwise distinct on $[r,x)$. Define 
	$$
	\tau^{i j }:= \inf\bigl\{y>r:\, \cS^i(y)=\cS^j(y)\bigr\}\quad \text{ for } i\neq j\in \{1,2,3\}.
	$$
	On this event, $x=\tau^{1 2}=\tau^{2 3}<\infty$. To prove Proposition~\ref{prop:no_31pts}, it therefore suffices to show that for any fixed $r\in\bD$ and $0<a_1<a_2<a_3$ with $a_i\in\bD$, 
	\begin{align}\label{eq:de_tau}
		\p(\tau^{12}=\tau^{23}<\infty)=0.
	\end{align}
	
	On $\{\tau^{12}=\tau^{23}<\infty,\, \cS^2(\tau^{23})=0\}$, define
	$$
	d_i:=\inf\{y\ge r:\cS^i(y)=0\},\qquad i=2,3.
	$$
	The three paths are strictly ordered before $\tau^{23}$, hence $d_2=d_3=\tau^{23}$. However, Corollary~\ref{cor:hit_boun_fi}, applied to $\cS^2$ and $\cS^3$ up to the first zero of $\cS^3$, gives $d_2\wedge\tau^{23}<d_3$ almost surely, a contradiction.
	
	It remains to exclude coalescence at $a>0$. 
	Fix a dyadic time $q>r$ and rational numbers $0<\ell<K$ and $T>q$. On $A:=\{\ell<\cS^1(q)<\cS^2(q)<\cS^3(q)<K\}$, set
	$$
	\rho:=T\wedge\inf\bigl\{y\ge q:
	\cS^1(y)=\cS^2(y)\ \text{or}\
	\cS^2(y)=\cS^3(y)\ \text{or}\
	\cS^1(y)\le\ell\ \text{or}\
	\cS^3(y)\ge K\bigr\},
	$$
	and set $\rho=q$ on $A^c$. Thus, on $A$, the three paths are strictly ordered in $(\ell,K)$ throughout $[q,\rho)$.
	For $i=1,2$, set
	$$
	F_i(x):=\frac{(\cS^{i+1}_x)^{\alpha+1}-(\cS^i_x)^{\alpha+1}}{\alpha+1}.
	$$
	As in the proof of Proposition~\ref{p:difference}, It\^o's formula gives, before $\rho$,
	$$
	\dd F_i=\beta_iF_i\,\dd x+\dd M_i,
	\qquad
	\dd\langle M_i\rangle_x=c_iF_i\,\dd x,
	\qquad
	\langle M_1,M_2\rangle=0,
	$$
	where $\beta_i=(F_i)^{-1}((\cS^{i+1})^\alpha-(\cS^i)^\alpha)$ and $c_i=4(F_i)^{-1}\int_{\cS^i}^{\cS^{i+1}}s^{2\alpha}\dd s$.
	On $A$, throughout $[q,\rho)$, there are constants $c,C>0$, depending only on $\alpha,\ell,K$, such that
	$$
	c\le c_i\le C,
	\qquad
	|\beta_i|\le C,
	\qquad
	|c_2-c_1|\le C(F_1+F_2).
	$$
	Put $D:=F_1+F_2$ and $R:=F_1/D$. It\^o's formula yields
	\begin{gather*}
		\dd R=\dd N+R(1-R)\left(\beta_1-\beta_2+\frac{c_2-c_1}{D}\right)\dd x,\\
		\dd\langle N\rangle_x=\frac{R(1-R)}{D}\bigl((1-R)c_1+Rc_2\bigr)\,\dd x,
	\end{gather*}
	where $N$ is a continuous local martingale. Set $h(p):=-p\log p-(1-p)\log(1-p)$. Then, $0\le h\le\log2$ and $h''(p)=-1/[p(1-p)]$. By It\^o's formula,
	\begin{align*}
		\dd h(R)
		=h'(R)\,\dd N+\left[
		R(1-R)\log\frac{1-R}{R}
		\Big(\beta_1-\beta_2+\frac{c_2-c_1}{D}\Big)
		-\frac{(1-R)c_1+Rc_2}{2D}\right]\dd x.
	\end{align*}
	Since $p(1-p)\log(\frac{1-p}{p})$ is bounded on $(0,1)$ and
	$$
	\left|\beta_1-\beta_2+\frac{c_2-c_1}{D}\right|\le3C,
	\qquad
	(1-R)c_1+Rc_2\ge c,
	$$
	the drift of $h(R)$ is at most $C-c/(2D)$, after increasing $C$ if necessary. Set
	$$
	\rho_n:=\rho\wedge\inf\{x\ge q:F_1(x)\wedge F_2(x)\le n^{-1}\}.
	$$
	On $A$, before $\rho_n$, both $R$ and $1-R$ are bounded away from zero, since $D\le K^{\alpha+1}/(\alpha+1)$. Thus the stopped stochastic integral in the equation for $h(R)$ is square-integrable and has conditional mean zero given $\cF_q$. Integrating the drift bound and taking conditional expectations gives, on $A$,
	\begin{align*}
		\frac c2\,\e\!\left[\int_q^{\rho_n}\frac{\dd x}{D_x}
		\,\middle|\,\cF_q\right]\le h(R_q)-\e[h(R_{\rho_n})\mid\cF_q] +C\,\e[\rho_n-q\mid\cF_q]\le \log2+C(T-q).
	\end{align*}
	Here $h$ is extended continuously to $[0,1]$ by $h(0)=h(1)=0$. Since $F_1,F_2$ are continuous and strictly positive on $[q,\rho)$, we have $\rho_n\uparrow\rho$. Conditional monotone convergence therefore yields, on $A$,
	$$
	\e\!\left[\int_q^\rho\frac{\dd x}{D_x}\,\middle|\,\cF_q\right]
	\le\frac{2}{c}\bigl(\log2+C(T-q)\bigr)<\infty.
	$$
	It follows that $\int_q^\rho D_x^{-1}\dd x<\infty$ almost surely. On the other hand,
	$$
	\dd\log D=\frac{\dd M_1+\dd M_2}{D}+ \left(\frac{\beta_1F_1+\beta_2F_2}{D}-\frac{c_1F_1+c_2F_2}{2D^2}\right) \dd x.
	$$
	The quadratic variation of the local martingale
	$$
	L_x:=\int_q^x\frac{\dd M_1+\dd M_2}{D},\qquad q\le x<\rho,
	$$
	satisfies
	$$
	\langle L\rangle_{\rho-}=\int_q^\rho\frac{c_1F_1+c_2F_2}{D^2}\,\dd x\le C\int_q^\rho\frac{\dd x}{D_x}<\infty.
	$$
	Thus $L_x$ has a finite limit as $x\uparrow\rho$. Moreover,
	$$
	\int_q^\rho	\left|\frac{\beta_1F_1+\beta_2F_2}{D}-\frac{c_1F_1+c_2F_2}{2D^2}\right|\dd x\le C(T-q)+\frac C2\int_q^\rho\frac{\dd x}{D_x}<\infty.
	$$
	Consequently, $\log D_x$ has a finite limit as $x\uparrow\rho$. By continuity of $D$, this rules out $D(\rho)=0$, which would imply $\log D_x\to-\infty$. If $\tau^{12}=\tau^{23}<\infty$ at positive height, continuity allows us to choose a dyadic $q$ sufficiently close before this time and rational $\ell,K,T$ such that $A$ holds and $\rho=\tau^{12}=\tau^{23}$. This proves~\eqref{eq:de_tau}.
\end{proof}

\section{Space-time transformation}
\label{sec:perfect}
Motivated by Perkins' disintegration theorem~\cite{perkins}, its flow version in~\cite{elie23}, and related works cited therein, we study a space--time transformation of the PSR flow using the framework developed in~\cite{elie23}. The transformed flow is mainly used to establish the perfect flow property in Proposition~\ref{prop:perfect}.

\subsection{Transformation of the PSR flow}

Fix $\alpha>0$ and $(a_0,r_0)\in\h^*$. Let $f$ be the unique strong solution to
\begin{align}\label{SDE:for_f}
	f(x) = a_0 + \int_{r_0}^x \frac{2}{f(u)^{\alpha}}  \int_{\r_+} s^\alpha \mathbbm{1}_{\{s\le f(u)\}} \cW(\dd s, \dd u) + \frac{x-r_0}{2\alpha+1},
\end{align}
for $r_0\le x<\df_f^+$, where $\df_f^+:=\inf\{x\ge r_0:f(x)=0\}$, and absorb $f$ at $0$ at this time. The process $(2\alpha+1)f$ is a killed $\mathrm{BESQ}^1$ process, so $\df_f^+<\infty$ almost surely. Define
\begin{align*}
	\eta_{f}(x):=\int_{r_0}^{x} \frac{\dd u}{f(u)}, \qquad r_0\le x< \df^+_{f}.
\end{align*}
By time reversal, $\bigl((2\alpha+1)f(\df_f^+-u),\,0\le u\le\df_f^+-r_0\bigr)$ has the law of a $\mathrm{BESQ}_0^3$ process run up to its last passage at level $(2\alpha+1)a_0$. By \cite[Exercise~3.20, Chapter~X]{RevYor}, it follows that $\eta_f(\df_f^+):=\lim_{x\uparrow\df_f^+}\eta_f(x)=+\infty$ almost surely. Since $f(x)>0$ on $[r_0, \df^+_f)$, the map $\eta_f$ is continuous and strictly increasing. Thus, the transformation $\eta_f$ defines a bijection from $[r_0,\df^+_f)$ onto $\r_+$. 

\begin{definition}[Transformation of the PSR flow]\label{def:trans_flow}
	Let $\bI_+:=[0,1)\times\r_+$, and let $\cS$ be the PSR flow driven by $\cW$ in Definition~\ref{def:PSR}. For $(v,r)\in\bI_+$ and $x\ge r$, define
	\begin{align*}
		\cY^+_{r, x}(v)
		:= \frac{\cS_{\eta_{f}^{-1}(r), \eta_{f}^{-1}(x)}\left(v f \circ \eta_{f}^{-1}(r)\right)}{f \circ \eta_{f}^{-1}(x)}.
	\end{align*}
    By property~\ref{item:R3} of $\cS$, $\cY^+$ takes values in $[0,1]$, and by Lemma~\ref{lm:coal}, it is absorbed at $1$ upon its first hit. Moreover, $(f(x),\, r_0\le x\le \df^+_{f})=(\cS_{r_0,x}(a_0),\, r_0\le x\le \df^+_{f})$.
\end{definition}

\begin{definition}[Transformation of white noise]\label{def:trans_noise}
	Let $(\mathscr E_u)_{u\in\r}$ be the usual augmented filtration generated by $\cW$. For each $t\ge0$, $\eta_f^{-1}(t)$ is an $(\mathscr E_u)$-stopping time. Set
	$$
	\widetilde{\mathscr E}_t:=\mathscr E_{\eta_f^{-1}(t)},\qquad t\ge0,
	$$
	and define
	\begin{align}\label{eq:def_tild_cW}
		\widetilde{\cW}(g):=\int_{r_0}^{\df_f^+}\int_0^{f(u)}\frac{1}{f(u)}g\!\left(\frac{s}{f(u)},\eta_f(u)\right)\cW(\dd s,\dd u),\qquad g\in L^2([0,1]\times\r_+).
	\end{align}
	This gives a white noise on $[0,1]\times\r_+$ with respect to $(\widetilde{\mathscr E}_t)$. By the It\^o isometry and approximation with elementary predictable integrands, identity~\eqref{eq:def_tild_cW} extends to all $(\widetilde{\mathscr E}_t)$-predictable functions $g$ satisfying $\e\!\left[\int_0^\infty\int_0^1g(v,t)^2\,\dd v\dd t\right]<\infty$.
\end{definition}

\begin{figure}[htbp]
	\centering
	\scalebox{1}{ 
		\def\svgwidth{0.8\columnwidth}
\begingroup%
  \makeatletter%
  \providecommand\color[2][]{%
    \errmessage{(Inkscape) Color is used for the text in Inkscape, but the package 'color.sty' is not loaded}%
    \renewcommand\color[2][]{}%
  }%
  \providecommand\transparent[1]{%
    \errmessage{(Inkscape) Transparency is used (non-zero) for the text in Inkscape, but the package 'transparent.sty' is not loaded}%
    \renewcommand\transparent[1]{}%
  }%
  \providecommand\rotatebox[2]{#2}%
  \newcommand*\fsize{\dimexpr\f@size pt\relax}%
  \newcommand*\lineheight[1]{\fontsize{\fsize}{#1\fsize}\selectfont}%
  \ifx\svgwidth\undefined%
    \setlength{\unitlength}{338.22452545bp}%
    \ifx\svgscale\undefined%
      \relax%
    \else%
      \setlength{\unitlength}{\unitlength * \real{\svgscale}}%
    \fi%
  \else%
    \setlength{\unitlength}{\svgwidth}%
  \fi%
  \global\let\svgwidth\undefined%
  \global\let\svgscale\undefined%
  \makeatother%
  \begin{picture}(1,0.4883588)%
    \lineheight{1}%
    \setlength\tabcolsep{0pt}%
    \put(0,0){\includegraphics[width=\unitlength,page=1]{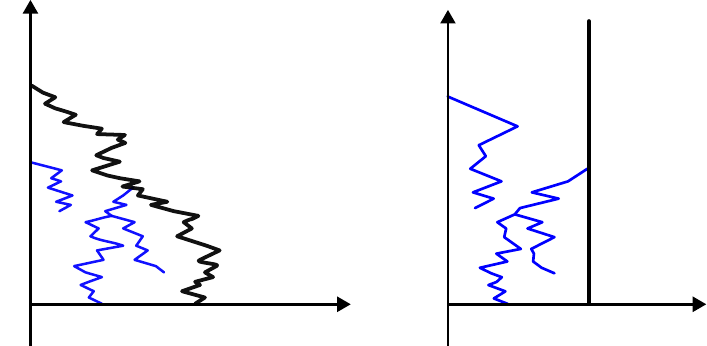}}%
    \put(0.44773258,0.45094967){\color[rgb]{0,0,0}\transparent{0.82599998}\makebox(0,0)[lt]{\lineheight{1.25}\smash{\begin{tabular}[t]{l}$\eta_{f}(\df_{f})=\infty$\end{tabular}}}}%
    \put(0.82547013,0.01810133){\color[rgb]{0,0,0}\transparent{0.82599998}\makebox(0,0)[lt]{\lineheight{1.25}\smash{\begin{tabular}[t]{l}$1$\end{tabular}}}}%
    \put(0.00878855,0.04803706){\color[rgb]{0,0,0}\transparent{0.82599998}\makebox(0,0)[lt]{\lineheight{1.25}\smash{\begin{tabular}[t]{l}$0$\end{tabular}}}}%
    \put(0.26585258,0.01810133){\color[rgb]{0,0,0}\transparent{0.82599998}\makebox(0,0)[lt]{\lineheight{1.25}\smash{\begin{tabular}[t]{l}$b$\end{tabular}}}}%
    \put(-0.00019459,0.36591864){\color[rgb]{0,0,0}\transparent{0.82599998}\makebox(0,0)[lt]{\lineheight{1.25}\smash{\begin{tabular}[t]{l}$\mathfrak{d}_{f}$\end{tabular}}}}%
    \put(0.60000264,0.04803706){\color[rgb]{0,0,0}\transparent{0.82599998}\makebox(0,0)[lt]{\lineheight{1.25}\smash{\begin{tabular}[t]{l}$0$\end{tabular}}}}%
  \end{picture}%
\endgroup%

	}
	\caption{\textbf{Transformation.}
		Left: the process $x\mapsto f(x)$ is shown in bold, started from $(a_0,r_0):=(b,0)$ and absorbed at time $\df^+_f$. The blue curves represent the PSR flow lines $\cS_{r,\cdot}(a)$ up to their first hitting time of $f$. 
		Right: under the space--time transformation, the curve $x\mapsto f(x)$ is mapped to the constant function $1$, and the corresponding flow lines become $\cY^+_{r,\cdot}(v)$.}
\end{figure}

\begin{theorem}\label{thm:perkin_disintegration}
	Let $\cY^+$ and $\widetilde{\cW}$ be as in Definitions~\ref{def:trans_flow} and~\ref{def:trans_noise}. For each $r\ge0$ and $v\in[0,1)$, the process $(\cY^+_{r,x}(v),\,x\ge r)$ is the unique strong solution, up to absorption, to
	\begin{align}\label{eq:SDE_perkin}
		\nonumber \cY^+_{r,x}(v)= v &+ 2\int_r^x \! \int_0^1 \left(\frac{s^\alpha}{\cY^+_{r,u}(v)^\alpha}\mathbbm{1}_{\{s\le \cY^+_{r,u}(v)\}}-\cY^+_{r,u}(v)s^\alpha \right) \widetilde{\cW}(\dd s, \dd u)\\
		& + \frac{1}{2\alpha+1} \int_r^x \left(1+3\cY^+_{r,u}(v)-4 \cY^+_{r,u}(v)^{\alpha+1} \right)\dd u.
	\end{align}
	It is absorbed at $1$, and absorption at $0$ occurs at the first hit with $x\ge\mathfrak{t}$, where
    \begin{align}\label{eq:frak_t}
         \mathfrak t:=
             \begin{cases}
             0,&r_0\ge0,\\
             \eta_f(0),&r_0<0<\df^+_f,\\
             +\infty,&r_0<\df^+_f\le0,
             \end{cases}
    \end{align}
    is a $\widetilde{\mathscr E}$-stopping time.
    Moreover, the family $\cY^+:=\{(\cY^+_{r,x}(v),\, x\ge r)\}_{(v,r)\in \bI_+}$ satisfies the following regularity properties: almost surely,
	\begin{enumerate}[label=\textup{(\roman*)}, nosep]
		\item for all $(v,r)\in \bI_+$, $\cY^+_{r, r}(v)=v$;
		\item for all $r \le x$, $[0,1)\ni v \mapsto \cY^+_{r, x}(v)$ is càdlàg;
		\item for any $(v_1,r_1), (v_2,r_2) \in \bI_+$ and any $z \ge y \ge r_1 \vee r_2$,
		$$
		\cY^+_{r_1,y}(v_1) < \cY^+_{r_2,y}(v_2) 
		\ \Longrightarrow\ 
		\cY^+_{r_1,z}(v_1) \le \cY^+_{r_2,z}(v_2).
		$$
	\end{enumerate}
\end{theorem}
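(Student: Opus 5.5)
The plan is a direct Itô computation for the ratio, followed by a time change and a transfer of uniqueness and regularity from $\cS$.

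Fix $(v,r)\in\bI_+$. Set $\rho:=\eta_f^{-1}(r)$, which is an $(\mathscr E_u)$-stopping time, and $Z_y:=\cS_{\rho,y}(vf(\rho))$ for $y\ge\rho$. By Lemma~\ref{lm:coal} and property~\ref{item:R3}, $Z\le f$ on $[\rho,\df_f^+)$, and once $Z$ meets $f$ it stays equal to $f$. By the strong Markov form of pathwise uniqueness (flow property~\eqref{eq:flow_prop} at stopping times), $Z$ solves \eqref{SDE:for} from time $\rho$, driven by the shifted noise. So on $[\rho,\df_f^+)$ we have two semimartingales $Z$ and $f$ with
\[
\dd Z_y=\frac{2}{Z_y^\alpha}\int_0^{Z_y}s^\alpha\,\cW(\dd s,\dd y)+\frac{\dd y}{2\alpha+1},\qquad
\dd f=\frac{2}{f^\alpha}\int_0^{f}s^\alpha\,\cW(\dd s,\dd y)+\frac{\dd y}{2\alpha+1}.
\]

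First I apply Itô's formula to $Z/f$ on the random interval where $f>0$. The quadratic variation of $f$ is $\frac{4}{f^{2\alpha}}\int_0^f s^{2\alpha}\dd s\,\dd y=\frac{4f}{2\alpha+1}\dd y$. The covariation $\dd\langle Z,f\rangle=\frac{4}{Z^\alpha f^\alpha}\int_0^{Z}s^{2\alpha}\dd s\,\dd y=\frac{4Z^{\alpha+1}}{(2\alpha+1)f^\alpha}\dd y$. This gives
\[
\dd\Big(\frac Zf\Big)=\frac{\dd Z}{f}-\frac{Z\,\dd f}{f^2}-\frac{\dd\langle Z,f\rangle}{f^2}+\frac{Z\,\dd\langle f\rangle}{f^3}.
\]
Writing $Y:=Z/f$, the drift is
\[
\frac{1}{(2\alpha+1)f}\bigl(1-Y-4Y^{\alpha+1}+4Y\bigr)=\frac{1+3Y-4Y^{\alpha+1}}{(2\alpha+1)f},
\]
which matches \eqref{eq:SDE_perkin}.

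For the martingale part, I substitute $s=f\sigma$. It becomes
\[
\frac1f\int_0^f 2\Big(\frac{\sigma^\alpha}{Y^\alpha}\ind_{\{\sigma\le Y\}}-Y\sigma^\alpha\Big)\cW(\dd s,\dd y).
\]
Then I time change by $y=\eta_f^{-1}(u)$, so that $\dd y=f\,\dd u$. By Definition~\ref{def:trans_noise}, extended to predictable integrands, this integral equals
\[
\int\!\!\int_0^1 2\Big(\frac{\sigma^\alpha}{Y^\alpha}\ind_{\{\sigma\le Y\}}-Y\sigma^\alpha\Big)\widetilde\cW(\dd\sigma,\dd u),
\]
and the drift becomes $\frac{1}{2\alpha+1}(1+3Y-4Y^{\alpha+1})\,\dd u$. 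This yields \eqref{eq:SDE_perkin} for $\cY^+_{r,\cdot}(v)$ up to its hitting time of $\{0,1\}$.

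Absorption at $1$ comes from Lemma~\ref{lm:coal}. Absorption at $0$ occurs exactly when $Z$ is absorbed, namely at its first zero after $0\vee\rho$. In the new clock the time $0$ corresponds to $\eta_f(0)$, which is \eqref{eq:frak_t}. The cases are: $r_0\ge0$ (then $\mathfrak t=0$), $0$ in the lifetime of $f$ (then $\mathfrak t=\eta_f(0)$), and $0$ beyond $\df_f^+$ (then $\mathfrak t=+\infty$). The stopping-time property follows because $\{\eta_f(0)\le t\}=\{\eta_f^{-1}(t)\ge0\}\in\widetilde{\mathscr E}_t$.

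For uniqueness, I reverse the argument. Given any strong solution $Y'$ of \eqref{eq:SDE_perkin} driven by $\widetilde\cW$ with the stated boundary rule, set $Z':=f\cdot Y'\circ\eta_f$. Undoing the Itô computation (the map is invertible while $f>0$, and $\widetilde\cW$ recovers $\cW$ below $f$) shows that $Z'$ solves \eqref{SDE:for} below $f$. Pathwise uniqueness for \eqref{SDE:for} then gives $Z'=Z$.

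The regularity properties (i)--(iii) transfer pathwise from \ref{item:R1}--\ref{item:R3}. Division by the positive factor $f\circ\eta_f^{-1}(x)$ and the increasing reparametrization $\eta_f$ preserve the value at the starting time, the càdlàg dependence on the initial value, and the order.

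I expect the main obstacle to be rigour around the random time change. This involves checking that $\widetilde\cW$ is a white noise for $(\widetilde{\mathscr E}_t)$, with the quadratic variation $\frac{1}{f^2}\cdot f\,\dd\sigma\cdot f\,\dd u=\dd\sigma\,\dd u$. It also requires justifying the predictable-integrand identity, which needs localization since $\eta_f\to\infty$ near $\df_f^+$. Finally, the pathwise uniqueness transfer near $Y=0$ uses the non-Lipschitz coefficient, and I would handle it through a Yamada--Watanabe type argument inherited from \eqref{SDE:for}.
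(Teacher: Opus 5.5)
Your proposal is correct and is essentially the paper's proof. It uses the same It\^o computation for the ratio $Z/f$ (with $\dd\langle f\rangle=\frac{4f}{2\alpha+1}\dd y$ and $\dd\langle Z,f\rangle=\frac{4Z^{\alpha+1}}{(2\alpha+1)f^\alpha}\dd y$), the substitution $s=f\sigma$ with the predictable extension of \eqref{eq:def_tild_cW} for the noise, the change of variables $\dd y=f\,\dd u$ for the drift, the identity $\{\mathfrak t\le t\}=\{\eta_f^{-1}(t)\ge0\}$, and inheritance of absorption and (i)--(iii) from $\cS$.

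Your pull-back uniqueness argument via $Z'=f\cdot Y'\circ\eta_f$ and your stopping-time treatment of $r>0$ spell out what the paper leaves implicit. One correction: that $\cS_{\rho,\cdot}(vf(\rho))$ solves \eqref{SDE:for} from the random start $\rho=\eta_f^{-1}(r)$ is not literally \eqref{eq:flow_prop}, which holds only for fixed points. Derive it instead from \eqref{eq:cS_I_+}, comparison of solutions started at a common stopping time, and a Corollary~\ref{cor:cov_b}-type monotone limit, not from Proposition~\ref{prop:perfect}, which is proved later using this theorem.
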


\begin{proof}
	By Definition~\ref{def:trans_flow} and the properties of $\cS$, $\cY^+$ inherits the stated regularity properties and absorption rules. Moreover, for every $t\ge0$,
	$$
	\{\mathfrak t\le t\}=\{\eta_f^{-1}(t)\ge0\}\in\mathscr E_{\eta_f^{-1}(t)}=\widetilde{\mathscr E}_t,
	$$
	so $\mathfrak t$ is a $(\widetilde{\mathscr E}_t)$-stopping time.
	Fix $v\in[0,1)$. By~\eqref{SDE:for_f} and $\cS_{r_0,\cdot}(va_0)\le f(\cdot)$, we obtain $\dd \langle f\rangle_x=\frac{4}{2\alpha+1}f(x)\dd x$ and $\dd\langle\cS_{r_0,\cdot}(va_0),f\rangle_x=\frac{4}{2\alpha+1}\frac{\cS_{r_0,x}(va_0)^{\alpha+1}}{f(x)^\alpha}\dd x$. Define $\xi_x(v):=\frac{\cS_{r_0,x}(va_0)}{f(x)}=\cY^+_{0,\eta_f(x)}(v)$. We deduce from It\^o's formula that, 
	\begin{align}\label{cY_xi}
		\nonumber \dd \xi_x(v)=&\; f(x)^{-1} \dd \cS_{r_0,x}(va_0)+\cS_{r_0,x}(va_0) \dd \!\left(f(x)^{-1}\right)+\dd \langle \cS_{r_0,\cdot}(va_0), f^{-1} \rangle_x\\
		\nonumber =&\; \frac{1}{f(x)} \dd \cS_{r_0,x}(va_0)-\frac{\xi_x(v)}{f(x)} \dd f(x)+ \frac{4}{2\alpha+1} \frac{ \xi_x(v)-\xi_x(v)^{\alpha+1}}{f(x)} \dd x\\
		\nonumber =&\; \frac{2}{f(x)} \int_{s\ge 0} \! \left[\frac{s^\alpha}{\cS_{r_0,x}(va_0)^\alpha}\mathbbm{1}_{\{s\le \cS_{r_0,x}(va_0)\}} - \xi_x(v)\frac{s^\alpha}{f(x)^\alpha}\mathbbm{1}_{\{s\le f(x)\}}\right]\! \cW(\dd s, \dd x)\\
		&\; + \frac{1}{(2\alpha+1)f(x)} \left[1+3\xi_x(v)-4\xi_x(v)^{\alpha+1}\right]\! \dd x.
	\end{align}
	Applying~\eqref{eq:def_tild_cW} to the predictable integrand below, after localization, gives
	\begin{align}\label{cY_diffusion}
		\nonumber &\int_0^{\eta_f(x)}\! \int_0^1 \left[\frac{2 s^\alpha}{\cY^+_{0,u}(v)^\alpha} \mathbbm{1}_{\{s\le \cY^+_{0,u}(v)\}} - 2\cY^+_{0,u}(v) s^\alpha \right]\! \widetilde{\cW} (\dd s, \dd u)\\
		 =&\; \int_{r_0}^x\! \int_{0}^{f(u)} \frac{2}{f(u)} \left[\frac{s^\alpha}{\cS_{r_0,u}(va_0)^\alpha}\mathbbm{1}_{\{s\le \cS_{r_0,u}(va_0)\}}-\xi_u(v)\frac{s^\alpha}{f(u)^\alpha}\mathbbm{1}_{\{s\le f(u)\}} \right]\! \cW (\dd s, \dd u).
	\end{align}
	By a change of variables, 
	\begin{align}\label{cY_drift}
		\int_{r_0}^x \frac{1}{f(u)}\left[1+3\xi_u(v)-4\xi_u(v)^{\alpha+1}\right]\dd u
		=\int_0^{\eta_f(x)}\left[1+3\cY^+_{0,u}(v)-4\cY^+_{0,u}(v)^{\alpha+1}\right]\dd u.
	\end{align}
	Combining~\eqref{cY_xi}, \eqref{cY_diffusion}, and~\eqref{cY_drift}, it follows that $\cY^+_{0,\cdot}(v)$ satisfies~\eqref{eq:SDE_perkin} before absorption. For $r>0$, the same Itô's formula argument applies. This completes the proof.
\end{proof}

\begin{remark}
	The construction of $\cY^+$ follows the approach of \cite[Theorem~5.9]{elie23}, but the resulting flow is not independent of $f$. Set $F_u:=f\circ\eta_f^{-1}(u)$ for $u\ge0$, and, for fixed $r\ge0$ and $v\in(0,1)$, write $Y_u:=\cY^+_{r,u}(v)$ for $u\ge r$. By~\eqref{SDE:for_f}, \eqref{eq:def_tild_cW}, and It\^o's formula,
	$$
	\log\frac{F_u}{a_0}
	=2\int_0^u\int_0^1s^\alpha\widetilde{\cW}(\dd s,\dd t)-\frac{u}{2\alpha+1}.
	$$
	Together with~\eqref{eq:SDE_perkin}, this gives
	$$
	\dd\langle\log F,Y\rangle_u
	=\frac{4Y_u(Y_u^\alpha-1)}{2\alpha+1}\,\dd u.
	$$
	The covariation density is strictly negative while $Y_u\in(0,1)$, so $F$ and $Y$ are not independent. Since $F$ is determined by $f$, neither are $f$ and $\cY^+$. Moreover, the displayed representation of $\log F$ shows that $F$ is a nonconstant measurable function of $\widetilde{\cW}$, so $\widetilde{\cW}$ and $f$ are not independent either.
\end{remark}

For later use, write the coefficients in~\eqref{eq:SDE_perkin} as
\begin{align}\label{eq:coe}
	\sigma(x,s)=2\mathbbm{1}_{\{0\le s\le 1\}} \bigl( \tfrac{s^\alpha}{x^\alpha}\mathbbm{1}_{\{s\le x\}} - x s^\alpha\bigr),\qquad 
	b(x)=\tfrac{1}{2\alpha+1}(1+3x-4x^{\alpha+1}),
\end{align}
where $\sigma(0,\cdot)=\sigma(1,\cdot)=0$. The superscript $+$ in $\cY^+$ indicates that only flow lines starting at $r\ge 0$ are considered. The definition of $\cY^+$ extends naturally to $\bI:=[0,1)\times\r$, giving a collection $\cY$. More precisely, almost surely, $\cY$ satisfies regularity properties (i)--(iii) in Theorem~\ref{thm:perkin_disintegration} on $\bI$, and for each $(v,r)\in\bI$, $(\cY_{r,x}(v),\,x\ge r)$ is the unique strong solution to~\eqref{eq:SDE_perkin}, absorbed upon hitting $1$; at $0$, absorption occurs after time $\mathfrak{t}$. The existence and uniqueness of $\cY$ follow from arguments similar to those in \cite[Section~6.2]{Yu26}, together with (ii) of Theorem~\ref{thm:perkin_disintegration}. We leave the details to the reader. When $\alpha=0$ and $r_0\ge0$, $\cY^+$ is the restriction of a killed $\mathrm{Jacobi}(1,0)$ flow to nonnegative starting times. This degenerate case, however, is not relevant in the present setting.

\begin{corollary}\label{cor:hit_boun_fi}
	For any $a\in (0,a_0)$, define the first hitting times
	$$
	T_0:=\inf\{x\ge r_0:\, \cS_{r_0,x}(a)=0\},\qquad T_f:=\inf\{x\ge r_0:\, \cS_{r_0,x}(a)=f(x)\}
	$$
	Then $\p(T_0\wedge T_f<\df_f^+)=1$.
\end{corollary}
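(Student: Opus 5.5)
The plan is to move the question to the transformed flow of Theorem~\ref{thm:perkin_disintegration}, where it becomes a boundary-hitting statement for a one-dimensional diffusion on $[0,1]$.

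\textbf{Reduction to a diffusion.} Set $v:=a/a_0\in(0,1)$ and $Y_t:=\cY^+_{0,t}(v)$ for $t\ge0$. By Definition~\ref{def:trans_flow}, $Y_{\eta_f(x)}=\cS_{r_0,x}(a)/f(x)$ for $r_0\le x<\df_f^+$. Since $\eta_f$ is an increasing bijection from $[r_0,\df_f^+)$ onto $\r_+$, the event $\{T_0\wedge T_f<\df_f^+\}$ is exactly the event that $Y$ reaches $\{0,1\}$ at some finite time $t<\infty$. The possible reflection at $0$ before time $\mathfrak t$ is irrelevant here, since we only need the first visit of $Y$ to $0$. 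Up to that time, $Y$ is a weak solution of~\eqref{eq:SDE_perkin}. It is therefore a one-dimensional diffusion with drift $b$ from~\eqref{eq:coe} and diffusion coefficient
$$
A(y):=\int_0^1\sigma(y,s)^2\,\dd s=\frac{4}{2\alpha+1}\,y\bigl(1+y-2y^{\alpha+1}\bigr),
$$
computed by integrating the three terms of $\sigma^2$ directly. For $y\in(0,1)$ we have $y^{\alpha+1}<y<1$, so $1+y-2y^{\alpha+1}>0$. Hence $A>0$ on $(0,1)$, $A$ is bounded away from $0$ on compacts of $(0,1)$, and $A$ and $b$ are continuous on $[0,1]$.

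\textbf{Feller test at the boundaries.} It suffices to show that both endpoints are accessible, in Feller's sense exit or regular, for the diffusion on $(0,1)$. Standard theory (Feller's test together with non-degeneracy in the interior) then gives an almost surely finite exit time from $(0,1)$, as in \cite[Chapter~VII]{RevYor}. Write $c:=1/(2\alpha+1)$.

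Near $0$, we have $A(y)\sim 4cy$ and $b(y)\to c$. The generator is locally $c\,(2y\partial^2+\partial)$, a time change of $\mathrm{BESQ}^1$. The scale density is $\exp(-\int 2b/A)\sim y^{-1/2}$, which is integrable, and the speed density $\sim y^{-1/2}$ is integrable as well. So $0$ is regular, hence accessible.

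Near $1$, put $z=1-y$. Since $\frac{d}{dy}\bigl(y+y^2-2y^{\alpha+2}\bigr)\big|_{y=1}=-(2\alpha+1)$, we get $A\sim 4z$. Also $b(1)=0$ and $b'(1)=-c(4\alpha+1)$, so the drift of $z$ is $c(4\alpha+1)z+O(z^2)$. The generator of $z$ is therefore $2z\partial_z^2+O(z)\partial_z$, a perturbation of $\mathrm{BESQ}^0$. The scale density $\exp(-\int 2b_z/A)$ is bounded near $z=0$, and the speed density is $\sim 1/(4z)$. Feller's integral $\int_0 (s(z)-s(0))\,m(\dd z)\approx\int_0 z\cdot z^{-1}\,\dd z$ is finite, so $1$ is an exit boundary and is accessible.

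\textbf{Conclusion and main obstacle.} With both endpoints accessible and non-degenerate diffusion inside, $Y$ exits $(0,1)$ in finite time almost surely. This gives $\p(T_0\wedge T_f<\df_f^+)=1$.

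The main obstacle is the boundary at $1$, where both $A$ and $b$ vanish. Only the precise first-order behaviour, with $A$ linear and the drift merely $O(z)$, makes $1$ an exit boundary rather than a natural one. Without this, one could worry that $S$ only approaches $f$ asymptotically as both die at $\df_f^+$. I would carry out this expansion carefully, checking that $b$ is $C^1$ at $1$ (true since $\alpha>0$), and then verify Feller's integral. To avoid applying Feller's test to a weak solution of a white-noise SDE, I would first note that $Y$ solves a martingale problem whose quadratic variation density is $A(Y)$. A Dambis--Dubins--Schwarz time change then identifies it in law with the one-dimensional diffusion used above.
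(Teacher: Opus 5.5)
Your proposal is correct and takes essentially the same route as the paper. Both transfer to $\cY^+_{0,\cdot}(a/a_0)$ and use the same scale/speed asymptotics ($p'\sim Cx^{-1/2}$ at $0$; $p'$ bounded and $\bar\sigma\sim 4(1-x)$ at $1$); the paper packages them as $\e[T_{0,1}]<\infty$ via the Karatzas--Shreve exit-time formula, which is equivalent to your accessibility of both endpoints. One harmless slip: since $b(1-z)\approx c(4\alpha+1)z$, the drift of $z=1-Y$ is $-c(4\alpha+1)z$, not $+c(4\alpha+1)z$, but only the boundedness of the drift-to-diffusion ratio near $1$ matters, so nothing changes.
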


\begin{proof}
	For $v=a/a_0$, let $T_{0,1}:=\inf\{x\ge 0:\, \cY^+_{0,x}(v)\notin (0,1)\}$, where $\cY^+_{0,\cdot}(v)$ is given by the transformation in Definition~\ref{def:trans_flow}. Thus, it suffices to prove
	\begin{align}\label{eq:T_01fi}
		\p\bigl(T_{0,1}<\infty)=1.
	\end{align}
	The coefficients in~\eqref{eq:coe} satisfy
	\[
	\bar\sigma(x):=\int_\r\sigma(x,s)^2\,\dd s
	=\frac{4}{2\alpha+1}x(1+x-2x^{\alpha+1})>0,\qquad x\in(0,1),
	\]
	and $(1+|b|)/\bar\sigma$ is locally integrable on $(0,1)$. For $0<a<v<b<1$, let $T_{a,b}:=\inf\{x\ge0:\,\cY^+_{0,x}(v)\notin(a,b)\}$. By Equations~(5.55) and~(5.59) in \cite[Section~5.5]{kara98},
	\begin{align}\label{eq:M_ab}
		\e[T_{a,b}]=-\int_a^v(p(v)-p(y))m(\dd y) +\frac{p(v)-p(a)}{p(b)-p(a)}\int_a^b(p(b)-p(y))m(\dd y),
	\end{align}
	where the scale function and speed measure are
	$$
	p(x):=\int_c^x\exp\left\{-2\int_c^\xi\frac{b(z)}{\bar\sigma(z)}\,\dd z\right\}\dd\xi,\qquad
	m(\dd x):=\frac{2\,\dd x}{p'(x)\bar\sigma(x)},\qquad c\in(0,1).
	$$
	As $x\downarrow0$, we have $2b(x)/\bar\sigma(x)=(2x)^{-1}+O(1)$, whereas $2b(x)/\bar\sigma(x)\to(4\alpha+1)/(4\alpha+2)$ as $x\uparrow1$. Hence there are constants $C_0,C_1>0$ such that
	$$
	p'(x)\sim C_0x^{-1/2}\quad(x\downarrow0),\qquad
	p'(x)\rightarrow C_1\quad(x\uparrow1).
	$$
	In particular, $p(0):=p(0+)>-\infty$ and $p(1):=p(1-)<\infty$. Since $\bar\sigma(x)\sim4x/(2\alpha+1)$ as $x\downarrow0$ and $\bar\sigma(x)\sim4(1-x)$ as $x\uparrow1$, these estimates imply that $m(\dd x)$ is integrable near $0$ and $(p(1)-p(x))m(\dd x)$ is integrable near $1$. By continuity, $T_{a,b}\uparrow T_{0,1}$ as $a\downarrow0$ and $b\uparrow1$. Monotone convergence on the left-hand side of~\eqref{eq:M_ab} and dominated convergence on the right-hand side give
	\begin{align*}
		\e[T_{0,1}]=-\int_0^v(p(v)-p(y))m(\dd y)+ \frac{p(v)-p(0)}{p(1)-p(0)}\int_0^1(p(1)-p(y))m(\dd y)
		<\infty.
	\end{align*}
	This proves~\eqref{eq:T_01fi} and hence the corollary.
\end{proof}

\subsection{Perfect flow property}\label{subsec:perf}

This section proves the perfect flow property (Proposition~\ref{prop:perfect}) deferred from Section~\ref{sec:PSR_flow}. Let $\cS^*$ denote the dual PSR flow. For $(v,r)\in \bI_+$ and $0\le x\le r$, define
\begin{align}\label{eq:cY_00}
	\cY^{\#}_{r, x}(v)
	:= \frac{\cS^*_{\eta_{f}^{-1}(r), \eta_{f}^{-1}(x)}\left(v f \circ \eta_{f}^{-1}(r)\right)}{f \circ \eta_{f}^{-1}(x)}.
\end{align}
By Proposition~\ref{prop:non-crossing}, $\cY^\#$ takes values in $[0,1]$. Its trajectories are continuous, since the trajectories of $\cS^*$ and the inverse clock $\eta_f^{-1}$ are continuous and $f$ is strictly positive on $[r_0,\df_f^+)$. Recall the family $\cY^+$ studied in Theorem~\ref{thm:perkin_disintegration}.

\begin{lemma}\label{lm:equ_cY}
    Almost surely, for all $(v,r)\in\bI_+$ and $0\le x\le r$,
    $$
    \cY^\#_{r,x}(v)=\cY^*_{r,x}(v):=\inf\{b\in[0,1):\cY^+_{x,r}(b)>v\}\wedge1,
    $$
    where $\inf\varnothing=+\infty$.
\end{lemma}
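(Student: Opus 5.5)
The plan is to transport the duality relation~\eqref{eq:def_dual_+}, together with its sharp form Corollary~\ref{cor:dual}, through the space--time change $\eta_f$. Fix $(v,r)\in\bI_+$ and $0\le x\le r$, and set $X:=\eta_f^{-1}(x)\le R:=\eta_f^{-1}(r)$. Both lie in $[r_0,\df_f^+)$, so $f(X),f(R)>0$. By~\eqref{eq:cY_00},
\[
\cY^\#_{r,x}(v)=\frac{\cS^*_{R,X}\bigl(vf(R)\bigr)}{f(X)}=\frac{1}{f(X)}\inf\{a\ge0:\cS_{X,R}(a)>vf(R)\}.
\]
Substituting $a=bf(X)$ and using Definition~\ref{def:trans_flow}, which gives $\cS_{X,R}(bf(X))=f(R)\,\cY^+_{x,r}(b)$ whenever $b\in[0,1)$, the condition $\cS_{X,R}(a)>vf(R)$ becomes $\cY^+_{x,r}(b)>v$. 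So for $a<f(X)$ the two infima agree, and the argument holds simultaneously for all parameters on a single almost sure event, because it only uses the deterministic definitions.

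The remaining work is the range $a\ge f(X)$, that is $b\ge1$, which is where the truncation ``$\wedge1$'' comes in. First I will show that the infimum defining $\cS^*_{R,X}(vf(R))$ is never strictly larger than $f(X)$. Since $f=\cS_{r_0,\cdot}(a_0)$ and $X\ge r_0$, Proposition~\ref{prop:perfect} gives $\cS_{X,R}(f(X))=f(R)>vf(R)$, using $v<1$ and $f(R)>0$. Hence $a=f(X)$ already belongs to the set, so $\cS^*_{R,X}(vf(R))\le f(X)$ and $\cY^\#_{r,x}(v)\le 1$.

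Next I split into two cases.
\begin{itemize}
\item If some $a<f(X)$ lies in the set, the infimum is attained below $f(X)$. It then equals $f(X)$ times the infimum over $b\in[0,1)$, which is at most $1$, so the $\wedge1$ has no effect.
\item If no such $a$ exists, then $\cS^*_{R,X}(vf(R))=f(X)$, so $\cY^\#_{r,x}(v)=1$. On the other side, $\{b\in[0,1):\cY^+_{x,r}(b)>v\}=\varnothing$, so the infimum is $+\infty$ and $\wedge1$ also gives $1$.
\end{itemize}
This case is exactly why the convention $\inf\varnothing=+\infty$ followed by $\wedge1$ is used.

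The main point requiring care is the use of the perfect flow property at the random point $(f(X),X)$ on the distinguished line $f$, simultaneously for all $x$. The fixed-point flow property~\eqref{eq:flow_prop} is not enough here, since $X$ ranges over a continuum. This is exactly what Proposition~\ref{prop:perfect} supplies, because $f(X)>0$ on $[r_0,\df_f^+)$.

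There is a subtlety: Lemma~\ref{lm:equ_cY} is presumably used in the proof of Proposition~\ref{prop:perfect}, which would make this circular. If so, I would replace that step with a weaker input. By \ref{item:R3} and Lemma~\ref{lm:coal}, any $a>f(X)$ satisfies $\cS_{X,R}(a)\ge f(R)>vf(R)$, using \eqref{eq:cS_I_+} to compare with skeleton lines passing above $f$ at time $X$. It follows that the infimum is at most $f(X)$ without ever evaluating exactly at $a=f(X)$, and the rest of the argument is unchanged.
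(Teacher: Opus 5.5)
Your proposal is correct and follows the paper's route: the substitution $a=bf(X)$ through Definition~\ref{def:trans_flow} and \eqref{eq:def_dual_+}, with the added merit of spelling out the range $b\ge1$, which the paper's one-line computation absorbs silently into $\inf\{b>0:\dots\}$. Your circularity concern is justified, because Proposition~\ref{prop:perfect} is proved via \eqref{eq:hhh} and Proposition~\ref{prop:transformed_separation}, which use this lemma; so make your fallback the main argument, noting that it needs only \ref{item:R1} and \ref{item:R3} applied to $(a_0,r_0)$ and $(a,X)$ with $a>f(X)$ (giving $\cS_{X,R}(a)\ge f(R)>vf(R)$) and no appeal to Lemma~\ref{lm:coal} or \eqref{eq:cS_I_+}.
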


\begin{proof}
    By Definition~\ref{def:trans_flow} and~\eqref{eq:def_dual_+},
    \begin{align*}
    \cY^*_{r,x}(v)
    &=\inf\{b>0:\, \cS_{\eta_{f}^{-1}(x), \eta_{f}^{-1}(r)}\left(b f \circ \eta_{f}^{-1}(x)\right)>v f \circ \eta_{f}^{-1}(r)\}\\
    &=\bigl[f \circ \eta_{f}^{-1}(x)\bigr]^{-1}\inf\{b>0:\, \cS_{\eta_{f}^{-1}(x), \eta_{f}^{-1}(r)}(b)>v f \circ \eta_{f}^{-1}(r)\}\\
    &=\bigl[f \circ \eta_{f}^{-1}(x)\bigr]^{-1} \cS^*_{\eta_{f}^{-1}(r), \eta_{f}^{-1}(x)}(v f \circ \eta_{f}^{-1}(r))=\cY^{\#}_{r, x}(v).
\end{align*}
This completes the proof.
\end{proof}

Let $\widetilde{\cW}^*$ denote the image of $\widetilde{\cW}$ under the map $(a,r)\mapsto(a,-r)$. 

\begin{proposition}\label{prop:transformed_separation}
	Almost surely, simultaneously for all $(v,r)\in\bI_+$,
	$$
	\cY^\#_{r,x}(v)<1,\qquad 0\le x\le r.
	$$
\end{proposition}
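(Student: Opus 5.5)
The plan is to convert the statement, via Lemma~\ref{lm:equ_cY}, into a statement about the forward transformed flow $\cY^+$ of Theorem~\ref{thm:perkin_disintegration}. By that lemma, $\cY^\#_{r,x}(v)<1$ holds if and only if there is some $b\in[0,1)$ with $\cY^+_{x,r}(b)>v$. Since $v<1$ and $\cY^+$ is absorbed at $1$, it suffices to prove the following. Almost surely, for every $0\le x<r$ there exists $b<1$ such that $\cY^+_{x,\cdot}(b)$ hits $1$ before time $r$. The case $x=r$ is trivial, because $\cY^\#_{r,r}(v)=v<1$.

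\emph{Step 1: the boundary $1$ is attracting and accessible.} First I would study the one-dimensional diffusion \eqref{eq:SDE_perkin} near $1$, reusing the scale and speed computations from the proof of Corollary~\ref{cor:hit_boun_fi}. There $\bar\sigma(y)\sim 4(1-y)$ and $b(y)\sim\frac{4\alpha+1}{2\alpha+1}(1-y)$ as $y\uparrow1$. Moreover $p'(1-)=C_1\in(0,\infty)$, and $(p(1)-p(y))\,m(\dd y)$ is integrable near $1$. From this I would derive two facts.
\begin{itemize}
\item $\p_y(T_1<T_{1/2})\to1$ as $y\uparrow1$, since $(p(y)-p(1/2))/(p(1)-p(1/2))\to1$.
\item $\e_y[T_1\wedge T_{1/2}]\to0$ as $y\uparrow1$, by the Green-function formula \eqref{eq:M_ab} with $b=1$, using dominated convergence.
\end{itemize}
Together, via Markov's inequality, these give a function $\varepsilon(\eta,\delta)\to0$ as $\eta\downarrow0$ such that, for every fixed starting time $x$,
\[
\p\bigl(\cY^+_{x,\cdot}(1-\eta)\text{ does not hit }1\text{ before }x+\delta\bigr)\le\varepsilon(\eta,\delta).
\]
Here I use that the law of a single flow line from $(1-\eta,x)$ is that of this diffusion, absorbed at $0$ only after $\mathfrak t$; absorption at $0$ only helps the bound, since exiting through $1/2$ is already counted as failure.

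\emph{Step 2: from fixed to all starting times.} Fix $\delta\in\bD_+$ and the grid $x_j=j\delta$. For each $j$ the event $E_j:=\{\exists b<1:\ \cY^+_{x_j,\cdot}(b)\text{ hits }1\text{ before }x_j+\delta\}$ has probability at least $1-\varepsilon(\eta,\delta)$ for every $\eta$. By monotonicity in $b$ (regularity (ii)--(iii) of Theorem~\ref{thm:perkin_disintegration}), $E_j$ increases as $\eta\downarrow0$, so $\p(E_j)=1$. Intersecting over the countably many $j$ and $\delta$ gives full probability.

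Now take arbitrary $0\le x<r$. Choose $\delta$ with $3\delta<r-x$, and a grid point $x_j\in[x-\delta,x)$; if $x<\delta$, take $x_j=x=0$ or handle it directly at $x_j=0$. The difficulty is that the witnessing line $\gamma=\cY^+_{x_j,\cdot}(b)$ may already be absorbed at $1$ before time $x$. To avoid this, I would use monotonicity in $b$ and the fact that the lines from $(b',x_j)$ with $b'\downarrow$ move continuously away from $1$ on compact time intervals, which I would verify using the c\`adl\`ag property and Corollary~\ref{p:properties} transported through the transformation. With this I would choose $b'\le b$ such that $\gamma'=\cY^+_{x_j,\cdot}(b')$ satisfies $\gamma'(x)<1$ and still hits $1$ before $x_j+3\delta<r$. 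If no such $b'$ exists, then all lines from $x_j$ below $1$ that reach $1$ do so before $x$. In that case I would instead use the grid point $x_{j+1}\in(x,x+\delta]$ and argue by the dual statement at that point. Given such a $\gamma'$, set $c:=\gamma'(x)<1$. Non-crossing (iii) and the coalescing property (Lemma~\ref{lm:coal} transported) give $\cY^+_{x,y}(c)=\gamma'(y)$ for $y\ge x$, so $b=c$ works for the pair $(x,r)$.

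\emph{Main obstacle.} I expect the delicate point to be exactly this interpolation in Step 2: producing, simultaneously for all real $x$, a line that is strictly below $1$ at time $x$ yet absorbed at $1$ before $r$. The first approach I would try is a Borel--Cantelli refinement. On the grid $x_j=j2^{-k}$ in a compact window, with levels $1-\eta_k$, use the uniform bound from Step 1 at time scale $2^{-k}$. This shows that for all large $k$ and all $j$, the line from $(1-\eta_k,x_j)$ hits $1$ within $2^{-k/2}$ and stays below $1$ for a positive time. That second property holds automatically, since $1-\eta_k<1$ and paths are continuous. One can then pick $x_j\in(x-2^{-k},x]$, and either $x_j=x$ or the line is still below $1$ at $x$ after choosing $\eta_k$ larger than the typical displacement over time $2^{-k}$, which is of order $2^{-k/2}$ by $\bar\sigma\le C(1-y)$. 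Balancing $\eta_k$ against $2^{-k}$ so that the failure probabilities, multiplied by $2^k$ grid points, remain summable is where I expect the real work to lie.
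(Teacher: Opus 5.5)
Your reduction through Lemma~\ref{lm:equ_cY} to a forward statement is sound. Step~1 together with the grid argument correctly proves that statement almost surely for all dyadic starting times. The gap is the passage to arbitrary $x$, which is the whole content of ``simultaneously'', and none of your three devices closes it.

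(a) The map $b'\mapsto\cY^+_{x_j,\cdot}(b')$ is not continuous. At the fixed time $x_j$ the forward flow has bifurcation points, namely where dual lines from later times pass; see Corollary~\ref{p:properties}(iii), transported. Nothing in your argument prevents the hitting time of $1$, which is non-increasing in $b'$, from jumping at such a point $b^*$ from a value $\le x$ to a value $\ge r$. In that case $\cY^+_{x_j,x}([0,1))\subset[0,c]\cup\{1\}$ for some $c<1$, and no line from $x_j$ helps.

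(b) The fallback at $x_{j+1}$ is circular. To carry the witness $b_{j+1}$ back to time $x$ you need some $c<1$ with $\cY^+_{x,x_{j+1}}(c)>b_{j+1}$. That is exactly $\cY^\#_{x_{j+1},x}(b_{j+1})<1$ at the non-grid time $x$, i.e.\ the proposition itself.

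(c) The Borel--Cantelli balancing is impossible. Since $\bar\sigma(y)\sim4(1-y)$ and $b(1)=0$, near $1$ the process $1-\cY^+$ is a $\mathrm{BESQ}^0$-type diffusion. Hence the hitting time of $1$ from $1-\eta$ is roughly $\eta/(2X)$ with $X\sim\mathrm{Exp}(1)$, which has only a polynomial upper tail.
\begin{itemize}
\item Keeping the lines below $1$ up to time $2^{-k}$ at all $2^k$ grid points of a window, with summable error, forces $\eta_k\gtrsim k2^{-k}$.
\item But then, even if you only ask for hitting before a fixed time $\delta$, one has $\p(T_1>\delta)\approx\eta_k/(2\delta)$.
\item The union bound for the hitting requirement is therefore of order $k/\delta$, which does not tend to zero.
\end{itemize}
A union bound over forward lines from different grid points discards precisely the ordering between them, and that ordering is what carries the uniformity.

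A smaller point: the equality $\cY^+_{x,\cdot}(\gamma'(x))=\gamma'$ at an arbitrary time $x$ is the perfect flow property (Proposition~\ref{prop:perfect}). The paper deduces that \emph{from} the present proposition, and Lemma~\ref{lm:coal} does not give it, since the two lines meet at the starting time $x$. You only need $\cY^+_{x,\cdot}(c')\ge\gamma'$ for $c'\in(\gamma'(x),1)$, which does follow from properties (ii)--(iii).

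The missing idea is to argue on the dual side. There, for fixed $(v,r)$, uniformity over $x\in[0,r]$ is a property of a single continuous path.
\begin{itemize}
\item The paper absorbs $\cY^+$ at $0$ as well. This gives a flow $\widehat{\cY}\le\cY^+$ with coefficients $(\sigma,b)$, absorbed at both endpoints.
\item Via the duality argument of \cite[Theorem~5.5]{Yu26}, it identifies the dual of $\widehat{\cY}$ with a backward flow $\cY^{\mathrm b}$ driven by $\widetilde{\cW}^*$, with coefficients $(-\sigma,b^*)$ where $b^*=-b+\tfrac12\bar\sigma'$.
\item By Lemma~\ref{lm:equ_cY}, the inequality $\widehat{\cY}\le\cY^+$ gives $\cY^\#_{r,x}(v)\le\cY^{\mathrm b}_{r,x}(v)$.
\item Since $2b^*(z)/\bar\sigma(z)=1/(z-1)+O(1)$ as $z\uparrow1$, the scale function satisfies $p(1-)=+\infty$. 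By Feller's test the backward diffusion started in $(0,1)$ never reaches $1$.
\item This yields $\cY^\#_{r,x}(v)<1$ for all $x\in[0,r]$ at once, for each fixed $(v,r)$.
\end{itemize}
So the accessibility of $1$ for the forward flow, which is what your Step~1 establishes, is traded for the dual fact that $1$ is inaccessible for the backward flow, and that dual fact is the uniform statement you need. Uniformity over $(v,r)$ then follows from a countable dense set of starting points, right-continuity, and non-crossing of the dual lines.
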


\begin{proof}
	Let $\widehat{\cY}$ be obtained from $\cY^+$ by absorption at the first hit of $0$. Then $\widehat{\cY}\le\cY^+$ and, by Theorem~\ref{thm:perkin_disintegration} and Corollary~\ref{p:properties}(i), $\widehat{\cY}$ is a flow with coefficients $(\sigma,b)$ from~\eqref{eq:coe}, absorbed at both endpoints.
	
	Let $\cY^{\mathrm b}$ be the backward flow driven by $\widetilde{\cW}^*$, with coefficients
	$$
	\sigma^*=-\sigma,\qquad b^*(z)=(2\alpha+1)^{-1}\big(1+z-4(\alpha+1)z^{\alpha+1}\big),
	$$
	and without absorption at either endpoint. Its existence follows from~\cite[Theorem~4.3]{Yu26}, after extending the noise independently to all times and the coefficients constantly outside $[0,1]$. Note that $b^*=-b+\tfrac12\bar\sigma'$.
	For the scale function $p$ associated with $(\bar\sigma,b^*)$, we have
	$$
	\frac{2b^*(z)}{\bar\sigma(z)}
	=\frac{1+z-4(\alpha+1)z^{\alpha+1}}{2(z+z^2-2z^{\alpha+2})}
	=\frac{1}{z-1}+O(1),\qquad z\uparrow1.
	$$
	Hence $p'(x)\asymp(1-x)^{-1}$ and $p(1-)=+\infty$. By~\cite[Proposition~5.22]{kara98}, $\cY^{\mathrm b}$ cannot reach $1$ from any fixed starting value in $(0,1)$. The absorption condition follows as in~\cite[Remark~5.6]{Yu26}, since $\cY^{\mathrm b}$ has no atom at $0$ at positive elapsed times, by a localized Lamperti--Girsanov argument, and cannot reach $1$. The argument of~\cite[Theorem~5.5]{Yu26}, restricted to the positive time axis, yields
	$$
	\cY^{\mathrm b}_{r,x}(v)
	=\inf\{a\in(0,1):\widehat{\cY}_{x,r}(a)>v\}\wedge1,
	\qquad 0<x\le r,\quad 0<v<1.
	$$
	Since $\widehat{\cY}\le\cY^+$, Lemma~\ref{lm:equ_cY} and continuity at $x=0$ imply that, for each fixed $(v,r)\in(0,1)\times(0,\infty)$, almost surely,
	$$
	\cY^\#_{r,x}(v)\le\cY^{\mathrm b}_{r,x}(v)<1,
	\qquad 0\le x\le r.
	$$
	
	On a common event of probability one, the preceding bound holds for all rational starting points. By continuity, right-continuity in the initial height and Lemma~\ref{lm:equ_cY}, every $(v,r)\in\bI_+$ admits a rational pair $(w,q)\in(0,1)\times(r,\infty)$ such that $\cY^\#_{q,r}(w)>v$. Non-crossing then gives
	$$
	\cY^\#_{r,x}(v)\le\cY^\#_{q,x}(w)<1,\qquad 0\le x\le r,
	$$
	proving the simultaneous assertion.
\end{proof}

For each fixed reference point $(a_0,r_0)\in\h^*$, almost surely,
\begin{align}\label{eq:hhh}
	\cS^*_{r,y}(a)<\cS_{r_0,y}(a_0),\qquad r_0\le y\le r<\df_f^+,\quad 0\le a<\cS_{r_0,r}(a_0),
\end{align}
simultaneously in $r$, $a$ and $y$. We are now ready to prove the perfect flow property.

\begin{proof}[Proof of Proposition~\ref{prop:perfect}]
	By property~\ref{item:R1},~\eqref{eq:perf} holds when $x=r$ or $y=x$. Suppose that $r<x<y$, $\cS_{r,x}(a)>0$, and $\cS_{r,y}(a)\neq \cS_{x,y}\circ\cS_{r,x}(a)$. For any $a'>\cS_{r,x}(a)$, property~\ref{item:R3} gives $\cS_{x,y}(a')\ge \cS_{r,y}(a)$. Letting $a'\downarrow\cS_{r,x}(a)$ and using property~\ref{item:R2}, we obtain $\cS_{x, y} \circ \cS_{r, x}(a)\ge \cS_{r,y}(a)$, and hence necessarily $\cS_{r,y}(a)<\cS_{x,y}\circ\cS_{r,x}(a)$. By Lemma~\ref{lm:coal}, the inequality remains strict for all $z\in (x,y]$. For any $(b,z)\in \cD$ such that
    $$
    b\in \big(\cS_{r,z}(a), \cS_{x,z}\circ\cS_{r,x}(a)\big), \quad z\in (x,y],
    $$
    property~\ref{item:R3} implies $\cS_{x,z}(a')\ge \cS_{x,z}\circ\cS_{r,x}(a)>b$ for any $a'>\cS_{r,x}(a)$ and $\cS_{x,z}(a')\le \cS_{r,z}(a)<b$ for any $a'<\cS_{r,x}(a)$. Thus, by~\eqref{eq:def_dual_+},
    $$
    \cS^*_{z,x}(b)=\inf\{c\ge0:\,\cS_{x,z}(c)>b\}=\cS_{r,x}(a)>0.
    $$
    Therefore,
    \begin{align}\label{pro_per}
    \nonumber &\p\Bigl(\exists\,a\ge 0,\,r<x<y:\, \cS_{r,x}(a)>0,\ \cS_{r,y}(a)\neq\cS_{x,y}\circ\cS_{r,x}(a)\Bigr)\\
    \nonumber &\le \p\Bigl(\exists\,(a,r)\in\h,\,(b,z)\in\cD,\,r<x<z:\, b>\cS_{r,z}(a),\,\cS^*_{z,x}(b)=\cS_{r,x}(a)>0\Bigr)\\
    \nonumber &= \p\Bigl(\exists\,(a,r),(b,z)\in\cD,\,r<x<z:\, b>\cS_{r,z}(a),\,\cS^*_{z,x}(b)=\cS_{r,x}(a)>0\Bigr)\\
    \nonumber &\le \sum_{\substack{(a,r),(b,z)\in\cD, z>r}}\p\Bigl(\exists\,x\in (r,z):\, b>\cS_{r,z}(a), \cS^*_{z,x}(b)=\cS_{r,x}(a)>0\Bigr)\\
    &\le \sum_{\substack{(a,r),(b,z)\in\cD, z>r}}\p\Bigl(\exists\,x\in (r,z):\, b<\cS_{r,z}(a), \cS^*_{z,x}(b)=\cS_{r,x}(a)>0\Bigr).
    \end{align}
    Here, the equality follows from Proposition~\ref{prop:skeleton}. The last inequality follows from Corollary~\ref{cor:dual} and Proposition~\ref{prop:self_dual}, with the reindexing $((a,r),(b,z))\mapsto((b,-z),(a,-r))$. For fixed $(a,r),(b,z)\in\cD$ with $z>r$, let $\mathcal J$ denote the event
    $$
    \Bigl\{\exists\,x\in(r,z):\,b<\cS_{r,z}(a),\,\cS^*_{z,x}(b)=\cS_{r,x}(a)>0\Bigr\}.
    $$
    
    On $\mathcal J$, fix such an $x$. By non-crossing, $\cS^*_{z,u}(b)\le\cS_{r,u}(a)$ for $u\in[r,z]$. Moreover, $\cS_{r,\cdot}(a)$ stays strictly positive on $[x,z]$: a zero at a nonnegative time would force $\cS_{r,z}(a)=0$, whereas a zero at a negative time would force $\cS^*_{z,x}(b)=0$ by backward absorption. Choose $r_1\in\bD\cap(r,x)$ sufficiently close to $x$ that $\cS_{r,\cdot}(a)>0$ on $[r_1,z]$, and set $a_1:=\cS_{r,r_1}(a)$. By~\eqref{eq:flow_prop} and Corollary~\ref{p:properties}(i), we may choose $a_2\in\bD_+$ with $a_2>a_1$ such that
    $$
    \cS_{r_1,x}(a_2)=\cS_{r_1,x}(a_1)=\cS_{r,x}(a)=\cS^*_{z,x}(b).
    $$
    
    Property~\ref{item:R3} gives $\cS_{r_1,u}(a_2)\ge\cS_{r,u}(a)>0$ for $u\in[r_1,z]$, and Lemma~\ref{lm:coal} gives $\cS_{r_1,z}(a_2)=\cS_{r,z}(a)>b$. Applying~\eqref{eq:hhh} with reference point $(a_2,r_1)$ now contradicts the displayed equality. Since~\eqref{eq:hhh} holds on a common event of probability one for all such dyadic reference points, $\p(\mathcal J)=0$. Together with~\eqref{pro_per}, this completes the proof. 
\end{proof}

\appendix

\section{Preliminary estimates}
\label{app:est}
We start with a general framework, of which the solution to the SDE~\eqref{SDE:for} is a specific instance. Let $\delta>0$ be a constant and let $\sigma:\r_+\times\r\to\r$ be Borel measurable. For every $a\ge0$, we assume that the SDE
\begin{align}\label{eq:SDE_gener}
	S_x(a)
	= a + \int_0^x \int_{\r}
	\sigma\bigl(S_u(a),s\bigr)\cW(\dd s,\dd u)
	+ \delta x
\end{align}
admits a continuous nonnegative strong solution on $[0,\infty)$, and that pathwise uniqueness holds. All these solutions are driven by the same space-time white noise $\cW$. We further assume that there exists a constant $L>0$ such that, for all $x,y\in\r_+$,
\begin{align}\label{ineq:ass_sig}
	\int_{\r}
	\bigl[\sigma(x,s)-\sigma(y,s)\bigr]^2\,\dd s
	\ge L|x-y|.
\end{align}

Fix $0\le a_1<a_2$. For $i=1,2$, write $S_x^i:=S_x(a_i)$.  We define the coalescence time of the two trajectories by
\begin{align}\label{eq:def_tau}
	\tau(a_1,a_2)
	:= \inf\bigl\{x>0:S_x(a_1)=S_x(a_2)\bigr\}.
\end{align}
By continuity and pathwise uniqueness, almost surely,
$S_x^1\le S_x^2$ for all $x\ge0$, and the two solutions coincide
after their first meeting.

Recall that the hitting time of $0$ for a ${\rm BESQ}^0$ process started from $a>0$ has the same law as $\frac{a}{2X}$, where $X\sim\mathrm{Exp}(1)$; see~\cite[Eq.~(15)]{GJY03} or~\cite[Exercise~1.23, Chapter~XI]{RevYor}. The following lemma gives a tail estimate for the coalescence time $\tau(a_1,a_2)$.

\begin{lemma}\label{lm:appen_tau}
    Fix $0\le a_1<a_2$, and let $\tau(a_1,a_2)$ be defined by~\eqref{eq:def_tau}. Under the above assumptions, for every $v>0$,
    \begin{align*}
        \p(\tau(a_1,a_2)>v) \le \frac{2(a_2-a_1)}{Lv}.
    \end{align*}
\end{lemma}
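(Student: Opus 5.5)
We compare the difference $D_x:=S^2_x-S^1_x$ with a time-changed $\mathrm{BESQ}^0$ process. By pathwise uniqueness and continuity, $D_x\ge0$ for all $x$, and $D$ is absorbed at $0$ at time $\tau:=\tau(a_1,a_2)$. Subtracting the two copies of~\eqref{eq:SDE_gener}, the drifts $\delta x$ cancel exactly, so on $[0,\tau]$
$$
D_x=(a_2-a_1)+M_x,\qquad M_x:=\int_0^x\!\int_\r\bigl[\sigma(S^2_u,s)-\sigma(S^1_u,s)\bigr]\cW(\dd s,\dd u),
$$
is a nonnegative continuous local martingale, hence a supermartingale. Its bracket is $\dd\langle M\rangle_x=\int_\r[\sigma(S^2_x,s)-\sigma(S^1_x,s)]^2\dd s\,\dd x$, and assumption~\eqref{ineq:ass_sig} gives the lower bound $\dd\langle M\rangle_x\ge L D_x\,\dd x$ before $\tau$.

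Next we apply Dambis--Dubins--Schwarz: $D_x=\beta_{\langle M\rangle_x}$ for a Brownian motion $\beta$ started at $a_2-a_1$, stopped on hitting $0$ (possibly on an extended probability space). Set $A_x:=\int_0^x D_u\,\dd u$. The bracket bound gives $\langle M\rangle_x\ge L A_x$, and $D$ hits zero exactly when $\beta$ does. So if $\tau>v$, then $\langle M\rangle_v\ge L\int_0^v D_u\,\dd u$, while $\beta$ has not yet hit zero by time $\langle M\rangle_v$.

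To turn this into a tail bound, the cleanest route is a direct supermartingale estimate rather than an exact BESQ comparison. Consider $\e[A_{\tau\wedge v}]$. Since $D$ is a nonnegative supermartingale started at $a_2-a_1$, we only get $\e[D_u]\le a_2-a_1$, which bounds $\e A$ by $v(a_2-a_1)$. That is too weak, so we use instead the function $F(d)=\sqrt d$. By It\^o's formula,
$$
\dd\sqrt{D}=\frac{\dd M}{2\sqrt D}-\frac{\dd\langle M\rangle}{8D^{3/2}}\le \frac{\dd M}{2\sqrt D}-\frac{L}{8\sqrt D}\,\dd x.
$$
Localizing and taking expectations then gives $\frac L8\,\e\int_0^{\tau\wedge v}D_u^{-1/2}\dd u\le\sqrt{a_2-a_1}$. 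This yields decay of order $1/\sqrt v$ in the wrong power, so the square root should be replaced by a choice matched to the constant $2$.

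The natural exact comparison is the following. By time-changing with $\langle M\rangle/L$, the process $D$ becomes at least as "fast" as the $\mathrm{BESQ}^0$ process $\dd Z=2\sqrt{Z}\,\dd W$ slowed by $L/4$. Concretely, the clock $C_x:=\langle M\rangle_x/L\ge A_x$, and under $\tilde C:=\int_0^x\frac{\dd\langle M\rangle}{L D}\ge x$ the process $D$ behaves as $\tfrac{L}{4}$ times a $\mathrm{BESQ}^0$-type martingale whose bracket is exactly $L D\,\dd t$. That is, $\tilde D_t:=D_{\tilde C^{-1}(t)}$ satisfies $\dd\langle\tilde D\rangle_t=L\tilde D_t\,\dd t$, so $\frac{4}{L}\tilde D$ is a $\mathrm{BESQ}^0$ process started at $\frac{4(a_2-a_1)}{L}$. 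Its absorption time has the law of $\frac{2(a_2-a_1)}{L X}$ with $X\sim\mathrm{Exp}(1)$. Since $\tilde C_x\ge x$, we get $\tau\le\tilde\tau$, and therefore
$$
\p(\tau>v)\le\p\Bigl(X<\tfrac{2(a_2-a_1)}{Lv}\Bigr)=1-e^{-2(a_2-a_1)/(Lv)}\le\tfrac{2(a_2-a_1)}{Lv}.
$$

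The main obstacle is making this time change rigorous. The clock $\tilde C$ must be well defined and strictly increasing up to $\tau$ (since $D>0$ there), and we must check that $\tilde C(\tau)$ equals the absorption time of the time-changed process. Identifying $\tilde D$ as a $\mathrm{BESQ}^0$ process requires the martingale representation theorem, $\dd\tilde D=\sqrt{L\tilde D}\,\dd\tilde W$, together with uniqueness in law for BESQ (Yamada--Watanabe). All of this is done on $[0,\tau)$, with localization away from $0$.
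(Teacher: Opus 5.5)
Your final argument is correct and is essentially the paper's proof. Your clock $\tilde C_x=\int_0^x \frac{\dd\langle M\rangle_u}{L D_u}$ is exactly the quantity $\frac1L\int_0^{A_x}\frac{\dd t}{\beta(t)}$ that the paper bounds below by $x$ after Dambis--Dubins--Schwarz, and both arguments identify its value at $\tau$ with a (rescaled) $\mathrm{BESQ}^0$ absorption time of law $\frac{2(a_2-a_1)}{LX}$. The paper computes that law through the Brownian functional $\int_0^{\tau^\beta}\frac{\dd t}{\beta(t)}$ and a second DDS, which spares you the martingale-representation, uniqueness-in-law and extension-past-$\tilde C_{\tau-}$ steps; your supermartingale and $\sqrt{D}$ detours can simply be dropped.
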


\begin{proof}
	Set $S_{a_1,a_2}(x):=S_x^2-S_x^1$ and $\tau:=\tau(a_1,a_2)$. Since the two solutions are ordered and their drift terms cancel, $S_{a_1,a_2}$ is a nonnegative continuous local martingale, absorbed at zero, with quadratic variation
	\begin{align}\label{eq:qua_var}
		\dd\langle S_{a_1,a_2}\rangle_x
		= \int_{\r}\bigl[\sigma(S_x^2,s)-\sigma(S_x^1,s)\bigr]^2\,\dd s \dd x \ge L S_{a_1,a_2}(x)\dd x.
	\end{align}
	
	\noindent
	Write $A_x:=\langle S_{a_1,a_2}\rangle_x$. By the Dambis--Dubins--Schwarz theorem, there exists a Brownian motion $\beta$ started at $a_2-a_1$ such that $S_{a_1,a_2}(x)=\beta(A_x)$ for all $x\ge0$. Let
	$$
	\tau^\beta:=\inf\{t>0:\beta(t)=0\}.
	$$
	For $x<\tau$, continuity of $A$ and positivity of $S_{a_1,a_2}$ on $[0,x]$ imply that $A_x<\tau^\beta$. Hence, by~\eqref{eq:qua_var} and a change of variables,
	\begin{align*}
		x\le \frac{1}{L}\int_0^x \frac{\dd A_u}{S_{a_1,a_2}(u)}
		= \frac{1}{L}\int_0^{A_x}\frac{\dd t}{\beta(t)}\le \frac{1}{L}\int_0^{\tau^\beta}\frac{\dd t}{\beta(t)}.
	\end{align*}
	The integral up to $\tau^\beta$ is defined by taking the limit from below. Letting $x\uparrow\tau$, with infinite values allowed, gives, for every $v>0$,
	\begin{align}\label{eq:A1}
		\p(\tau>v)
		\le \p\left(\frac{1}{L}\int_0^{\tau^\beta}\frac{\dd t}{\beta(t)}>v\right).
	\end{align}
	
	Next, let $Y$ be a ${\rm BESQ}^0$ process started at $a_2-a_1$, and let $\tau^Y:=\inf\{x>0:Y_x=0\}$. Since $\dd\langle Y\rangle_x=4Y_x\,\dd x$, a second application of the DDS theorem gives $Y_x=\beta'(\langle Y\rangle_x)$, where $\beta'$ is a Brownian motion started at $a_2-a_1$. Denote its first hitting time of zero by $\tau^{\beta'}$. Then $\langle Y\rangle_{\tau^Y}=\tau^{\beta'}$, and the same change of variables yields
	$$
	\int_0^{\tau^{\beta'}}\frac{\dd t}{\beta'(t)}
	= \int_0^{\tau^Y}\frac{\dd\langle Y\rangle_u}{Y_u}
	= 4\tau^Y.
	$$
	Since $\beta'$ has the same law as $\beta$ and $\tau^Y$ has the law of $\frac{a_2-a_1}{2X}$, where $X\sim\mathrm{Exp}(1)$, $\int_0^{\tau^\beta}\frac{\dd t}{\beta(t)}$ has the same law as $\frac{2(a_2-a_1)}{X}$. Combining this identity with~\eqref{eq:A1}, we obtain
	\begin{align*}
		\p(\tau>v)\le\p\left(X<\frac{2(a_2-a_1)}{Lv}\right)=1-\exp\left(-\frac{2(a_2-a_1)}{Lv}\right)\le \frac{2(a_2-a_1)}{Lv},
	\end{align*}
	as required.
\end{proof}

\begin{corollary}\label{app:coal}
	Fix $r\in\r$ and $0\le a_1<a_2$. Let $\cShat_{r,\cdot}(a_i)$, $i=1,2$, denote the non-killed solutions to~\eqref{SDE:for} started from $a_i$. Define their coalescence time by
	\begin{align*}
		\widehat{\tau}(r;a_1,a_2)
		:= \inf\bigl\{y>0:\cShat_{r,r+y}(a_1)=\cShat_{r,r+y}(a_2)\bigr\}.
	\end{align*}
	Then, for every $v>0$,
	\begin{align*}
		\p\bigl(\widehat{\tau}(r;a_1,a_2)>v\bigr)
		\le \frac{2\alpha+1}{2}\frac{a_2-a_1}{v}.
	\end{align*}
\end{corollary}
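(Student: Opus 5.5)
The plan is to apply Lemma~\ref{lm:appen_tau} after shifting the starting time $r$ to $0$. Shifting the spatial variable by $r$ turns the non-killed flow lines $\cShat_{r,r+\cdot}(a_i)$ into solutions of~\eqref{eq:SDE_gener} driven by the shifted white noise, which is again a standard white noise. The coefficients are
$$
\sigma(z,s)=2z^{-\alpha}s^\alpha\mathbbm{1}_{\{0\le s\le z\}}\quad (z>0),\qquad \sigma(0,\cdot)=0,\qquad \delta=\tfrac{1}{2\alpha+1}.
$$
Existence of nonnegative continuous strong solutions and pathwise uniqueness hold for the non-killed flow by \cite[Theorem~4.3]{Yu26}, as recalled in the paper. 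On the event of interest the two trajectories are ordered and coalesce upon meeting, so $\widehat\tau(r;a_1,a_2)$ coincides with $\tau(a_1,a_2)$ from~\eqref{eq:def_tau} for the shifted system.

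The only real work is to check the nondegeneracy condition~\eqref{ineq:ass_sig} with $L=\frac{4}{2\alpha+1}$. Substituting this value into $\frac{2(a_2-a_1)}{Lv}$ gives exactly $\frac{2\alpha+1}{2}\frac{a_2-a_1}{v}$.

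To check~\eqref{ineq:ass_sig}, take $0\le y<x$. If $y=0$, then $\int\sigma(x,s)^2\,\dd s=\frac{4x}{2\alpha+1}$, and the bound holds with equality. For $y>0$, split the integral over $[0,y]$ and $(y,x]$ and compute directly:
$$
\int_0^x[\sigma(x,s)-\sigma(y,s)]^2\,\dd s=\frac{4}{2\alpha+1}\Bigl[x+y-2\,\frac{y^{2\alpha+1}}{(xy)^{\alpha}}\Bigr]=\frac{4}{2\alpha+1}\Bigl[x+y-2y\,(y/x)^{\alpha}\Bigr].
$$
Since $(y/x)^\alpha\le1$, the bracket is at least $x+y-2y=x-y$. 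This yields~\eqref{ineq:ass_sig} with $L=\frac{4}{2\alpha+1}$.

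I expect no real obstacle. The one point needing care is confirming that the non-killed solutions are genuinely nonnegative and satisfy pathwise uniqueness, so that the lemma's standing assumptions hold; here I would cite \cite{Yu26} and \cite{Dawson12}. The remaining step is simply the substitution into Lemma~\ref{lm:appen_tau}.
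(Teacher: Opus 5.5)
Your proposal is correct and follows the paper's proof. Like the paper, you reduce to $r=0$, identify $\sigma$ and $\delta=\frac{1}{2\alpha+1}$, verify \eqref{ineq:ass_sig} with $L=\frac{4}{2\alpha+1}$ by the same direct computation of $\frac{4}{2\alpha+1}\bigl[x+y-2\frac{(x\wedge y)^{2\alpha+1}}{x^\alpha y^\alpha}\bigr]$, and substitute into Lemma~\ref{lm:appen_tau}. One small point: after the shift, $\widehat\tau$ and $\tau(a_1,a_2)$ are equal by definition, so the ordering and coalescence remark is not needed for that identification.
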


\begin{proof}
	Without loss of generality, we may assume $r=0$.  In~\eqref{eq:SDE_gener}, the SDE~\eqref{SDE:for} corresponds to $\sigma(x,s)=\frac{2}{x^\alpha}s^\alpha\mathbbm{1}_{\{0< s\le x\}}$ and $\delta=\frac{1}{2\alpha+1}$.
	A direct computation gives
	$$
	    \int_{\r} \bigl[\sigma(x,s)-\sigma(y,s)\bigr]^2 \,\dd s
	    = \frac{4}{2\alpha+1} \left[ x+y - 2\frac{(x\wedge y)^{2\alpha+1}}{x^\alpha y^\alpha} \right]
	    \ge \frac{4}{2\alpha+1} |x-y|,
	$$
	for any $x,y\in \r_+$, so that condition~\eqref{ineq:ass_sig} holds with $L = \frac{4}{2\alpha+1}$. 
	The desired result then follows directly from Lemma~\ref{lm:appen_tau}.
\end{proof}

\section{Results on discrepancy urn processes}
\label{app:discrepancy}
In this appendix we provide auxiliary estimates for the discrepancy urn processes in two regimes of \eqref{eq:weight_func}: the polynomially self-repelling case ($\alpha>0$) and the asymptotically free case ($\alpha=0$).  The asymptotically free estimates support the discussion of an alternative approach to the BMPE limit in Remark~\ref{rem:AF}.

Recall the generalized P\'olya urn scheme introduced in Section~\ref{sec:joint_RK}. The weights specified in \eqref{weights_on_Z} naturally divide the sites on $\z$ into three classes. We denote the urn processes at sites to the left of the origin by $\{(\fB_n^-, \fR_n^-)\}_{n \ge 0}$, those at the origin by $\{(\fB_n^0, \fR_n^0)\}_{n \ge 0}$, and those to the right by $\{(\fB_n^+, \fR_n^+)\}_{n \ge 0}$. The corresponding discrepancy urn processes are denoted by $\fD^-$, $\fD^0$, and $\fD^+$, and the stopping times by $\tau_{m}^{*^{\dag}}$ for $\dag\in \{-,0,+\}$ and $* \in \{\fB,\fR\}$. For notational simplicity, we abbreviate $\fD^{\dag}_{\tau_{m}^{*^{\dag}}}$ by $\fD^{\dag}_{\tau_{m}^{*}}$.
By symmetry, the following distributional identities hold: 
\begin{align}\label{identi_law}
    \fD^+_{n} \stackrel{d}{=} -\fD^-_{n}, \quad \fD^+_{\tau_n^\fR} \stackrel{d}{=} -\fD^-_{\tau_n^\fB}, \quad \fD^-_{\tau_n^\fR} \stackrel{d}{=} -\fD^+_{\tau_n^\fB}, \quad \fD^0_{\tau_n^\fR} \stackrel{d}{=} -\fD^0_{\tau_n^\fB}.
\end{align}

The following lemma gives a concentration inequality that holds in both regimes.

\begin{lemma}[\cite{KMMP26}, Lemma~3.1; \cite{Elena23}, Lemma~4.1]\label{lm:4_1}
Let $w$ be as in \eqref{eq:weight_func} with $\alpha\ge 0$.
There exist constants $C, c>0$ such that for any integers $n, m \ge 1$ and $\dag \in\{-,0,+\}$,
$$
\P\left(\left|\fD_{\tau_n^{\fB}}^\dag\right| \ge m\right) \leq C e^{\frac{-c m^2}{m \vee n}} \quad \text{and} \quad \P\left(\left|\fD_{\tau_n^{\fR}}^\dag\right| \ge m\right)\leq C e^{\frac{-c m^2}{m \vee n}}.
$$
\end{lemma}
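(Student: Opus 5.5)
The plan is to use Rubin's exponential embedding of the generalized P\'olya urn (see~\cite{Davis90}). This turns the discrepancy at $\tau_n^\fB$ into a comparison between two independent sums of weighted exponential variables, to which a standard Chernoff bound applies. Let $(E_i)_{i\ge0}$ and $(E'_j)_{j\ge0}$ be independent i.i.d.\ $\mathrm{Exp}(1)$ families. Set
$$
\beta_k:=\sum_{i<k}\frac{E_i}{b^\dag(i)},\qquad \rho_k:=\sum_{j<k}\frac{E'_j}{r^\dag(j)} .
$$
The $k$-th blue ball is drawn at continuous time $\beta_k$ and the $k$-th red ball at time $\rho_k$; ordering these times realizes the urn with the correct transition probabilities. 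Because $\fD^\dag_{\tau_n^\fB}=\fR^\dag_{\tau_n^\fB}-n$, we have
$$
\bigl\{\fD^\dag_{\tau_n^\fB}\ge m\bigr\}=\{\rho_{n+m}<\beta_n\},\qquad \bigl\{\fD^\dag_{\tau_n^\fB}\le -m\bigr\}=\{\rho_{n-m}>\beta_n\}.
$$
The second event is empty if $m>n$. The statement for $\tau_n^\fR$ then follows by exchanging the roles of the two colors, as in~\eqref{identi_law}. The same argument works for all three site types $\dag\in\{-,0,+\}$, since their weights differ only through the shift $2j$ versus $2j+1$ in~\eqref{weights_on_Z}.

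\textbf{Mean comparison.} By~\eqref{eq:weight_func}, $1/w(k)=k^\alpha(1+O(1/k))$ with $\alpha\ge0$. Hence $1/b^\dag(i)$ and $1/r^\dag(i)$ both equal $(2i)^\alpha+O((i+1)^{\alpha-1})$. Consequently
$$
\e[\beta_n]-\e[\rho_n]=O\bigl(n^{\alpha}+1\bigr)\ \text{(including a possible $\log n$ when $\alpha=0$)}.
$$
For the upper tail I therefore expect
$$
\e[\rho_{n+m}]-\e[\beta_n]\ \ge\ c\,m\,(n+m)^\alpha - C(n^\alpha+\log n).
$$
Symmetrically, $\e[\beta_n]-\e[\rho_{n-m}]\ge c\,m\,n^\alpha-C(n^\alpha+\log n)$ for $m\le n$. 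For $m\ge m_0$, with $m_0$ a large constant, the gap is at least $\tfrac c2 m(n+m)^\alpha$. Smaller $m$ are absorbed into the constant $C$. The $\alpha=0$ logarithmic bias needs a little care: it is $O(1)$ in the stated regimes because the $O(1/k)$ corrections cancel between $2j$ and $2j+1$ up to $O(1/k^2)$. I would check this carefully, and if necessary borrow the sharper expansion from~\cite{Elena23}.

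\textbf{Chernoff step.} The centered variable $\beta_n-\rho_{n+m}$ is a sum of independent terms $\pm w_k(E_k-1)$ with weights $w_k\lesssim (n+m)^\alpha$. For $0\le\lambda\le 1/(2\max w_k)$ one has $\log\e e^{\lambda w(E-1)}\le\lambda^2w^2$. This gives
$$
\p\bigl(\rho_{n+m}<\beta_n\bigr)\le \exp\Bigl(-\lambda\tfrac c2 m(n+m)^\alpha+\lambda^2 C(n+m)^{2\alpha+1}\Bigr).
$$
Choose $\lambda=\min\bigl(c'm/(n+m)^{\alpha+1},\,1/(2C(n+m)^\alpha)\bigr)$. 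In the first case the exponent is $-c\,m^2/(n+m)$. In the second case, which occurs when $m\gtrsim n$, the exponent is $-c\,m$. Both are bounded by $-c\,m^2/(m\vee n)$. The lower tail is handled the same way with $n+m$ replaced by $n$.

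\textbf{Main obstacle.} The delicate point is the mean comparison. The dominant terms cancel, so I need a uniform lower bound on the gap of order $m(n+m)^\alpha$ that beats the bias from the $B$-term and the $O(k^{-2})$ remainders in~\eqref{eq:weight_func}, uniformly over all $n,m$ and all three site types. At the origin, $b=r=w(2j)$, so the bias vanishes and that case is easiest. At $\ell\neq0$, I would expand $(2i+1)^\alpha-(2i)^\alpha$ explicitly. This shows the bias is $O(n^\alpha)$, or $O(1)$ when $\alpha=0$. That bias is dominated by the gap once $m\ge m_0$, and enlarging $C$ covers the remaining cases $m<m_0$.
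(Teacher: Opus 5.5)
\emph{Verdict: your argument is correct.} The paper does not prove this lemma; it imports it from \cite{KMMP26} and \cite{Elena23}. Your route is a valid self-contained proof: Rubin's exponential embedding turns $\{\fD^\dag_{\tau_n^\fB}\ge m\}$ into a comparison of two independent weighted exponential sums. A Chernoff/Bernstein bound then applies, with variance proxy $O((n+m)^{2\alpha+1})$ and maximal weight $O((n+m)^\alpha)$. I have no access to the cited proofs, so I cannot say whether they argue the same way.

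There are three small remarks, none of which is a gap.

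\emph{Off-by-one in the lower tail.} The correct identity is $\{\fD^\dag_{\tau_n^\fB}\le -m\}=\{\fR^\dag_{\tau_n^\fB}\le n-m\}=\{\rho_{n-m+1}>\beta_n\}$. Your version $\{\rho_{n-m}>\beta_n\}$ is empty at $m=n$, although $\{\fR^\dag_{\tau_n^\fB}=0\}$ has positive probability. This only shifts $m$ by one and is absorbed into the constants.

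\emph{The mean comparison is not delicate.} For $m\le\sqrt n$ one has $m^2/(m\vee n)\le 1$, so the bound is trivial once $C\ge e^{c}$. For $m\ge\sqrt n$, the gap coming from $\sum_{j=n}^{n+m-1}1/r^\dag(j)$ is at least $c\,m(n+m)^\alpha\ge c\sqrt n\,n^\alpha$. This dominates any bias of size $O(n^\alpha+\log(n+1))$, after enlarging $C$ to cover bounded $n$ or $m$. So the crude expansion $1/w(k)=k^\alpha+O((k+1)^{\alpha-1})$ already suffices, and the second-order cancellation between $2j$ and $2j+1$ at $\alpha=0$ is not needed.

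\emph{The case split in $\lambda$ is unnecessary.} The choice $\lambda\asymp m/(n+m)^{\alpha+1}$ satisfies $\lambda\max_k w_k\lesssim m/(n+m)\le 1$. After shrinking constants it is therefore always admissible, and it gives the exponent $-c\,m^2/(n+m)\le -\tfrac{c}{2}\,m^2/(m\vee n)$ directly.
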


\subsection{Polynomially self-repelling case}\label{appsub:PSR}

Fix $\alpha>0$. We list below the results from \cite{KMMP26}, extending the generalized P\'olya urn results of \cite{Elena23}, that are needed in the proof of Theorem~\ref{thm:meas}.

\begin{lemma}[\cite{KMMP26}, Lemma~3.6]\label{lm:4_7}
    There exists a constant $C>0$ such that for any integer $n \ge 1$ and $\dag \in\{-,0,+\}$,
    \begin{align}\label{diff_tau_n}
        \e\left[\left(\fD^\dag_{\tau_n^\fB}-\fD^\dag_{2 n}\right)^2\right] \leq C \sqrt{n} \quad \text{and} \quad \e\left[\left(\fD^\dag_{\tau_n^\fR}-\fD^\dag_{2 n}\right)^2\right] \leq C \sqrt{n}.
    \end{align}
\end{lemma}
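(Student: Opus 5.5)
The plan rests on the identity linking the random time $\tau_n^\fB$ to the discrepancy itself. At time $\tau_n^\fB$ we have $\fB=n$ and $\fR=n+\fD^\dag_{\tau_n^\fB}$. Hence
$$
\tau_n^\fB-2n=\fD^\dag_{\tau_n^\fB},
$$
and likewise $\tau_n^\fR-2n=-\fD^\dag_{\tau_n^\fR}$. So $\fD^\dag_{\tau_n^\fB}-\fD^\dag_{2n}$ is the increment of the nearest-neighbour process $\fD^\dag$ over a random window around $2n$ of length $m:=|\fD^\dag_{\tau_n^\fB}|$. By Lemma~\ref{lm:4_1}, $m$ has Gaussian tails at scale $\sqrt n$. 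The trivial bound $|\text{increment}|\le m$ only gives $O(n)$. The gain must come from the fact that, over a window of length $m$, the increment behaves like $\sqrt m$ rather than $m$; heuristically this gives $\e[m]\asymp\sqrt n$. I treat the $\fB$ case; the $\fR$ case is symmetric by~\eqref{identi_law}.

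First, write the Doob decomposition $\fD^\dag_j=\fD^\dag_0+M_j+\sum_{i<j}\beta_i$. Here $M$ is a martingale with increments bounded by $2$, and
$$
\beta_i=\frac{r(\fR_i)-b(\fB_i)}{r(\fR_i)+b(\fB_i)}.
$$
Using~\eqref{eq:weight_func}, for $i$ of order $n$ one gets
$$
|\beta_i|\le C\,\frac{|\fD^\dag_i|+1}{i}.
$$
This holds because $\fB_i,\fR_i\approx i/2$, the weights $w$ are comparable at nearby arguments with relative difference $\approx\alpha|\fD|/(i/2)$, and the index shifts in~\eqref{weights_on_Z} contribute only $O(1/i)$.

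Next, split on the dyadic scale of the window. Let $E_k:=\{2^k\le m<2^{k+1}\}$, and on $\{m\ge n\}$ use the trivial bound $m^2$, whose expectation there is $O(e^{-cn})$ by Lemma~\ref{lm:4_1}. For $2^{k+1}\le n$, the martingale part is controlled by Cauchy--Schwarz:
$$
\e\Bigl[\sup_{|j-2n|\le 2^{k+1}}(M_j-M_{2n})^2;\,E_k\Bigr]\le \Bigl(\e\sup_{|j-2n|\le2^{k+1}}(M_j-M_{2n})^4\Bigr)^{1/2}\p(E_k)^{1/2}.
$$
By Burkholder--Davis--Gundy the fourth moment is $\le C4^k$, and by Lemma~\ref{lm:4_1}, $\p(E_k)\le Ce^{-c4^k/n}$. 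Summing, $\sum_k 2^k e^{-c4^k/(2n)}\le C\sqrt n$.

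The drift part over a window of length $\le 2^{k+1}$ is at most
$$
C\,2^{k+1}\,\frac{\sup_{|j-2n|\le n}|\fD^\dag_j|+1}{n},
$$
and its square, summed against the same tails, contributes at most
$$
C\,\e\Bigl[\sup_{j\le 3n}|\fD^\dag_j|^4\Bigr]^{1/2}\Big/ n\;\cdot\;\sum_k 4^k e^{-c4^k/(2n)}\Big/ n .
$$
This is $O(1)$ once $\sup_{j\le3n}|\fD^\dag_j|$ has fourth moment $O(n^2)$.

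I expect the main obstacle to be this last uniform moment bound for $\sup_{j\le 3n}|\fD^\dag_j|$, since Lemma~\ref{lm:4_1} only controls $\fD^\dag$ at the stopping times $\tau^\fB_\cdot,\tau^\fR_\cdot$. I would first try to transfer it: every deterministic $j$ lies between consecutive blue times $\tau^\fB_{l}\le j<\tau^\fB_{l+1}$ with $l\le j$. Between these times the walk only takes red steps, so
$$
|\fD^\dag_j|\le|\fD^\dag_{\tau^\fB_l}|+|\fD^\dag_{\tau^\fB_{l+1}}|+1 .
$$
A union bound over $l\le 3n$ combined with the Gaussian tails in Lemma~\ref{lm:4_1} then gives $\p(\sup_{j\le3n}|\fD^\dag_j|\ge x)\le Cne^{-cx^2/(x\vee n)}$, which yields fourth moment $O(n^2\log^2 n)$. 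The logarithmic loss is harmless, because the drift contribution then becomes $O(\log n)\ll\sqrt n$. Finally, the origin urn $\dag=0$ is handled identically, since its weights satisfy the same comparison estimate for $\beta_i$.
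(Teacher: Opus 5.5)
Your proof is correct. The paper gives no argument of its own for this lemma, which is quoted from \cite{KMMP26}, so there is nothing in the text to compare against. What your route buys is economy. It needs only three inputs:
\begin{itemize}
\item the exact identity $\tau_n^\fB-2n=\fD^\dag_{\tau_n^\fB}$;
\item the Gaussian tails of Lemma~\ref{lm:4_1};
\item the crude drift bound $|\beta_i|\le C(|\fD^\dag_i|+1)/i$.
\end{itemize}
No variance asymptotics are needed. The drift bound should be stated for all $i\ge1$, using $|\beta_i|\le1$ in the regime $|\fD^\dag_i|\ge\varepsilon i$.

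There are a few small repairs and one simplification.

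First, your dyadic split leaves uncovered the block $E_k$ with $2^k<n<2^{k+1}$. Put the trivial bound on $\{m>n/2\}$ instead, where $\e[m^2;\,m>n/2]=O(n^2e^{-cn})$. Then every remaining $E_k$ has $2^{k+1}\le n$, and its window lies in $[n,3n]$.

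Second, the step you flag as the main obstacle can be bypassed. For $i$ between $2n$ and $\tau_n^\fB$ one has $|\fD^\dag_i|\le|\fD^\dag_{\tau_n^\fB}|+|i-\tau_n^\fB|\le 2m$. So on $\{m\le n/2\}$ the drift is at most $Cm(2m+1)/n$. Its second moment is therefore $O(\e[m^4]/n^2)=O(1)$, by the bound $\e[m^4]\le C(1+n^2)$. That bound follows from Lemma~\ref{lm:4_1} exactly as in the proof of Theorem~\ref{thm:meas}. Your transfer through consecutive blue times, with the union bound, is valid but unnecessary, and it costs a harmless logarithm.

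Third, \eqref{identi_law} only records one-dimensional marginals, whereas the $\fR$ case needs the joint law of $(\fD^\dag_{\tau_n^\fR},\fD^\dag_{2n})$. That joint identity does follow from the colour swap applied to the whole urn process. It is cleaner, though, to rerun your argument directly with $\tau_n^\fR-2n=-\fD^\dag_{\tau_n^\fR}$ and the second tail bound in Lemma~\ref{lm:4_1}.
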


\begin{proposition}[\cite{KMMP26}, Proposition~3.9]\label{prop:exp_discre}
\begin{equation*} 
\begin{aligned}
    \lim_{n \rightarrow \infty} \e\!\left[ \fD^+_{\tau_n^\fB} \right]
        &= \lim_{n \rightarrow \infty} \e\!\left[ \fD^-_{\tau_n^\fB} +1\right] = \frac{1}{2(2\alpha+1)},\\
    \lim_{n \rightarrow \infty} \e\!\left[ -\fD^-_{\tau_n^\fR} \right] 
        &= \lim_{n \rightarrow \infty} \e\!\left[ -\fD^+_{\tau_n^\fR}+1 \right] = \frac{1}{2(2\alpha+1)},\\
    \lim_{n \rightarrow \infty} \e\!\left[ \fD^0_{\tau_n^\fR} \right] 
        &= \lim_{n \rightarrow \infty} \e\!\left[ -\fD^0_{\tau_n^\fB} \right] = \frac{\alpha}{2\alpha+1}.
\end{aligned}
\end{equation*}
\end{proposition}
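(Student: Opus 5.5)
The plan is to treat each discrepancy process $\fD^\dag$ as a near-martingale with an explicit small drift, and to evaluate its mean at the stopping time $\tau_n^\fB$ by optional stopping. The last three identities of the statement then follow from the first via the symmetries~\eqref{identi_law}, together with a separate but analogous computation for the symmetric weights $(w(2j),w(2j))$ at the origin.

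\textbf{Step 1: drift expansion.} Fix the $+$ urn, with weights $b(j)=w(2j+1)$ and $r(j)=w(2j)$. On $\{(\fB_k,\fR_k)=(i,j)\}$ the conditional increment of $\fD$ has mean
$$
\frac{r(j)-b(i)}{r(j)+b(i)}=\tfrac12\log\frac{w(2j)}{w(2i+1)}+O\Bigl(\log^2\frac{w(2j)}{w(2i+1)}\Bigr).
$$
By~\eqref{eq:weight_func}, this equals $\frac{\alpha}{2}\cdot\frac{1-2(j-i)}{2i}+O\bigl((1+(j-i)^2)/i^2\bigr)$ whenever $i\asymp j$. The $B$-terms cancel at first order because both colours carry the same correction $2B/n$. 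Hence
$$
\fD_k=M_k+\sum_{l<k}\Bigl[-\frac{\alpha(2\fD_l-1)}{4\fB_l}+\varepsilon_l\Bigr],
$$
where $M$ is a martingale with increments bounded by $1$, and $\varepsilon_l$ is quadratically small.

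\textbf{Step 2: second moments.} Using Lemma~\ref{lm:4_1} to control the event that $\fB_l$ is far from $l/2$, I will first show that $\e[\fD_k^2]=\frac{k}{2\alpha+1}(1+o(1))$. This comes from the Itô-type recursion $\e[\fD_{k+1}^2]-\e[\fD_k^2]=1-\frac{2\alpha}{k}\e[\fD_k^2]+O(\e|\fD_k|/k)$, whose solution grows like $k/(1+2\alpha)$. In the same way, $\e[\fD_k]$ stays bounded.

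\textbf{Step 3: optional stopping at $\tau_n^\fB$.} Apply optional stopping to $M$ at $\tau_n^\fB$. This is legitimate because $\tau_n^\fB\le 2n+|\fD_{\tau_n^\fB}|$ has exponential tails beyond $O(n)$ by Lemma~\ref{lm:4_1}. It gives
$$
\e\bigl[\fD_{\tau_n^\fB}\bigr]=\e\Bigl[\sum_{l<\tau_n^\fB}\Bigl(-\frac{\alpha(2\fD_l-1)}{4\fB_l}+\varepsilon_l\Bigr)\Bigr].
$$
The key point is that the random horizon $\tau_n^\fB$ is correlated with $\fD$: at $\tau_n^\fB$ one has $\fB=n$ and $\tau_n^\fB=2n+\fD_{\tau_n^\fB}$. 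So I will reparametrise the sum by the number of blue draws. Steps at which the blue count equals $i$ occur in blocks, and each block is followed by a red run of mean length about $1+O(\fD/i)$. This turns the sum into $\sum_{i<n}$ of terms like $-\alpha(2\fD_{\tau_i^\fB}-1)/(2i)$ plus a covariance term of order $\e[\fD^2_{\tau_i^\fB}]/i^2\asymp 1/i$.

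\textbf{Step 4: fixed point.} Writing $e_i:=\e[\fD_{\tau_i^\fB}]$, the previous step yields a recursion $e_{i+1}-e_i=\frac{1}{i}\bigl(-\alpha(2e_i-1)/2+\kappa\bigr)+o(1/i)$. Here $\kappa$ is the covariance contribution, computed from the variance $\frac{2i}{2\alpha+1}$ of Step 2. Any such recursion converges to its fixed point, and the value of $\kappa$ fixes this limit at $\frac{1}{2(2\alpha+1)}$. For the origin urn, the identical weights remove the $-1$ shift and the same computation produces the limit $\frac{\alpha}{2\alpha+1}$.

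\textbf{Main obstacle.} I expect Step 3 to be the hardest part, namely evaluating the covariance correction between the stopping time and the discrepancy exactly rather than up to $O(1)$. An $O(1)$ error there would shift the limiting constant, so the red-run expansion must be carried to second order and all remainders, bounded via Lemma~\ref{lm:4_1}, must be shown to be $o(1/i)$ after summation.
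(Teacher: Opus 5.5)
Your frame (drift of $\fD$, optional stopping at $\tau_n^{\fB}$, then a recursion for $e_i:=\e[\fD_{\tau_i^{\fB}}]$ with contraction factor $1-\alpha/i$) is workable. The gap is that the statement is exactly the value of the fixed point, and the proposal never computes it. Worse, the contributions you list would give the wrong value.

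Put $G_i:=\fD_{\tau_i^{\fB}}$. Since $\e[G_i^2]\sim 2i/(2\alpha+1)$, every term of size $G_i^2/i^2$ is of the same order $1/i$ as the linear drift. All such terms must therefore be evaluated exactly, not merely bounded. On the block $[\tau_i^{\fB},\tau_{i+1}^{\fB})$ one has $\fB=i$, while $\fD$ runs through $G_i,G_i+1,\dots,G_i+\rho_i$. There are three such contributions:

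(a) The growth of $\fD$ along the red run. Since $\e[\rho_i(\rho_i+1)]\approx 4$, the conditional mean of the block sum of the linear drift is $-\alpha(2G_i+1)/(2i)$, not the $-\alpha(2G_i-1)/(2i)$ of your Step~3.

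(b) Your covariance term, $+\alpha^2G_i^2/(2i^2)$. It comes from $\e[\rho_i+1\mid G_i]-2\approx-\alpha G_i/i$ acting against the drift $\approx-\alpha G_i/(2i)$.

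(c) The Taylor remainder you call quadratically small. With $\fB$ in the denominator the drift is $-\frac{\alpha(2\fD-1)}{4\fB}+\frac{\alpha\fD^2}{4\fB^2}+\cdots$. Hence $\e[\varepsilon_l]\approx\frac{\alpha}{(2\alpha+1)l}$, which is not $o(1/l)$, and it adds $+\alpha G_i^2/(2i^2)$ per block.

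Directly in blue time this reads $\e[\rho_i\mid G_i]=1-\frac{\alpha(2G_i+1)}{2i}+\frac{\alpha(\alpha+1)}{2}\frac{G_i^2}{i^2}+O\bigl(\frac{1+|G_i|}{i^2}+\frac{|G_i|^3}{i^3}\bigr)$. Hence
\[
e_{i+1}-e_i=\frac1i\Bigl(-\frac{\alpha(2e_i+1)}{2}+\frac{\alpha(\alpha+1)}{2\alpha+1}\Bigr)+o\Bigl(\frac1i\Bigr),
\]
whose fixed point is $\frac{1}{2(2\alpha+1)}$. If $\kappa$ is only the covariance term, as you describe, your recursion instead gives $\frac12+\frac{\alpha}{2\alpha+1}=\frac{4\alpha+1}{2(2\alpha+1)}$. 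The principle in your last paragraph, that all remainders are $o(1/i)$, is precisely what fails for (c).

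There is also a gap in the reduction. The identities~\eqref{identi_law} pair $(\fD^+,\tau^{\fB})$ with $(\fD^-,\tau^{\fR})$, $(\fD^+,\tau^{\fR})$ with $(\fD^-,\tau^{\fB})$, and the two origin quantities with each other. So they do not give $\lim\e[\fD^-_{\tau_n^{\fB}}+1]$ (equivalently $\lim\e[-\fD^+_{\tau_n^{\fR}}+1]$) from the $+$ urn at blue times. That limit needs its own run of the argument: there the linear drift is $-\alpha(2\fD+1)/(4\fB)$, and the blue-time linear term is $-\alpha(2G_i+3)/(2i)$.

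A tidier organisation makes all the constants transparent. At deterministic times, expanded in $k$ rather than $\fB$, the quadratic terms cancel. For the $+$ urn the drift is $\frac{\alpha(1-2\fD_k)}{2k}$ up to terms with summable expectations. Hence $\e[\fD^{\pm}_k]\to\pm\frac12$, while $\e[\fD^0_k]=0$ by symmetry.

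Then use $\tau_n^{\fB}=2n+\fD_{\tau_n^{\fB}}$: on the $|\fD_{\tau_n^{\fB}}|$ steps between $2n$ and $\tau_n^{\fB}$ the restoring drift is about $-\alpha\fD/(2n)$. Using Lemmas~\ref{lm:4_1} and~\ref{lm:4_7} to control the fluctuations of $\fD$ on that stretch, one can show $\e[\fD_{\tau_n^{\fB}}-\fD_{2n}]=-\frac{\alpha}{2n}\e[\fD_{\tau_n^{\fB}}^2]+o(1)\to-\frac{\alpha}{2\alpha+1}$. Likewise the correction is $+\frac{\alpha}{2\alpha+1}$ at $\tau_n^{\fR}$. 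Combining the two steps yields all six limits at once.
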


\begin{proposition}[\cite{KMMP26}, Corollary~3.3 \& Proposition~3.7]\label{prop:var_discre}
    For any $\dag \in\{-,0,+\}$,
    \begin{align}\label{eq:var_fD_+}
        \lim_{n\to \infty}\frac{\operatorname{Var}\!\left(\fD_{n}^\dag\right)}{n}
        =\frac{1}{2\alpha+1},
    \end{align}
    and
    \begin{align}\label{eq:var_fD_tau_+}
        \lim_{n\to \infty}\frac{\operatorname{Var}\!\left(\fD_{\tau^\fB_n}^\dag\right)}{2n}=\lim_{n\to \infty}\frac{\operatorname{Var}\!\left(\fD_{\tau^\fR_n}^\dag\right)}{2n}
        =\frac{1}{2\alpha+1}.
    \end{align}
\end{proposition}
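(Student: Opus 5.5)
The plan is to treat the discrepancy $\fD^\dag_n$ as a nearest-neighbour chain with a restoring drift of order $\alpha\fD^\dag_n/n$. From this I would derive an asymptotic recursion for $\e[(\fD^\dag_n)^2]$, which gives~\eqref{eq:var_fD_+}. I would then transfer the result to the stopping times $\tau^\fB_n$ and $\tau^\fR_n$ using Lemma~\ref{lm:4_7}. I describe $\dag=+$; the cases $-$ and $0$ differ only in the parity shift of the weights in~\eqref{weights_on_Z}, which affects the drift by $O(1/n)$ and not the variance asymptotics.

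\textbf{Step 1 (conditional drift).} At state $(\fB_n,\fR_n)=(i,j)$ with $n=i+j$ and $D=j-i$, the red probability is $w(2j)/(w(2i+1)+w(2j))$. I would expand this using~\eqref{eq:weight_func}. On the event $|D|\le n^{2/3}$ this gives
$$
\e[\fD_{n+1}-\fD_n\mid\mathcal F_n]=-\frac{\alpha \fD_n}{n}+\frac{\kappa}{n}+O\Bigl(\frac{1+|\fD_n|^3}{n^{2}}\Bigr),
$$
where $\kappa$ is a constant depending on $\alpha$, $B$ and the parity shift. The increments are $\pm1$, so the conditional second moment is exactly $1$. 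The complementary event $\{|\fD_n|>n^{2/3}\}$ has probability $O(e^{-cn^{1/3}})$ by Lemma~\ref{lm:4_1}, so it is negligible in every moment computation.

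\textbf{Step 2 (recursions).} Taking expectations in Step~1 gives $m_{n+1}=m_n(1-\alpha/n)+\kappa/n+O(n^{-1/2})$ for $m_n:=\e[\fD_n]$. Here Lemma~\ref{lm:4_1} supplies $\e|\fD_n|^k\le C_k n^{k/2}$, which bounds the error term. Consequently $m_n=O(\sqrt n)$, and in fact $m_n=O(1)$ after a second iteration. For $V_n:=\e[\fD_n^2]$, the discrete Itô identity $\fD_{n+1}^2-\fD_n^2=2\fD_n\Delta_n+\Delta_n^2$ yields
$$
V_{n+1}=V_n\Bigl(1-\frac{2\alpha}{n}\Bigr)+1+O\Bigl(\frac{|m_n|+1}{n}\Bigr)+O\Bigl(\frac{\e|\fD_n|^4+1}{n^2}\Bigr)=V_n\Bigl(1-\frac{2\alpha}{n}\Bigr)+1+O(n^{-1/2})\cdots
$$
with the error bounded by $O(1)$ per step uniformly and tending to $0$. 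Since $\e|\fD_n|^4=O(n^2)$, the last term is $O(1)$ per step. I expect this to be the main obstacle: the crude bound leaves an $O(1)$ error per step, which does not vanish. It has to be sharpened to $o(1)$ by using the exact cubic structure of the expansion. The cubic term is $c\,\fD_n^3/n^2$ with the odd moment $\e[\fD_n^4]/n^2$ multiplied by $\fD_n$ appearing only through $\e[\fD_n^4]/n^2$. So I would instead keep the correction as $-\alpha\fD_n/n\,(1+O(\fD_n^2/n^2))$. Its contribution to $V$ is then $O(\e\fD_n^4/n^3)=O(1/n)$, which is summable to $O(\log n)=o(n)$.

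\textbf{Step 3 (solving the recursion).} Write $\Pi_{k,n}=\prod_{l=k}^{n-1}(1-2\alpha/l)\asymp (k/n)^{2\alpha}$. Then
$$
V_n=\sum_{k\le n}\Pi_{k,n}\bigl(1+o(1)\bigr)\sim n^{-2\alpha}\sum_{k\le n}k^{2\alpha}\sim\frac{n}{2\alpha+1}.
$$
Combined with $m_n=O(1)$, this gives $\operatorname{Var}(\fD_n)\sim n/(2\alpha+1)$, which is~\eqref{eq:var_fD_+}.

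\textbf{Step 4 (stopping times).} By Lemma~\ref{lm:4_7}, $\|\fD_{\tau^\fB_n}-\fD_{2n}\|_{L^2}=O(n^{1/4})$. Minkowski's inequality applied to the centered variables then gives $\bigl|\operatorname{Var}(\fD_{\tau_n^\fB})^{1/2}-\operatorname{Var}(\fD_{2n})^{1/2}\bigr|=O(n^{1/4})=o(\sqrt n)$. Step~3 with $n$ replaced by $2n$ then yields~\eqref{eq:var_fD_tau_+}. The argument for $\tau^\fR_n$ is identical.
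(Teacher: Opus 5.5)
\textbf{Overall.} The paper gives no proof of this proposition; it imports it from \cite{KMMP26}. As far as one can infer from how the paper uses that source (the identity for $\operatorname{Var}(t_2^\alpha\fD_{m_2}-t_1^\alpha\fD_{m_1})$ from \cite[Lemma~3.2]{KMMP26}, and Lemma~\ref{lm:4_7}), your route has the same structure. You get the deterministic-time variance from the near-martingale $k^\alpha\fD_k$, which is what your integrating factor $\Pi_{k,n}\asymp(k/n)^{2\alpha}$ encodes. Your Step~4 is the same Lemma~\ref{lm:4_7} transfer the paper uses in the proof of Proposition~\ref{prop:cov}, and it is correct.

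\textbf{The gap in Steps 1--2.} The error term $O((1+|\fD_n|^3)/n^2)$ in Step~1 is too crude for both estimates you need, not only for the $V_n$ recursion. It also caps you at $m_n=O(\sqrt n)$. The $O(n^{-1/2})$ error in the $m_n$ recursion does not involve $m_n$, so no ``second iteration'' improves it. And $m_n=O(\sqrt n)$ is useless here: $m_n^2$ would then be of the same order as $V_n$, so you could not conclude $\operatorname{Var}(\fD_n)\sim V_n$.

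\textbf{The fix.} State Step~1 correctly from the start. Write the drift as $\tanh\bigl(\tfrac12\log\tfrac{w(2\fR_n)}{w(2\fB_n+1)}\bigr)$ and expand. On $\{|\fD_n|\le n^{2/3}\}$ this gives
$$
\e[\fD_{n+1}-\fD_n\mid\mathcal F_n]=-\frac{\alpha\fD_n}{n}+\frac{\alpha}{2n}+O\Bigl(\frac{|\fD_n|^3}{n^3}+\frac{1+|\fD_n|}{n^2}\Bigr).
$$
The cubic term carries $n^{-3}$, not $n^{-2}$ as in your sentence about ``$c\,\fD_n^3/n^2$''. The reason is that the leading part is $-\tanh(\alpha\,\mathrm{artanh}(\fD_n/n))$, a smooth odd function of $\fD_n/n$. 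Here $\kappa=\alpha/2$ (and $-\alpha/2$, $0$ for the other sites), independent of $B$; $B$ enters only at order $(1+|\fD_n|)/n^2$. With this expansion:
\begin{itemize}
\item $m_{n+1}=m_n(1-\alpha/n)+\kappa/n+O(n^{-3/2})$, which gives $m_n=O(1)$;
\item the error in the $V_n$ recursion is $O(\e\fD_n^4/n^3+\e\fD_n^2/n^2)=O(1/n)$, so Step~3 goes through.
\end{itemize}

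\textbf{A smaller point.} Lemma~\ref{lm:4_1} controls $\fD$ only at the stopping times $\tau^\fB_k,\tau^\fR_k$, not at deterministic times $n$. This is easy to bridge. After the last blue draw only red draws occur, and after the last red draw only blue ones, so
$$
\fD_{\tau^\fB_{\fB_n}}\le\fD_n\le\fD_{\tau^\fR_{\fR_n}}.
$$
A union bound over $k\le n$ then gives $\p(|\fD_n|\ge m)\le Cne^{-cm^2/(m\vee n)}$. This costs at most logarithmic factors in $\e|\fD_n|^k$, and the recursions above absorb them.
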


We next derive estimates for the covariance between $\fD_{\tau_n^*}$ and $\fD_{\tau_m^*}$ as $m,n \to \infty$, where $* \in \{\fB, \fR\}$, using results from \cite{KMMP26}. In the following proofs, we fix $\dag\in\{-,0,+\}$ and suppress the superscript.

\begin{lemma}
For any $k_1, k_2 > 0$ and $\dag \in\{-,0,+\}$,
\begin{align}\label{eq:cov_0}
\lim_{n\to\infty}
\frac{\operatorname{Cov}\!\left(
\fD^\dag_{\lfloor n k_1\rfloor},
\fD^\dag_{\lfloor n k_2\rfloor}
\right)}{n}
=
\frac{(k_1\wedge k_2)^{2\alpha+1}}
{(2\alpha+1)k_1^\alpha k_2^\alpha}.
\end{align}
\end{lemma}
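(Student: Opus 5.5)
The strategy is to reduce the covariance at two times to a conditional-mean estimate for the discrepancy walk, and then use the variance asymptotics in Proposition~\ref{prop:var_discre}. By symmetry we take $k_1<k_2$; the case $k_1=k_2$ is exactly \eqref{eq:var_fD_+}. Write $n_1=\lfloor nk_1\rfloor$ and $n_2=\lfloor nk_2\rfloor$. The discrepancy $\fD_m$ is not Markov by itself, but the pair $(\fB_m,\fR_m)$ is, and $\fB_m+\fR_m=m$. Hence $\fD_m$ determines the urn state at time $m$, so $(\fD_m)_m$ is a time-inhomogeneous Markov chain. This gives
$$
\operatorname{Cov}(\fD_{n_1},\fD_{n_2})=\operatorname{Cov}\bigl(\fD_{n_1},\,\e[\fD_{n_2}\mid\fD_{n_1}]\bigr).
$$

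The key input is a linear-response estimate. For $w(j)\sim j^{-\alpha}$, with $m$ draws made and discrepancy $d$, the conditional drift is
$$
\frac{w(\fR)-w(\fB)}{w(\fR)+w(\fB)}\approx -\frac{\alpha d}{m}
$$
up to $O(1/m)$ and quadratic corrections. The mean therefore satisfies, to leading order, $\frac{\dd}{\dd m}\mu=-\alpha\mu/m$. This yields
$$
\e[\fD_{n_2}\mid\fD_{n_1}=d]=d\,(n_1/n_2)^{\alpha}+O(1)+\text{error}(d),
$$
where the error term is controlled by $d^2/n_1$ and by higher moments. I would take this from the corresponding estimate in \cite{KMMP26}, which is the one behind their Corollary~3.3 and underlies \eqref{eq:var_fD_+}. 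If it is not stated there, I would prove it directly by writing $M_m:=m^{\alpha}\fD_m-\sum(\text{drift corrections})$ and showing that $M$ is approximately a martingale. The tail bound of Lemma~\ref{lm:4_1}, which holds equally at deterministic times, makes the nonlinear remainder $O(\e[\fD_{n_1}^2]/n_1)=O(1)$ in $L^2$. Substituting this expansion gives
$$
\operatorname{Cov}(\fD_{n_1},\fD_{n_2})=(k_1/k_2)^{\alpha}\operatorname{Var}(\fD_{n_1})+o(n).
$$
Dividing by $n$ and applying \eqref{eq:var_fD_+}, the right-hand side becomes
$$
(k_1/k_2)^\alpha\frac{k_1}{2\alpha+1}=\frac{k_1^{2\alpha+1}}{(2\alpha+1)k_1^\alpha k_2^\alpha},
$$
which is the claimed limit \eqref{eq:cov_0}.

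An alternative route avoids conditional means and is useful as a consistency check. The variance formula, applied with time-scale heuristics, says the martingale $M_m\approx m^\alpha\fD_m$ has increments of variance $\approx m^{2\alpha}$. Then $\operatorname{Var}(M_{n_1})\approx n_1^{2\alpha+1}/(2\alpha+1)$, and orthogonality of increments gives $\operatorname{Cov}(M_{n_1},M_{n_2})=\operatorname{Var}(M_{n_1})$. Dividing by $n_1^\alpha n_2^\alpha$ recovers the same formula.

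The main obstacle is making the linearization rigorous with errors of order $o(n)$ after multiplication by $\fD_{n_1}$. Three sources of error must be handled. First, the $O(1/m)$ terms coming from $B$ and from the parity offset in \eqref{weights_on_Z} contribute bounded mean shifts; these are harmless because they only affect $\e[\fD_{n_2}]$. Second, the region $|d|\gtrsim m$, where the linear approximation fails, has exponentially small probability by Lemma~\ref{lm:4_1}. Third, the cubic terms in $d/m$ need to be controlled, and for this I would use the moment bounds $\e|\fD_m|^4\le C m^2$ derived in the proof of Theorem~\ref{thm:meas}. Combining these, $\e\bigl[\fD_{n_1}\,\mathrm{err}(\fD_{n_1})\bigr]=O(\sqrt n)$, which is $o(n)$ as required.
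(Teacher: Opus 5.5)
Your proof is correct in outline, but your main argument takes a different route from the paper.

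\textbf{Comparison with the paper.} The paper never analyses the urn dynamics here. It cites \cite[Lemma~3.2]{KMMP26}, which on a macroscopic grid $t_i=m_i/N$ gives
$N^{-1}\operatorname{Var}(t_2^\alpha\fD_{m_2}-t_1^\alpha\fD_{m_1})=\int_{t_1}^{t_2}u^{2\alpha}\,\dd u+O(N^{-1/2}\log^2N)$.
It then expands this variance and inserts \eqref{eq:var_fD_+} to solve for the covariance. That is exactly your ``consistency check'' ($m^\alpha\fD_m$ as an approximate martingale with increment variance $m^{2\alpha}$), made rigorous by citation.

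Your main argument instead gets the two-time correlation from the one-time variance together with the explicit one-step drift of the chain,
$\e[\fD_{m+1}-\fD_m\mid\fD_m=d]=-\alpha d/m+h(m)+\rho(d,m)$.
Here $h(m)=O(1/m)$ is deterministic, and $\rho=O\bigl((1+|d|)/m^2+|d|^3/m^3\bigr)$ for $|d|\le m/2$. There is no $d^2/m^2$ term, because the drift is odd in $d/m$ up to parity corrections. Your route is more self-contained: it reproves what is needed from the cited two-time lemma, but only to $o(n)$ accuracy. The paper's route is two lines and carries a quantitative error rate.

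\textbf{Two points to tidy up.} First, the pointwise formula for $\e[\fD_{n_2}\mid\fD_{n_1}=d]$ is more than you need and awkward to justify. Lemma~\ref{lm:4_1} gives no bounds for the chain restarted from an arbitrary state at time $n_1$. Instead, set $C(m):=\operatorname{Cov}(\fD_{n_1},\fD_m)$ for $m\ge n_1$. The martingale part of $\fD_{m+1}-\fD_m$ is orthogonal to $\fD_{n_1}$, and $h(m)$ is deterministic, so
$C(m+1)=(1-\alpha/m)C(m)+\operatorname{Cov}(\fD_{n_1},\rho(\fD_m,m))$.
By H\"older and the tail bound on $\{|\fD_m|>m/2\}$, the remainders sum to a polylogarithmic quantity, hence $o(n)$, over $n_1\le m<n_2$. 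Iterating gives $C(n_2)=(n_1/n_2)^\alpha(1+o(1))\operatorname{Var}(\fD_{n_1})+o(n)$.

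Second, the bound $\e|\fD|^4\le Cm^2$ in the proof of Theorem~\ref{thm:meas} is stated at the times $\tau^\fB_t$, not at deterministic times. To transfer it, note that on $\{\fB_m=b\}$ one has $\fD_{\tau^\fB_b}\le\fD_m\le\fD_{\tau^\fB_{b+1}}+1$. A union bound over $b\le m$ then gives deterministic-time tails and moments at the cost of a logarithmic factor, which is harmless for an $o(n)$ error.
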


\begin{proof}
	By symmetry, assume $k_1\le k_2$. Choose an integer $L>k_2/2$, and set $N=Ln$, $m_i=\lfloor nk_i\rfloor$, and $t_i=m_i/N$ for $i=1,2$. For all sufficiently large $n$, $\delta\le t_1\le t_2\le2$, where $\delta=k_1/(2L)>0$. By~\cite[Lemma~3.2]{KMMP26},
	$$
	\frac1N\operatorname{Var}\!\left(t_2^\alpha\fD_{m_2}-t_1^\alpha\fD_{m_1}\right)
	=\int_{t_1}^{t_2}u^{2\alpha}\,\dd u+O\!\left(N^{-1/2}\log^2 N\right).
	$$
	Expanding the variance and using~\eqref{eq:var_fD_+}, together with $t_i\to k_i/L$ and $N/n=L$, gives
	$$
	\lim_{n\to\infty}\frac{\operatorname{Cov}(\fD_{m_1},\fD_{m_2})}{n}
	=\frac{L(k_1/L)^{2\alpha+1}}{(2\alpha+1)(k_1/L)^\alpha(k_2/L)^\alpha}
	=\frac{k_1^{2\alpha+1}}{(2\alpha+1)k_1^\alpha k_2^\alpha}.
	$$
	This proves~\eqref{eq:cov_0}.
\end{proof}

\begin{proposition}\label{prop:cov}
For $* \in \{\fB, \fR\}$, $\dag \in\{-,0,+\}$, and any $k_1, k_2 > 0$,
\begin{align*}
\lim_{n\to\infty}
\frac{\operatorname{Cov}\!\left(\fD^\dag_{\tau^{*}_{\lfloor n k_1\rfloor}}, \fD^\dag_{\tau^{*}_{\lfloor n k_2\rfloor}}\right)}{2n}
=\frac{(k_1 \wedge k_2)^{2\alpha+1}}{(2\alpha+1) k_1^\alpha k_2^\alpha}.
\end{align*}
\end{proposition}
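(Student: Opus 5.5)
We reduce Proposition~\ref{prop:cov} to the covariance limit~\eqref{eq:cov_0} for the discrepancy process observed at deterministic times. The reduction uses Lemma~\ref{lm:4_7}, which shows that observing at $\tau^*_m$ is asymptotically the same as observing at time $2m$. Fix $*\in\{\fB,\fR\}$ and $\dag$, and suppress the superscript $\dag$. Set $m_i:=\lfloor nk_i\rfloor$ for $i=1,2$ and decompose
$$
\fD_{\tau^*_{m_i}}=\fD_{2m_i}+E_i,\qquad E_i:=\fD_{\tau^*_{m_i}}-\fD_{2m_i}.
$$
Expanding bilinearly gives
$$
\operatorname{Cov}\bigl(\fD_{\tau^*_{m_1}},\fD_{\tau^*_{m_2}}\bigr)
=\operatorname{Cov}(\fD_{2m_1},\fD_{2m_2})+\operatorname{Cov}(\fD_{2m_1},E_2)+\operatorname{Cov}(E_1,\fD_{2m_2})+\operatorname{Cov}(E_1,E_2).
$$

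For the main term, note that $2m_i=\lfloor 2nk_i\rfloor+O(1)$, that is, $\lfloor n' k_i'\rfloor$ with $n'=2n$ up to a bounded shift. We apply~\eqref{eq:cov_0} with $n$ replaced by $2n$. The bounded shift is harmless: by Cauchy--Schwarz and Lemma~\ref{lm:4_1}, one step of the urn changes $\fD$ by at most $1$, so each covariance changes by $O(\sqrt{n})$. This gives
$$
\frac{\operatorname{Cov}(\fD_{2m_1},\fD_{2m_2})}{2n}\longrightarrow\frac{(k_1\wedge k_2)^{2\alpha+1}}{(2\alpha+1)k_1^\alpha k_2^\alpha}.
$$
If one prefers to avoid the shift argument, one can instead invoke directly the variance identity from~\cite[Lemma~3.2]{KMMP26} along the even times $2m_i$, exactly as in the proof of~\eqref{eq:cov_0}.

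For the error terms, Cauchy--Schwarz together with Lemma~\ref{lm:4_7} gives $\operatorname{Var}(E_i)\le\e[E_i^2]\le C\sqrt{m_i}=O(\sqrt n)$. By~\eqref{eq:var_fD_+}, $\operatorname{Var}(\fD_{2m_j})=O(n)$. Hence the cross terms are $O(n^{1/4}\cdot n^{1/2})=O(n^{3/4})=o(n)$, and $\operatorname{Cov}(E_1,E_2)=O(\sqrt n)$. Dividing by $2n$, every correction term vanishes and the stated limit follows.

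The main point requiring care is the bookkeeping in the normalization. The deterministic time is $2m_i\approx 2nk_i$, so the natural scale in~\eqref{eq:cov_0} is $2n$. It is exactly this factor that produces the $2n$ in the denominator of the statement. As a consistency check, the case $k_1=k_2$ recovers~\eqref{eq:var_fD_tau_+}. Everything else is a direct combination of Lemma~\ref{lm:4_7} with~\eqref{eq:cov_0}; no further structure of the stopping times $\tau^*_m$ is needed.
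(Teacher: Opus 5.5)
Your proposal is correct and follows essentially the same route as the paper. You replace $\tau^*_{m_i}$ by the deterministic time $2m_i$ at cost $O(n^{3/4})$ via Cauchy--Schwarz and Lemma~\ref{lm:4_7}, then absorb the unit shift between $2m_i$ and $\lfloor 2nk_i\rfloor$ and apply~\eqref{eq:cov_0} at scale $2n$. The differences are cosmetic: you expand into four covariance terms, bounding $\operatorname{Cov}(E_1,E_2)$ by Lemma~\ref{lm:4_7} alone instead of using~\eqref{eq:var_fD_tau_+}, and you treat $*=\fR$ directly rather than by symmetry.
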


\begin{proof}
	By symmetry, it suffices to consider $*=\fB$. Set $m_i=\lfloor nk_i\rfloor$ for $i=1,2$. By Cauchy--Schwarz and~\eqref{diff_tau_n}, \eqref{eq:var_fD_+}, and~\eqref{eq:var_fD_tau_+},
	\begin{align*}
		&\qquad\left|\operatorname{Cov}\!\left(\fD_{\tau_{m_1}^\fB},\fD_{\tau_{m_2}^\fB}\right)-\operatorname{Cov}\!\left(\fD_{2m_1},\fD_{2m_2}\right)\right|\\
		&\le
		\sqrt{\e\!\left[(\fD_{\tau_{m_1}^\fB}-\fD_{2m_1})^2\right]\operatorname{Var}(\fD_{\tau_{m_2}^\fB})}+
		\sqrt{\operatorname{Var}(\fD_{2m_1})\e\!\left[(\fD_{\tau_{m_2}^\fB}-\fD_{2m_2})^2\right]}
		=O(n^{3/4}).
	\end{align*}
	Since $|\fD_{2m_i}-\fD_{\lfloor2nk_i\rfloor}|\le1$, the result follows from~\eqref{eq:cov_0} after dividing by $2n$.
\end{proof}

\subsection{Asymptotically free case}\label{appsub:AF}

Suppose that $w$ is positive and monotone, satisfying~\eqref{eq:weight_func} with $\alpha=0$. Define
\begin{equation}\label{def:gamma}
	\gamma
	:=\lim_{n\to\infty}
	\left(
	\sum_{j=0}^{n-1}\frac{1}{w(2j+1)}
	-\sum_{j=0}^{n-1}\frac{1}{w(2j)}
	\right).
\end{equation}
By monotonicity, the limit exists and $\gamma<1$. Moreover, $\gamma\in[0,1)$ when $w$ is nonincreasing, whereas $\gamma\le0$ when $w$ is nondecreasing.
The following expectation limits follow from the argument in~\cite[Lemma~4.3]{LW25} and the discussion following its proof.

\begin{proposition}\label{prop:AF_expe}
	For $*\in\{\fB,\fR\}$ and $\dag\in\{-,0,+\}$,
	$$
	\lim_{n\to\infty}\e\!\left[\fD^\dag_{\tau_n^*}\right]
	=
	\begin{cases}
		\gamma, & \dag=+,\\
		0, & \dag=0,\\
		-\gamma, & \dag=-.
	\end{cases}
	$$
\end{proposition}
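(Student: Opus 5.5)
The plan is to use Rubin's construction together with an exact Wald-type identity. Realize the urn at a fixed site $\dag$ with two independent families of i.i.d.\ $\mathrm{Exp}(1)$ variables $(e_j)$ and $(e'_j)$. Blue balls are drawn at times $\sigma^\fB_k:=\sum_{j<k}e_j/b(j)$ and red balls at times $\sigma^\fR_k:=\sum_{j<k}e'_j/r(j)$. Then $\tau_n^\fB$ corresponds to the continuous time $\sigma^\fB_n$. At that instant $\fB=n$ and $\fR=R_n:=\#\{k\ge1:\sigma^\fR_k<\sigma^\fB_n\}$, so $\fD^\dag_{\tau_n^\fB}=R_n-n$. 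Since the red clock is independent of $\sigma^\fB_n$, the index $R_n+1$ is a stopping time for the red exponentials after conditioning on $\sigma^\fB_n$. Wald's identity, together with memorylessness of the overshoot $\sigma^\fR_{R_n+1}-\sigma^\fB_n\sim e/r(R_n)$, gives the exact identity
\[
\e\Big[\sum_{j<R_n}\frac1{r(j)}\Big]=\e[\sigma^\fB_n]=\sum_{j<n}\frac1{b(j)}.
\]
I would first check integrability of $R_n$ by Lemma~\ref{lm:4_1} so that Wald applies.

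Next I would plug in the weights \eqref{weights_on_Z} and the expansion \eqref{eq:weight_func} with $\alpha=0$. Then $1/w(m)=1+2B/m+O(m^{-2})$. Hence $G_r(R):=\sum_{j<R}1/r(j)-R-B\log R$ and $G_b(n):=\sum_{j<n}1/b(j)-n-B\log n$ converge to constants $c_r$ and $c_b$. For $\dag=+$ we have $b(j)=w(2j+1)$ and $r(j)=w(2j)$, so $c_b-c_r=\gamma$ by \eqref{def:gamma}. For $\dag=0$ we have $b=r$, so the difference is $0$. For $\dag=-$ the colours are swapped, which gives $-\gamma$. The identity then reads
\[
\e[R_n-n]+B\,\e\big[\log R_n-\log n\big]+\e[G_r(R_n)]=G_b(n).
\]
It therefore remains to show that $\e[\log(R_n/n)]\to0$ and $\e[G_r(R_n)]\to c_r$.

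For both limits I would use Lemma~\ref{lm:4_1}. It gives $\p(|R_n-n|\ge m)\le Ce^{-cm^2/(m\vee n)}$, so $R_n/n\to1$ with Gaussian tails on the scale $\sqrt n$. The function $G_r$ is bounded on $\N_0$, because it is convergent; the value $R=0$ is handled by setting $\log0$-terms to zero, and this event has exponentially small probability. Dominated convergence therefore gives $\e[G_r(R_n)]\to c_r$. For the logarithm, I would split the expectation on $\{|R_n-n|\le n/2\}$, where $|\log(R_n/n)|\le 2|R_n-n|/n$ has expectation $O(n^{-1/2})$. On the complement, $|\log R_n|\le\log(1+R_n)+|\log n|$, and this is killed by the exponential tail. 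This yields $\lim\e[\fD^\dag_{\tau_n^\fB}]=c_b-c_r$, which is the claimed value.

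The red case $*=\fR$ follows by exchanging the roles of the colours, using the symmetry identities \eqref{identi_law}, or by repeating the argument with $\fD^\dag_{\tau_n^\fR}=n-B_n$. For instance, at the $+$ site this gives the limit $\gamma$ again, because the blue weights dominate in the same way.

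I expect the main obstacle to be the rigorous justification of the Wald/overshoot identity. The stopping index $R_n+1$ depends on the random threshold $\sigma^\fB_n$, so it has to be handled by conditioning on the independent blue clock. Keeping the sign conventions straight across the three site types, and the $\pm$ offsets in the statement of Proposition~\ref{prop:exp_discre}-type limits, also needs care. The asymptotic bookkeeping is routine once Lemma~\ref{lm:4_1} is available.
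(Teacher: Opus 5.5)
Your proof is correct. It gives a self-contained argument, whereas the paper proves nothing here and only asserts that the limits follow from the argument of \cite[Lemma~4.3]{LW25} and the discussion after it. Everything rests on your exact identity $\e\bigl[\sum_{j<R_n}1/r(j)\bigr]=\sum_{j<n}1/b(j)$.

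The step you flag as the main obstacle can be bypassed. Write $R(t)$ for the number of red arrivals by time $t$ and $\Lambda_r(m):=\sum_{j<m}1/r(j)$. Then $\Lambda_r(R(t))-t$ is a martingale. It jumps by $1/r(R(t-))$ at rate $r(R(t-))$, and it is integrable because $r$ is bounded above and away from zero, so $R(t)$ is dominated by a Poisson process. Hence $\e[\Lambda_r(R(s))]=s$ for every deterministic $s$. Since $\sigma^\fB_n$ is independent of the red clock, the identity follows with no Wald or overshoot bookkeeping.

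The rest works as you describe. Both partial sums carry the same $B\log m$ correction, and $c_b-c_r$ equals $\gamma$, $0$, $-\gamma$ at the three site types by \eqref{def:gamma}. Lemma~\ref{lm:4_1} controls $\e[\log(R_n/n)]$ and $\e[G_r(R_n)]$. The case $*=\fR$ follows from \eqref{identi_law}, or from the mirrored identity for the blue count at red times.

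Compared with the bare citation, your route shows directly that $\gamma$ is the asymptotic offset between the two Rubin clocks. It also explains why the blue and red clocks give the same limit here but not in Proposition~\ref{prop:exp_discre}. Expand $\Lambda_r(R_n)$ about $n$. The curvature term $\tfrac12\Lambda_r''(n)\,\e[(R_n-n)^2]$ is $O(1/n)$ when $\alpha=0$. For $\alpha>0$ it has the same order $n^\alpha$ as $\Lambda_r'(n)$. Combined with Proposition~\ref{prop:var_discre}, it shifts the first-order value $\tfrac12$ by $\mp\alpha/(2\alpha+1)$ for $\fD^+_{\tau_n^\fB}$ and $\fD^+_{\tau_n^\fR}$ respectively. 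One checks that this reproduces the constants of Proposition~\ref{prop:exp_discre}.
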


\begin{proposition}\label{prop:cov_AF}
	For $\dag\in\{-,0,+\}$, $*_1,*_2\in\{\fB,\fR\}$,
	and $k_1,k_2>0$,
	$$
	\lim_{n\to\infty}
	\frac{1}{2n}\operatorname{Cov}\!\left(
	\fD^\dag_{\tau_{\lfloor nk_1\rfloor}^{*_1}},
	\fD^\dag_{\tau_{\lfloor nk_2\rfloor}^{*_2}}
	\right)
	=k_1\wedge k_2.
	$$
\end{proposition}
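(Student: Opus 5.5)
Our plan is to reduce the asymptotically free covariance to the unstopped walk $\fD^\dag_n$, exactly as in the proof of Proposition~\ref{prop:cov}. The argument has three inputs.

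\emph{Step 1: the unstopped covariance.} We first establish
\begin{align*}
\lim_{n\to\infty}\frac{1}{n}\operatorname{Cov}\!\left(\fD^\dag_{\lfloor nk_1\rfloor},\fD^\dag_{\lfloor nk_2\rfloor}\right)=k_1\wedge k_2,
\end{align*}
which is the $\alpha=0$ analogue of~\eqref{eq:cov_0}. When $\alpha=0$ the weights satisfy $1/w(n)=1+2B/n+O(n^{-2})$, so the discrepancy walk is asymptotically a simple symmetric walk. The conditional probability of an up-step, given the past, is $\tfrac12+O\bigl((1+|\fD_n|)/n\bigr)$ once $n$ is large relative to $|\fD_n|$.

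We write the Doob decomposition $\fD_n=M_n+A_n$. The martingale part has conditional increment variances $1-O(\cdot)$, so it satisfies $\operatorname{Var}(M_n)=n+o(n)$, and its increments are orthogonal. Hence $\operatorname{Cov}(M_{m_1},M_{m_2})=\operatorname{Var}(M_{m_1\wedge m_2})$.

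For the drift part we would use Lemma~\ref{lm:4_1} (valid for $\alpha\ge0$) to bound $\e|\fD_j|\lesssim\sqrt j$. Each conditional drift is then of order $\sqrt j/j$ in $L^2$, which would give $\e[A_n^2]=O(n)$. This is not good enough, because it does not show that $A$ is negligible.

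Instead we control $A$ through the finite limit $\gamma$ in~\eqref{def:gamma}. The expected drift converges by Proposition~\ref{prop:AF_expe}, and we would show $\operatorname{Var}(A_n)=o(n)$. The plan is to note that the drift is a bounded-variation function of the weight sums, whose telescoping differences $\sum_j(1/w(2j+1)-1/w(2j))$ converge. We would follow the argument of~\cite[Lemma~4.3]{LW25} to get $\e[A_n^2]=O(\log^2 n)$, or at least $o(n)$. Cauchy--Schwarz then yields the limit $k_1\wedge k_2$.

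\emph{Step 2: replacing deterministic times by stopping times.} Next we compare $\fD_{\tau^{*}_{m}}$ with $\fD_{2m}$. We need the $\alpha=0$ version of Lemma~\ref{lm:4_7}, namely
\begin{align*}
\e\!\left[\left(\fD_{\tau_m^{*}}-\fD_{2m}\right)^2\right]=o(m).
\end{align*}
Since $\tau_m^\fB=2m+\fD_{\tau_m^\fB}$, we have $|\tau_m^{\fB}-2m|=|\fD_{\tau_m^\fB}|$, which is $O(\sqrt m)$ with Gaussian tails by Lemma~\ref{lm:4_1}.

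By Step 1, conditional increments of $\fD$ over a window of length $\ell$ have second moment $O(\ell)$. A maximal (Doob) inequality over the random window $[2m-C\sqrt m,\,2m+C\sqrt m]$, together with tail truncation via Lemma~\ref{lm:4_1}, then gives the bound $O(\sqrt m)$. The same argument applies to $\tau^{\fR}$, using $\tau_m^\fR=2m-\fD_{\tau_m^\fR}$.

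\emph{Step 3: conclusion.} With Steps 1 and 2 in hand, the Cauchy--Schwarz comparison in the proof of Proposition~\ref{prop:cov} applies verbatim for any choice of $*_1,*_2$, and the mixed colours cause no extra difficulty. We have
\begin{align*}
\operatorname{Cov}\bigl(\fD_{\tau^{*_1}_{m_1}},\fD_{\tau^{*_2}_{m_2}}\bigr)=\operatorname{Cov}(\fD_{2m_1},\fD_{2m_2})+o(n),
\end{align*}
with $m_i=\lfloor nk_i\rfloor$. Dividing by $2n$ and applying Step 1 at scale $2n$ gives the limit $k_1\wedge k_2$.

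The main obstacle is showing in Step 1 that the drift part $A_n$ is negligible at order $\sqrt n$ in $L^2$. If the $\gamma$-telescoping bound is hard to extract, we would fall back to a coupling with Rubin's construction~\cite{Davis90}, using the exponential clocks with rates $w(j)$. In that construction the discrepancy is expressed through the difference of two sums of independent exponentials, whose means differ by the partial sums defining $\gamma$. This makes the variance computation explicit: it equals $n+o(n)$, with independent increments up to an $o(\sqrt n)$ error.
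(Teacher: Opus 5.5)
Your proposal is correct, and it takes a genuinely different route from the paper.

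The paper's sketch stays on the blue-draw clock. It splits each increment $\fD_{\tau_k}-\fD_{\tau_{k-1}}$ (a geometric-type run of red draws followed by one blue draw) into a martingale difference and a conditional mean of order $1/k$ in $L^2$. It checks that the conditional second moments tend to $2$ with bounded fourth moments, and proves a martingale functional CLT $\fD_{\tau_{\lfloor nt\rfloor}}/\sqrt{2n}\Rightarrow B$. It then compares the two colour clocks via $q_n=\fB_{\tau_n^\fR}$ and monotonicity. Finally it reads off the covariances from joint weak convergence plus uniform integrability, with fourth moments supplied by Lemma~\ref{lm:4_1}.

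You instead work on the step clock, where increments are $\pm1$. You obtain the covariance of $\fD_n$ from a Doob decomposition, and transfer to the stopping times through an $\alpha=0$ analogue of Lemma~\ref{lm:4_7} and Cauchy--Schwarz, exactly as in Proposition~\ref{prop:cov}.

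The paper's argument gives more: an invariance principle for the stopped discrepancy, with both colours driven by one Brownian motion. The price is estimating conditional moments of the inter-blue runs. Yours is purely second-moment and avoids weak convergence and uniform integrability. It handles mixed colours for free, because each side is compared with the same deterministic-time walk. It also makes the $\alpha=0$ and $\alpha>0$ proofs structurally identical.

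The step you single out as the main obstacle is in fact easy, and you do not need $\gamma$, \cite{LW25} or Rubin's construction. A drift of order $(1+|\fD_j|)/j$ belongs to the $\alpha>0$ regime. There the restoring term $\approx-\alpha\fD_j/j$ is exactly what produces the kernel of Proposition~\ref{prop:cov}.

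For $\alpha=0$ the drift is much smaller:
\begin{itemize}
\item One has $w(m)=1-2B/m+O(m^{-2})$, hence $|w(m)-w(m')|\le C(1+|m-m'|)/(m\wedge m')^2$.
\item Set $G_j:=\{\fB_j\wedge\fR_j\ge j/4\}$. On $G_j$ the conditional drift $(r(\fR_j)-b(\fB_j))/(r(\fR_j)+b(\fB_j))$ is at most $C(1+|\fD_j|)/j^2\le C'/j$.
\item The complement $G_j^c$ has exponentially small probability by Lemma~\ref{lm:4_1}. Indeed, $\fB_j<j/4$ forces $\fD_{\tau^\fB_{\lceil j/4\rceil}}>j/2-2$, and symmetrically for red.
\item Hence $\|A_n\|_2=O(\log n)$. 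The martingale part has conditional variances $1-(\text{drift})^2$, so $\e M_n^2=n-O(1)$.
\item Feeding $\e\fD_j^2=O(j)$ back in even gives $\sup_n\e A_n^2<\infty$.
\end{itemize}

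The same bound controls Step 2. The drift accumulated over your window $|k-2m|\le C\sqrt m$ is of order $C/\sqrt m$. So the local increment bound there comes from the Step 1 decomposition and Doob's inequality for $M$, not from the covariance limit itself. Taking $C=C_m\asymp\sqrt{\log m}$ in the truncation yields $\e(\fD_{\tau_m^*}-\fD_{2m})^2=O(\sqrt{m\log m})=o(m)$.
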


\begin{proof}[Proof sketch]
	Fix $\dag$ and take $*=\fB$, suppressing these superscripts. Let $\cF^{\fB,\fR}$ be the natural filtration of the urn process, and set $\cF_k:=\cF_{\tau_k}^{\fB,\fR}$. Define
	\begin{align*}
		\zeta_k
		&:=\e\!\left[\fD_{\tau_k}-\fD_{\tau_{k-1}}\mid\cF_{k-1}\right]
		=\e\!\left[\fD_{\tau_k}-\fD_{\tau_{k-1}}\mid\fR_{\tau_{k-1}}\right],\\
		\eta_k
		&:=\e\!\left[(\fD_{\tau_k}-\fD_{\tau_{k-1}})^2\mid\cF_{k-1}\right]
		=\e\!\left[(\fD_{\tau_k}-\fD_{\tau_{k-1}})^2\mid\fR_{\tau_{k-1}}\right].
	\end{align*}
	Then $\xi_k:=\fD_{\tau_k}-\fD_{\tau_{k-1}}-\zeta_k$ is a martingale difference sequence with respect to $(\cF_k)_{k\ge0}$, with conditional variance
	$$
	\sigma_k^2:=\e[\xi_k^2\mid\cF_{k-1}]=\eta_k-\zeta_k^2.
	$$
	The weights are bounded above and away from zero, so the conditional interarrival times have uniformly geometric tails. A geometric comparison using~\eqref{eq:weight_func} and Lemma~\ref{lm:4_1} gives
	$$
	\|\zeta_k\|_2+\e|\eta_k-2|\le C k^{-1},
	\qquad
	\sup_{k\ge1}\e[\xi_k^4]\le C.
	$$
	In particular, $\e|\sigma_k^2-2|\le C/k$, and hence $\frac1n\sum_{k=1}^{\lfloor nt\rfloor}\sigma_k^2\rightarrow 2t$ locally uniformly in probability, while the fourth-moment bound gives the Lindeberg condition. The drift is negligible on the $\sqrt n$ scale, since
	$\big\|\max_{0\le j\le\lfloor nT\rfloor}\big|\sum_{k=1}^j\zeta_k\big|\big\|_2\le C_T\log(n+1)$, for every $T<\infty$. Since $\fD_{\tau_j}=\sum_{k=1}^j\xi_k+\sum_{k=1}^j\zeta_k$, the martingale functional central limit theorem \cite[Theorem~8.2.8]{Durrett2019} yields
	\begin{equation}\label{eq:AF_urn_CLT}
		\left(\frac{\fD_{\tau_{\lfloor nt\rfloor}}}{\sqrt{2n}},
		\,t\ge0\right)
		\Rightarrow (B_t,\,t\ge0),
	\end{equation}
	in $D([0,\infty),\r)$ with the $J_1$ topology,
	where $B$ is a standard Brownian motion.
	
	To compare the two clocks, set $q_n:=\fB_{\tau_n^\fR}=n-\fD_{\tau_n^\fR}$. By Lemma~\ref{lm:4_1}, $q_n/n\to1$ in probability. Monotonicity places $\fD_{\tau_n^\fR}$ between
	$\fD_{\tau_{q_n}^\fB}$ and $\fD_{\tau_{q_n+1}^\fB}+1$. Thus~\eqref{eq:AF_urn_CLT} and continuity of its limit give $(\fD_{\tau_n^\fR}-\fD_{\tau_n^\fB})/\sqrt n\to0$ in probability. Consequently, for either choice of $*_1,*_2$, the normalized discrepancies converge jointly to $(B_{k_1},B_{k_2})$ for the same Brownian motion $B$.
	Finally, Lemma~\ref{lm:4_1} gives $\e|\fD_{\tau_m^*}|^4\le Cm^2$ for $m\ge1$. The normalized products are therefore uniformly integrable, and the joint convergence yields the stated covariance limit.
\end{proof}

\section{Tightness of the discrete local time flow}
\label{app:tight}
This appendix discusses the tightness of the collection of rescaled upcrossing profiles, a result of independent interest. Throughout this appendix, we linearly interpolate the profiles in~\eqref{eq:def_S_N} on the common grid $N^{-1}\z$ and continue to denote them by $S^N_{r,\cdot}(a)$. The interpolated paths retain the non-crossing property of Lemma~\ref{lem:noncrossing} and the convergence in
Theorem~\ref{thm:1B_toth}. 

Following \cite[Section~3]{BW04}, let $(\cH,d_\cH)$ be the space of compact sets of forward paths, with Borel $\sigma$-field $\cF_\cH$. Define $(\cH^b,d_{\cH^b})$ and $\cF_{\cH^b}$ by reversing the time coordinate. Equip $\cH^D:=\cH\times\cH^b$ with the maximum of the two Hausdorff metrics and its Borel $\sigma$-field $\cF_{\cH^D}=\cF_\cH\otimes\cF_{\cH^b}$.
For each $N$, let $S^N$ be the ordered pair consisting of the closures of
$$
\left\{S^N_{k/N,\cdot}(m/N)\big|_{[k/N,\infty)}:m\in\N_0,\, k\in\z\right\},\quad
\left\{S^N_{k/N,\cdot}(m/N)\big|_{(-\infty,k/N]}:m\in\N_0,\, k\in\z\right\},
$$
respectively. The two closures are readily checked to be compact and measurable, so $S^N$ is an $(\cH^D,\cF_{\cH^D})$-valued random variable. We adapt the argument of~\cite[Proposition~B.2 and Remark~B.2]{BW04} to prove the following proposition.

\begin{proposition}
The laws of $\{S^N\}_{N\in\N}$ form a tight family.
\end{proposition}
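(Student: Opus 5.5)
Since $\cH^D$ carries the product topology and a pair of tight families is tight, it suffices to prove tightness of the forward component in $(\cH,d_\cH)$; the backward component is handled identically, using~\eqref{eq:back_po} in place of~\eqref{eq:for_po}, or equivalently the time-reversed analysis. Following~\cite[Proposition~B.2]{BW04}, tightness in $\cH$ reduces, via the Arzel\`a--Ascoli-type compactness criterion for $\cH$ in the compactified path space, to a uniform control of the modulus of continuity of all paths simultaneously. Concretely, I will show: for every $\varepsilon>0$ and every compact window $[-L,L]^2$ in the (rescaled height, space) coordinates,
$$
\lim_{\delta\downarrow0}\limsup_{N\to\infty}\p\Bigl(\exists\,\text{a path in } S^N \text{ whose oscillation on some interval of length } \delta \text{ inside the window exceeds } \varepsilon\Bigr)=0.
$$
Behaviour outside the window is controlled by the compactification of heights used in~\cite{BW04}, so only bounded regions matter.

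\textbf{Step 1: a finite skeleton via non-crossing.} Cover the window by $O(\delta^{-2})$ boxes of side comparable to $\delta$ in space and $\varepsilon$ in height. By Lemma~\ref{lem:noncrossing}, any path entering a box is sandwiched, on the next spatial interval of length $\delta$, between the two discrete profiles started from the box's lower and upper corners at its left time. Hence a large oscillation of an arbitrary path forces one of the corner profiles to make a move of size at least $\varepsilon/3$ within time $\delta$. This is exactly the reduction behind~\cite[Remark~B.2]{BW04}: it reduces the statement to a union bound over $O(\delta^{-2})$ fixed starting points. Tightness then follows once I show, uniformly in $N$ large and in starting points $(a,r)$ in the window,
$$
\p\Bigl(\sup_{r\le x\le r+\delta}\bigl|S^N_{r,x}(a)-a\bigr|>\varepsilon/3\Bigr)\le C\delta^{2}\,g(\varepsilon,\delta),\qquad g(\varepsilon,\delta)\to0 \text{ as } \delta\to0 .
$$

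\textbf{Step 2: the single-path estimate.} This is where I expect the main work to lie. Convergence in distribution from Theorem~\ref{thm:1B_toth}, applied to the single-point limit, gives only $o(1)$, which is too weak to beat the $\delta^{-2}$ factor. I will therefore work directly with the representation~\eqref{eq:proce_+1}. Up to the exit time from $[\eta,2L]$, $S^N_{r,\cdot}(a)$ equals $a$ plus a predictable drift of order $\delta$ plus the martingale $\bar M$ built from the centred urn increments. By Lemma~\ref{lm:4_1} and the fourth-moment computation in the proof of Theorem~\ref{thm:meas}, each site contributes a conditional fourth moment $O(N^{-2})$ when the height is bounded. Burkholder's inequality with a high moment $p$ then yields
$$
\e\Bigl[\sup_{x\le r+\delta}|\bar M_x|^{p}\Bigr]\le C_p\,\delta^{p/2},
$$
so the probability above is at most $C_p\varepsilon^{-p}\delta^{p/2}$. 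Taking $p=6$ beats $\delta^{-2}$. Near height $0$, where the branches are absorbed, oscillations are even smaller, and the same bound applies since the urn increment variance is linear in the height.

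\textbf{Step 3: backward paths.} The backward paths need one extra point: they may also start at sites left of the origin, where the site-$0$ urn and the $\pm1$ corrections in~\eqref{eq:back_dis} enter. These corrections contribute only $O(N^{-1})$ per site, so they do not affect the bound. Combining Steps 1 through 3 gives tightness of both components, and hence of $\{S^N\}_{N\in\N}$.
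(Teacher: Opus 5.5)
Your proposal is correct. Its architecture matches the paper's: the BW04 reduction to \eqref{eq:tight_goal}, a non-crossing sandwich between two fixed corner profiles, and a union bound over a finite grid of starting points. The one genuine difference is the single-path estimate.

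\textbf{Your route.} You derive the estimate at the discrete level. You use the urn representation \eqref{eq:proce_+1}, bounded conditional means from Proposition~\ref{prop:exp_discre}, moment bounds from Lemma~\ref{lm:4_1}, and Burkholder's inequality, stopping at the upper exit from a bounded height window. This gives a bound $C_p\varepsilon^{-p}\delta^{p/2}$ that is uniform in $N$.

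\textbf{The paper's route.} For fixed $\delta$ it applies Theorem~\ref{thm:1B_toth} together with the Portmanteau theorem to the open event $\{\sup_{[r_i,r+2\delta]}|S^N_{r_i,\cdot}(a_i)-a_i|<\eta/16\}$. This reduces the estimate to a $\mathrm{BESQ}^1$ computation, and the Brownian maximal inequality gives failure probability at most $Ce^{-c/\delta}$.

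\textbf{Your reason for discarding Theorem 1B is mistaken.} In \eqref{eq:tight_goal} the $\limsup_{N\to\infty}$ is taken for fixed $\delta$, over finitely many fixed starting points. Weak convergence therefore delivers the full limiting probability, which is exponentially small in $1/\delta$, not merely an $o(1)$ term. Moreover, the grid $L_D\times T_D(\delta)$ has only $O(\delta^{-1})$ points, because the height spacing is of fixed order $\eta$; your $O(\delta^{-2})$ overcounts. Either exponent is beaten by $e^{-c/\delta}$.

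\textbf{What each route buys.} Yours gives a quantitative, $N$-uniform modulus bound that does not rely on Tóth's theorem. The cost is extra bookkeeping: the stopping argument, the initial correction term, the origin urn and the $\delta_k$ corrections, and the analogous backward representation. The paper's route is a few lines once Theorem~\ref{thm:1B_toth} is available.

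\textbf{A precision point in Step 1.} The corner profiles must start strictly outside the box's height range, and they must be controlled on a window of length about $2\delta$, not $\delta$. This is needed so that non-crossing yields the sandwich from a strict inequality at the entry time. The paper uses heights $a\pm3\eta/8$ for a box of half-height $\eta/4$, starts the profiles at $r-3\delta/8$, and controls them on $[r_i,r+2\delta]$.
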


\begin{proof}
We prove tightness of the forward component, whose law is denoted by $\mu_N$. Fix $L,T,\eta>0$. For $\delta>0$, set
$$
L_D=\left\{k\eta/2:k=0,\ldots,\lceil2L/\eta\rceil\right\},
\quad
T_D(\delta)=\left\{-T+m\delta:m=0,\ldots,\lceil2T/\delta\rceil\right\}.
$$
For $(a,r)\in\h$, define
$$
B_{\delta,\eta}(a,r):=
\left\{
K\in\cH\;\middle|\;
\begin{array}{l}
	\exists\,(f,r')\in K,\quad \exists\,y\ge x\ge r',
	\\[4pt]
	\text{s.t.}
	\begin{cases}
		(f(x),x)\in R(a,r;\eta/2,\delta),\\
		y\in[x,x+\delta],\\
		|f(y)-f(x)|\ge\eta
	\end{cases}
\end{array}
\right\},
$$
where $R(a,r;\eta,\delta):=[a-\eta/2,a+\eta/2]\times[r,r+\delta]\cap\h$.
By the argument in~\cite[Proposition~B.1]{BW04}, as used in the proof of
Proposition~B.2 there, it suffices to show that
\begin{equation}\label{eq:tight_goal}
	\lim_{\delta\downarrow0}\limsup_{N\to\infty}
	\mu_N\!\left(
	\bigcup_{(a,r)\in L_D\times T_D(\delta)}
	B_{\delta,\eta}(a,r)
	\right)=0.
\end{equation}

Fix $(a,r)\in L_D\times T_D(\delta)$ and set $r_1=r_2=r-3\delta/8$. For $a>0$, take $a_1=a-3\eta/8$, $a_2=a+3\eta/8$ and define, for $i=1,2$,
$$
 B_i:=\left\{\sup_{x\in[r_i,r+2\delta]}
 |S^N_{r_i,x}(a_i)-a_i|<\eta/16\right\},
 \qquad D^N_{\delta,\eta}(a,r):=B_1\cap B_2.
$$
For $a=0$, take only $a_2=3\eta/8$ and set $D^N_{\delta,\eta}(0,r)=B_2$. In both cases, non-crossing gives
$$
 \mu_N(B_{\delta,\eta}(a,r))\le\p\bigl((D^N_{\delta,\eta}(a,r))^c\bigr).
$$
Let $Z^i$ be a BESQ$^1$ process started from $2(2\alpha+1)a_i$. Theorem~\ref{thm:1B_toth} and the Portmanteau theorem therefore give
\begin{align*}
 \liminf_{N\to\infty}\p(B_i)\ge\p\left(\sup_{0\le x\le r+2\delta-r_i}\left|\frac{Z^i(x)}{2(2\alpha+1)}-a_i\right|<\frac{\eta}{16}\right)\ge1-Ce^{-c/\delta}.
\end{align*}
The last inequality follows from the Brownian maximal inequality, using the representation of $Z^i$ as the square of a Brownian motion started from $\sqrt{2(2\alpha+1)a_i}$. Since $a_i\le L+\eta$ and $r+2\delta-r_i<5\delta/2$,
the constants $C,c>0$ depend only on $L,\eta$ and $\alpha$.

Consequently, $\liminf_{N\to\infty}\p(D^N_{\delta,\eta}(a,r))\ge1-2Ce^{-c/\delta}$. A union bound over the finite grid gives, for $0<\delta\le1$,
$$
 \limsup_{N\to\infty}
 \mu_N\!\left(\bigcup_{(a,r)\in L_D\times T_D(\delta)}B_{\delta,\eta}(a,r)\right)
 \le C\delta^{-1}e^{-c/\delta}\longrightarrow0,
$$
where $C$ may also depend on $T$. This proves~\eqref{eq:tight_goal}. The same argument applies to the backward component, giving tightness in $\cH^D$.
\end{proof}

\bibliographystyle{abbrvurl}
\bibliography{PSRM}

\end{document}